\documentclass[a4paper,reqno,11pt]{amsart}

\usepackage[left=1 in, right=1 in,top=1 in, bottom=1 in]{geometry}

\usepackage{amsfonts,amsmath,amsthm,amssymb,stmaryrd}
\usepackage{mathrsfs}
\usepackage{color}
\allowdisplaybreaks

%for checking unused references
%\usepackage{refcheck}

\newcommand{\beq}{\begin{equation}}
\newcommand{\eeq}{\end{equation}}
\newcommand{\bal}{\begin{align}}
\newcommand{\eal}{\end{align}}

\renewcommand{\(}{\left(}         
 \renewcommand{\)}{\right)}
\renewcommand{\[}{\left[}         
 \renewcommand{\]}{\right]}

\newcommand{\D}{\partial^{\varphi}}
\newcommand{\De}{\partial^{\varphi^\eep}}
\newcommand{\Dn}{\nabla^\varphi}
\newcommand{\V}{\mathcal{V}_{t}}

\newcommand{\N}{\mathcal{N}} 
\newcommand{\F}{\mathcal F} 

\DeclareMathOperator{\diverge}{div}

\providecommand{\ns}[1]{\norm{#1}^2}
\providecommand{\as}[1]{\abs{#1}^2}
\providecommand{\abs}[1]{\left\vert#1\right\vert}
\providecommand{\norm}[1]{\left\Vert#1\right\Vert}
\providecommand{\Rn}[1]{\mathbb{R}^{#1}}

\providecommand{\se}[1]{\mathcal{E}_{#1}}
\providecommand{\fe}[1]{\mathfrak{E}_{#1}}
\providecommand{\fd}[1]{\mathfrak{D}_{#1}}  
\providecommand{\sdb}[1]{\bar{\mathcal{D}}_{#1}}
\providecommand{\seb}[1]{\bar{\mathcal{E}}_{#1}}

\providecommand{\fdb}[1]{\bar{\mathfrak{D}}_{#1}}

\providecommand{\br}[1]{\left\langle #1 \right\rangle}

%some macros -- Yanjin   
 
\providecommand{\jump}[1]{\left\llbracket #1 \right\rrbracket }
 \providecommand{\jumps}[1]{\llbracket #1 \rrbracket }

\def\pa{\partial} 
\def\nabb{\bar B\cdot \nabla}
\def\hee{\hat E}
\def\hb{\hat b}
\def\hv{\hat v}
\def\hf{\hat f}
\def\bb{{\bf b}}
\def\hbb{{\hat{ B}}}
\def\hhbb{{\hat{ {\bf b}}}}
\def\he{\hat E}

\def\dt{\partial_t}
\def\dtt{ \frac{d}{dt}}
\def\hal{\frac{1}{2}}

\def\ls{\lesssim} 

\def\eep{\epsilon}
\def\epp{\epsilon}

\def\x{\mathcal{X}}
\def\y{\mathcal{Y}}

\def\i{\mathcal{I}}
  
\def\al{\alpha}  
\def\p{\partial}
\def\dis{\displaystyle}  
\def\jj{\mathcal{J}} 
\def\mt{\mathbb{T}^2}  
\def\pav{\pa^\varphi}
\def\nav{\nabla^\varphi}

\def\be{{  b^\eep}} 
\def\hbe{{\hat b^\eep}}  
\def\bb{{\bf b}} 
\def\BB{{\bf B}}

\def\nabb{\bar B\cdot \nabla}  

\def\sdn{\mathcal{D}_{2N}}

\def\sen{\mathcal{E}_{2N}}

\def\sdn{\mathbb{D}_{2N}}

\def\diva{\diverge^{\varphi}} 
\def\divaz{\diverge^{\varphi_0}} 
\def\divae{{\rm div}^{\varphi^\eep}} 
\def\divaez{{\rm div}^{\varphi_0^\eep}} 
\def\divam{\diverge^{\varphi^m}} 
\def\divaml{\diverge^{\varphi^{m+1}}} 
\def\divamll{\diverge^{\varphi^{m-1}}} 
\def\curlv{{\rm curl}^\varphi}
\def\curlvz{{\rm curl}^{\varphi_0}}
\def\curlve{{\rm curl}^{\varphi^\eep}}
\def\curlvez{{\rm curl}^{\varphi_0^\eep}}
\def\curlvm{{\rm curl}^{\varphi^m}}

\def\curlvmll{{\rm curl}^{\varphi^{m-1}}}
\def\n{\mathcal{N}} 
\def\ne{\mathcal{N}^\eep} 
 
\def\nm{\mathcal{N}^m} 
\def\nml{\mathcal{N}^{m+1}}

\newcommand{\Div}{\operatorname{div}}
\newcommand{\curl}{\operatorname{curl}}

\newtheorem{prop}{Proposition}
\newtheorem{thm}[prop]{Theorem}
\newtheorem{lem}[prop]{Lemma}

\newtheorem{rem}[prop]{Remark}

\numberwithin{equation}{section}
\numberwithin{prop}{section}

%%%%%%%%%%

%\newcommand{\D}{\partial^{\varphi}}
\newcommand{\Dm}{\partial^{\varphi^m}}
\newcommand{\Dml}{\partial^{\varphi^{m+1}}}
\newcommand{\Dmll}{\partial^{\varphi^{m-1}}}
\newcommand{\Dnm}{\nabla^{\varphi^m}}
\newcommand{\Dnml}{\nabla^{\varphi^{m+1}}}

%%%%%%%%%%%%
\title[Inviscid resistive plasma interface problems]{Global Well-posedness of Free Interface Problems for the incompressible Inviscid Resistive MHD}

\author{Yanjin Wang}
\address{School of Mathematical Sciences\\
	Xiamen University\\
	Xiamen, Fujian 361005, China
	\newline \indent and
	\newline \indent The Institute of Mathematical Sciences\\
	The Chinese University of Hong Kong\\
	Shatin, NT, Hong Kong}
\email[Y. J. Wang]{yanjin$\_$wang@xmu.edu.cn}

\author{Zhouping Xin}
\address{The Institute of Mathematical Sciences\\
	The Chinese University of Hong Kong\\
	Shatin, NT, Hong Kong}
\email[Z. P. Xin]{zpxin@ims.cuhk.edu.hk} 

%\thanks{Y. J. Wang was supported by the National Natural Science Foundation of China (No. 11771360) and the Natural Science Foundation of Fujian Province of China (No. 2019J02003).}
%\thanks{Z. P. Xin was partially supported by the Zheng Ge Ru Foundation, and Hong Kong RGC Earmarked Research Grants CUHK-4048/13P, CUHK-14305315, CUHK-14302819, CUHK-14300917,  CUHK-14302917, a Focus Area Grant from CUHK, a grant from Croucher Foundation, and a NSFC/RGC Joint Research Scheme.}

\subjclass[2010]{35Q35, 35R35, 76B03, 76B15, 76W05.}

\keywords{Free boundary problems; Plasma interface; MHD; Inviscid fluids; Magnetic diffusion.}

\date{\today}

\begin{document}

\begin{abstract}
We consider the plasma-vacuum interface problem in a horizontally periodic slab impressed by a uniform non-horizontal magnetic field. The lower plasma region is governed by the incompressible inviscid and resistive MHD, the upper vacuum region is governed by the pre-Maxwell equations, and the effect of surface tension is taken into account on the free interface. The global well-posedness of the problem, supplemented with physical boundary conditions, around the equilibrium is established, and the solution is shown to decay to the equilibrium almost exponentially. Our results reveal the strong stabilizing effect of the magnetic field as the global well-posedness of the free-boundary incompressible Euler equations, without the irrotational assumption, around the equilibrium is unknown. One of the key observations here is an induced damping structure for the fluid vorticity due to the resistivity and transversal magnetic field. A similar global well-posedness for the plasma-plasma interface problem is obtained, where the vacuum is replaced by another plasma.
\end{abstract}

\maketitle

%\tableofcontents
%%%%%%%%%%%%%%%%%%%%%%%%%%%%%%%%%%%%%%%%%%%%%%%%%%%%%%
\section{Introduction}
%%%%%%%%%%%%%%%%%%%%%%%%%%%%%%%%%%%%%%%%%%%%%%%%%%%%%%

%%%%%%%%%%%%%%%%%%%%%%%%%%%%%%%%%%%%%%%%%%%%%%%%%%%%%%
\subsection{Formulation in Eulerian coordinates}
%%%%%%%%%%%%%%%%%%%%%%%%%%%%%%%%%%%%%%%%%%%%%%%%%%%%%%

We consider the plasma-vacuum interface problem  in the slab $\Omega=\mt \times(-1,1)$ impressed by a uniform transversal magnetic field $\bar B$, $i.e.$, $\bar B_3\neq0$, where  the slab  is assumed to be horizontally periodic for $ \mathbb{T} =\mathbb{R}/\mathbb{Z}$. Let $\Sigma_\pm:=\mt \times\{\pm 1\}$ be the upper and lower fixed boundaries, respectively.   The plasma moves in the lower domain
\beq
\Omega_-(t)= \left\{ y=(y_h,y_3):=(y_1,y_2,y_3) \in \mt \times\mathbb{R} \mid  -1< y_{3} < \eta(t, y_h) \right\},
\eeq
 the vacuum occupies the upper domain
\beq
\Omega_+(t)= \left\{ y \in \mt \times\mathbb{R} \mid   \eta(t, y_h)< y_{3} <1  \right\},
\eeq
and the interface $
\Sigma(t):= \left\{ y \in \mt \times\mathbb{R} \mid   y_{3}= \eta(t, y_h) \right\}
$ is  free to move, where  the
graph function $ \eta:\mathbb{R}_+\times\mt \to \Rn{}$ is  unknown.
We assume that the velocity $u$, the pressure $p$ and the magnetic field $B$ of the plasma satisfy the incompressible inviscid and resistive magnetohydrodynamic equations (MHD):
\beq\label{MHD00}
\begin{cases}
\partial_t {u}  +    {u}\cdot \nabla  {u} +\nabla p =  \curl B\times B& \text{in } \Omega_-(t)
\\ \Div u=0 &\text{in }\Omega_-(t)
\\ \dt B        =  \curl   E,\quad  E= u\times B-\kappa\curl  B& \text{in } \Omega_-(t)
\\ \Div B=0 &\text{in }\Omega_-(t),
\end{cases}
\eeq
where $E$  is the electric field of the plasma and  $\kappa>0$ is the magnetic diffusion coefficient, the inverse of the electric conductivity. The magnetic field $\hbb$ and the electric field $\hee$ in vacuum are assumed to satisfy the {\it pre}-Maxwell equations:
\beq\label{MHD2}
\begin{cases}
\curl\hbb=0,\quad\Div \hbb=0 &\text{in }\Omega_+(t)\\
\dt \hbb=\curl\hee,\quad \Div \hee=0&\text{in }\Omega_+(t).
\end{cases}
\eeq
 The interface $\Sigma(t)$ is adverted with the plasma through the kinematic boundary condition:
\beq\label{MHDb0}
\dt \eta= u\cdot \n\quad \text{on } \Sigma(t),
\eeq
where  
$\n =  (-\nabla_h \eta ,1) 
$ is the upward non-unit normal vector to $\Sigma(t)$ with $\nabla_h=(\pa_1,\pa_2)$  the horizontal gradient.   
Note that the equations \eqref{MHD00} and \eqref{MHD2} are derived by neglecting the displacement current in the  {\it full}-Maxwell equations, and one may refer to the books \cite{R,C,F,D,GP} for the physical backgrounds and applications.

To solve \eqref{MHD00} and \eqref{MHD2}, one needs to impose certain physical boundary conditions. First, due to 
$\curl B\times B=-\diverge (\hal \abs{B}^2 I-B\otimes B),$   the dynamic boundary condition of the balance of the normal stresses on the free interface    reads as
\beq\label{bdc1}
\( p I +\hal \abs{B}^2I-B\otimes B \)\n =\(\hal |\hbb|^2I-\hbb\otimes \hbb\)\n -\sigma H\n\quad \text{on } \Sigma(t),
\eeq
where $I$ is the $3\times 3$ identity matrix,   $\sigma>0$ is the surface tension coefficient and $H$ is the mean curvature  of $\Sigma(t)$ given by
\beq\label{HHdef}
H=\Div_h\(\frac{\nabla_h\eta}{\sqrt{1+|\nabla_h\eta|^2}}\).
\eeq
Here $\Div_h$ is the horizontal divergence, and also the notation $\Delta_h=\Div_h\nabla_h$ will be used later.
Next, the classical jump conditions for the magnetic and electric fields, which follow from the Maxwell equations (see \cite{GP,F}), are
\beq\label{bdc2}
  B\cdot \n = \hbb\cdot \n, \quad (E-\hee)\times \n =u\cdot\n (B-\hbb)\quad\text{on }\Sigma(t).
  \eeq
Due to the consideration in this paper that the problem is around the uniform traversal magnetic field $\bar B$, $B\cdot \n= \hbb\cdot \n\neq0$  on $\Sigma(t)$,  and hence the tangential components of  the jump condition \eqref{bdc1} imply in particular that $B\times \n=\hbb\times \n$  on $\Sigma(t)$, and one then finds  that \eqref{bdc1} and \eqref{bdc2} are equivalent to the following  boundary conditions:
\beq\label{MHDb1}
p =   - \sigma H,\quad B=\hbb,\quad E\times \n=\hee\times \n      \quad \text{on } \Sigma(t).
\eeq
Finally, we impose  the impermeable condition on the lower fixed boundary:
\beq
u\cdot e_3=0\quad\text{on }\Sigma_-,
\eeq
with $e_3=(0,0,1)$. It should be emphasized that the boundary conditions of the magnetic and electric fields on a fixed boundary  depend on the nature of the boundary, see \cite{R,C,F,D,GP};  we assume that the lower fixed boundary  is a perfect conducting wall, so
\beq\label{MHDb3}
B\cdot e_3=\bar B\cdot e_3,\quad E \times e_3=0 \quad \text{on }\Sigma_{-},
\eeq
while  the upper fixed boundary is a perfect insulating wall, then
\beq\label{MHDb4}
\hbb\times e_3=\bar B\times e_3,\quad  \hee\cdot e_3=0 \quad \text{on }\Sigma_{+}.
\eeq
One  may refer to \cite{St} for more physical and mathematical discussions on the boundary conditions of the magnetic and electric fields and related literature.

Mathematically, the electric field in vacuum $\he$ could be regarded as a second variable, see Ladyzhenskaya and Solonnikov \cite{LS1,LS}. Indeed,  one can eliminate $\he$ from the problem under consideration, $i.e.$, \eqref{MHD00}--\eqref{MHDb0} and \eqref{MHDb1}--\eqref{MHDb4},  and  arrive at  the following system for $(u,p,\eta,b,\hb)$, with $b=B-\bar B$ and $\hb=\hbb-\bar B$,
\beq\label{MHD}
\begin{cases}
\partial_t {u}  +    {u}\cdot \nabla  {u} +\nabla p =  \curl b\times (\bar B+b)& \text{in } \Omega_-(t)
\\ \Div u=0 &\text{in }\Omega_-(t)
\\ \dt b        =  \curl   E,\quad  E= u\times (\bar B+b)-\kappa\curl  b& \text{in } \Omega_-(t)
\\ \Div b=0 &\text{in }\Omega_-(t)
\\\curl\hb =0,\quad\Div \hb =0 &\text{in }\Omega_+(t)
\\ {\dt \eta} = u\cdot \n  & \text{on } \Sigma(t)
\\p =   - \sigma H,\quad b=\hb     & \text{on } \Sigma(t)
\\ u_3=0,\quad b_3=0,\quad E \times e_3=0 & \text{on }\Sigma_{-}
\\\hb \times e_3=0 & \text{on }\Sigma_{+}.
\end{cases}
\eeq
Once \eqref{MHD} is solved, then $\he$ can be recovered from the following problem:
\beq\label{MHDvk02}
\begin{cases}
\curl\he=\dt\hb ,\quad\Div \he =0 &\text{in }\Omega_+(t)
\\\he\times \n=E\times \n  &\text{on }\Sigma(t)
\\\hee_3=0 &\text{on }\Sigma_{+}.
\end{cases}
\eeq
Note also that the magnetic field in vacuum $\hb$ is completely determined by $b\cdot\n$ on $\Sigma(t)$  via the following  problem:
\beq\label{MHDv"}
\begin{cases}
\curl \hb=0,\quad\diverge \hb =0 &\text{in }\Omega_+(t)  
\\ \hb \cdot\n= b\cdot\n & \text{on } \Sigma(t)
\\  \hb \times e_3=0   & \text{on } \Sigma_{+}.
\end{cases}
\eeq
Then the jump condition $b=\hb $  on $\Sigma(t)$   could be regarded as a nonlocal boundary condition   for $b$ (see \cite{LS1,LS}):
\beq
b\times \n =\mathcal{B}^t(b\cdot\n)\times\n\quad \text{on } \Sigma(t),
\eeq
where $\mathcal{B}^t(b\cdot\n)$ is the solution to  \eqref{MHDv"}. Thus one could further formally suppress  $\hb $ in \eqref{MHD}.

To complete the statement of the problem \eqref{MHD}, one must specify the initial conditions. Suppose that the initial interface $\Sigma(0)$ is given by the graph of the function $\eta(0)=\eta_0: \mt \rightarrow\mathbb{R}$, which yields the initial lower domain $\Omega_-(0)$ on which the initial velocity $u(0)=u_0: \Omega_-(0) \rightarrow \mathbb{R}^3$, and the initial magnetic field  $b(0)=b_0: \Omega_-(0)\rightarrow \mathbb{R}^3$ are specified.  %It is assumed that $-1<\eta_0<1$ and that $(u_0,b_0,\eta_0)$ satisfy certain necessary compatibility conditions, which will be described later.

%%%%%%%%%%%%%%%%%%%%%%%%%%%%%%%%%%%%%%%%%%%%%%%%%%%%%%
\subsection{Physical energy-dissipation law}
%%%%%%%%%%%%%%%%%%%%%%%%%%%%%%%%%%%%%%%%%%%%%%%%%%%%%%

The problem \eqref{MHD} possesses a natural physical energy-dissipation law. First, as for the free-surface incompressible Euler equations, one has
\begin{align}\label{en_iden0}
 & \hal\dtt \(  \int_{\Omega_-(t)}  |u|^2\, dy   +   \int_{\mt } 2\sigma\(\sqrt{1+|\nabla_h \eta|^2}-1\) dy_h  \)  \nonumber
 \\&\quad=    \int_{\Omega_-(t)} \curl    b\times (\bar B+b) \cdot u\, dy
 =- \int_{\Omega_-(t)}u\times (\bar B+b) \cdot  \curl    b \, dy.
\end{align}
Next, to handle the magnetic system in \eqref{MHD}, making use of the electric field in vacuum $\he$ satisfying \eqref{MHDvk02}, one has
\begin{align}\label{en_iden1}
& \hal\dtt  \( \int_{\Omega_-(t)} |b|^2 \,dy  +\int_{\Omega_+(t)}   |\hb|^2   \,dy\) =      \int_{\Omega_-(t)} \dt b\cdot b \, dy+ \int_{\Omega_+(t)} \dt \hb\cdot\hb\, dy \nonumber
 \\&\quad=     \int_{\Omega_-(t)} \curl   E\cdot b\, dy+ \int_{\Omega_+(t)}  \curl \he \cdot \hb\, dy=\int_{\Omega_-(t)}    E\cdot \curl b \,dy
  \nonumber
 \\&\quad=  \int_{\Omega_-(t)} u\times (\bar B+b) \cdot  \curl   b\,dy- \kappa  \int_{\Omega_-(t)} |\curl b|^2\, dy.
\end{align}
It then follows from \eqref{en_iden0} and \eqref{en_iden1} that
\begin{align}\label{en_idenphy}
& \hal\dtt \(  \int_{\Omega_-(t)} \(|u|^2+|b|^2 \) dy +\int_{\Omega_+(t)}   |\hb|^2   \,dy +    \int_{\mt } 2\sigma\(\sqrt{1+|\nabla_h \eta|^2}-1\)  dy_h  \)  \nonumber
\\&\quad+ \kappa  \int_{\Omega_-(t)} |\curl b|^2\, dy =0.
\end{align}
This structure of the energy evolution equation is the basis of the energy method we will use to analyze the problem \eqref{MHD}. 

Note that \eqref{en_idenphy} can be derived in an alternative way,  motivated by  Ladyzhenskaya and Solonnikov \cite{LS1,LS}, that does not involve the electric field in vacuum $\he$. The idea is to introduce instead a virtual magnetic field $  \mathfrak{b}$ in $\Omega_-(t)$ as the solution to
 \beq\label{MHDv'}
\begin{cases} 
\curl \mathfrak{b}=0,\quad\Div  \mathfrak{b}=0 &\text{in }\Omega_-(t) 
\\ \mathfrak{b}\times \n= \hb\times \n   & \text{on } \Sigma(t)
\\\mathfrak{b}_3 =0  & \text{on } \Sigma_{-}.
\end{cases}
\eeq
 Then one may write  $\hb=\nabla  \hat\phi$   and $\mathfrak{b}=\nabla  {\bf \phi}$    with  $\hat\phi$ solving 
\beq
\Delta \hat\phi=0 \text{ in }\Omega_+(t),\quad 
\nabla \hat\phi\cdot \n=b\cdot \n  \text{ on }\Sigma(t),\quad \hat\phi=0  \text{ on }\Sigma_+
\eeq
and $\phi$ satisfying
\beq
\Delta \phi=0 \text{ in }\Omega_-(t),\quad 
  \phi=\hat\phi  \text{ on }\Sigma(t),\quad \p_3\phi=0  \text{ on }\Sigma_-.
\eeq
Note that  $ b\times \n=\mathfrak{b}\times \n $ on $\Sigma(t)$. Then one has
 \begin{align}\label{en_iden12}
& \hal\dtt  \( \int_{\Omega_-(t)} |b|^2 dy  +\int_{\Omega_+(t)}   |\hb|^2   \,dy\) =           \int_{\Omega_-(t)} \dt b\cdot (b-\mathfrak{b})\, dy+    \int_{\Omega_-(t)} \dt b\cdot \mathfrak{b}\, dy+ \int_{\Omega_+(t)} \dt \hb\cdot\hb\, dy \nonumber
\\&\quad=     \int_{\Omega_-(t)} \curl   E\cdot  (b-\mathfrak{b})\, d\V+    \int_{\Omega_-(t)} \dt b\cdot \nabla  \phi\, dy+ \int_{\Omega_+(t)} \dt \hb\cdot\nabla \hat \phi\, dy \nonumber
 \\&\quad=   \int_{\Omega_-(t)}    E\cdot \curl  b\, dy .
\end{align}
This yields again \eqref{en_iden1} and hence \eqref{en_idenphy}.

%%%%%%%%%%%%%%%%%%%%%%%%%%%%%%%%%%%%%%%%%%%%%%%%%%%%%%
\subsection{Related works}
%%%%%%%%%%%%%%%%%%%%%%%%%%%%%%%%%%%%%%%%%%%%%%%%%%%%%%

Free boundary problems in fluid mechanics have attracted huge attention in the mathematical community.  Unlike the Euler equations (see Nalimov \cite{N},  Wu \cite{W1,W2}) and the Navier-Stokes equations (see Solonnikov \cite{S1}, Beale \cite{B1}), the free boundary problems for MHD have been studied only more recently. The free boundary problems for MHD arise typically when a plasmas is surrounded by the vacuum and when two plasmas are separated by a free interface, which are known as the plasma-vacuum interface problem and the plasma-plasma interface problem, respectively. In this paper we focus us on the incompressible MHD.

For  the ideal (inviscid and non-resistive) MHD,  the magnetic field is required to be tangential on the free interface, which is transformed to be the constraint on the initial magnetic field, and in this case the dynamic boundary condition on the free interface is then reduced to the balance of the total pressure of the hydrodynamic part and the magnetic part. For the incompressible plasma-vacuum interface problem, under the non-colinearity condition of the magnetic fields on the free interface, which yields a regularizing effect for the free interface,  Morando, Trakhinin and Trebeschi \cite{MTT} showed the well-posedness of the linearized problem and Sun, Wang and Zhang \cite{SWZ2} proved the local well-posedness of the nonlinear problem; under the Taylor condition of the total pressure on the free interface,  when the magnetic field in the vacuum is trivial Hao and Luo \cite{HL} established an a priori estimate and Gu and the first author \cite{GW} proved the local well-posedness, while when the  magnetic field in the vacuum is nontrivial the local well-posedness is still unknown.  For the incompressible plasma-plasma interface problem, Sun, Wang and Zhang \cite{SWZ1} proved the local well-posedness under the Syrovatskij stability condition, and previously, Coulombel, Morando, Secchi and Trebeschi \cite{CMST} showed an a priori estimate under a stronger condition. For the incompressible Euler equations, it is known that either the Taylor condition of the pressure or the effect of surface tension on the free surface is required for the local well-posedness of the one-phase problem (see Christodoulou and Lindblad  \cite{CL}, Lindblad  \cite{Li}, Coutand and Shkoller \cite{CS1}, Shatah and Zeng \cite{SZ} and Zhang and Zhang \cite{ZZ}), and the effect of surface tension is necessary for the local well-posedness of the two-phase problem (see Cheng, Coutand and Shkoller \cite{CCS1} and Shatah and Zeng \cite{SZ1,SZ2}); otherwise, one has the ill-posedness of the problem (see Ebin  \cite{E1,E} and Caflisch and Orellana \cite{CO}). Thus the works \cite{CMST, MTT, SWZ1,SWZ2} show the stabilizing effect  of the magnetic field on the local well-posedness for inviscid fluids.

It is natural to consider the question whether there is a global well-posedness for free boundary problems or not. The recent works, Castro,  C\'ordoba, Fefferman, Gancedo and G\'omez-Serrano \cite{CCFGG1,CCFGG2}, Fefferman, Ionescu and Lie \cite{FIL} and Coutand \cite{Cou},  imply the development of singularities in finite time of free boundary problems for some large initial data. For the irrotational incompressible Euler equations in the horizontally nonperiodic setting,  certain dispersive effects can be used to establish the global well-posedness for the small initial data; we refer to  Wu \cite{W3,W4}, Germain, Masmoudi and Shatah \cite{GMS1,GMS2}, Ionescu and Pusateri \cite{IP1,IP2}, Alazard and Delort \cite{AD} and  Deng,  Ionescu,  Pausader and  Pusateri \cite{DIPP}. We refer to Beale \cite{B2}, Solonnikov \cite{S2},  Hataya \cite{H}, Guo and Tice \cite{GT1,GT2,GT3} and the first author, Tice and Kim \cite{WTK} for the global well-posedness of the incompressible Navier-Stokes equations. Despite these, it is still not clear whether the  free-boundary incompressible Euler equations for the general small initial data admits a global unique solution or not.  It is then interesting and important to study the effect of the magnetic field on the global well-posedness for inviscid fluids.

Note that the global well-posedness of free boundary problems for the ideal MHD is unknown, and  it is reasonable to expect the global well-posedness of the viscous and resistive MHD. 
We may refer to Padula and Solonnikov \cite{PS} and Solonnikov \cite{So2,So} for the local well-posedness of  the incompressible plasma-vacuum interface problem and Solonnikov and Frolova \cite{SF} and Solonnikov \cite{So} for the global well-posedness around the zero magnetic field. In \cite{W}, the first author proved the global well-posedness of the incompressible viscous and non-resistive plasma-plasma interface problem around a traversal uniform magnetic field. These results \cite{SF,So,W} of the global well-posedness rely heavily on the dissipation and regularizing effects of the viscosity for the velocity field. In this paper, we will prove the global well-posedness of free interface problems for the incompressible inviscid and resistive MHD around a traversal uniform magnetic field. It seems  more subtle and difficult to prove the global well-posedness for the inviscid and resistive MHD since the flow is transported by the velocity. Indeed, even the local well-posedness theory is much involved and technically difficult (see Section \ref{sec_lwp}) and the global existence of classical solutions to the Cauchy problem in 2D is unknown. Our analysis here depends on the finite depth of the fluid in our setting, which allows the use of the Poincar\'e-type inequality.

There are a huge amount of mathematical works for free boundary problems in fluid mechanics, and it is impossible to provide a thorough survey of the literature here. We may refer to the references cited in these works above for more proper survey of the literature.

%%%%%%%%%%%%%%%%%%%%%%%%%%%%%%%%%%%%%%%%%%%%%%%%%%%%%%
\section{Main results}\label{sec_2}
%%%%%%%%%%%%%%%%%%%%%%%%%%%%%%%%%%%%%%%%%%%%%%%%%%%%%%

%%%%%%%%%%%%%%%%%%%%%%%%%%%%%%%%%%%%%%%%%%%%%%%%%%%%%%
\subsection{Reformulation in flattening coordinates}
%%%%%%%%%%%%%%%%%%%%%%%%%%%%%%%%%%%%%%%%%%%%%%%%%%%%%%

As usual for free boundary problems in fluid mechanics, we  use a  coordinate transformation in which  the interface stays fixed in time. Set
\beq
\Omega_-:=\mt  \times (-1, 0)\text{ and }\Omega_+:=\mt  \times ( 0,1),
\eeq
and denote by $\Sigma :=\mt  \times \{0\} $ for the interface. The domains can be  flattened by the mapping
\beq\label{diff_def}
\Omega_\pm\ni x\mapsto(x_h, \varphi(t, x ): =x_3+\bar \eta(t, x ))=:\Phi (t,x)=y\in\Omega_\pm(t),
\eeq
where $\bar\eta=\chi\mathcal{P}\eta$ for $\chi=\chi(x_3)$ a smooth function  in $\mathbb{R}$ that satisfies  $\chi(0)=1$ and $\chi(\pm1)=0$  and $\mathcal{P}\eta$ the specialized harmonic extension of $\eta$ onto $\mathbb{R}^3$ with $\mathcal{P}$ defined by \eqref{poisson_def}. 

If $\eta$  is sufficiently small and regular,  then the mapping $\Phi$ is a diffeomorphism. This allows one to transform the problem in  $\Omega_\pm(t)$ to one  in $\Omega_\pm$ for each $t\ge 0$.
Set
\beq\label{pav_def}
\partial^\varphi_{i} =  \partial_{i}  - { \partial_{i} \bar\eta }\D_{3}, \quad i=t, \, 1, \, 2, \quad \D_{3}=  \frac{1}{  \partial_{3} \varphi} \partial_{3}.
\eeq
For the jump conditions on $\Sigma$, define the interfacial jump as
\begin{equation}
\jump{b} := \hb \vert_{\Sigma} - b \vert_{\Sigma}.
\end{equation}
etc.
 Then the problem \eqref{MHD}  is equivalent to the following problem in new coordinates:
\beq\label{MHDv}
\begin{cases}
\D_t   {u}  +    {u}\cdot \Dn  {u} +\Dn   p =   \curlv   b\times (\bar B+b)  & \text{in } \Omega_-
\\ \diva    u=0  &\text{in }\Omega_-
\\ \D_t  {b}        =  \curlv   E ,\quad   E=  u\times (\bar B+b) -\kappa\curlv  b & \text{in } \Omega_-
\\ \diva  b = 0  &\text{in }\Omega_-
\\\curlv\hb=0,\quad\diva \hb=0 &\text{in }\Omega_+ 
\\ \partial_t \eta  = u\cdot\N & \text{on }\Sigma
\\  p=-\sigma H,\quad \jump{b}=0  & \text{on } \Sigma
\\ u_3 =0,\quad b_3 =0,\quad E\times e_3 =0 & \text{on } \Sigma_{-}
\\  \hb\times e_3=0   & \text{on } \Sigma_{+}
\\ (u,b,\eta)\mid_{t=0}= (u_0,b_{0},\eta_0).
\end{cases}
\eeq
Here $ \(\nabla^\varphi  \)_i =  \D_{i},  \ i=1,2,3,\    \diva      = \nabla^\varphi\cdot$ and $\curlv    = \nabla^\varphi\times   $. Also the notation $ \Delta^\varphi=  \diva \nabla^\varphi$ will be used later.

The energy-dissipation law \eqref{en_idenphy} in the new coordinates reads as
\begin{align}\label{en_iden}
& \hal\dtt \(  \int_{\Omega_-} \(|u|^2+|b|^2 \) d\V+\int_{\Omega_+}   |\hb|^2   d\V +    \int_{\mt } 2\sigma\(\sqrt{1+|\nabla_h \eta|^2}-1\)     \)  \nonumber
\\&\quad+ \kappa  \int_{\Omega_-} |\curlv b|^2\,d\V =0.
\end{align}
Here   $d\V:=\partial_{3}\varphi \, dx$ is the volume element induced by the change of variables  \eqref{diff_def}.

%%%%%%%%%%%%%%%%%%%%%%%%%%%%%%%%%%%%%%%%%%%%%%%%%%%%%%
\subsection{Statement of the results}\label{sec21}
%%%%%%%%%%%%%%%%%%%%%%%%%%%%%%%%%%%%%%%%%%%%%%%%%%%%%%

One of the aims of this paper is to show the global well-posedness of the problem \eqref{MHDv} around the trivial equilibrium state when $\bar B_3\neq 0$.

Before stating the main results, we first mention the issue of compatibility conditions for the initial data $(u_0,b_0, \eta_0)$ since the problem \eqref{MHDv} is considered in a domain with boundary. We will work in a high-regularity context, essentially with regularity up to $2N$ temporal derivatives. This requires one to use $(u_0,b_0,\eta_0)$ to construct the initial data $\partial_t^j \eta(0)$ for $j=1,\dotsc,2N+1$, $\partial_t^j u(0)$ and $\partial_t^j b(0)$ for $j=1,\dotsc,2N$,  $\partial_t^j  p(0)$ for $j =0,\dotsc, 2N-1$ and $\partial_t^j  \hb(0)$ for $j =0,\dotsc, 2N$. These data need to satisfy various conditions, which in turn require $(u_0,b_0,\eta_0)$ to satisfy the necessary compatibility conditions that are natural for the local well-posedness of \eqref{MHDv} in the functional framework below. The construction of these data is technically quite involved and will be given in details in Section \ref{sec71}, and these compatibility conditions will be described explicitly as the $2N$-th order compatibility conditions \eqref{compatibility}.
We will also show in Section \ref{sec71} that the set of the initial data $(u_0,b_0,\eta_0)$ satisfying the  compatibility conditions \eqref{compatibility}  is not empty.  For the global well-posedenss of \eqref{MHDv}, it is assumed further that 
\beq\label{zero_0v}
\int_{\mt } \eta_0 =0.
\eeq
For sufficiently regular solutions,  the condition \eqref{zero_0v} persists in time, $i.e.$,
\beq\label{zero_0vt}
\int_{\mt } \eta  =0.
\eeq
Indeed, one has
\beq
\dtt \int_{\mt }\eta=  \int_{\mt }\dt \eta=  \int_{ \mt}u\cdot \n =\int_{\Omega_-} \diva u\, d\V=0.
\eeq

Let $H^k(\Omega_\pm)$, $k\ge 0$ and $H^s(\mt )$, $s \in \Rn{}$ be the usual Sobolev spaces with norms denoted by $\norm{\cdot}_m  $ and $ \abs{\cdot}_s$, respectively. For an integer $N\ge 4$, we define the high-order energy as
\begin{align}\label{high_en}
\se{2N}:=&\sum_{j=0}^{2N}\norm{  \dt^j{u}}_{2N-j}^2 +\sum_{j=0}^{2N-1}\norm{ \dt^j {b}}_{2N-j+1 }^2+\norm{ \dt^{2N} {b}}_{0}^2
 +\sum_{j=0}^{2N-1}\norm{ \dt^j {\hb}}_{2N-j +1}^2+\norm{ \dt^{2N} {\hb}}_{0}^2 \nonumber
\\&+\sum_{j=0}^{2N-1}\norm{  \dt^j p}_{2N-j}^2+ \sum_{j=0}^{{2N}-1}\abs{  \dt^j\eta}_{{2N}-j+3/2}^2+\abs{  \dt^{2N}\eta}_{1}^2+\abs{  \dt^{{2N}+1}\eta}_{-1/2}^2.
\end{align}

\begin{rem}
Note that in the definition \eqref{high_en}, the magnetic field $b$ does not have the usual parabolic regularity, which results from the coupling with the free-surface incompressible Euler equations. Due to the regularizing effect of the magnetic diffusion,  $b$  and $\hb$ enjoy one order of regularity higher than the velocity $u$, up to $2N-1$ temporal derivatives.
\end{rem}

The key part in proving the global well-posedness of \eqref{MHDv} is to show that the high-order energy $\se{2N}(t)$ for $N\ge8$ is bounded for all  $t\ge 0$. To this end,  we need to derive a sufficiently fast time-decay rate of certain lower-order Sobolev norms of the solution, which will follow from the dissipation estimates. For  $n=N+4,\dots,2N$, define a set of dissipations as
\begin{align} \label{low_diss}
\fd{n}:=&  \sum_{j=0}^{n-1} \norm{\dt^j u}_{n-j-1}^2+  \sum_{j=0}^{n-2}\norm{\dt^j b}_{n-j }^2 +\sum_{j=0}^{n}\norm{\dt^j b}_{1,n-j}^2   +\sum_{j=0}^{n }\norm{ \dt^j {\hb}}_{n-j+1}^2  \nonumber
\\&+\sum_{j=0}^{n-2}\norm{  \dt^{j} p }_{n-j-1}^2 +\sum_{j=0}^{n-2}\abs{  \dt^{j} \eta}_{n-j+1/2}^2+\abs{\dt^{n-1}\eta}_{1}^2 +\abs{\dt^{n}\eta}_{0}^2.
\end{align}
Here the anisotropic Sobolev norm $\norm{\cdot}_{m,\ell}$ is defined as
\beq\label{ani_sob_norm}
\norm{f}_{m,\ell}:= \sum_{\al\in\mathbb{N}^2, |\al|\le \ell}\norm{\pa^\al f}_m.
\eeq
Note that the dissipation $\fd{2N}$ can not control the energy $\se{2N}$. Furthermore,  it is the  following energy which is involved in the derivation of the dissipation estimates of $\fd{n}$:
\begin{align} \label{low_en}
\fe{n}:=&\norm{   u}_{n-1}^2+\norm{   u}_{0,n}^2 +\sum_{j=1}^{n}\norm{\dt^j u}_{n-j}^2 +\norm{ b}_{n}^2+ \sum_{j=1}^{n-1}\norm{ \dt^j {b}}_{n-j+1 }^2+ \norm{ \dt^n{b}}_{0}^2  \nonumber
\\&	+\norm{\hb}_{n}^2  +\sum_{j=1}^{n-1}\norm{\dt^j \hb}_{n-j+1 }^2  +\norm{ \dt^n {\hb}}_{0}^2  +\sum_{j=0}^{n-1}\norm{  \dt^j p}_{n-j}^2\nonumber
\\&	+ \sum_{j=0}^{{n}-1}\abs{  \dt^j\eta}_{{n}-j+3/2}^2+\abs{  \dt^{n}\eta}_{1}^2+\abs{  \dt^{{n}+1}\eta}_{-1/2}^2.
\end{align}

Now the main results of this paper are stated as follows.
\begin{thm}\label{main_thm}
Assume that $\kappa>0,\ \bar B_3\neq 0$ and $\sigma>0$ and let $N\ge 8$ be an integer. Assume that $u_0\in H^{2N}(\Omega_-)$, $b_0\in H^{2N+1}(\Omega_-)$  and $\eta_0\in H^{2N+3/2}(\Sigma )$ are given such that $\se{2N} (0)<+\infty$ and that  the $2N$-th order compatibility conditions \eqref{compatibility}  as well as the zero average condition \eqref{zero_0v} are satisfied.
There exists a universal constant $\varepsilon_0>0$ such that if $\se{2N} (0) \le \varepsilon_0$, then there exists a global unique solution $(u,p,\eta,b,\hb)$ to  \eqref{MHDv}. Moreover,   for all $t\ge 0$,
\beq\label{thm_en1}
 \se{2N} (t) + \int_0^t \fd{2N} (s)\,ds \ls \se{2N} (0)
\eeq
and
\beq\label{thm_en2}
\sum_{j=0}^{N-6} (1+t)^{N-5-j}\fe{N+4+j}(t) + \sum_{j=0}^{N-6}\int_0^t(1+s)^{N-5-j}\fd{N+4+j}(s)\,ds\ls \se{2N} (0)  .
\eeq
\end{thm}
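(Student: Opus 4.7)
The overall plan is a standard nonlinear stability argument: combine the local well-posedness (to be established, as referenced in Section \ref{sec_lwp}) with a priori estimates of the form
\[
\se{2N}(t) + \int_0^t \sdn(s)\,ds \ls \se{2N}(0) + \bigl(\se{2N}(t)\bigr)^{3/2},
\]
plus decay estimates for the intermediate energies $\fe{n}$ and dissipations $\fd{n}$, and then close via a continuity argument under the smallness hypothesis $\se{2N}(0)\le\varepsilon_0$. The physical energy identity \eqref{en_iden} already provides $L^2$ control of $(u,b,\hb)$ together with the dissipation $\kappa\int|\curlv b|^2\,d\V$, but this alone is far from sufficient: it controls only the curl of $b$ and nothing for $u$. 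The core of the argument is to promote this tiny seed of dissipation into a full dissipation $\sdn$ that controls enough norms of $u$, $b$, $\hb$, $p$ and $\eta$ to close the estimates.

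First I would run tangential energy estimates on the flattened system \eqref{MHDv} by applying $\pa_t^j$ for $j\le 2N$ and horizontal derivatives $\pa^\al$ with $|\al|\le 2N-j$. These derivatives preserve the structure of the boundary conditions on $\Sigma$, $\Sigma_-$, $\Sigma_+$, and they produce from the surface tension term $-\sigma H$ the positive boundary energy $\sigma|\nabla_h\pa_t^j\pa^\al\eta|_0^2$ which is what controls $\eta$ at top order; commutators with the transformed derivatives $\D_i$ and $\Dn$ produce lower-order remainders absorbable by $\se{2N}^{3/2}$. The magnetic part, handled exactly as in \eqref{en_iden12} with the auxiliary harmonic field $\mathfrak{b}$ to avoid the vacuum electric field, yields the dissipation $\kappa\int|\curlv \pa_t^j\pa^\al b|^2\,d\V$. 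This gives control of $\pa_t^j\pa^\al u$, $\pa_t^j\pa^\al b$ in $L^2$ and of $\pa_t^j\pa^\al\eta$ in $H^1(\Sigma)$, plus dissipation for $\curlv\pa_t^j\pa^\al b$.

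The key step—the main obstacle—is recovering the full spatial regularity and, above all, identifying the induced damping for $\curlv u$ promised in the abstract. Since $\bar B_3\ne 0$ and $\bar B$ is constant, taking $\curlv$ of the momentum equation in \eqref{MHDv} and using $\curlv((\curlv b)\times\bar B)=\bar B\cdot\nav \curlv b$ (modulo lower-order terms in $b$), together with the induction equation $\D_t b=\curlv E$, produces an equation of the schematic form $\D_t \curlv u = \bar B_3\,\D_3 \curlv b+\text{quadratic}$. Pairing this identity with the available dissipation $\kappa|\curlv b|^2$ and exploiting the transversality ($\bar B_3\ne 0$) converts control of $\curlv b$ into control of tangential/horizontal derivatives of $\curlv u$, giving the crucial dissipative control of fluid vorticity that the inviscid Euler equations lack. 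Once $\curlv u$ and $\diva u=0$ are controlled in tangential norms, a Hodge/div–curl decomposition plus elliptic theory in the strip recovers the full $H^m$ norms of $u$; similarly, $p$ is recovered from an elliptic problem with boundary data $-\sigma H|_\Sigma$, while the vacuum field $\hb$ is recovered from the Hodge system \eqref{MHDv"} in $\Omega_+$, yielding the anisotropic norms in the definitions of $\fd{n}$ and $\fe{n}$.

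Granted the a priori bound $\se{2N}(t)\ls \se{2N}(0)$, the decay estimate \eqref{thm_en2} is obtained by running the same energy/dissipation scheme at the intermediate levels $n=N+4,\ldots,2N$, producing inequalities of the type $\dtt \fe{n}+\fd{n}\ls\se{2N}^{1/2}\fd{n}$, then interpolating: since $\fe{n}$ does not quite dominate $\fd{n}$ (the top-order surface energy $|\eta|^2_{n+3/2}$ in $\fe{n}$ is not in $\fd{n}$), one trades with the uniformly bounded $\se{2N}$ to get $\fe{n}\ls(\fd{n})^\theta\se{2N}^{1-\theta}$ for some $\theta\in(0,1)$, which yields the polynomial decay of each $\fe{n}$ with rate $(1+t)^{-(N-5-j)}$. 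The condition $N\ge 8$ guarantees enough regularity hierarchy for this ladder of decays. Finally, the a priori estimates \eqref{thm_en1}–\eqref{thm_en2} are combined with the local existence theorem via a standard continuation argument: the smallness threshold $\varepsilon_0$ is chosen so that $\se{2N}(t)\le 2\se{2N}(0)\le\varepsilon_0$ on any interval of existence, which by local well-posedness forces the solution to extend to $[0,\infty)$ and inherit both \eqref{thm_en1} and \eqref{thm_en2}.
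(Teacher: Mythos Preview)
Your overall architecture is right, but two of the core mechanisms are misidentified.

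First, the decay argument. You propose to interpolate $\fe{n}\ls(\fd{n})^\theta\se{2N}^{1-\theta}$ and deduce polynomial decay from $\dtt\fe{n}+\fd{n}\ls 0$. The paper explicitly rules this out: $\fd{n}$ fails to dominate $\fe{n}$ not only in spatial regularity but also in \emph{temporal} regularity (compare the $\dt^n u$ term in $\fe{n}$ with the $\dt^{n-1}u$ cap in $\fd{n}$), so no Sobolev interpolation in space can repair the gap. The paper's substitute is a cascade: one proves the clean inequality $\fe{\ell}\le\fd{\ell+1}$ for each $\ell$, then multiplies $\dtt\fe{N+4+j}+\fd{N+4+j}\le 0$ by $(1+t)^{N-5-j}$ and chains the resulting inequalities from $j=N-6$ down to $j=0$, feeding the weight-loss term $(1+t)^{N-6-j}\fe{N+4+j}$ into the dissipation at the next level up. This time-weighted induction, not interpolation, is what produces \eqref{thm_en2}, and it is also why $N\ge 8$ enters: one needs $\sqrt{\fe{N+4}}$ integrable in time to absorb the term $\int_0^t\sqrt{\fe{N+4}}\,\se{2N}$ that actually appears in the $\se{2N}$ estimate (your schematic bound $\se{2N}(0)+(\se{2N})^{3/2}$ is too clean; the true bound has this integral and cannot close without the decay).

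Second, the vorticity damping. You describe it as ``pairing $\D_t\curlv u=\bar B_3\D_3\curlv b+\cdots$ with the dissipation $\kappa|\curlv b|^2$''. That is not how the mechanism works. The paper substitutes the induction equation into $\bar B\cdot\nav(\curlv b)_i$: writing $\bar B_3\D_3(\curlv b)_1=\bar B_3(\curlv\curlv b)_2+\bar B_3\D_1(\curlv b)_3$ and using $\kappa(\curlv\curlv b)_2=-\D_t b_2+\bar B\cdot\nav u_2+\cdots$, the term $\bar B\cdot\nav u_2$ itself contains $-\bar B_3(\curlv u)_1$. Moving this to the left yields a genuine transport--damping equation
\[
\D_t(\curlv u)_h+u\cdot\nav(\curlv u)_h+\tfrac{\bar B_3^2}{\kappa}(\curlv u)_h=\text{(tangential derivatives of }b,u\text{)}+\text{l.o.t.},
\]
so the damping is a structural coefficient $\bar B_3^2/\kappa$ in the equation, not an energy estimate extracted from $\int|\curlv b|^2$. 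This is what allows normal-derivative control of $u$ in both the energy and the dissipation.
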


\begin{rem}
Theorem \ref{main_thm} implies in particular that $\sqrt{\fe{N+4} (t)} \ls (1+t)^{- {(N-5)}/{2}} $, which is integrable in time for $N\ge 8 $. Since $N$ may be taken to be arbitrarily large, this decay result can be regarded as an ``almost exponential" decay rate. Since $\eta$ is such that the mapping $\Phi(t,\cdot)$, defined in \eqref{diff_def}, is a diffeomorphism for each $t\ge 0$, one may change coordinates to $y\in\Omega_\pm(t)$ to produce a global-in-time, decaying solution to \eqref{MHD}.
\end{rem}

\begin{rem}
In contrast with the works \cite{CMST, MTT, SWZ1,SWZ2} which show the stabilizing effect  of the tangential magnetic field on the local well-posedness of the ideal MHD, the global well-posedness in Theorem \ref{main_thm} relies crucially on that the  magnetic field is traversal (see also \cite{W}). Indeed, the analysis here cannot be applied to the case when $\bar B$ is horizontal; for example, if $\bar B=e_1:=$ $(1,0,0)$, $B=\bar B$, $ \hbb=\bar B$ and $u_1=0$, then the problem under consideration  reduces to the free-boundary incompressible Euler equations in 2D for which the global well-posedness is unknown.
\end{rem}

\begin{rem}
It turns out that to solve \eqref{MHDv} with the desired regularity of $b$ (and $\hb$) in  \eqref{high_en}, even local in time, one needs $\eta\in H^{2N+1/2}$ due to the magnetic diffusion term $\curlv\curlv b$. For the case without surface tension, $i.e.$, $\sigma=0$, it seems that only $H^{2N}$ regularity for $\eta$ is available.  Hence $\sigma>0$ is necessary here even for the local well-posedness. This is different from the viscous and resistive problem \cite{So}, where the viscosity has a regularizing effect of $1/2$ order for $\eta$ and so $\sigma>0$ is not necessary. 
\end{rem}

\begin{rem}
The local well-posedness of \eqref{MHDv}, which is of independent interest, will be proved in Section \ref{sec_lwp}. It should be noted that the ideas in the works \cite{ GW,MTT, SWZ2} for the local well-posedness of the ideal MHD do not work in our case. Indeed, even though the magnetic diffusion here has a regularized effect for the magnetic field, one of the main difficulties in constructing solutions to \eqref{MHDv} lies in solving the magnetic system due to the nonlocal boundary condition for the magnetic field. For the viscous and resistive MHD, Padula and Solonnikov \cite{PS}  solved the magnetic system in the framework of full parabolic regularity theory,  which unfortunately can not be applied to the inviscid problem here due to the less regularity of the velocity. Our way is to solve the magnetic system in the framework  of energy method, which is naturally consistent with the Euler equations, and the solution is constructed as the limit of  approximate solutions to an appropriate regularization as described in the next subsection.
\end{rem}

\begin{rem}
The main ideas and strategies for the the plasma-vacuum interface problem can be modified to study the plasma-plasma interface problem to obtain its global well-posedness. This will be given in Section \ref{sec_pp}. To our best knowledge, the results in this paper are the first ones on the global well-posedness of free boundary problems for the incompressible inviscid rotational fluids around the equilibrium. This is due to the strong coupling between the fluid and the diffusive magnetic field. 
\end{rem}

%%%%%%%%%%%%%%%%%%%%%%%%%%%%%%%%%%%%%%%%%%%%%%%%%%%%%%
\subsection{Strategy of the proof}
%%%%%%%%%%%%%%%%%%%%%%%%%%%%%%%%%%%%%%%%%%%%%%%%%%%%%%

Theorem \ref{main_thm} will be proved in Section \ref{sec_gwp} by combining  the local well-posedness of  \eqref{MHDv}, Theorem \ref{main_thm2}, and the global-in-time a priori estimates, Theorem \ref{apriorith}, with a standard continuity argument.

Note that for free boundary problems in fluid mechanics, even with the necessary a priori estimates of the solutions ready, it is often still highly nontrivial to construct such solutions, especially for inviscid fluids. So we consider first the construction of local solutions to  \eqref{MHDv}. As the Lorentz force is of  lower order regularity compared to the magnetic diffusion term, one may decompose \eqref{MHDv}  into the hydrodynamic part in $\Omega_-$ and the magnetic part in $\Omega$ and then construct the solutions by an iteration. For the force $ F =\curl^{\tilde \varphi}  b\times (\bar B+  b)$ with $\tilde \eta$ and $  b$ given, the hydrodynamic part (cf. \eqref{MHDvk1}) is the free-surface incompressible Euler equations with surface tension, which can be solved in spirit of Coutand and Shkoller \cite{CS1}.  It then remains to handle  the magnetic part  for $ G =  u\times (\bar B+\tilde b)$ with $  u$,  $\tilde b$ and $\eta$ given (cf. \eqref{MHDvk01}). The magnetic part \eqref{MHDvk01} was solved in Padula and Solonnikov \cite{PS} in a different setting  by treating \eqref{MHDvk01} with  $\eta$ small as a perturbation of the ``flat interface" problem, the one obtained by setting $\eta=0$ in \eqref{MHDvk01}. The flat interface problem can be  solved  by employing the Galerkin method, see  Ladyzhenskaya and Solonnikov \cite{LS1,LS}. The solution to  \eqref{MHDvk01} is then produced from solutions to the flat interface problem and an iteration argument in \cite{PS} by employing the full parabolic regularity, which works in the  anisotropic space-time Sobolev spaces (cf. \eqref{parabolicspace}). Such spaces were used extensively in studying the nonhomogeneous boundary value problem for parabolic systems, see  Lions and Magenes \cite{LM}.  A subtle point of using such spaces  in \cite{PS}  is that it allows for the control of the resulting forcing terms when one adjusts the inhomogeneous  terms, $e.g.$ $\curl \hb\neq 0$ in $\Omega_+$ here, see also \cite{B1,S2} for the study of the free-surface incompressible Navier-Stokes equations. However,  such full parabolic regularity of solving \eqref{MHDvk01} is not consistent in the iteration scheme of constructing solutions to \eqref{MHDv} as the {\it hyperbolic} Euler equations could not provide such higher regularity of $u$ and $\eta$. To get around this, we will solve \eqref{MHDvk01} in the functional framework  of using the energy structure of the problem (cf. \eqref{en_iden}). The main strategy is to first construct approximate solutions to an appropriately regularized problem by following the arguments of \cite{PS} and then derive the uniform estimates  independent of the smoothing parameter of the solutions in the framework of energy method  to pass to the limit to produce a solution to \eqref{MHDvk01}. More precisely, we will consider first the following regularized problem:  
\beq\label{MHDv22} 
\begin{cases}
\De_t  {\be}   +\kappa\curlve\curlve  \be     =  \curlve \( G^\eep -\Psi^\eep\)& \text{in } \Omega_-
\\ \divae  \be = 0  &\text{in }\Omega_-
\\\curlve\hbe=0,\quad\divae \hbe=0 &\text{in }\Omega_+
\\ \jump{\be}=0  & \text{on } \Sigma
\\  b^\eep_3 =0,\quad\kappa \curlve  \be \times e_3 = { G^\eep }\times e_3 & \text{on } \Sigma_{-}
\\  \hbe\times e_3=0  & \text{on } \Sigma_{+}
\\ \be\mid_{t=0}= b_{0}^\eep.
\end{cases}
\eeq
Here $\varphi^\eep=\varphi(\eta^\eep)$ as in \eqref{diff_def}, $\eta^\epp$ and $G^\eep$ are the smooth approximations of $\eta$ and $G$, respectively, where $\epp>0$ is the smoothing parameter.  It should be pointed out that the introduction of both the so-called corrector $\Psi^\eep$ in \eqref{MHDv22} and a sequence of  correctors $\phi_j^\epp$ in \eqref{MHDvk21100} is crucial here, which allows one to construct the smooth approximation  $b_{0}^\eep$ of $b_0$ satisfying the corresponding compatibility conditions for \eqref{MHDv22}. Such idea could be applicable for general parabolic problems when one needs to smooth out the initial data. We then follow  the arguments of \cite{PS} to solve  \eqref{MHDv22}, in a higher order regularity context.  To derive the uniform estimates independent of $\epp>0$ of  the solutions to \eqref{MHDv22}, with the desired regularity in our functional framework, we will make an important use of the corresponding regularized electric field in vacuum, $\hee^\eep$, which solves 
\beq\label{MHDv33}
\begin{cases}
\curlve\hee^\eep=\De_t\hb^\eep,\quad\divae \hee^\eep=0 &\text{in }\Omega_+
\\
\hee^\eep\times \ne=\(-\kappa\curlve  b^\epp+G^\epp-\Psi^\epp\)\times \ne  &\text{on }\Sigma
\\
 \hee_3^\eep=0 &\text{on }\Sigma_{+}.
\end{cases}
\eeq
The solvability of \eqref{MHDv33} is classical, see  Cheng and Shkoller \cite{CS11} and the references therein for instance. Indeed, the first, second, fourth and fifth equations in \eqref{MHDv22} provide the necessary conditions for solving \eqref{MHDv33}. The solution to  the original problem \eqref{MHDvk01} is then obtained as the limit of the sequence of solutions to \eqref{MHDv22} as $\epsilon\rightarrow 0$ after deriving the uniform estimates for the approximate solutions on a time interval independent of $\epsilon$, by  a slight variant of the derivation of the estimates of \eqref{MHDv} as outlined below. Note that in our way of solving  the  magnetic system \eqref{MHDvk01}, the electric field in vacuum $\he$ could be viewed as an auxiliary variable, rather than the secondary variable  as in \cite{PS,So,SWZ1,SWZ2}. We remark  that one could  also use the virtual magnetic field $\mathfrak{b}$ as an auxiliary variable instead of $\he$; yet we choose  to work with $\he$  here as it is a physical variable. Finally, one can then construct solutions to   \eqref{MHDv} by the method of successive approximations, based on the solvability of  the  problems \eqref{MHDvk1} and \eqref{MHDvk01}.

Now we turn to the derivation of the a priori estimates for the solutions to \eqref{MHDv}.  Our derivation  involves the  electric field  in vacuum $\hee$ which solves \eqref{MHDv3e}, and the estimates of  $\hee$ are provided in Section \ref{sec_ee}. The basic ingredient in our analysis is to use the energy-dissipation structure \eqref{en_iden}. Since the higher order energy functionals are needed to control the nonlinear terms, one applies the temporal and horizontal spatial derivatives $\pa^\al$ for $\al\in \mathbb{N}^{1+2}$ with $\abs{\al}\le 2N$ to \eqref{MHDv} and  \eqref{MHDv3e} to derive the tangential energy evolution
\begin{align}\label{intro_1}
& \hal\dtt \left(\int_{\Omega_-}\( |\pa^\al u|^2+|\pa^\al b|^2 \) d\V+\int_{\Omega_+}|\pa^\al \hb|^2 d\V+\int_{\Sigma } \sigma  { |    \nabla  \pa^\alpha \eta|^2  }  \)  +   \kappa  \int_{\Omega_-} |\curlv \pa^\al b|^2\, d\V\nonumber
\\&\quad =\int_{\Omega_-}   \pa^\al p  F^{2,\al} \, d\V-\int_{\Sigma }   \sigma \pa^\alpha H   F^{5,\al}+ \int_{\Omega_+}   \pa^\al \hat E \cdot   \hat F^{4,\al}  d\V  +{\sum}_{\mathcal{R}}.
\end{align}
Here  the nonlinear terms $F^{2,\al}$, $F^{5,\al}$ and $\hat F^{4,\al}$ are defined by \eqref{f2a}, \eqref{f5a} and \eqref{f4aa}, respectively,  and ${\sum}_{\mathcal{R}}$ denotes terms involving other nonlinearities, which, after some delicate arguments, can be controlled well in the sense that a term in ${\sum}_{\mathcal{R}}$ is either bounded by $\sqrt{\fe{N+4}} \(\se{ 2N}+\fd{2N} \)$, or its time integration is bounded by  $(\se{ 2N})^{3/2}$,   as $\se{2N}$ is small. When $\al_0\le 2N-1$,  the first  three terms  in the right hand side of \eqref{intro_1} can be shown to be also of ${\sum}_{\\jump{\pa_3 q}{R}}$. When $\alpha_0=2N$, the difficulty is that  $\dt^{2N}p$, $\dt^{2N}H$  and $\dt^{2N}\he$ seem to be out of control. Integrating by parts in time shows that  the third term is of ${\sum}_{\mathcal{R}}$, while integrating by parts in both time and space in an appropriate order and then employing a crucial cancelation between $\dt^{2N}p$ and $\sigma\dt^{2N}H$ on  $\Sigma$ by using the dynamic boundary condition as what we have done in \cite{WX}, one can show that  the first two terms are of ${\sum}_{\mathcal{R}}$. These lead to the following tangential energy evolution estimates:
\beq\label{intro_3}
\seb{2N}(t)+ \int_0^t  \sdb{2N} \ls   \se{2N}(0)+ (\se{ 2N}(t))^{3/2} +\int_0^t\sqrt{\fe{N+4}} \(\se{ 2N}+\fd{2N} \),
\eeq
where $\seb{n}$ and $\sdb{n}$ represent the tangential energy and dissipation functionals to be defined by \eqref{bar_en_def} and \eqref{bar_dn_def} respectively.

Note that, as already seen from the estimate \eqref{intro_3},  to close the estimates, since the energy can not be dominated by the dissipation, one needs to show that $\sqrt{\fe{N+4}(t)}$ is integrable in time. The key here is to show that $\fe{N+4}(t)$ decays sufficiently fast in time. To this end, employing an elaborate argument, we are able to derive a related set of tangential energy evolution estimates different from \eqref{intro_3}:
\beq\label{intro_4}
\frac{d}{dt}\left(\bar{\mathcal{E}}_{n}+\mathcal{B}_{n}\right)+ \bar{\mathcal{D}}_{n} \ls   \sqrt{\se{2N}   } \fd{ n} ,\  n=N+4,\dots,2N-2,
\eeq
with $\mathcal{B}_{n}$ satisfying $\abs{\mathcal{B}_{n}}\ls \sqrt{\se{2N}   } { \fe{n} }.$  

With the control of the tangential energy  estimates, one proceeds to derive the full energy estimates by exploiting further the structures of \eqref{MHDv}. First, one 
may improve the $\curl$-estimates of $b$ in the tangential dissipation $\bar{\mathcal{D}}_{n}$ to be the  $H^1$-estimates of $b$ and derive the  desired  dissipation estimates of $\hb$ 
by applying  the Hodge-type estimates. 
Note that such dissipation estimates  control only  $b$ and $\hb$. To get the tangential dissipation estimates for $u$, it is crucial for us to derive the dissipation estimates of the following $\nabb-$terms from the control of $\sdb{n}$ for $\kappa>0$,
\beq\label{Bidiss}
  \sum_{j=0}^{n-1} \norm{\bar B \cdot \nabla\dt^j u_3}_{0,n-j-1}^2+ \sum_{j=0}^{n-1}\norm{\bar B \cdot \nabla\dt^j \(\kappa\p_3b_h+\bar B_3 u_h\)}_{0,n-j-1}^2.
\eeq
Here we have used the notation that $v_h=(v_1,v_2)$ for any vector $v$.
This follows by projecting the magnetic equations onto the vertical and horizontal components, respectively. Then one uses the Poincar\'e-type inequalities related to $\nabb$ for $\bar B_3\neq0$  together with the boundary conditions on $\Sigma_-$ to derive the tangential dissipation estimates of $u$.

Now the heart of the analysis is to derive the estimates involving the normal derivatives of $u$ and $b$.  The natural way of estimating the normal derivatives of $u$, as for the incompressible Euler equations, is to consider the equations for the vorticity $\curlv{u} $:
\beq\label{intro_5}
\D_t \curlv{u}  +    {u}\cdot \Dn  \curlv {u}  =   \bar{B}   \cdot \Dn  \curlv {b}  + \cdots.
\eeq
Here $+\cdots$ means plus some nonlinear terms.
One of the key observations here is the treatment of the linear term in the right hand side of \eqref{intro_5}: by using the third and second equations in \eqref{MHDv}, one finds
\begin{align}\label{intro_51}
&\bar{B}   \cdot \Dn  (\curlv  b)_1 \equiv\bar{B}_h   \cdot \Dn_h  (\curlv  b)_1+\bar{B}_3     \D_1  (\curlv  b)_3+\bar{B}_3      (\curlv\curlv  b)_2 
\nonumber
\\&\quad=\bar{B}_h   \cdot \Dn_h  (\curlv  b)_1  +\bar{B}_3     \D_1  (\curlv  b)_3+\frac{\bar B_3}{\kappa} ( -\D_t b_2+\bar{B}   \cdot \Dn u_2+\cdots).
\end{align}
On the other hand, one has
\begin{align}\label{intro_52}
\bar{B}   \cdot \Dn u_2
\equiv \bar{B}_h   \cdot \Dn_h u_2-\bar B_3 (\curlv  u)_1+\bar B_3 \pav_2 u_3.
\end{align}
Carrying out the similar computations for $\bar{B}   \cdot \Dn  (\curlv  b)_2$, one then deduces from \eqref{intro_5}   the following equation of $(\curlv{u})_h$: for $i=1,2,$
\begin{align} \label{intro_55}
&\D_t (\curlv{u})_i  +    {u}\cdot \Dn  (\curlv{u})_i+\frac{\bar{B}_3^2}{\kappa}(\curlv{u})_i
\\&\quad=\bar{B}_h   \cdot \nabla_h  (\curl  b)_i  +\bar{B}_3     \p_i  (\curl  b)_3+(-1)^{i+1}\frac{\bar B_3}{\kappa} ( -\p_t b_{3-i}+\bar{B}_h   \cdot \nabla_h u_{3-i} +\bar B_3 \p_{3-i} u_3 ) +\cdots.\nonumber
\end{align}
One thus sees again the key roles of the positivity of the magnetic diffusion coefficient $\kappa>0$ and the non-vanishing of $\bar B_3\neq0$; they induce the damping term in \eqref{intro_55}, which provides the mechanism for the global-in-time estimates of $(\curlv{u})_h$. Note that for the estimates of $u$ in the energy $\se{2N}$,  one can estimate the linear $\nabla_h u $  terms in the right hand of  \eqref{intro_55} by the control of $\seb{2N}$; for the estimates of $u$ in the dissipation $\fd{n}$, one has to estimate these terms by  using instead the tangential dissipation estimates of $u$ derived from the $\nabb$-estimate \eqref{Bidiss}. Making use of these estimates, the transport-damping structure of $(\curlv{u})_h$ in \eqref{intro_55} and the Hodge-type estimates, one can derive the desired estimates of $u$ in a recursive way in terms of the number of normal derivatives of $u$;  the desired estimates of $b$ and $\hb$ can be derived along by employing  the elliptic estimates.

With these  estimates above, one may then derive the desired estimates for $p$ and $\eta$ by using directly the equations  \eqref{MHDv}.  The conclusion is that one can thus improve \eqref{intro_3}  and \eqref{intro_4}  to be, respectively, since $\se{2N}$ is small,
\begin{align}\label{intro_6}
&\se{2N}(t)+ \int_0^t  \fd{2N}    \ls  \se{2N}(0)+ \int_0^t\sqrt{\fe{N+4} }  \se{ 2N}  
\end{align}
and
\beq\label{intro_7}
\frac{d}{dt}\fe{n}	+ \fd{n} \le 0,\quad n=N+4,\dots,2N-2.
\eeq 	
	
Note that if $\fe{N+4}(t)$ decays at a sufficiently fast rate, then the estimate \eqref{intro_6}  can lead   to   \eqref{thm_en1}. This will be achieved by using \eqref{intro_7}.  One does not have that $  \fe{n}\ls \fd{n}$, which rules out the exponential decay; also, $\fd{n}$ can not control $\fe{n}$ with respect to not only the spatial regularity but also the temporal regularity, which prevents one from using the spatial regularity Sobolev interpolation argument to bound $\fe{n} \ls \se{2N}^{1-\theta} \fd{n}^{\theta},  0<\theta<1$ so as to derive the algebraic decay.  Our key ingredient to get around this here is to observe that $\fe{\ell}\le \fd{\ell+1}$ and   employ  a time weighted inductive argument to \eqref{intro_7}. The conclusion is  
\begin{align}
&\sum_{j=0}^{N-6} (1+t)^{N-5-j}\fe{N+4+j}(t) + \sum_{j=0}^{N-6}\int_0^t(1+s)^{N-5-j}\fd{N+4+j}(s)\,ds\nonumber
\\&\quad\ls \se{2N}(0)+ \int_0^t\fd{2N-1}(s)\,ds .
\end{align}
This together with \eqref{thm_en1} yields \eqref{thm_en2} and hence a decay of $\fe{N+4}$ with the rate $(1+t)^{-N+5}$. Consequently, this scheme of the a priori estimates can be closed by requiring $N\ge 8$.

%%%%%%%%%%%%%%%%%%%%%%%%%%%%%%%%%%%%%%%%%%%%%%%%%%%%%%
\subsection{Notation}
%%%%%%%%%%%%%%%%%%%%%%%%%%%%%%%%%%%%%%%%%%%%%%%%%%%%%%

We now list the conventions for notations.  $C>0$ denotes a generic constant independent of the data and time,  but may depend on the  parameters of the problem, $\kappa,\sigma, \bar B$ and  $N$, which is referred  to as ``universal''.  Such constants are allowed to change from line to line. To indicate some constants in some places so that they can be referred to later, they will be denoted  in particular by $C_1,C_2$, etc.   $A_1 \lesssim A_2$ means that $A_1 \le C A_2$, and 
$A_1\ls   A_2+A_3$ means that $A_1\le  A_2+C A_3$, for a universal constant $C>0$.  To avoid the constants in various time differential inequalities, we use the following two conventions:
\begin{align} \label{ccon}
&\dt A_1+A_2\ls A_3 \text{ means }\dt \widetilde A_1+A_2\ls A_3 \text{ for }A_1\ls \widetilde A_1\ls A_1,
\\ 
&\dt \(A_1+A_2\)+A_3\ls A_4 \text{ means }\dt \(C_1A_1+C_2A_2\)+A_3\ls A_4 \text{ for constants }C_1,C_2>0.
\end{align}

Also, $\mathbb{N} = \{ 0,1,2,\dotsc\}$ denotes for the collection of non-negative integers. When using space-time differential multi-indices, we write $\mathbb{N}^{1+d} = \{ \alpha = (\alpha_0,\alpha_1,\dotsc,\alpha_d) \}$ to emphasize that the $0-$index term is related to temporal derivatives. For just spatial derivatives, we write $\mathbb{N}^d$.  For $\alpha \in \mathbb{N}^{1+d}$,  $\pa^\alpha = \dt^{\alpha_0}  \pa_1^{\alpha_1}\cdots \pa_d^{\alpha_d}.$  We define the standard commutator
\beq  \label{cconm1}
\[\pa^\al, f\]g=\pa^\al (fg)-f\pa^\al g
\eeq
and the symmetric commutator
\beq  \label{cconm2}
\[\pa^\al, f,g \]=\pa^\al (fg)-f\pa^\al g-\pa^\al f g.
\eeq

We omit the differential elements $dx$ and $dx_h$ of the integrals over $\Omega_\pm$ and $\Sigma$   and also sometimes the  differential elements $ds$ of the time integrals.

%%%%%%%%%%%%%%%%%%%%%%%%%%%%%%%%%%%%%%%%%%%%%%%%%%%%%%%
\subsection{Organization of the paper}
%%%%%%%%%%%%%%%%%%%%%%%%%%%%%%%%%%%%%%%%%%%%%%%%%%%%%%%
%

The rest of the paper is organized as follows. Section \ref{sec_3} concerns several Hodge-type elliptic problems to be used often later. Section \ref{sec_4} contains some preliminary results for the a priori estimates. We derive first the tangential energy evolution estimates in Section  \ref{sec_5}, then the full energy and dissipation estimates in Section \ref{sec_6}, and finally the global a priori estimates in Section \ref{sec_priori}. Section \ref{sec_lwp} contains the proof of  the local well-posedness. The global well-posedness is proved in Section \ref{sec_gwp}. Section \ref{sec_pp}  considers  the plasma-plasma interface problem.  Some analytic tools are collected in Appendix \ref{section_app}.

%%%%%%%%%%%%%%%%%%%%%%%%%%%%%%%%%%%%%%%%%%%%%%%%%%%%%%
 \section{Hodge-type Elliptic systems}\label{sec_3}
 %%%%%%%%%%%%%%%%%%%%%%%%%%%%%%%%%%%%%%%%%%%%%%%%%%%%%%

In this section we will consider the solvability and regularity of several Hodge-type elliptic problems to be used later. 

First, we consider the following one-phase Hodge-type elliptic problem:
  \beq \label{elpp1}
\begin{cases}
\curlv v =f^1,\quad\diva v=f^2 &\text{in }\tilde \Omega
\\v\times \n =f^3  &\text{on }\tilde\Sigma_1
\\ v\cdot \n=f^4 &\text{on }\tilde\Sigma_{2}.
\end{cases}
\eeq 
Here  $\tilde \Omega$ is either $\Omega_-$ or $\Omega+$ or $\Omega$, $\tilde\Sigma_1$,  $ \tilde\Sigma_2$ are the two boundaries of $\tilde \Omega$ and we have extended $\n$ to be $(-\nabla_h\bar\eta,1)$, which reads as  $(-\nabla_h \eta,1)$ on $\Sigma $ and  $e_3$  on $\Sigma_\pm$, respectively.
 \begin{prop}\label{propel1}
Assume  $\eta\in H^{k+1/2}$ for an integer $k> 3/2$ with $\p_3\varphi>0$.  Let $r=1,\dots,k$ and $f^1\in H^{r-1}(\tilde\Omega )$, $f^2\in H^{r-1}(\tilde\Omega )$, $f^3\in H^{r-1/2}(\tilde\Sigma_1)$ and $f^4\in H^{r-1/2}(\tilde\Sigma_2)$   be given such that 
 \beq\label{nececon}
\diva f^1=0\text{ in }\tilde\Omega ,\ f^3\cdot \n =0 \text{ and }f^1\cdot \n =\diverge_h f^3_h\text{ on }\tilde\Sigma_1.
\eeq
There   exists a unique solution $v$ to \eqref{elpp1} satisfying
\beq\label{v_ell_th01}
\norm{v}_r\ls_\eta \norm{f^1}_{r-1}+\norm{f^2}_{r-1} +\abs{f^3}_{r-1/2}+\abs{f^4}_{r-1/2}.
\eeq
Hereafter $\ls_\eta$  stands for $\le C_\eta$ for a constant $C_\eta$ depending on $\abs{\eta}_{k+1/2}$ and $1/\norm{ \p_3\varphi}_{L^\infty}$.
\end{prop}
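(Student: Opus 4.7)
The plan is to reduce \eqref{elpp1} to a classical div--curl system in Eulerian coordinates via the diffeomorphism $\Phi$ from \eqref{diff_def} and then invoke standard Hodge-type elliptic regularity. Setting $\tilde\Omega(t):=\Phi(\tilde\Omega)$ and $\tilde v(y):=v(\Phi^{-1}(y))$ on $\tilde\Omega(t)$, and letting $\tilde f^i$ denote the pushforwards of the data, the definition \eqref{pav_def} of $\nabla^\varphi$ gives $\curlv v = (\curl \tilde v)\circ\Phi$ and $\diva v = (\Div \tilde v)\circ \Phi$. Since $\n=(-\nabla_h\eta,1)$ is precisely the geometric non-unit normal to $\partial \tilde\Omega(t)$ under this identification, the boundary conditions of \eqref{elpp1} pull over to the standard tangential/normal conditions for $\tilde v$. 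Thus the problem is equivalent to a classical Hodge system on the $H^{k+1/2}$-smooth domain $\tilde\Omega(t)$.

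First I would confirm that \eqref{nececon} is exactly the solvability condition. The relation $\diva f^1=0$ comes from $\diva\curlv\equiv 0$; on $\tilde\Sigma_1$, a direct Stokes-type calculation on a graph yields $\curlv v\cdot\n = \Div_h(v\times\n)_h$, which forces $f^1\cdot\n = \Div_h f^3_h$, while $f^3\cdot\n=0$ simply records the tangentiality of $v\times\n$. Under these conditions, classical Hodge theory on Lipschitz domains (see, e.g., the estimates in Cheng--Shkoller \cite{CS11}) produces a unique weak solution $\tilde v\in H^1(\tilde\Omega(t))$ and the $r=1$ version of \eqref{v_ell_th01}, driven by the $L^2$ energy identity
\[
\int_{\tilde\Omega(t)}\abs{\nabla \tilde v}^2 = \int_{\tilde\Omega(t)}\(\abs{\curl \tilde v}^2+\abs{\Div \tilde v}^2\) + \text{boundary terms},
\]
together with trace estimates controlled by $\tilde f^3$ and $\tilde f^4$.

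For $r\ge 2$ I would boost the regularity using the componentwise identity $-\Delta \tilde v = \curl\curl \tilde v - \nabla \Div \tilde v$, which turns the problem into an elliptic system with mixed boundary data (tangential on $\Phi(\tilde\Sigma_1)$, normal on $\Phi(\tilde\Sigma_2)$). Classical elliptic regularity for such mixed problems, combined with tangential difference-quotient arguments, iterates up to $r=k$ and yields
\[
\norm{\tilde v}_{H^r(\tilde\Omega(t))} \ls_\eta \norm{\tilde f^1}_{r-1}+\norm{\tilde f^2}_{r-1}+\abs{\tilde f^3}_{r-1/2}+\abs{\tilde f^4}_{r-1/2}.
\]
Pulling this estimate back through $\Phi^{-1}$ by standard Sobolev composition and product estimates then produces \eqref{v_ell_th01}; the constant depends on $\abs{\eta}_{k+1/2}$, which controls $\Phi^{\pm 1}$ in the required norms, and on $\norm{1/\p_3\varphi}_{L^\infty}$, which is the Jacobian factor between the two volume elements.

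The main obstacle I anticipate lies in the bookkeeping at the top level $r=k$: transferring an $H^k$-estimate from $\tilde v$ back to $v$ requires Moser-type product estimates against coefficients built from $\bar\eta\in H^{k+1}$, which is tight. The hypothesis $k>3/2$ is exactly what is needed so that $H^{k+1/2}\hookrightarrow L^\infty$ on $\Sigma$, that $\p_3\varphi$ retains a positive $L^\infty$ lower bound, and that the product rule closes up to order $k$. Should the pull-back become cumbersome in practice, an intrinsic alternative is to stay on the flat domain and apply $\pa^\alpha$ for tangential multi-indices $\abs{\alpha}\le r$ to \eqref{elpp1}, test the resulting system with $\nabla^\varphi\psi$, exploit the $\varphi$-version of the Hodge identity, and recover the normal derivatives directly from the equations; this routes around the change of variables but otherwise follows the same logic.
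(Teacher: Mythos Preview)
Your proposal is broadly correct and, like the paper, rests on the Hodge theory of Cheng--Shkoller \cite{CS11}; however, the routes are genuinely different. The paper stays in the flattened coordinates with $\nabla^\varphi$ throughout and proceeds constructively: it first solves an auxiliary problem with purely \emph{normal} data on both boundary components (this is exactly assertion~(1) of Theorem~1.1 in \cite{CS11}), and then corrects the tangential condition on $\tilde\Sigma_1$ by writing $v=\tilde v+\bar v$ and building $\bar v=\nabla^\varphi\phi$ from a scalar $\Delta^\varphi$-problem, after using the compatibility conditions \eqref{nececon} to produce a stream function $\psi$ for the residual tangential data. Your approach instead pushes forward to the Eulerian domain and invokes Hodge regularity directly for the mixed (tangential on $\tilde\Sigma_1$, normal on $\tilde\Sigma_2$) problem, bootstrapping via $-\Delta\tilde v=\curl\curl\tilde v-\nabla\Div\tilde v$. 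This is conceptually cleaner but leans on mixed-boundary Hodge regularity as a black box; the result in \cite{CS11} as stated prescribes a \emph{single} boundary type on the whole boundary, so the paper's two-step decomposition is precisely what reduces the mixed problem to those known cases and makes the role of \eqref{nececon} transparent. Your closing alternative---tangential differentiation in the flat domain and recovery of normal derivatives from the equations---is closer in spirit to how one would actually fill that gap intrinsically, and would be a reasonable substitute for the decomposition if you prefer not to carry out the explicit $\tilde v+\bar v$ splitting.
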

\begin{proof}
This can be proved similarly as for Theorem 1.1 in Cheng and Shkoller \cite{CS11}, which established the solvability and regularity for a Hodge-type elliptic system  in a Sobolev-class bounded domain, where the curl and divergence inside the domain and either the normal component  or tangential components on the boundary of a vector field are prescribed.  In order to understand the compatibility conditions in \eqref{nececon} and for reader's convenience, we still sketch the construction of the solution to \eqref{elpp1}, for the case that $\tilde\Omega=\Omega_+$, $\tilde \Sigma_1=\Sigma$ and $\tilde \Sigma_2=\Sigma_+$ for instance.

First, by the first condition in \eqref{nececon}, according to the assertion (1) of Theorem 1.1 in  \cite{CS11}, one can define $\tilde v$ as the solution to
\beq \label{elpp10}
\begin{cases}
\curlv \tilde v =f^1,\quad\diva \tilde v=f^2 &\text{in }  \Omega_+
\\ \tilde v\cdot\n =\int_{\Sigma_+} f^4- \int_{\Omega_+} f^2 \p_3\varphi  &\text{on } \Sigma
\\ \tilde v_3=f^4 &\text{on } \Sigma_+.
\end{cases}
\eeq 
Then the solution to \eqref{elpp1} can be constructed as   $v=\tilde v+\bar v$ with $\bar v$ the solution to
\beq \label{elpp100}
\begin{cases}
\curlv \bar v =0,\quad\diva \bar v=0 &\text{in }  \Omega_+
\\ \bar v\times \n=\bar f^3:= f^3-\tilde v\times\n &\text{on } \Sigma
\\ \bar v_3=0 &\text{on } \Sigma_+.
\end{cases}
\eeq 
Next, note that the last two conditions in \eqref{nececon} and the first equation in \eqref{elpp10} yield
 \beq\label{nececon'}
\bar f^3\cdot \n = f^3\cdot \n =0 \text{ and } \diverge_h \bar f^3_h=\diverge_h   f^3_h-\curlv \tilde v\cdot \n=\diverge_h   f^3_h-f^1\cdot \n=0\text{ on }\Sigma.
\eeq
This implies that there exists a $\psi=\psi(x_1,x_2)$ such that 
\beq
\bar f^3=(\p_2\psi, -\p_1\psi, \p_1\eta\p_2\psi-\p_2\eta\p_1\psi).
\eeq
Then  the solution to \eqref{elpp100} can be constructed as $\bar v=\Dn \phi$ with $\phi$ the solution to 
\beq \label{elpp10000}
\begin{cases}
\Delta^\varphi  \phi =0 &\text{in }  \Omega_+
\\  \phi  =\psi &\text{on } \Sigma
\\ \p_3 \phi\cdot =0 &\text{on }\Sigma_+.
\end{cases}
\eeq 
The construction of the solution to \eqref{elpp1} is thus completed.
\end{proof}

Next, we consider the following two-phase Hodge-type elliptic problem:
\beq\label{elpp2}
\begin{cases}
    \curlv   v=  f^1, \quad \diva v= f^2 & \text{in } \Omega_-
 \\\curlv \hv= \hat f^1 ,\quad\diva \hv=  \hat f^2 &\text{in }\Omega_+
\\\jump{v}=0  & \text{on } \Sigma
\\  v_3=0 & \text{on } \Sigma_{-}
\\   \hv\times e_3=0  & \text{on } \Sigma_{+}.
\end{cases}
\eeq

 \begin{prop} \label{propel2}
Assume  $\eta\in H^{k+1/2}$ for an integer $k> 3/2$ with $\p_3\varphi>0$.  Let $r=1,\dots,k+1$ and $f^1,f^2\in H^{r-1}(\Omega_-)$, $\hf^1,\hf^2\in H^{r-1}(\Omega_+)$  be given such that 
 \beq\label{nececon0}
\diva f^1=0\text{ in }\Omega_-, \ \diva \hf^1=0\text{ in }\Omega_+,\ \jump{f^1}\cdot \n=0\text{ on }\Sigma  \text{ and }\hf^1\cdot e_3=0\text{ on }\Sigma_+.
\eeq
There exists   a unique solution $(v,\hv)$ to \eqref{elpp2} satisfying
\beq\label{v_ell_th02}
\norm{v}_r+\norm{\hv}_r\ls_\eta  \norm{f^1}_{r-1}+\norm{f^2}_{r-1} +\norm{\hf^1}_{r-1}+\norm{\hf^2}_{r-1}.
\eeq
\end{prop}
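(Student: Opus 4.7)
The plan is to adapt the two-step construction from the proof of Proposition \ref{propel1} to the coupled two-phase setting. First I would produce a preliminary pair $(\tilde v,\tilde{\hv})$ satisfying the curl, divergence, and $\Sigma_\pm$-boundary conditions with matching normal traces on $\Sigma$; then I would correct via gradients of harmonic potentials in each sub-domain, reducing to a scalar elliptic transmission problem whose solvability is ensured precisely by the compatibility $\jump{f^1}\cdot\n=0$ on $\Sigma$.

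For the preliminary construction, pick a scalar $g$ on $\Sigma$ with $\int_\Sigma g=\int_{\Omega_-}f^2\,\p_3\varphi$ and apply Proposition \ref{propel1} in each sub-domain: in $\Omega_-$ with the normal data $\tilde v_3=0$ on $\Sigma_-$ and $\tilde v\cdot\n=g$ on $\Sigma$, and in $\Omega_+$ with the tangential data $\tilde{\hv}\times e_3=0$ on $\Sigma_+$ and the normal data $\tilde{\hv}\cdot\n=g$ on $\Sigma$. The compatibilities required for Proposition \ref{propel1} reduce exactly to those in \eqref{nececon0}, while the residual integral compatibility in $\Omega_+$ is absorbed by the unconstrained value of $\tilde{\hv}_3|_{\Sigma_+}$. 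The bound \eqref{v_ell_th02} for $(\tilde v,\tilde{\hv})$ follows from two applications of \eqref{v_ell_th01}.

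The residual $T:=(\tilde v-\tilde{\hv})|_\Sigma$ is then tangential to $\Sigma$ by construction. Using the identity $(\curlv W)\cdot\n=\diverge_h(W\times\n)_h$ on $\Sigma$ and the hypothesis $\jump{f^1}\cdot\n=0$, one computes $\diverge_h(T\times\n)_h=(f^1-\hat f^1)\cdot\n=0$, i.e. the surface curl of $T$ vanishes. As $\Sigma=\mt$ has a two-dimensional first cohomology, I add if needed a suitable constant horizontal vector $(a_1,a_2,0)$ to $\tilde v$ (which preserves its curl, divergence, and the $\Sigma_-$-boundary condition) to kill the two period integrals of $T$; the adjusted $T$ is then exact and of the form $T=\nabla_\Sigma\xi$ for a scalar $\xi\in H^{r+1/2}(\Sigma)$. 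Next I would seek corrections $\bar v=\nabla^\varphi\phi_-$ in $\Omega_-$ and $\bar{\hv}=\nabla^\varphi\phi_+$ in $\Omega_+$ with $\phi_\pm$ solving the transmission problem
\begin{equation*}
\Delta^\varphi\phi_\pm=0 \text{ in }\Omega_\pm,\quad \p_3\phi_-=0 \text{ on }\Sigma_-,\quad \phi_+=0 \text{ on }\Sigma_+,\quad \phi_--\phi_+=\xi,\ \ \nabla^\varphi\phi_-\cdot\n=\nabla^\varphi\phi_+\cdot\n \text{ on }\Sigma.
\end{equation*}
This is a standard uniquely solvable elliptic transmission problem in the variational setting, with elliptic regularity delivering $\phi_\pm\in H^{r+1}$ bounded by $\abs{\xi}_{r+1/2}$. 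Setting $v:=\tilde v-\bar v$ and $\hv:=\tilde{\hv}-\bar{\hv}$ yields a solution of \eqref{elpp2}, since $v-\hv=T-\nabla^\varphi(\phi_--\phi_+)|_\Sigma$ has vanishing tangential part by the choice of $\xi$ and vanishing normal part by the transmission condition.

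Uniqueness follows from the Hodge-type coercivity $\norm{w}_1^2\ls \norm{\curlv w}_0^2+\norm{\diva w}_0^2$ modulo lower-order terms: with zero data, integrating by parts in each sub-domain and exploiting $v_3=0$ on $\Sigma_-$, $\hv\times e_3=0$ on $\Sigma_+$, and $v=\hv$ on $\Sigma$ (so that the $\Sigma$-boundary contributions cancel between the two sub-integrals) gives $\int_{\Omega_-}|\nabla^\varphi v|^2+\int_{\Omega_+}|\nabla^\varphi\hv|^2=0$, forcing $v=\hv=0$. The estimate \eqref{v_ell_th02} then follows by summing the $H^r$ bounds from the preliminary step and the transmission-problem correction. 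The principal obstacle is the careful bookkeeping of trace regularity across $\Sigma$ and the topological adjustment in the surface-gradient representation of $T$; the compatibility $\jump{f^1}\cdot\n=0$ is invoked precisely at the step where $T$ must be realized as a surface gradient.
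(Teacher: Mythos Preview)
Your route differs substantially from the paper's. The paper glues the data into a single vector field on the whole domain $\Omega$, setting $F^1=f^1$ in $\Omega_-$, $F^1=\hat f^1$ in $\Omega_+$ (and similarly $F^2$). The first three conditions in \eqref{nececon0} are precisely what makes $\diva F^1=0$ hold across $\Sigma$ in the distributional sense, and the last gives $F^1\cdot e_3=0$ on $\Sigma_+$; one then applies Proposition~\ref{propel1} \emph{once} on the single domain $\Omega$ with $V_3=0$ on $\Sigma_-$ and $V\times e_3=0$ on $\Sigma_+$. The interface condition $\jump{v}=0$ is automatic from $V\in H^1(\Omega)$, so no transmission problem is needed at all. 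The higher regularity $r=2,\dots,k+1$ is obtained by induction: one applies horizontal derivatives $\pa^\alpha$, uses the $r=1$ conclusion to control $\norm{v}_{1,\ell}+\norm{\hv}_{1,\ell}$ and hence the traces on $\Sigma$, and then invokes Proposition~\ref{propel1} separately in each subdomain with those traces as boundary data.

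Your two-step scheme can be made to work, but as written there is a gap in the topological correction. Adding a constant horizontal vector $(a_1,a_2,0)$ to $\tilde v$ does \emph{not} preserve the normal-trace matching $\tilde v\cdot\n=g=\tilde{\hv}\cdot\n$ on $\Sigma$, since $(a_1,a_2,0)\cdot\n=-a_1\p_1\eta-a_2\p_2\eta$ is generically nonzero; after this adjustment the residual $T$ is no longer tangential to $\Sigma$, and the claim $T=\nabla_\Sigma\xi$ no longer makes sense. Your transmission problem imposes $\nabla^\varphi\phi_-\cdot\n=\nabla^\varphi\phi_+\cdot\n$ on $\Sigma$, so the correction $\bar v-\bar{\hv}$ is tangential there and cannot absorb the normal residual you have introduced. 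This can be repaired (e.g.\ by allowing a nonzero Neumann jump in the transmission problem, or by exploiting the freedom in the all-normal solve on $\Omega_-$), but the fix is not the one you gave. The paper's gluing sidesteps all of this bookkeeping.
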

\begin{proof}
This can be proved similarly as for Theorem 2 in  Padula and Solonnikov \cite{PS}, which concerns the bounded domain case  when $\Omega_-$ is surrounded by $\Omega_+$ and the normal component of $\hv$ was prescribed on $\p\Omega$.  Here we sketch an alternative proof based on  Proposition \ref{propel1}.

To this end, one may define $F^1$ in $\Omega$ with $F^1=f^1$ in $\Omega_-$ and $F^1=\hf^1$ in $\Omega_+$ and $F^2 $ in $\Omega$ with $F^2=f^2$ in $\Omega_-$ and $F^2=\hf^2$ in $\Omega_+$. By the first three conditions in \eqref{nececon0}, one has that $
\diva F^1$ exists and vanishes in $\Omega$, and the last condition implies $F^1\cdot e_3=0$ on $\Sigma_+$. Hence, by Proposition \ref{propel1}, there exists a unique solution $V\in H^1(\Omega)$ to 
  \beq \label{elpp1op}
\begin{cases}
\curlv V =F^1,\quad\diva V=F^2 &\text{in }  \Omega
\\  V_3=0 & \text{on } \Sigma_{-}
\\   V\times e_3=0  & \text{on } \Sigma_{+}.
\end{cases}
\eeq 
Taking $v=V$ in $\Omega_-$ and $\hv=V$ in $\Omega_+$ yields the conclusion for $r=1$.

Now we derive the higher regularity estimates \eqref{v_ell_th02} for $2\le r\le k+1$ by an induction argument. Suppose that $\ell\in[1,r-1]$ and \eqref{v_ell_th02} holds for $\ell$. One then applies $\p^\al$ for $\al\in\mathbb{N}^2$ with $\abs{\al}\le \ell$ to \eqref{elpp2} to find that
\beq\label{elpp231}
\begin{cases}
    \curlv  \p^\al v=f^{1,\al} , \quad \diva \p^\al v= f^{2,\al}  & \text{in } \Omega_-
 \\    \curlv  \p^\al \hv=  \hf^{1,\al}, \quad \diva \p^\al \hv= \hf^{2,\al} &\text{in }\Omega_+
\\\jump{ \p^\al v}=0  & \text{on } \Sigma
\\   \p^\al v_3=0 & \text{on } \Sigma_{-}
\\    \p^\al\hv\times e_3=0  & \text{on } \Sigma_{+},
\end{cases}
\eeq
where
\beq 
\begin{split}
f^{1,\al} := \p^\al f^1-\[\p^\al,\curlv\] v, \quad  f^{2,\al} := \p^\al f^2-\[\p^\al,\diva\] v,
 \\    \hf^{1,\al} := \p^\al f^1-\[\p^\al,\curlv\] v, \quad  \hf^{2,\al} := \p^\al \hf^2-\[\p^\al,\diva\] v.
\end{split}
\eeq
It is routine to check that
 \beq\label{nececon0a}
\diva f^{1,\al}=0\text{ in }\Omega_-, \ \diva \hf^{1,\al}=0\text{ in }\Omega_+,\ \jump{f^{1,\al}}\cdot \n=0\text{ on }\Sigma  \text{ and }\hf^{1,\al}\cdot e_3=0\text{ on }\Sigma_+.
\eeq
The conclusion for $r=1$ then yields that 
\beq 
\norm{v}_{1,\ell}+\norm{\hv}_{1,\ell}\ls_\eta  \norm{f^1}_{\ell}+\norm{f^2}_{\ell} +\norm{\hf^1}_{\ell}+\norm{\hf^2}_{\ell}+C_\eta\(\norm{v}_{\ell}+\norm{\hv}_{\ell}\).
\eeq
This together with the trace theory and   the induction assumption  imply
\beq 
\abs{v}_{ \ell+1/2}+\abs{\hv}_{ \ell+1/2}\ls \norm{v}_{1,\ell}+\norm{\hv}_{1,\ell}\ls_\eta   \norm{f^1}_{\ell}+\norm{f^2}_{\ell} +\norm{\hf^1}_{\ell}+\norm{\hf^2}_{\ell}.
\eeq
Hence, by Proposition \ref{propel1}, one has
\beq 
\norm{v}_{ \ell+1}+\norm{\hv}_{ \ell+1}\ls_\eta  \norm{f^1}_{\ell}+\norm{f^2}_{\ell} +\abs{v}_{ \ell+1/2}+\abs{\hv}_{ \ell+1/2}
\ls_\eta \norm{f^1}_{\ell}+\norm{f^2}_{\ell} + \norm{\hf^1}_{\ell}+\norm{\hf^2}_{\ell}.
\eeq
This implies that \eqref{v_ell_th02} holds for $\ell+1$. \eqref{v_ell_th02} is thus proved.
\end{proof}

Finally, we consider the following mixed-phase Hodge-type elliptic problem:
  \beq \label{elpp3}
\begin{cases}
    \curlv\curlv  v=f^1 & \text{in } \Omega_-
\\ \diva v= f^2 & \text{in } \Omega_-
 \\\curlv \hv= \hat f^1 ,\quad\diva \hv=  \hat f^2 &\text{in }\Omega_+
\\\jump{v}=0  & \text{on } \Sigma
\\  v_3=0,\quad   { \curlv v }\times e_3=f^3 & \text{on } \Sigma_{-}
\\   \hv\times e_3=0  & \text{on } \Sigma_{+}.
\end{cases}
\eeq

 \begin{prop}\label{propel3}
Assume  $\eta\in H^{k+1/2}$ for an integer $k> 3/2$ with $\p_3\varphi>0$.  Let $r=2,\dots,k+1$ and  $f^1\in H^{r-2}(\Omega_-)$, $f^2\in H^{r-1}(\Omega_-)$, $\hf^1,\hf^2\in H^{r-1}(\Omega_+)$ and $f^3\in H^{r-3/2}(\Sigma_-)$  be given such that 
 \beq\label{nececon1}
\diva f^1=0\text{ in }\Omega_-, \ f^3\cdot  e_3 =0 \text{ on }\Sigma_- \text{ and }f^1\cdot e_3=\diverge_h f^3_h\text{ on }\Sigma_-
\eeq
and
 \beq\label{nececon2}
\diva \hf^1=0\text{ in }\Omega_+\text{ and } \hf^1\cdot e_3 =0\text{ on }\Sigma_+.
\eeq
There exists a unique solution $(v,\hv)$ to \eqref{elpp3} satisfying
\beq\label{v_ell_th03}
\norm{v}_r+\norm{\hv}_r\ls_\eta  \norm{f^1}_{r-2}+\norm{f^2}_{r-1} +\norm{\hf^1}_{r-1}+\norm{\hf^2}_{r-1} +\abs{f^3}_{r-3/2}.
\eeq\end{prop}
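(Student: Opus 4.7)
The plan is to recognize \eqref{elpp3} as a cascade of two already-solved Hodge systems via the substitution $w:=\curlv v$ in $\Omega_-$. With this substitution, the operator $\curlv\curlv$ applied to $v$ becomes the plain curl of $w$, while $\diva w=0$ holds automatically from $\diva\curlv=0$. First, Proposition \ref{propel1} will produce $w$ in $\Omega_-$ from $f^1$, $f^3$, and a judiciously chosen Neumann trace on $\Sigma$; then Proposition \ref{propel2} will produce $(v,\hv)$ from the curl data $(w,\hat f^1)$ and the divergence data $(f^2,\hat f^2)$.

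\textbf{Step 1 (solving for $w$).} Consider the one-phase Hodge problem
\begin{equation*}
\curlv w=f^1,\ \diva w=0\ \text{in }\Omega_-;\quad w\times e_3=f^3\ \text{on }\Sigma_-;\quad w\cdot\N=\hat f^1\cdot\N\ \text{on }\Sigma.
\end{equation*}
The three hypotheses in \eqref{nececon1} are exactly the compatibility conditions \eqref{nececon} required by Proposition \ref{propel1} (with $\tilde\Sigma_1=\Sigma_-$, $\tilde\Sigma_2=\Sigma$), and since $\hat f^1\in H^{r-1}(\Omega_+)$ with $r\ge 2$, the Neumann datum $\hat f^1\cdot\N$ lies in $H^{r-3/2}(\Sigma)$ by the trace theorem. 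Applying Proposition \ref{propel1} at level $r-1\in\{1,\dots,k\}$ produces a unique $w\in H^{r-1}(\Omega_-)$ with
\begin{equation*}
\norm{w}_{r-1}\ls_\eta \norm{f^1}_{r-2}+\abs{f^3}_{r-3/2}+\norm{\hat f^1}_{r-1}.
\end{equation*}

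\textbf{Step 2 (solving for $(v,\hv)$ and closing).} Feed $w$ into the two-phase system \eqref{elpp2} with curl data $(w,\hat f^1)$ and divergence data $(f^2,\hat f^2)$. Its compatibility conditions \eqref{nececon0} are all verified: $\diva w=0$ holds by construction; $\diva\hat f^1=0$ and $\hat f^1\cdot e_3=0$ come from \eqref{nececon2}; and the jump identity $(w-\hat f^1)\cdot\N=0$ on $\Sigma$ is precisely the Neumann condition built into Step 1. Proposition \ref{propel2} at level $r\in\{1,\dots,k+1\}$ thus yields a unique $(v,\hv)$ with
\begin{equation*}
\norm{v}_r+\norm{\hv}_r\ls_\eta \norm{w}_{r-1}+\norm{f^2}_{r-1}+\norm{\hat f^1}_{r-1}+\norm{\hat f^2}_{r-1},
\end{equation*}
which combined with Step 1 gives \eqref{v_ell_th03}. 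Since the two-phase system enforces $\curlv v=w$ in $\Omega_-$, one recovers $\curlv\curlv v=\curlv w=f^1$ and $\curlv v\times e_3=f^3$ on $\Sigma_-$; the remaining equations of \eqref{elpp3} are built into \eqref{elpp2}.

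\textbf{Main obstacle.} The only nontrivial conceptual step is identifying the correct Neumann datum for $w$ on $\Sigma$. It must be $\hat f^1\cdot\N$: given any solution $(v,\hv)$ of \eqref{elpp3}, the jump condition $\jump{v}=0$ forces the tangential-derivative part of the curl to agree across $\Sigma$, and this is precisely the normal component $(\curlv v)\cdot\N=(\curlv\hv)\cdot\N=\hat f^1\cdot\N$. With this choice the hypotheses \eqref{nececon1}--\eqref{nececon2} match the compatibility conditions of Propositions \ref{propel1} and \ref{propel2} verbatim, and uniqueness follows from the same cascade argument run in reverse: $w$ is uniquely determined by Step 1, and then $(v,\hv)$ by Step 2.
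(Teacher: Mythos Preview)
Your proof is correct and follows exactly the same approach as the paper: first solve the one-phase Hodge system for $w=\curlv v$ in $\Omega_-$ via Proposition~\ref{propel1} with the Neumann datum $w\cdot\N=\hat f^1\cdot\N$ on $\Sigma$, then feed $(w,\hat f^1)$ into Proposition~\ref{propel2} to recover $(v,\hv)$. Your identification of the correct Neumann trace on $\Sigma$ and the verification of the compatibility conditions match the paper's argument line by line.
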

\begin{proof}
Motivated by the works of Ladyzhenskaya and Solonnikov \cite{LS1,LS}, the solution to \eqref{elpp3} can be constructed as follows. First,  it follows from  \eqref{nececon1} and Proposition \ref{propel1} that
\beq \label{eeelp21} 
\begin{cases}
\curlv w =f^1,\quad\diva w=0 &\text{in }\Omega_+
\\ w\cdot \n = \hf^1\cdot \n  &\text{on }\Sigma
\\ w\times e_3=f^3 &\text{on }\Sigma_{+}
\end{cases}
\eeq 
has a unique solution $w$ which satisfies, by the trace theory,
\beq \label{vvv1}
\norm{w}_{r-1}\ls_\eta  \norm{f^1}_{r-2} +\abs{\hf^1\cdot \n}_{r-3/2}+\abs{f^3}_{r-3/2}\ls_\eta  \norm{f^1}_{r-2} +\norm{\hf^1  }_{r-1}+\abs{f^3}_{r-3/2}.
\eeq
Then by the second and third equations in \eqref{eeelp21}, \eqref{nececon2} and Proposition \ref{propel2}, one can define $(v,\hv)$ as the  solution to
 \beq \label{eeelp2} 
\begin{cases}
    \curlv   v=  w, \quad \diva v= f^2 & \text{in } \Omega_-
 \\\curlv \hv= \hf^1 ,\quad\diva \hv=  \hf^2 &\text{in }\Omega_+
\\\jump{v}=0  & \text{on } \Sigma
\\  v_3=0 & \text{on } \Sigma_{-}
\\   \hv\times e_3=0  & \text{on } \Sigma_{+},
\end{cases}
\eeq
which satisfies
\beq \label{vvv2}
\norm{v}_r+\norm{\hv}_r\ls_\eta  \norm{w}_{r-1}+\norm{f^2}_{r-1} +\norm{\hf^1}_{r-1}+\norm{\hf^2}_{r-1}.
\eeq
It is easy to check that $(v,\hv)$ solves \eqref{elpp3}, and \eqref{v_ell_th03} follows from \eqref{vvv1} and \eqref{vvv2}.
\end{proof}

%%%%%%%%%%%%%%%%%%%%%%%%%%%%%%%%%%%%%%%%%%%%%%%%%%%%%%
\section{Preliminaries for the a priori estimates}\label{sec_4}
%%%%%%%%%%%%%%%%%%%%%%%%%%%%%%%%%%%%%%%%%%%%%%%%%%%%%%

In this section we give some preliminary results to be used in the derivation of the a priori estimates for solutions to \eqref{MHDv}. It will be assumed throughout Sections \ref{sec_4}--\ref{sec_6} that the solution  is given on the interval $[0,T]$ and obeys the a priori assumption
\beq\label{apriori_1}
\se{2N}(t)\le \delta,\quad\forall t\in[0,T]
\eeq
for $N\ge 4$ and a sufficiently small constant $\delta>0.$ This implies in particular that
\beq\label{apriori_2}
\hal\le \pa_3\varphi(t,x)\le \frac{3}{2},\quad\forall (t,x)\in[0,T]\times \bar\Omega.
\eeq
We remark that \eqref{apriori_1} and \eqref{apriori_2} are always used; in particular, the smallness \eqref{apriori_1} is used in many nonlinear estimates so that the various polynomials of $\se{2N}$ are bounded by $C\se{2N}$.

%%%%%%%%%%%%%%%%%%%%%%%%%%%%%%%%%%%%%%%%%%%%%%%%%%%%%%
\subsection{Estimates of $\he$}\label{sec_ee}
%%%%%%%%%%%%%%%%%%%%%%%%%%%%%%%%%%%%%%%%%%%%%%%%%%%%%%

Our derivation of the  estimates of the solutions to \eqref{MHDv} will involve the  electric field in vacuum $\hee$, which solves
\beq\label{MHDv3e}
\begin{cases}
\curlv \hee =\D_t\hb,\quad\diva \hee=0 &\text{in }\Omega_+
\\\hee\times \n=E\times \n  &\text{on }\Sigma
\\ \hee_3=0 &\text{on }\Sigma_{+}.
\end{cases}
\eeq
\begin{rem}\label{ssrem}
Note that by the seventh, fifth,  tenth and third equations in \eqref{MHDv}, one has
\beq
\diva\D_t\hb=0\text{ in }\Omega_+\text{ and }\D_t\hb\cdot \n=\D_tb\cdot \n=\curlv E\cdot\n\equiv\diverge_h(E\times \n)_h \text{ on }\Sigma.
\eeq
Thus Proposition \ref{propel1} guarantees the existence of a unique solution $\he$ to \eqref{MHDv3e}.
\end{rem}

Now we estimate $\he.$ For $N\ge 4$, define
\beq\label{high_ene}
\se{2N}(\he):= \sum_{j=0}^{2N-1} \ns{ \dt^j    \he }_{2N-j} 
\eeq
and that for $n=N+4,\dots,2N$, 
\beq\label{low_ene}
\fe{n}(\he):= \ns{   \he }_{n-1} + \sum_{j=1}^{n-1} \ns{ \dt^j    \he }_{n-j}
\eeq
and
\beq \label{low_disse}
\fd{n}(\he):= \sum_{j=0}^{n-2} \ns{ \dt^j    \he }_{n-j-1}.  
\eeq 

The  equations \eqref{MHDv3e} can be rewritten as
\beq\label{eeequ}
\begin{cases}
 \curl \he=P^1,\quad \diverge \he=P^2&\text{in }\Omega_+
 \\ \hee\times e_3 =P^{3} & \text{on } \Sigma
 \\   \he_3=0  & \text{on } \Sigma_{+},
\end{cases}
\eeq
where
\begin{align}
& P^1=\D_t \hb+\nabla\bar\eta\times\pav_3  \he,
\\& P^2=\nabla\bar\eta\cdot\pav_3 \he,
\\& P^3=  E\times\n+\he\times (e_3-\n) .
\end{align}
Here one has used the fact that $\pav_i-\pa_i=-\pa_i\bar\eta\pav_3$ for $i=t,1,2,3$ due to \eqref{pav_def}.

The   terms $P^i$ are estimated as follows.
\begin{lem}\label{le_G_2Ne}
It holds that
\begin{align}\label{p_G_e_02e1}
&\sum_{j=0}^{2N-1}\ns{    \dt^j  P^1}_{2N-j-1}+\sum_{j=0}^{2N-1}\ns{    \dt^j  P^2}_{2N-j-1}+ \sum_{j=0}^{2N-1}\as{    \dt^j  P^3}_{2N-j-1/2} \nonumber
\\&\quad \ls \se{ 2N}+ \se{ 2N}\se{ 2N}(\he)
\end{align} 
and that for $n=N+4,\dots,2N$,
\begin{align}\label{p_G_e_02e2}
&\ns{      P^1}_{n-2}+\sum_{j=1}^{n-1}\ns{    \dt^j  P^1}_{n-j-1}+  \ns{    P^2}_{n-2}+\sum_{j=1}^{n-1}\ns{    \dt^j  P^2}_{n-j-1}+  \as{    P^3}_{n-3/2}+ \sum_{j=1}^{n-1}\as{    \dt^j  P^3}_{n-j-1/2} \nonumber
\\&\quad \ls \fe{n}+ \se{ 2N}\fe{ n}(\he)
\end{align} 
and
\begin{align}\label{p_G_e_02e3}
& \sum_{j=0}^{n-2}\ns{    \dt^j  P^1}_{n-j-2}+\sum_{j=0}^{n-2}\ns{    \dt^j  P^2}_{n-j-2}+ \sum_{j=0}^{n-2}\as{    \dt^j  P^3}_{n-j-3/2} \nonumber
\\&\quad \ls \fd{n}+ \se{ 2N}\fd{ n}(\he).
\end{align} 
\end{lem}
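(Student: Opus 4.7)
The plan is to decompose each $P^i$ into a linear part (already controlled by one of the named energies for $u$, $b$, $\hb$, $\eta$, $\he$) and a quadratic remainder, then distribute derivatives via Leibniz and bound each resulting product by a Sobolev multiplier inequality. The three estimates \eqref{p_G_e_02e1}--\eqref{p_G_e_02e3} share an identical structure and differ only in the total regularity budget, so a single argument, run with three different counts of derivatives, handles all three.

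First I would treat $P^1=\D_t\hb+\nabla\bar\eta\times\pav_3\he$. The first summand contributes $\sum_j\norm{\dt^{j+1}\hb}_{2N-j-1}^2$, which is majorized directly by $\se{2N}$ (respectively $\fe{n}$ and $\fd{n}$) from the definitions \eqref{high_en}, \eqref{low_en}, \eqref{low_diss}. For the quadratic piece I would write $\pav_3\he=\p_3\he/\p_3\varphi$, expand $1/\p_3\varphi=1-\p_3\bar\eta/\p_3\varphi$ using the a priori bound \eqref{apriori_2}, and apply Leibniz to $\p^\al(\nabla\bar\eta\cdot\p_3\he/\p_3\varphi)$. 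A standard product estimate of the form $\norm{fg}_s\ls\norm{f}_{L^\infty}\norm{g}_s+\norm{f}_s\norm{g}_{L^\infty}$ (together with its high--low variants at higher regularity, presumably collected in the appendix) distributes the derivatives between $\bar\eta$ and $\he$. The harmonic-extension bound $\norm{\bar\eta}_r\ls\abs{\eta}_{r-1/2}$ then recasts every $\bar\eta$-factor into an $\eta$-norm appearing in $\se{2N}$, $\fe{n}$, or $\fd{n}$; combined with the corresponding $\he$-norm from $\se{2N}(\he)$, $\fe{n}(\he)$, or $\fd{n}(\he)$, this produces a remainder of the advertised form $\se{2N}\se{2N}(\he)$ (respectively $\se{2N}\fe{n}(\he)$ and $\se{2N}\fd{n}(\he)$). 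The term $P^2=\nabla\bar\eta\cdot\pav_3\he$ is estimated identically, as it is just the tangential scalar version of the same product.

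For the boundary term $P^3=E\times\n+\he\times(e_3-\n)$ I would use the trace inequality $\abs{f}_s\ls\norm{f}_{s+1/2}$ to convert the $H^s(\Sigma)$-norms into bulk norms, and then proceed as above. I would expand $E=u\times\bar B+u\times b-\kappa\curlv b$ on $\Sigma$; the linear-in-unknowns pieces $u\times\bar B$ and $\kappa\curlv b$ produce the $\fe{n}$, $\se{2N}$, and $\fd{n}$ contributions through their traces, while $u\times b$ and $\he\times(e_3-\n)=\he\times(\nabla_h\eta,0)$ are genuinely quadratic. Applying $\p^\al$ and the product rule, together with the embedding $H^s(\mt)\hookrightarrow L^\infty$ for $s>1$, distributes the derivatives and yields remainders of the claimed form; note that $\curlv$ itself carries an implicit factor of $\nabla\bar\eta$ through the $\pav$ convention, giving an additional quadratic contribution of the same type.

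The main obstacle is the bookkeeping at top order. For instance, in \eqref{p_G_e_02e1} with $j=2N-1$, estimating $\dt^{2N-1}(\nabla\bar\eta\cdot\pav_3\he)$ in $L^2$ requires placing at most $2N-1$ derivatives on one factor, so the complementary factor must sit in $L^\infty$; this forces $\bar\eta\in H^{>5/2}$ (available from $\se{2N}$ for $N\ge 4$) or $\he\in H^{>3/2}$ (available from $\se{2N}(\he)$), and exactly one such split is admissible in each Leibniz term. Verifying that each split lands in a norm actually named by one of the functionals \eqref{high_en}, \eqref{low_diss}, \eqref{low_en}, \eqref{high_ene}, \eqref{low_ene}, \eqref{low_disse}, and that the counts match across the three inequalities (noting in particular that \eqref{p_G_e_02e2} affords one more derivative on the top-order pieces than \eqref{p_G_e_02e3}), is tedious but routine given the smallness \eqref{apriori_1}.
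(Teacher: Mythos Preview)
Your proposal is correct and follows the same approach the paper intends: the paper's own proof is the single line ``These can be checked similarly as for Lemmas \ref{le_F_2N} and \ref{p_F_eN2} below,'' and your Leibniz/product-estimate/high--low splitting argument is exactly the content of those referenced lemmas, applied to the $P^i$ instead of the $F^{i,\al}$. One small point worth noting: the summand $\D_t\hb$ in $P^1$ is not purely $\partial_t\hb$ but $\partial_t\hb-\partial_t\bar\eta\,\pav_3\hb$, so it too carries a quadratic correction involving $\hb$ (not $\he$); this extra piece is bounded by $\se{2N}\cdot\se{2N}\ls\se{2N}$ via the same product argument and the smallness \eqref{apriori_1}, so it lands in the first term on the right-hand side rather than the cross term with $\se{2N}(\he)$.
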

\begin{proof}
These can be checked similarly as for Lemmas  \ref{le_F_2N} and \ref{p_F_eN2} below. 
\end{proof}

The estimates of $\he$ are given as follows.
\begin{prop}\label{eprop}
It holds that
\begin{align}\label{e_es1}
\se{2N}(\he)\ls    \se{2N}
\end{align}
and that for $n=N+4,\dots,2N$,
\begin{align}\label{e_es3}
\fe{n}(\he) \ls   \fe{n} 
\end{align}
and
\begin{align}\label{e_es4}
\fd{n}(\he) \ls   \fd{n}.
\end{align}
\end{prop}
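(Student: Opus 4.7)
The plan is to exploit the flat Hodge-type reformulation \eqref{eeequ} provided just above the statement, applying $\partial_t^j$ to reduce matters to estimates for the source terms $P^1, P^2, P^3$ that are already handled by Lemma \ref{le_G_2Ne}. Since $\curl$, $\Div$, and the traces onto $\Sigma$ and $\Sigma_+$ are flat operators that commute with $\partial_t$, differentiating \eqref{eeequ} in time yields, for each $j\ge 0$,
\begin{equation*}
\curl \partial_t^j\hat E = \partial_t^j P^1,\quad \Div\,\partial_t^j\hat E=\partial_t^j P^2 \text{ in }\Omega_+,\quad \partial_t^j\hat E\times e_3=\partial_t^j P^3 \text{ on }\Sigma,\quad \partial_t^j\hat E_3=0 \text{ on }\Sigma_+.
\end{equation*}
The compatibility conditions required by Proposition \ref{propel1}, namely $\Div\,\partial_t^j P^1=0$ in $\Omega_+$, $\partial_t^j P^3\cdot e_3=0$ and $\partial_t^j P^1\cdot e_3=\Div_h\partial_t^j P^3_h$ on $\Sigma$, follow for $j=0$ by direct computation from Remark \ref{ssrem} together with the identity $\partial_i^\varphi=\partial_i-\partial_i\bar\eta\,\partial_3^\varphi$, and are then preserved by $\partial_t^j$ since the operators involved are flat and commute with $\partial_t$.

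With these compatibility conditions in hand, I would apply Proposition \ref{propel1} in the flat case ($\eta\equiv 0$) to each $\partial_t^j\hat E$. For the top-order energy, choose $r=2N-j$ with $j=0,\dots,2N-1$ to obtain
\begin{equation*}
\ns{\partial_t^j\hat E}_{2N-j}\ls \ns{\partial_t^j P^1}_{2N-j-1}+\ns{\partial_t^j P^2}_{2N-j-1}+\as{\partial_t^j P^3}_{2N-j-1/2}.
\end{equation*}
Summing in $j$ and invoking \eqref{p_G_e_02e1} gives $\se{2N}(\hat E)\ls \se{2N}+\se{2N}\se{2N}(\hat E)$, and the smallness assumption \eqref{apriori_1} absorbs the last term to yield \eqref{e_es1}.

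For \eqref{e_es3} I would proceed analogously: estimate the $j=0$ term with $r=n-1$ (whose right-hand side is controlled by the norms of $P^i$ appearing precisely in \eqref{p_G_e_02e2}), and the terms $j=1,\dots,n-1$ with $r=n-j$, then sum and apply \eqref{p_G_e_02e2}, absorbing the $\se{2N}\fe{n}(\hat E)$ term using smallness. For the dissipation estimate \eqref{e_es4}, take $r=n-j-1$ for $j=0,\dots,n-2$, sum, and apply \eqref{p_G_e_02e3}, again absorbing the small contribution.

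There is no genuine analytic obstacle here; the whole proof is an application of Proposition \ref{propel1} combined with Lemma \ref{le_G_2Ne} and a small-data absorption. The only delicate point is the bookkeeping: one has to match the indexing in \eqref{high_ene}--\eqref{low_disse}, which reflects the fact that the solution to the flat Hodge system gains one full order of spatial regularity over its sources, with the precisely tuned norms in \eqref{p_G_e_02e1}--\eqref{p_G_e_02e3}. Once this matching is verified, the three estimates in the proposition follow in parallel.
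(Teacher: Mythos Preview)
Your proposal is correct and follows essentially the same approach as the paper: apply Proposition \ref{propel1} with $\eta=0$ to the time-differentiated system \eqref{eeequ}, invoke Lemma \ref{le_G_2Ne} to bound the source terms, and absorb the small $\se{2N}$-multiplied contribution. The only difference is that you spell out the compatibility conditions more explicitly than the paper, which simply cites \eqref{eeequ} and the lemma.
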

\begin{proof}
For $j=0,\dots,2N-1$, it follows from the Hodge-type estimates \eqref{v_ell_th01} of Proposition \ref{propel1}  (setting $\eta=0$) with $r=2N-j\ge 1$  that, by \eqref{eeequ} and \eqref{p_G_e_02e1},
\begin{align}\label{eeee1}
 \ns{ \dt^j    \he }_{2N-j}&\ls    \norm{ \dt^j  \curl {\he} }_{2N-j-1}^2+  \ns{   \dt^j  \diverge {\he}}_{2N-j-1 } +  \abs{ \dt^j    {\he}\times e_3 }_{2N-j-1/2}^2\nonumber\\&\ls   
  \ns{   \dt^j    P^1}_{2N-j-1}+  \ns{   \dt^j P^2 }_{2N-j-1} + \as{   \dt^j   P^3  }_{2N-j-1/2}  \nonumber
 \\&\ls  \se{ 2N}+ \se{ 2N}\se{ 2N}(\he).
\end{align}
Summing \eqref{eeee1} over $j=0,\dots,2N-1$  yields the estimate \eqref{e_es1} since $\se{2N}$ is small. 
The estimates \eqref{e_es3} and  \eqref{e_es4} follow similarly by using instead \eqref{p_G_e_02e2} and \eqref{p_G_e_02e3}, respectively.
\end{proof}
 
In light of Proposition \ref{eprop}, in the nonlinear estimates to be carried out  in Sections \ref{sec_5}--\ref{sec_6}, we can simply use $\se{2N}$ to bound terms which are controlled by $\se{2N}(\he)$, etc.

%%%%%%%%%%%%%%%%%%%%%%%%%%%%%%%%%%%%%%%%%%%%%%%%%%%%%%
\subsection{Geometric perturbed formulation}
%%%%%%%%%%%%%%%%%%%%%%%%%%%%%%%%%%%%%%%%%%%%%%%%%%%%%%

In order to  use the energy-dissipation structure \eqref{en_iden} to  derive the tangential energy evolution estimates for solutions to \eqref{MHDv}, as usual for free boundary problems in fluid mechanics, it is natural to utilize the geometric structure given in \eqref{MHDv}. For this, one applies the temporal and horizontal spatial derivatives $\pa^\al$ for $\al\in \mathbb{N}^{1+2}$  with $|\al|\ge 1$   to \eqref{MHDv} and \eqref{MHDv3e} to find that
\beq\label{MHDva} 
\begin{cases}
\D_t \pa^\al {u}  +    {u}\cdot \Dn \pa^\al {u} +\Dn \pa^\al p =   \curlv \pa^\al b\times (\bar B+b) +F^{1,\al}& \text{in } \Omega_-
\\ \diva  \pa^\al u=F^{2,\al}  &\text{in }\Omega_-
\\ \D_t \pa^\al {b}        =  \curlv \pa^\al E +F^{3,\al},\quad \pa^\al E= \pa^\al u\times (\bar B+b)-\kappa\curlv \pa^\al b+F^{4,\al} & \text{in } \Omega_-
\\ \D_t \pa^\al {\hb}        =  \curlv \pa^\al \hat E +\hat F^{3,\al} ,\quad  \curlv \pa^\al \hb= \hat F^{4,\al} & \text{in } \Omega_+
\\ \partial_t \pa^\al \eta  =\pa^\al u\cdot\N +F^{5,\al}&\text{on }\Sigma
\\ \pa^\al p =  -  \sigma \pa^\alpha H  ,\quad \pa^\alpha H    =      \diverge_h  \(\! \dis\frac{ \nabla_h \pa^\alpha \eta}{\sqrt{1+|\nabla_h \eta|^2}}- \frac{\nabla_h \eta \cdot \nabla_h \pa^\alpha \eta}{\sqrt{1+|\nabla_h \eta|^2}^3}\nabla_h \eta 
\! \) +F^{6,\al}   & \text{on } \Sigma
\\ \jump{\pa^\al b} =  0,\quad \jump{\pa^\al E}\times \n = F^{7,\al}   & \text{on } \Sigma
\\ \pa^\al u_3 =0,\quad 	\pa^\al E\times e_3=0  & \text{on } \Sigma_{-}
\\  \pa^\al \hb\times e_3=0  & \text{on } \Sigma_{+} ,
\end{cases}
\eeq
where, recalling the commutator notations \eqref{cconm1} and \eqref{cconm2},
\begin{align}
&F^{1,\al}=-\[\pa^\al,  (\bar B+b) \times  \curlv   \]  b -\[\pa^\al,\D_t  +    {u}\cdot \Dn  \]{u} -\[\pa^\al, \Dn  \]p ,\label{f1a}
\\&F^{2,\al}=-\[\pa^\al, \diva  \] {u} , \label{f2a}
\\&F^{3,\al}=\[\pa^\al,     \curlv   \] E-\[\pa^\al,\D_t  \]{b}    ,\label{f3a}
\\& F^{4,\al}=-\[\pa^\al,  b \] \times u-\kappa \[\pa^\al, \curlv\]  b,   \label{f4a}
\\&  \hat F^{3,\al}=\[\pa^\al,     \curlv   \] \hat E -\[\pa^\al,\D_t  \]{\hb}  ,\label{f3aa}
\\& \hat F^{4,\al}=-  \[\pa^\al, \curlv\]  \hb ,   \label{f4aa}
\\&F^{5,\al}= \[\pa^\al, \n  \] \cdot {u},\label{f5a}
\\&  F^{6,\al}=\diverge_h \(-\[\pa^{\alpha-\alpha'},\frac{\nabla_h \eta }{\sqrt{1+|\nabla_h \eta|^2}^3}\]\cdot \nabla_h \pa^{\alpha'} \eta
 \nabla_h \eta +\[\pa^\alpha, \frac{1}{\sqrt{1+|\nabla_h \eta|^2}}, \nabla_h \eta \]\),\label{f6a}
\\&F^{7,\al}= \[\pa^\al, \n  \] \times \jump{E} .\label{f7a}
\end{align}
Here $\al'$ in \eqref{f6a} is  any $\al'<\al$ with $|\alpha'|=1$. Note that the rest of equations in \eqref{MHDv} and \eqref{MHDv3e} that are not considered in \eqref{MHDva} will be not needed in the tangential energy evolution estimates.

These nonlinear terms $F^{i,\al}$ with $\abs{\al}\le 2N$ are estimated as follows.
\begin{lem}\label{le_F_2N}
It holds that for $\abs{\al}\le 2N$,
\begin{align}\label{p_F_e_01}
&\ns{F^{1,\al} }_{0}+ \ns{F^{2,\al}}_{0}+ \ns{ F^{3,\al}}_{0}+ \ns{ \hat {F}^{3,\al}}_{0} + \ns{ F^{4,\al}}_{0}+ \ns{ \hat {F}^{4,\al}}_{0}\nonumber
\\&\quad+ \abs{F^{5,\al}}_{0}^2 + \abs{F^{6,\al}}_{1/2}^2
+ \abs{F^{7,\al}}_{0}^2    \ls \fe{N+4} \se{2N} ,
\end{align}
and for $\abs{\al}\le 2N,\al_0\le 2N-1$,
\beq\label{p_F_e_02}
\abs{F^{5,\al}}_{1/2}^2    \ls \fe{N+4} \se{2N}
\eeq
and 
\begin{align}\label{p_F_e_01'}
 \ns{\dt \hat {F}^{4,(2N,0)}}_{-1}   \ls \fe{N+4}\( \se{2N} +\fd{2N}\),
\end{align}
where $\norm{\cdot}_{-1}$ denotes the norm of $(H^1(\Omega))^\ast$.
\end{lem}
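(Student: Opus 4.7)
The plan is to treat all of the $F^{i,\al}$ uniformly as commutators and run the standard derivative-count split on the resulting bilinear products. By Leibniz, every such term takes the shape $\pa^\beta f\cdot \pa^{\al-\beta} g$ with $|\beta|\ge 1$ (up to smooth coefficients coming from Taylor expansion of $1/\p_3\varphi$, valid by \eqref{apriori_2}). I would distinguish two regimes. If $|\beta|\le N+2$, bound $\pa^\beta f$ in $L^\infty$ via $\norm{f}_{|\beta|+2}\le\norm{f}_{N+4}\ls\sqrt{\fe{N+4}}$ (using $H^2\hookrightarrow L^\infty$ in $3$D) and $\pa^{\al-\beta}g$ in $L^2$ by $\norm{g}_{2N-1}\ls\sqrt{\se{2N}}$. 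If $|\beta|\ge N+3$, then $|\al-\beta|\le N-3$, so swap the roles: $\pa^{\al-\beta}g\in L^\infty$ controlled by $\sqrt{\fe{N+4}}$, and $\pa^\beta f\in L^2$ controlled by $\sqrt{\se{2N}}$. In either regime the product is $\ls\sqrt{\fe{N+4}\se{2N}}$, giving the squared bound $\fe{N+4}\se{2N}$ in \eqref{p_F_e_01}.

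This scheme applies directly to $F^{1,\al},\dots,F^{4,\al},\hat F^{3,\al},\hat F^{4,\al}$, whose coefficients are smooth functions of $\eta,\bar\eta,1/\p_3\varphi,u,b$: low-order norms are dominated by $\sqrt{\se{2N}}$ (hence small by \eqref{apriori_1}), while high-order norms fit inside either $\se{2N}$ or $\fe{N+4}$. The boundary quantities $F^{5,\al}$ and $F^{7,\al}$ are handled analogously, using the trace theorem to reduce $H^{1/2}(\Sigma)$ norms to bulk $H^1(\Omega)$ norms and product estimates on $\mt$ for the tangential pieces. For $F^{6,\al}$ I would recast the inner commutator in the symmetric form \eqref{cconm2} so that the two endpoint contributions drop; the remaining pieces all carry at least one derivative on each factor, and the split above closes the $H^{1/2}(\Sigma)$ estimate.

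For the refined boundary estimate \eqref{p_F_e_02} of $F^{5,\al}=[\pa^\al,\n]\cdot u$, the restriction $\al_0\le 2N-1$ is essential because the worst commutator piece is $\pa^\al\n\cdot u|_\Sigma\sim\dt^{\al_0}\nabla_h^{|\al|-\al_0+1}\eta\cdot u$, whose $H^{1/2}(\Sigma)$ norm requires $\abs{\dt^{\al_0}\eta}_{|\al|-\al_0+3/2}$; this matches exactly the term $\abs{\dt^j\eta}_{2N-j+3/2}^2$ present in $\se{2N}$ for $j\le 2N-1$, paired with $|u|_{L^\infty(\Sigma)}\ls\sqrt{\fe{N+4}}$. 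The remaining pieces $\pa^\beta\n\cdot\pa^{\al-\beta}u$ with $|\beta|<|\al|$ carry at least one derivative on $u$ and are closed by product estimates on $\mt$ in the spirit of the first paragraph.

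The hard part will be the $(H^1)^\ast$ estimate \eqref{p_F_e_01'} of $\dt \hat F^{4,(2N,0)}=-\dt([\dt^{2N},\curlv]\hb)$, because naive counting produces terms with $2N+1$ time derivatives, which are not controlled by $\se{2N}$ alone. Expanding $\dt([\dt^{2N},\curlv]\hb)$, the critical contributions split into two types. Type (a): $\dt^{2N+1}(\mathrm{coeff}(\bar\eta))\cdot\pa_3\hb$, for which I would use the Poisson-extension bound $\norm{\dt^{2N+1}\bar\eta}_0\ls\abs{\dt^{2N+1}\eta}_{-1/2}\ls\sqrt{\se{2N}}$ and pair against a test function $\phi\in H^1(\Omega_+)$ directly, with $\pa_3\hb$ in $L^\infty$ (bounded by $\sqrt{\fe{N+4}}$). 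Type (b): $\dt(\mathrm{coeff})\cdot\dt^{2N}\pa_3\hb$, where $\dt^{2N}\pa_3\hb$ is not controlled by $\se{2N}$ but $\norm{\dt^{2N}\hb}_1^2\ls\fd{2N}$ by the definition \eqref{low_diss}; this contribution gives $\sqrt{\fe{N+4}\fd{2N}}\,\norm{\phi}_1$ after placing the coefficient in $L^\infty$. All intermediate-order products are subcritical and fit the first-paragraph scheme, contributing only $\fe{N+4}\se{2N}$. Summing and taking the supremum over $\norm{\phi}_1\le 1$ yields $\fe{N+4}(\se{2N}+\fd{2N})$; the interplay between $\se{2N}$ and $\fd{2N}$ in the $(H^1)^\ast$ setting is the central subtlety of the lemma.
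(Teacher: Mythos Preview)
Your plan is correct and follows essentially the same approach as the paper: Leibniz expansion of the commutators, a derivative-count split placing the low-order factor in $L^\infty$ (via $H^2\hookrightarrow L^\infty$) and the high-order factor in $L^2$, with the $\fe{N+4}$--$\se{2N}$ pairing emerging exactly as you describe. The paper splits at $|\al'|\le N$ rather than your $|\beta|\le N+2$, but both thresholds close for the same reason (the extra $4$ in $\fe{N+4}$).

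One small imprecision in your Type~(a) handling of \eqref{p_F_e_01'}: the critical coefficient is $\dt^{2N+1}\nabla\bar\eta$, not $\dt^{2N+1}\bar\eta$, so the Poisson bound $\norm{\dt^{2N+1}\bar\eta}_0\ls\abs{\dt^{2N+1}\eta}_{-1/2}$ does not directly control the $L^2$ norm of the term. The paper instead bounds $\norm{\dt^{2N+1}\nabla\bar\eta}_{-1}\ls\abs{\dt^{2N+1}\eta}_{-1/2}$ (which follows by duality/integration by parts, the boundary contribution on $\Sigma$ being controlled by the same $H^{-1/2}$ norm) and then invokes Lemma~\ref{sobolev-1} to absorb the smooth factor $\pav_3\hb$. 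Your pairing-against-$\phi\in H^1$ language can be made to work the same way once you move the gradient onto $\phi$, but as written it skips this step.
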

\begin{proof}
To estimate $F^{1,\al}$, it follows from the Leibniz rule and the Sobolev embeddings that, by the definition \eqref{pav_def} of $\D_{t}$ and Lemma  \ref{sobolev},
\begin{align}\label{ftes1}
&\ns{\[\pa^\al, \D_t \] {u}}_0\equiv \ns{\[\pa^\al,   \frac{\dt\bar\eta}{\p_3\varphi}  \]\p_3 {u}}_0 \ls \sum_{0\neq \al'\le \al} \ns{ \pa^{\al'}\( \frac{\dt\bar\eta}{\p_3\varphi} \)\pa^{\al-\al'} \p_3 u  }_0\nonumber
\\& \quad\ls \sum_{  \al'\le \al \atop 0<\abs{\al'}\le N} \ns{ \pa^{\al'}\(\frac{\dt\bar\eta}{\p_3\varphi}\)}_{2} \ns{\pa^{\al-\al'} \p_3 u  }_0
+ \sum_{  \al'\le \al \atop  \abs{\al'}> N} \ns{\pa^{\al-\al'} \p_3 u  }_{2}\ns{ \pa^{\al'}\(\frac{\dt\bar\eta}{\p_3\varphi}\)}_{0}.
\end{align}
To bound the $H^0$ norms in the right hand side of \eqref{ftes1},   one may check directly that the terms of the highest order derivatives involved are $\pa^{\al-\al'}\pa_3 u$ for $\al'\in \mathbb{N}^{1+2}$ with $\abs{\al'}=1$ and $\pa^{\al+\beta'}\bar\eta$ for $\beta'\in \mathbb{N}^{1+3}$ with $\abs{\beta'}=1$. Noting that the term $\as{\dt^{2N+1}\eta}_{-1/2} $ is included in $\se{2N}$  so that  when  $\pa^{\al+\beta'}=\dt^{2N+1}$, one gets by  Lemma \ref{p_poisson} that
\beq
\ns{\dt^{2N+1}\bar\eta}_0\ls \ns{\dt^{2N+1}\mathcal{P}\eta}_{0}\ls \as{\dt^{2N+1}\eta}_{-1/2}\le \se{2N}.
\eeq
Then the $H^0$ norms in the right hand side of \eqref{ftes1} are bounded by $\se{2N}$, due to  Lemmas \ref{p_poisson} and \ref{sobolev}. On the other hand, by Lemmas \ref{p_poisson} and \ref{sobolev} again along with the definition \eqref{low_en} of $\fe{n}$, one notes that the extra $4$ derivatives in $\fe{N+4} $ have been chosen so that those $H^2$ norms in the right hand side of \eqref{ftes1} can be bounded by $\fe{N+4} $.  Hence $\ns{\[\pa^\al, \D_t \] {u}}_0\ls \fe{N+4} \se{2N}$. Estimating the other terms in $F^{1,\al}$ in the same way, one may conclude that
\beq
\ns{F^{1,\al} }_{0}\ls \fe{N+4} \se{2N}.
\eeq
Similarly, one has that
\beq
\ns{F^{2,\al} }_{0}+\ns{\hat F^{3,\al} }_{0}+\ns{  F^{4,\al} }_{0}+\ns{\hat F^{4,\al} }_{0}\ls \fe{N+4} \se{2N}.
\eeq

To estimate $ F^{3,\al} $, one may argue again as $F^{1,\al}$ to derive the desired estimates except the term $\kappa \[\pa^\al,\curlv\]  \curlv b$, which involves new types of terms of the highest order derivatives: $\pa^{\al-\al'}\pa_3 \nabla \bar\eta$ and $\pa^{\al-\al'}\pa_3 \nabla b$  for $\al'\in \mathbb{N}^{1+2}$ with $\abs{\al'}=1$. By  Lemma \ref{p_poisson}, one has
\beq
\ns{\pa^{\al-\al'}\pa_3 \nabla  \bar\eta}_0 \ls \ns{\pa^{\al-\al'} \mathcal{P}\eta}_2\ls \as{\pa^{\al-\al'} \eta}_{3/2}\le \se{2N}
\eeq
and since $\al_0-\al_0'\le 2N-1$,
\beq
\ns{\pa^{\al-\al'}\pa_3 \nabla b}_0\le \ns{\pa^{\al-\al'} b}_2 \le \se{2N}.
\eeq
Hence, one can get
\beq
\ns{F^{3,\al} }_{0}\ls \fe{N+4}  \se{2N}.
\eeq

To estimate $F^{5,\al}$,  since the terms of the highest order derivatives are $\pa^{\al-\al'}u_h$ for $\al'\in \mathbb{N}^{1+2}$ with $\abs{\al'}=1$ and $\pa^{\al }\nabla_h \eta$, one then separates the cases $\al_0=2N$ and $\al_0\le 2N-1$. It follows from Lemma \ref{sobolev} and the trace theory  that  
\beq
\as{F^{5,\al}}_0\ls \fe{N+4} \se{2N}\text{ for } \al_0=2N\text{ and }\as{F^{5,\al}}_{1/2}\ls \fe{N+4} \se{2N}\text{ for }\al_0\le 2N-1.
\eeq

To estimate $F^{6,\al}$, since the terms of the highest order derivatives are $\pa^{\al-\al'}\nabla_h^2\eta$ for $\al'\in \mathbb{N}^{1+2}$ with $\abs{\al'}=1$, similarly, one obtains  
\beq
\as{F^{6,\al}}_{1/2}\ls \fe{N+4} \se{2N}.
\eeq

Next, for $F^{7,\al}$, the  new  terms of the highest order derivatives here are $\pa^{\al-\al'}\nabla b$  for $\al'\in \mathbb{N}^{1+2}$ with $\abs{\al'}=1$. It follows from the trace theory that
\beq
\as{\pa^{\al-\al'}  \nabla b}_0\le \ns{\pa^{\al-\al'} b}_2 \le \se{2N}
\text{ and }
\as{\pa^{\al-\al'}  \he}_0\le \ns{\pa^{\al-\al'} \he}_1 \le \se{2N}.
\eeq
Hence, one can get
\beq
\as{F^{7,\al} }_{0}\ls \fe{N+4} \se{2N}.
\eeq

Finally, in $\dt \hat {F}^{4,(2N,0)}$, the new terms of the highest order derivatives are $\dt^{2N+1}\nabla_h \eta$ and $\dt^{2N}\p_3 \hb$. Note that
\beq
\ns{\dt^{2N+1}\nabla \bar\eta}_{-1} \ls \as{\dt^{2N+1}  \eta}_{-1/2}\le \se{2N}\text{ and }\ns{\dt^{2N}\p_3 \hb}_{0}\le \fd{2N}.
\eeq
Hence, by Lemma \ref{sobolev-1}  one can get
\beq 
  \ns{\dt \hat {F}^{4,(2N,0)}}_{-1}   \ls \fe{N+4}\( \se{2N} +\fd{2N}\)
\eeq

Consequently, the estimates \eqref{p_F_e_01}--\eqref{p_F_e_01'} follow.
\end{proof}

We now present some specialized estimates of $F^{i,\al}$ when $\abs{\al}\le 2N-2$.
\begin{lem}\label{p_F_eN2}
For $\abs{\al}\le 2N-2$, it holds that
\begin{align}\label{p_F_e_03}
&\ns{F^{1,\al} }_{1}+ \ns{\dt F^{1,\al} }_{0}+\ns{F^{2,\al}}_{1} +\ns{\dt F^{2,\al}}_{0}+\ns{\dt^2 F^{2,\al}}_{0}+\ns{ F^{3,\al}}_{0}+\ns{ \hat F^{3,\al}}_{0} \nonumber
\\&\quad+\ns{ F^{4,\al}}_{0} +\ns{\hat F^{4,\al}}_{1} +\ns{\dt \hat F^{4,\al}}_{0}+\ns{\dt^2 \hat F^{4,\al}}_{0}+  \abs{F^{5,\al}}_{3/2}^2 +\abs{\dt F^{5,\al}}_{1/2}^2 +\abs{\dt^2F^{5,\al}}_{0}^2
\nonumber
\\&\quad + \abs{F^{6,\al}}_{1}^2 + \abs{\dt F^{6,\al}}_{0}^2+ \abs{F^{7,\al}}_{0}^2    \ls  \fd{N+4}\se{2N} 
\end{align}
and
\begin{align}\label{p_F_e_04}
&\ns{  F^{1,\al} }_{0} +\ns{ F^{2,\al}}_{0}+\ns{\dt  F^{2,\al}}_{0}+\ns{ \hat F^{4,\al}}_{0}+\ns{\dt  \hat F^{4,\al}}_{0}\nonumber
\\&\quad+ \abs{  F^{5,\al}}_{0}^2 + \abs{\dt F^{5,\al}}_{0}^2 + \abs{  F^{6,\al}}_{0}^2  \ls  \fe{N+4} \se{2N}.
\end{align}
\end{lem}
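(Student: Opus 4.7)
The plan is to follow the same scheme as in the proof of Lemma \ref{le_F_2N}, but now we have two extra derivatives in reserve since $|\alpha|\le 2N-2$ rather than $|\alpha|\le 2N$. This allows us to place the ``high'' factor in a dissipation-level norm (for \eqref{p_F_e_03}) or an energy-level norm (for \eqref{p_F_e_04}) rather than having to charge it to $\se{2N}$.

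First, I would expand each $F^{i,\alpha}$ via the Leibniz rule: write every commutator $[\pa^\alpha, f]g = \sum_{0\neq \alpha'\le \alpha} c_{\alpha,\alpha'}\pa^{\alpha'}f\,\pa^{\alpha-\alpha'}g$, and similarly unfold the symmetric commutators and the derivatives of $\D_t$, $\Dn$, $\diva$, $\curlv$ via \eqref{pav_def} (recall $\pa^\varphi_i - \pa_i = -\pa_i\bar\eta\,\pav_3$, which produces products whose structure is controlled by Lemma \ref{p_poisson} applied to $\bar\eta$). In each product I would then run the standard high-low dichotomy based on whether $|\alpha'|\le N$ or $|\alpha'|>N$: on the ``low'' side bound that factor in $L^\infty$ via Lemma \ref{sobolev} using at most $N+2$ spatial derivatives, and put the remaining budget on the ``high'' factor.

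The key observation, which is what makes \eqref{p_F_e_03} work, is that when the high factor involves $u$, $b$, $p$, $\hb$, $\he$ or $\eta$, the total number of derivatives landing on it is at most $2N-1$, and moreover after using an inner spatial derivative (like $\p_3$, coming from $\D_3$, or $\nabla_h$, coming from $\diva$, $\curlv$, or $\N$), the resulting quantity fits inside one of the dissipation norms constituting $\fd{N+4}$. For instance $\|\pa^{\alpha-\alpha'}\p_3 u\|_0$ with $|\alpha-\alpha'|\le 2N-2$ is controlled by $\|\pa_t^{\alpha_0-\alpha'_0}u\|_{|\alpha-\alpha'|-\alpha_0+\alpha'_0+1}$, which after a Sobolev interpolation sits in $\fd{N+4}$ once one uses the extra derivatives; the corresponding statement holds for $b$ using the anisotropic slot $\|\pa_t^j b\|_{1,n-j}$ in \eqref{low_diss}, for $\hb$ using $\|\pa_t^j \hb\|_{n-j+1}^2$, and for $\eta$ using $|\pa_t^{n-1}\eta|_1^2$ and $|\pa_t^{n}\eta|_0^2$. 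In particular for $\dt^2$ variants such as $\|\dt^2 F^{2,\alpha}\|_0$ or $\|\dt^2 \hat F^{4,\alpha}\|_0$, the two added temporal derivatives stay within the $2N-1$ cap because $|\alpha|\le 2N-2$. The electric field $\he$ enters only through Proposition \ref{eprop}, which reduces its norms to $\fe{\cdot}$ and $\fd{\cdot}$.

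I would then treat the terms case by case. For $F^{1,\alpha}$, $F^{3,\alpha}$, $F^{4,\alpha}$, $\hat F^{3,\alpha}$ and $\hat F^{4,\alpha}$ every commutator is bilinear and the dichotomy above directly yields the claim, with the pressure term inside $F^{1,\alpha}$ absorbed by the $\|\pa_t^j p\|_{n-j-1}^2$ slot of $\fd{N+4}$. For $F^{2,\alpha}$ the situation is simpler as only $u$ and $\bar\eta$ appear. The boundary terms $F^{5,\alpha}$ and $F^{7,\alpha}$ are handled via the trace theorem: the highest-order pieces are $\pa^{\alpha-\alpha'}u_h$, $\pa^\alpha\nabla_h \eta$, and $\pa^{\alpha-\alpha'}\nabla b$, which all reside in the dissipation-controlled slots thanks to the anisotropic norms in \eqref{low_diss}. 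The most delicate term is $F^{6,\alpha}$, which is trilinear in $\eta$ (with the nonlinear factor $(1+|\nabla_h\eta|^2)^{-1/2}$ contributing arbitrary orders of derivatives via Faà di Bruno); here I would use the symmetric commutator identity \eqref{cconm2} to absorb the lowest-order terms and control the remaining products by placing two of the $\eta$-factors in the ``low'' $L^\infty$-compatible slot via Sobolev, leaving a single high-derivative $\eta$-factor to land in $|\pa_t^{n-1}\eta|_1^2$ or $|\pa_t^{n}\eta|_0^2$. The estimate \eqref{p_F_e_04} follows by the same argument but using $\fe{N+4}$ instead of $\fd{N+4}$, which is strictly easier since $\fe{n}$ has one more derivative per slot than $\fd{n}$.

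The main obstacle is the accounting: because the dissipation $\fd{n}$ loses one derivative compared to $\fe{n}$ in some slots (e.g.\ $\|\pa_t^j u\|_{n-j-1}$ versus $\|\pa_t^j u\|_{n-j}$) but gains in others (via the anisotropic norms $\norm{\pa_t^j b}_{1,n-j}$), one must match each commutator factor to the correct slot. The trilinear curvature commutator $F^{6,\alpha}$ requires the most care, but otherwise the proof is a systematic bookkeeping exercise using Lemmas \ref{p_poisson} and \ref{sobolev} together with Proposition \ref{eprop}.
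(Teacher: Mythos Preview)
Your overall plan—repeat the scheme of Lemma~\ref{le_F_2N} via Leibniz expansion and a high/low split—is exactly the paper's approach (its proof is the one-liner ``this follows similarly as for Lemma~\ref{le_F_2N}, by checking the terms of the highest order derivatives involved''). However, your description of the allocation is backwards, and as written the argument would not close.

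The factor carrying the bulk of the derivatives (your ``high'' factor, with up to $2N-1$ derivatives) must go to $\se{2N}$, not to $\fd{N+4}$: the dissipation $\fd{N+4}$ controls at most $\sum_{j\le N+3}\|\dt^j u\|_{N+3-j}^2$, so a term like $\|\pa^{\alpha-\alpha'}\p_3 u\|_0$ with $|\alpha-\alpha'|$ near $2N-2$ simply does not live there for $N\ge 5$. It is the \emph{low}-derivative factor—the one you place in $L^\infty$ via $H^2$, carrying at most $N+2$ or so derivatives—that supplies the $\sqrt{\fd{N+4}}$ (for \eqref{p_F_e_03}) or $\sqrt{\fe{N+4}}$ (for \eqref{p_F_e_04}). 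This is the same pattern as in Lemma~\ref{le_F_2N}, where the $H^2$ norms on the low side were charged to $\fe{N+4}$ and the $H^0$ norms on the high side to $\se{2N}$; here one only replaces the low-side target $\fe{N+4}$ by $\fd{N+4}$, and the two extra derivatives in reserve from $|\alpha|\le 2N-2$ are what allow the high side (now possibly hit by the extra $\|\cdot\|_1$, $\dt$, or $\dt^2$) still to fit inside $\se{2N}$. With the allocation swapped, the rest of your bookkeeping sketch goes through.
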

\begin{proof}
This follows similarly as for Lemma  \ref{le_F_2N}, by checking the terms of the highest order derivatives involved.
\end{proof}

%%%%%%%%%%%%%%%%%%%%%%%%%%%%%%%%%%%%%%%%%%%%%%%%%%%%%%
\subsection{Linear perturbed formulation}
%%%%%%%%%%%%%%%%%%%%%%%%%%%%%%%%%%%%%%%%%%%%%%%%%%%%%%

 In order to use the linear structure of  \eqref{MHDv},  it is more convenient to write it as a perturbation of the linearized equations:
\beq\label{MHDv_perb}
\begin{cases}
\dt {u}   +\nabla  p =  \curl    b\times  \bar B +G^1  & \text{in } \Omega_-
\\ \diverge  u= G^2  &\text{in }\Omega_-
\\ \dt {b}  +\kappa \curl\curl  b=\curl (u\times \bar B)+G^3 & \text{in } \Omega_-
\\ \diverge  b = G^4 &\text{in }\Omega_-
\\\curl \hb=  \hat{G}^3,\quad\diverge \hb=  \hat{G}^4 &\text{in }\Omega_+
\\ \partial_t \eta  = u_3+G^5 &\text{on }\Sigma
\\ p=   - \sigma \Delta_h \eta +G^6 ,\quad \jump{b}=0 & \text{on } \Sigma
\\ u_3 =0, \quad b_3=0,\quad  {\kappa\curl b }\times e_3=(u\times \bar B)\times e_3+G^7 & \text{on } \Sigma_{-}
\\  \hb\times e_3=0 & \text{on } \Sigma_{+},
\end{cases}
\eeq
where
\begin{align}
&G^1=\dt\bar\eta \pav_3 {u}-  {u}\cdot \Dn  {u}+\nabla\bar\eta\pav_3 p -(\nabla\bar\eta\times\pav_3  {b})\times\bar B+\curlv   b\times b   ,
\\ &G^2=\nabla\bar\eta\cdot\pav_3 u,
\\ &G^3= \dt\bar\eta\pav_3b- \kappa (\curlv\curlv  b -\curl\curl  b)+(\curl-\curl )(u\times \bar B)+\curlv(u\times b),
\\& G^4=\nabla\bar\eta\cdot\pav_3 b,
\\& \hat{G}^3=\nabla\bar\eta\times\pav_3  \hb,
\\& \hat{G}^4=\nabla\bar\eta\cdot\pav_3 \hb,
\\ &G^5=- u_h\cdot\nabla_h\eta ,
\\ &G^6=- \sigma \diverge_h\(\(  ({1+\abs{\nabla_h\eta}^2})^{-1/2}-1\)\nabla_h\eta\) ,
\\& G^7=    \kappa(\curl b -\curlv b) \times e_3  .
\end{align} 

The nonlinear terms $G^i$ are estimated as follows.
\begin{lem}\label{le_G_2N}
It holds that
\begin{align}\label{p_G_e_02}
&\sum_{j=0}^{2N-1}\ns{    \dt^j  G^1}_{2N-j-1}+\sum_{j=0}^{2N-1}\ns{    \dt^j  G^2}_{2N-j-1}+\sum_{j=0}^{2N-1}\ns{    \dt^j  G^3}_{2N-j-1}\nonumber
\\&\quad+\sum_{j=0}^{2N-1}\ns{    \dt^j  G^4}_{2N-j}
+\sum_{j=0}^{2N-1}\ns{    \dt^j  \hat{G}^3}_{2N-j}+\sum_{j=0}^{2N-1}\ns{    \dt^j  \hat{G}^4}_{2N-j}\nonumber
\\&\quad  +\sum_{j=0}^{2N}\as{    \dt^j  G^5}_{2N-j-1/2}+\sum_{j=0}^{2N-1}\as{    \dt^j  G^6}_{2N-j-1/2} 
+ \sum_{j=0}^{2N-1}\as{    \dt^j  G^7}_{2N-j-1/2} \nonumber
\\&\quad \ls \min\{\fe{N+4},\fd{N+4} \}  \se{ 2N} 
\end{align} 
and   
\begin{align}\label{p_G_e_03}
\sum_{j=0}^{2N}\ns{    \dt^j  G^4}_{2N-j}+\sum_{j=0}^{2N}\ns{    \dt^j  \hat{G}^3}_{2N-j} +\sum_{j=0}^{2N}\ns{    \dt^j  \hat{G}^4}_{2N-j}  \ls \fe{N+4}\( \se{ 2N}+\fd{2N}\) .
\end{align}
\end{lem}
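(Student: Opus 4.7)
The plan is to proceed term by term as in Lemmas \ref{le_F_2N} and \ref{p_F_eN2}, writing each $G^i$ as a sum of products of the unknowns $u,p,b,\hb,\bar\eta$ and their derivatives, then applying the Leibniz rule, the anisotropic Sobolev embeddings of Lemma \ref{sobolev} (and Lemma \ref{sobolev-1} for the dual norm), and the Poisson extension estimate of Lemma \ref{p_poisson} to trade boundary regularity of $\eta$ for interior regularity of $\bar\eta$. For any product $fg$ with $|\alpha|\le 2N-1$ spatial/temporal derivatives falling on it, I split based on which factor carries more than half the derivatives: the factor with fewer derivatives is embedded into $L^\infty$ (or its higher Sobolev scale) using the extra four slots of $\se{2N}$, and the other factor is absorbed into either $\fe{N+4}$ or $\fd{N+4}$. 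The a priori assumption $\se{2N}\le \delta$ lets one ignore higher-order polynomials in $\se{2N}$.

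For \eqref{p_G_e_02}, each of $G^1,\dots,G^7$ is at least quadratic in the perturbation, and the structure is the same: one factor is either $\bar\eta$ (or its derivatives) and the other is a derivative of $u,b,\hb,p,\eta$; or both factors are from $u,b,\hb$. In the former case one uses Lemma \ref{p_poisson} to convert $\bar\eta$ norms to $\eta$ norms with a half-order gain that matches the definitions of $\se{2N}$ and $\fe{N+4}$. The dichotomy between $\fe{N+4}$ and $\fd{N+4}$ is forced by the different regularity profiles in \eqref{low_en} and \eqref{low_diss}: for a given bilinear term one chooses whichever of the two norms lets one put the ``big'' factor in the lower-regularity class and still have a full $\se{2N}$ available on the ``small'' factor. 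The boundary terms $G^5,G^6,G^7$ are handled by the trace theorem together with Lemma \ref{p_poisson}. The harmonic-extension terms $\hat G^3,\hat G^4$ follow the same pattern since $\nabla\bar\eta$ gains one derivative over $\eta$ on $\Sigma$.

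For \eqref{p_G_e_03}, the issue is that the top temporal derivative $\dt^{2N}$ of $G^4=\nabla\bar\eta\cdot\pav_3 b$ (and analogously of $\hat G^3,\hat G^4$) contains the terms $\dt^{2N+1}\bar\eta\,\p_3 b$ and $\nabla\bar\eta\,\dt^{2N}\p_3 b$. The first is estimated by $|\dt^{2N+1}\eta|_{-1/2}\,\|\p_3 b\|_{L^\infty}\ls \se{2N}\fe{N+4}$ through Lemma \ref{p_poisson}. For the second term the factor $\dt^{2N}\p_3 b$ (and analogously $\dt^{2N}\p_3\hb$) cannot be absorbed into $\fe{N+4}$ — it is not in the list \eqref{low_en} at the top temporal level — and so must be paid for by the dissipation contribution $\fd{2N}$ from the extra magnetic regularity in \eqref{high_en}. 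All other contributions at the $2N$th temporal derivative continue to be controlled by $\fe{N+4}\se{2N}$, which explains the shape of the bound in \eqref{p_G_e_03}.

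The main obstacle I expect is the term $G^3$, and in particular the magnetic diffusion difference $\kappa(\curlv\curlv - \curl\curl)b$: it is a bilinear combination of up to two derivatives of $\bar\eta$ with up to two derivatives of $b$, so distributing $\dt^j\pa^{\al}$ with $j+|\al|=2N-1$ requires a careful count to see that the ``top'' factor always lands in $\se{2N}$ (with the magnetic-regularity gain on $b$ in \eqref{high_en}) while the ``small'' factor lands in $\fe{N+4}$ or $\fd{N+4}$. Here the choice of either $\fe{N+4}$ or $\fd{N+4}$ on the right-hand side, which is essential for the later decay argument, relies on the fact that the extra four derivatives in either low-order norm always leave enough room; once this accounting is set up once for $G^3$, the remaining $G^i$ are strictly easier and go through analogously.
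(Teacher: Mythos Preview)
Your approach is essentially the same as the paper's: both proofs defer to the machinery of Lemmas~\ref{le_F_2N} and \ref{p_F_eN2} for \eqref{p_G_e_02}, and both identify the single new highest-order contribution $\dt^{2N}\p_3 b$ (and $\dt^{2N}\p_3\hb$) as the reason $\fd{2N}$ must appear on the right of \eqref{p_G_e_03}. One small inaccuracy: in your discussion of $\dt^{2N}G^4$ you list a term $\dt^{2N+1}\bar\eta\,\p_3 b$, but $G^4=\nabla\bar\eta\cdot\pav_3 b$ contains no explicit time derivative, so applying $\dt^{2N}$ produces at most $\dt^{2N}\nabla\bar\eta$ on the $\eta$-factor, not $\dt^{2N+1}\bar\eta$; this is harmless since that phantom term is absent and your treatment of the genuine new term $\nabla\bar\eta\,\dt^{2N}\p_3 b$ via $\fd{2N}$ is exactly what the paper does.
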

\begin{proof}
\eqref{p_G_e_02} can be proved similarly as Lemmas  \ref{le_F_2N} and \ref{p_F_eN2}, and \eqref{p_G_e_03}  follows similarly by noting that   the  new  term of the highest order derivatives is $ \dt^{2N} \p_3 b$ and estimating $\ns{ \dt^{2N} \p_3 b }_0\le \fd{2N}$. 
\end{proof}

The following estimates on the difference between $\D_i $ and $\pa_i$ will be used later.
\begin{lem}
It holds that for  $\abs{\al}\le 2N-1$,  
\beq\label{curl_e_1}
\ns{ \pa^\al \(\D_i u-\pa_i  u\)  }_{0}  \ls  \min\{\fe{N+4},\fd{N+4} \} \se{2N}
\eeq 
and that
for  $\abs{\al}\le n$ with $n=N+4,\dots,2N$,  
\beq\label{curl_e_2}
\ns{ \D_i \pa^\al b-\pa_i  \pa^\al b  }_{0}  \ls  \fe{N+4}\fd{n} .
\eeq 
\end{lem}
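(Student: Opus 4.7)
My plan is to exploit the elementary identity
\[
\D_i f - \pa_i f = -\frac{\pa_i \bar\eta}{\pa_3 \varphi}\,\pa_3 f, \qquad i=t,1,2,3,
\]
(using $\pa_3\varphi-1=\pa_3\bar\eta$ when $i=3$), which follows directly from the definition \eqref{pav_def}. This reduces both inequalities to the Leibniz-plus-Sobolev bookkeeping already performed in the proofs of Lemmas \ref{le_F_2N} and \ref{p_F_eN2}: products of derivatives of $\bar\eta$ (treated through the specialized harmonic extension $\mathcal{P}\eta$ and Lemma \ref{p_poisson}) and derivatives of $u$ or $b$ are split so that one factor lands in $L^\infty$ via Sobolev and the other in $L^2$.

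For \eqref{curl_e_1}, I would apply $\pa^\al$ with $|\al|\le 2N-1$ and expand by Leibniz as a sum over $\beta\le\al$ of terms of the form
\[
\pa^\beta\!\(\frac{\pa_i\bar\eta}{\pa_3\varphi}\)\,\pa^{\al-\beta}\pa_3 u,
\]
and split according to whether $|\beta|$ is small or large. In each regime the low-order factor is bounded in $L^\infty$ by its $H^2$-norm; because $|\al|\le 2N-1$ leaves at least four derivatives of ``room,'' that $H^2$-norm involves at most $N+4$ total derivatives and is therefore controlled, through Lemma \ref{p_poisson} together with the definitions \eqref{low_en} and \eqref{low_diss}, by both $\fe{N+4}^{1/2}$ and $\fd{N+4}^{1/2}$. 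The companion high-order factor carries at most $2N$ derivatives and is bounded in $L^2$ by $\se{2N}^{1/2}$ (on $u$) or again by $\se{2N}^{1/2}$ (on $\bar\eta$ via Lemma \ref{p_poisson}). The only saturation case is when the full $\pa^\al$ falls on $\pa_t\bar\eta$, producing $\dt^{2N}\bar\eta$, whose $L^2$-norm is controlled by $|\dt^{2N}\eta|_{-1/2}\le|\dt^{2N}\eta|_1$, and this is included in $\se{2N}$; this is exactly the mechanism already used in \eqref{ftes1}.

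For \eqref{curl_e_2}, the quantity to be estimated is already in product form,
\[
\D_i\pa^\al b-\pa_i\pa^\al b = -\frac{\pa_i\bar\eta}{\pa_3\varphi}\,\pa_3\pa^\al b,
\]
so no Leibniz expansion is needed: the $\bar\eta$-factor is placed in $L^\infty$ via Sobolev and Lemma \ref{p_poisson} and controlled by a norm of $\eta$ sitting inside $\fe{N+4}^{1/2}$, while the $L^2$-factor $\pa^\al\pa_3 b$ for $|\al|\le n$ sits in the anisotropic piece $\norm{\dt^{\al_0} b}_{1,|\al|-\al_0}$ of $\fd{n}$ --- available precisely because the magnetic diffusion gives $b$ one extra order of regularity in the dissipation \eqref{low_diss} compared with $u$.

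I expect no genuine obstacle beyond keeping the derivative counts straight; the one subtle point is the saturation time-derivative case in \eqref{curl_e_1}, where it is essential that $|\dt^{2N}\eta|_1^2$ sits inside $\se{2N}$. Everything else is a direct transcription of the arguments already deployed in the proofs of Lemmas \ref{le_F_2N} and \ref{p_F_eN2}.
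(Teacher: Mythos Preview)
Your proposal is correct and takes essentially the same approach as the paper, which simply defers to the bookkeeping in Lemmas \ref{le_F_2N} and \ref{p_F_eN2}. Your explicit observation that \eqref{curl_e_2} requires no Leibniz expansion (since the difference operator acts after $\pa^\al$, leaving the bare product $-\frac{\pa_i\bar\eta}{\pa_3\varphi}\,\pa_3\pa^\al b$) and your identification of the saturation case $\dt^{2N}\bar\eta$ in \eqref{curl_e_1} are exactly the right fine points, and the anisotropic norm $\norm{\dt^{\al_0} b}_{1,n-\al_0}$ in $\fd{n}$ absorbs $\pa_3\pa^\al b$ precisely because the relevant $\al$ are tangential.
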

\begin{proof}
The proof   follows in the same way as for Lemmas \ref{le_F_2N} and \ref{p_F_eN2}.
\end{proof}

%%%%%%%%%%%%%%%%%%%%%%%%%%%%%%%%%%%%%%%%%%%%%%%%%%%%%%
\subsection{Vorticity equations}
%%%%%%%%%%%%%%%%%%%%%%%%%%%%%%%%%%%%%%%%%%%%%%%%%%%%%%

For the estimates of the normal derivatives of $u$, as for the incompressible Euler equations, a natural way is to estimate first the vorticity $\curlv u$ (to get rid of the pressure term $\nav p$ and avoid the loss of derivatives) and then  to use the Hodge-type  estimates. Applying $\curlv$ to the first equation in \eqref{MHDv} yields that
\beq\label{curlv_eq}
\D_t \curlv{u}  +    {u}\cdot \Dn  \curlv {u} =    \bar{B}   \cdot \Dn  \curlv {b}  + \curlv{u}\cdot \Dn  {u}+\curlv(\curlv{b}\times b ).
\eeq
The difficulty is that there is a linear forcing term $\bar{B}   \cdot \Dn  \curlv {b}$ on the right hand side of \eqref{curlv_eq}, and one cannot use the equations of $\curlv b$ to balance this term as  for the tangential energy evolution estimates, due to the usual difficulties caused by the diffusion term $\kappa\curlv\curlv b$. This is harmful for  the global-in-time uniform estimate of $\curlv {u} $. But, on the other hand, if there were without this term, then it would be difficult to derive the global-in-time uniform estimates  of  the higher order derivatives for $\curlv {u} $ just as for the incompressible Euler equations. The crucial observation here is that there is a new damping structure for the vorticity $\curlv u$.  Indeed, it follows from the second component of the third equation and the second equation in \eqref{MHDv} that
\begin{align}\label{curl_1}
&\bar{B}   \cdot \Dn  (\curlv  b)_1 \equiv\bar{B}_h   \cdot \Dn_h  (\curlv  b)_1+\bar{B}_3      (\curlv\curlv  b)_2 +\bar{B}_3     \D_1  (\curlv  b)_3\nonumber
 \\&\quad=\bar{B}_h   \cdot \Dn_h  (\curlv  b)_1  +\bar{B}_3     \D_1  (\curlv  b)_3+\frac{\bar B_3}{\kappa} ( -\D_t b_2+\bar{B}   \cdot \Dn u_2+(\curlv(u\times b ))_2) .
\end{align}
On the other hand, one can write
\beq\label{curl_3}
\bar{B}   \cdot \Dn u_2
\equiv \bar{B}_h   \cdot \Dn_h u_2-\bar B_3 (\curlv  u)_1+\bar B_3 \pav_2 u_3.
\eeq
Hence, as a consequence of \eqref{curl_1} and \eqref{curl_3}, the first component of \eqref{curlv_eq} can be rewritten as
\begin{align} \label{MHDcurlv1}
 &\D_t (\curlv{u})_1  +    {u}\cdot \Dn  (\curlv {u})_1+\frac{\bar{B} _3^2}{\kappa}(\curlv u)_1 \nonumber
 \\&\quad=  \bar{B}_h   \cdot \Dn_h  (\curlv  b)_1  +\bar{B}_3     \D_1  (\curlv  b)_3+\frac{\bar B_3}{\kappa} ( -\D_t b_2+\bar{B}_h   \cdot \Dn_h u_2 +\bar B_3 \pav_2 u_3 ) \nonumber
  \\&\qquad +\frac{\bar B_3}{\kappa}  (\curlv(u\times b ))_2 + \curlv{u}\cdot \Dn  {u}_1+( \curlv(\curlv{b}\times b ))_1  .
\end{align}
Similarly, one has
\begin{align} \label{MHDcurlv2}
 &\D_t (\curlv{u})_2  +    {u}\cdot \Dn  (\curlv {u})_2+\frac{\bar{B} _3^2}{\kappa}(\curlv u)_2 \nonumber
 \\&\quad=  \bar{B}_h   \cdot \Dn_h  (\curlv  b)_2  +\bar{B}_3     \D_2  (\curlv  b)_3-\frac{\bar B_3}{\kappa} ( -\D_t b_1+\bar{B}_h   \cdot \Dn_h u_1 +\bar B_3 \pav_1 u_3 ) \nonumber
  \\&\qquad -\frac{\bar B_3}{\kappa}  (\curlv(u\times b ))_1 + \curlv{u}\cdot \Dn  {u}_2+( \curlv(\curlv{b}\times b ))_2 .
\end{align}
The equations \eqref{MHDcurlv1} and \eqref{MHDcurlv2} yield a transport-damping evolution structure for $(\curlv{u})_h $, and one then sees the key roles of the positivity of the magnetic diffusion coefficient $\kappa>0$ and the non-vanishing of $\bar B_3\neq0$.

Applying $\pa^\al$ for $\al\in \mathbb{N}^{1+3}$ with $\abs{\al}\ge 1$ to  \eqref{MHDcurlv1} and \eqref{MHDcurlv2} gives that
\beq\label{MHDcurlva}
 \D_t \pa^\al (\curlv{u})_h  +    {u}\cdot \Dn  \pa^\al (\curlv{u})_h+\frac{\bar{B}_3^2}{\kappa} \pa^\al (\curlv{u})_h=    \pa^\al L_h +\Phi^{\al}_h ,
\eeq
where for $i=1,2,$
\beq\label{lheq}
L_i=  \bar{B}_h   \cdot \nabla_h  (\curl  b)_i  +\bar{B}_3     \p_i  (\curl  b)_3+(-1)^{i+1}\frac{\bar B_3}{\kappa} ( -\p_t b_{3-i}+\bar{B}_h   \cdot \nabla_h u_{3-i} +\bar B_3 \p_{3-i} u_3 )
\eeq
and 
\beq
\Phi^{\al}_h=\[\pa^\al, \D_t +    {u}\cdot \Dn\]  (\curlv {u})_h+ \pa^\al\Phi_h
\eeq
with that for $i=1,2,$
\begin{align}
\Phi_i=& -\bar{B}_h   \cdot \nabla_h \bar\eta  \pav_3 (\curl  b)_i  -\bar{B}_3     \p_i   \bar\eta  \pav_3 (\curl  b)_3- \bar{B}_h   \cdot \Dn_h  (\nabla\bar \eta\times \pav_3 b)_i  -\bar{B}_3     \D_i  (\nabla\bar \eta\times \pav_3  b)_3 \nonumber
  \\&+(-1)^{i+1}\frac{\bar B_3}{\kappa} (  \dt\bar \eta   \pav_3  b_{3-i}-\bar{B}_h   \cdot \nabla\bar\eta \pav_3  u_{3-i} -\bar B_3 \p_{3-i} \bar\eta \pav_3 u_3 ) \nonumber
  \\&  +(-1)^{i+1}\frac{\bar B_3}{\kappa}  (\curlv(u\times b ))_{3-i} + \curlv{u}\cdot \Dn  {u}_i+( \curlv(\curlv{b}\times b ))_i .
\end{align}

The nonlinear term $\Phi^{\al}_h$ can be estimated as follows.
\begin{lem}
It holds that for  $\abs{\al}\le 2N-1$,
\beq\label{p_Phi_e_1}
\ns{\Phi^{\al}_h  }_{0}   \ls   \min\{\fe{N+4},\fd{N+4} \}\se{2N}.
\eeq 
\end{lem}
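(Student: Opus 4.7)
The plan is to follow the same strategy as in Lemmas \ref{le_F_2N} and \ref{p_F_eN2}: apply the Leibniz rule to each factor in $\Phi^{\al}_h$, identify for every resulting product which factor carries the highest-order derivatives (to be placed in $L^2$ and bounded by $\se{2N}$ via \eqref{high_en} and Lemma \ref{p_poisson}) and which factor is of lower order (to be placed in $L^\infty$ or $H^2$ via Sobolev embedding and absorbed into $\fe{N+4}$, respectively $\fd{N+4}$). The four extra derivatives built into $N+4$ are precisely what makes this accounting work once $|\al|\le 2N-1$.

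First I would split $\Phi^{\al}_h =[\pa^\al, \D_t +u\cdot \Dn ](\curlv u)_h+\pa^\al \Phi_h$ and treat the commutator piece. Using $\D_t =\pa_t -(\dt\bar\eta/\pa_3\varphi)\pa_3$ and $u\cdot\Dn=u_h\cdot\nabla_h +(u_3-u_h\cdot\nabla_h\bar\eta)\pav_3$, Leibniz expansion of the commutator removes the $\al'=0$ contribution, so every term is a product $\pa^{\al'}(\text{coefficient})\cdot \pa^{\al-\al'}\pa_3(\curlv u)_h$ with $|\al'|\ge 1$. Splitting the sum at $|\al'|\le N$ vs.\ $|\al'|>N$ as in \eqref{ftes1} and invoking Lemmas \ref{p_poisson} and \ref{sobolev}, each factor's $L^2$ or $H^2$ norm is controlled by $\se{2N}$ or $\fe{N+4}$, respectively. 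The worst spatial-derivative combination lands on $\pa_3 (\curlv u)_h$, which needs two spatial derivatives of $u$ and thus $|\al-\al'|\le 2N-2$ of free derivatives; since $|\al|\le 2N-1$, this fits inside $\se{2N}$. Similarly, the worst temporal derivative on $\bar\eta$ is $\dt^{2N}\bar\eta$, bounded via Lemma \ref{p_poisson} by $|\dt^{2N}\eta|_{-1/2}\le \se{2N}$.

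Next I would turn to $\pa^\al \Phi_h$, whose summands are products of derivatives of $\bar\eta$, $u$, $b$, and their curls. For the linear-in-$\bar\eta$ terms (first line and second line of the formula for $\Phi_i$), the Leibniz rule gives pairs $\pa^{\al'}\bar\eta\cdot \pa^{\al-\al'}(\text{derivative of }u\text{ or }b)$, and the same $L^2\times L^\infty$ splitting at $|\al'|\le N$ vs.\ $|\al'|>N$ works as above; the factor $\pa^{\al-\al'}\curl b$ is of order at most $2N$ in spatial derivatives, and in view of the definition \eqref{high_en} (which contains $\|\dt^j b\|_{2N-j+1}^2$ for $j\le 2N-1$) is controlled by $\se{2N}$. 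For the quadratic terms $\curlv(u\times b)$, $\curlv u\cdot \Dn u$, and $\curlv(\curlv b\times b)$, I argue identically, noting that $\curlv(\curlv b\times b)$ involves at most one derivative on $\curlv b$ and two on $b$, all within the $\se{2N}$ budget. The estimate $\curl^\varphi-\curl$ types of commutators are handled by \eqref{curl_e_1}.

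For the $\min$ to yield the $\fd{N+4}$ version, I would observe that in every product one of the two factors contains a spatial derivative (a $\nabla_h$, a $\pav_3$, or a $\curlv$) of either $u$ or $b$; hence one can always trade one unit of regularity for a unit of dissipation, placing that factor in the $\fd{N+4}$ norm (which contains $\|\dt^j u\|_{n-j-1}^2$ and $\|\dt^j b\|_{1,n-j}^2$) while keeping the other factor in $\se{2N}$. This is exactly the bookkeeping of Lemma \ref{p_F_eN2}.

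The main obstacle I anticipate is the commutator $[\pa^\al, u\cdot \Dn](\curlv u)_h$, since $(\curlv u)_h$ already contains one spatial derivative of $u$, so a term like $\pa^\al u\cdot\Dn(\curlv u)_h$ requires $\Dn(\curlv u)_h$ to sit in $L^\infty$; this is available through $H^2 \hookrightarrow L^\infty$ using $\|u\|_{0,N+4}^2\subset \fe{N+4}$ together with the interior elliptic-type control, and this is precisely the mechanism already exploited in the proof of Lemma \ref{le_F_2N} for $F^{1,\al}$. Combining all of the above yields \eqref{p_Phi_e_1}.
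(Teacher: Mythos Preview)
Your proposal is correct and follows exactly the approach the paper intends: the paper's own proof is the one-liner ``The proof follows in the same way as for Lemmas \ref{le_F_2N} and \ref{p_F_eN2},'' and your sketch is precisely an expansion of that. One small imprecision: when you bound $\Dn(\curlv u)_h$ in $L^\infty$ you invoke $\norm{u}_{0,N+4}^2\subset\fe{N+4}$ plus ``interior elliptic-type control,'' but no elliptic step is needed---$\fe{N+4}$ (and $\fd{N+4}$) already contain the full isotropic norm $\norm{u}_{N+3}^2$, which directly gives the $L^\infty$ bound via Sobolev embedding since $N\ge 4$.
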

\begin{proof}
The proof follows in the same way as for Lemmas \ref{le_F_2N} and \ref{p_F_eN2}.
\end{proof}

%%%%%%%%%%%%%%%%%%%%%%%%%%%%%%%%%%%%%%%%%%%%%%%%%%%%%%
\section{Tangential energy evolution}\label{sec_5}
%%%%%%%%%%%%%%%%%%%%%%%%%%%%%%%%%%%%%%%%%%%%%%%%%%%%%%

In this section we will derive the tangential energy evolution estimates for solutions to \eqref{MHDv}. For a generic integer $n\ge 3$, we define the tangential energy that involves the temporal and horizontal spatial derivatives by, employing the anisotropic Sobolev norm \eqref{ani_sob_norm},
\beq\label{bar_en_def}
\seb{n}:=\sum_{j=0}^n\norm{\dt^j u}_{0,n-j}^2+\sum_{j=0}^n\norm{\dt^jb}_{0,n-j}^2+\sum_{j=0}^n\norm{\dt^j\hb}_{0,n-j}^2+ \sum_{j=0}^n\abs{\dt^j\eta }_{n-j+1}^2
\eeq
and the corresponding dissipation by
\beq\label{bar_dn_def}
\sdb{n}:= \sum_{j=0}^n\norm{\curl\dt^jb}_{0,n-j}^2.
\eeq

%%%%%%%%%%%%%%%%%%%%%%%%%%%%%%%%%%%%%%%%%%%%%%%%%%%%%%
\subsection{Energy evolution at the $2N$ level}
%%%%%%%%%%%%%%%%%%%%%%%%%%%%%%%%%%%%%%%%%%%%%%%%%%%%%%

We start with the following time-integrated tangential energy evolution estimate at the $2N$ level.
\begin{prop}\label{evolution_2N}
It holds that
\beq \label{en_ev_2N}
\seb{2N}(t)+ \int_0^t  \sdb{2N} (s)\,ds \ls   \se{2N}(0)+(\se{ 2N}(t))^{3/2} +\int_0^t\sqrt{\fe{N+4} } \(\se{ 2N} +\fd{2N} \) .
\eeq
\end{prop}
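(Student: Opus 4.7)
The plan is to start from the perturbed system \eqref{MHDva} for $\pa^\al$ with $\al\in\mathbb{N}^{1+2}$, $|\al|\le 2N$ (purely temporal/horizontal derivatives, so $\pa^\al$ commutes with boundary geometry), and test against $\pa^\al u$ on $\Omega_-$, $\pa^\al b$ on $\Omega_-$ and $\pa^\al\hb$ on $\Omega_+$. Using $d\V=\p_3\varphi\,dx$ together with the geometric identities $\diva \p^\alpha u = F^{2,\al}$, $\diva(\pa^\al b)=0$ (since $\p^\al$ is tangential) and $\D_t \p_3\varphi + \diva(u\p_3\varphi)=0$, the $u$-equation yields $\frac12\frac{d}{dt}\int_{\Omega_-}|\pa^\al u|^2\,d\V$ with the pressure and Lorentz terms producing boundary traces on $\Sigma$ after integration by parts. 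The kinematic condition in \eqref{MHDva} turns these boundary traces, together with the dynamic condition $\p^\al p = -\sigma\p^\al H + F^{6,\al}$, into the surface tension energy $\frac{1}{2}\frac{d}{dt}\int_\Sigma \sigma|\nabla_h\p^\al\eta|^2$ up to lower order. Combined with the analogous estimate for $b$ (which produces the dissipation $\kappa\int_{\Omega_-}|\curlv\p^\al b|^2\,d\V$ via the resistive term and uses the interface jump condition together with the vacuum side tested against $\p^\al\hb$ via \eqref{MHDv3e}), this yields the identity \eqref{intro_1}.

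Once \eqref{intro_1} is established, summing in $|\al|\le 2N$ gives a time-evolution for $\seb{2N}$ plus $\int\sdb{2N}$ equal to $\sum_\mathcal{R}$ plus the three boundary/volume terms $\int_{\Omega_-}\p^\al p\,F^{2,\al}\,d\V$, $-\int_\Sigma \sigma\p^\al H\,F^{5,\al}$ and $\int_{\Omega_+}\p^\al\hee\cdot\hat F^{4,\al}\,d\V$. Using Lemmas \ref{le_F_2N}--\ref{p_F_eN2} (including $\fe{N+4}\ls\se{2N}$) and Proposition \ref{eprop} to bound $\he$, every term in $\sum_\mathcal{R}$ is either pointwise in time dominated by $\sqrt{\fe{N+4}}\bigl(\se{2N}+\fd{2N}\bigr)$ (hence integrable against $ds$ as required), or its primitive in time is $\ls(\se{2N})^{3/2}$ after a trivial integration by parts in $t$. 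This disposes of the ``regular'' part of the right-hand side.

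The main obstacle, as flagged in the outline, is the case $\al_0=2N$, where $\dt^{2N}p$, $\dt^{2N}H$ and $\dt^{2N}\he$ are not controlled by $\se{2N}$. For the vacuum term $\int_{\Omega_+}\dt^{2N}\he\cdot\hat F^{4,(2N,0)}\,d\V$ I would integrate by parts in time, producing $\frac{d}{dt}\int_{\Omega_+}\p_t^{2N-1}\he\cdot\hat F^{4,(2N,0)}\,d\V$ (a perfect derivative that is absorbed into $\seb{2N}$ using the $H^1$ control of $\p_t^{2N-1}\he$ from Proposition \ref{eprop}) plus a leftover $\int_{\Omega_+}\p_t^{2N-1}\he\cdot\p_t \hat F^{4,(2N,0)}\,d\V$; the key estimate \eqref{p_F_e_01'} puts $\p_t\hat F^{4,(2N,0)}$ in $H^{-1}$ with norm $\ls\sqrt{\fe{N+4}}\sqrt{\se{2N}+\fd{2N}}$, so an $H^1$--$H^{-1}$ duality with $\p_t^{2N-1}\he\in H^1$ finishes this contribution.

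The two hydrodynamic-interface terms $\int_{\Omega_-}\dt^{2N}p\,F^{2,(2N,0)}\,d\V$ and $-\int_\Sigma\sigma\dt^{2N}H\,F^{5,(2N,0)}$ are treated jointly, following our earlier strategy in \cite{WX}. First I write $F^{2,(2N,0)}=\diva w$ for an appropriate $w$ supplied by the divergence structure (using $F^{2,\al}=-[\p^\al,\diva]u$), integrate by parts in $\Omega_-$ to move $\nav$ off $\dt^{2N}p$, and pick up a boundary trace on $\Sigma$ that pairs with the $H$ term via the dynamic boundary condition $p=-\sigma H+F^{6,(2N,0)}$. The $p$ and $\sigma H$ contributions then cancel exactly, leaving surface integrals with $F^{5,(2N,0)}$, $F^{6,(2N,0)}$ and commutator terms for which one more integration by parts in $t$ produces a perfect derivative (absorbed into $\seb{2N}$) plus a remainder bounded by $\sqrt{\fe{N+4}}\bigl(\se{2N}+\fd{2N}\bigr)$ pointwise in $t$. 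Integrating the resulting differential inequality from $0$ to $t$ and using that the boundary-at-$t$ pieces are $\ls (\se{2N})^{3/2}$ while the boundary-at-$0$ pieces are $\ls \se{2N}(0)$ delivers precisely \eqref{en_ev_2N}. The delicate bookkeeping is ensuring that no derivative lands on $\dt^{2N}p$, $\dt^{2N}H$ or $\dt^{2N}\he$ after the integrations by parts are complete, which is exactly where the cancellation between pressure and surface tension on $\Sigma$ is indispensable.
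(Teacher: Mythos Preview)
Your overall plan matches the paper's proof: derive the energy identity \eqref{en_iden_al}, bound the routine commutator terms via Lemma~\ref{le_F_2N}, and for $\al_0=2N$ treat the $\hat E$ term by one integration by parts in $t$ using \eqref{p_F_e_01'}. The handling of the total time derivatives $\mathcal{B}_{2N}$ at the endpoints (your ``boundary-at-$t$ pieces are $\ls(\se{2N})^{3/2}$'') is also as in the paper.

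The one place where your description would not go through as written is the joint treatment of $\int_{\Omega_-}\dt^{2N}p\,F^{2,(2N,0)}\,d\V$ and $-\int_\Sigma\sigma\dt^{2N}H\,F^{5,(2N,0)}$. You cannot write $F^{2,(2N,0)}=\diva w$ in any useful way (the only natural choice $w=\dt^{2N}u$ just undoes the previous step), and you do not move $\nav$ \emph{off} $\dt^{2N}p$. What the paper does is expand, via \eqref{div_free}, $-\p_3\varphi\,F^{2,(2N,0)}=[\dt^{2N},-\nabla_h\bar\eta]\cdot\p_3 u_h+[\dt^{2N},\p_3\bar\eta]\diverge_h u_h$ and split it into five pieces. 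Pieces $F^{2,(2N,0)}_2,\dots,F^{2,(2N,0)}_5$ are handled by an integration by parts in $t$ (and, for the top-order-in-$\eta$ pieces, an additional $\p_3$- or $\nabla_h$-IBP). The dangerous piece is $F^{2,(2N,0)}_1=-2N\,\dt\nabla_h\bar\eta\cdot\dt^{2N-1}\p_3 u_h$: here one integrates by parts in $x_3$ \emph{first} (not in $t$), moving $\p_3$ onto $\dt^{2N}p\,\dt\nabla_h\bar\eta$; the order matters because $\dt^{2N}u_h$ has no boundary trace estimate. This produces the $\Sigma$-trace $\int_\Sigma\dt^{2N}p\cdot 2N\,\dt\nabla_h\eta\cdot\dt^{2N-1}u_h$. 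Now the exact dynamic condition $\dt^{2N}p=-\sigma\dt^{2N}H$ (there is no $F^{6}$ correction here) combines this with $-\int_\Sigma\sigma\dt^{2N}H\,F^{5,(2N,0)}$, and the point is that the $\ell=1$ Leibniz piece of $F^{5,(2N,0)}$ is precisely $-2N\,\dt\nabla_h\eta\cdot\dt^{2N-1}u_h$, so it cancels. What survives is $\int_\Sigma\sigma\dt^{2N}H\,(u_h\cdot\nabla_h\dt^{2N}\eta+\tilde F^{5,(2N,0)})$, and the remaining top term $u_h\cdot\nabla_h\dt^{2N}\eta$ is disposed of by an $x_h$-IBP using the divergence form of $\dt^{2N}H$ in \eqref{MHDva}. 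Your sketch has the right idea (use $p=-\sigma H$ on $\Sigma$ to effect a cancellation) but the mechanism you describe---writing $F^{2,(2N,0)}$ as a $\diva$ and moving $\nav$ off $p$---does not match what actually works and would stall on the uncontrolled trace of $\dt^{2N}u_h$.
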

\begin{proof}
Let $\al\in \mathbb{N}^{1+2}$ such that  $1\le|\al|\le 2N$. Taking the inner product of the first equation in \eqref{MHDva} with $\pa^\al u$ and the third equation with $\pa^\al b$, respectively, integrating by parts over $\Omega_-$ by using the second, eighth and eleventh equations in \eqref{MHDv}, and then adding the resulting equations together, one has
\begin{align}\label{en_iden_al_0}
& \hal\dtt \int_{\Omega_-} \(\abs{\pa^\al u}^2+\abs{\pa^\al b}^2 \) d\V - \hal  \int_{\Sigma}  \dt \eta\abs{\pa^\al b}^2   +\int_{\Omega_-}   \nabla^\varphi \pa^\al p   \cdot \pa^\al u \, d\V
\\&\quad=\int_{\Omega_-}\left(\curlv \pa^\al b\times (\bar B+b)\cdot \pa^\al u+ \curlv \pa^\al E \cdot \pa^\al b \) d\V+\int_{\Omega_-} \(F^{1,\al}\cdot \pa^\al u + F^{3,\al}  \cdot \pa^\al b\)d\V.\nonumber
\end{align}
The integration by parts over $\Omega_-$  shows that, by using the thirteenth, fourth,    eleventh  and tenth  equations in \eqref{MHDva},
\begin{align}\label{en_iden_al_1}
&  \int_{\Omega_-}\left(\curlv \pa^\al b\times (\bar B+b)\cdot \pa^\al u+ \curlv \pa^\al E \cdot \pa^\al b \) d\V\nonumber
\\&\quad= \int_{\Omega_-}\left(  (\bar B+b)\times \pa^\al u+ \pa^\al E \) \cdot  \curlv \pa^\al b\,d\V+\int_\Sigma \n\times\pa^\al E\cdot    \pa^\al b
\nonumber
\\&\quad= \int_{\Omega_-}\left( -\kappa\curlv \pa^\al b +F^{4,\al} \) \cdot  \curlv \pa^\al b\,d\V+\int_\Sigma \n\times\pa^\al \hat E\cdot    \pa^\al \hb +\int_\Sigma   F^{7,\al}\cdot    \pa^\al b   .
\end{align}
The integration by parts over $\Omega_+$  yields, by using the fifth, sixth,  and fourteenth equations in \eqref{MHDva}, 
\begin{align}\label{en_iden_al_11}
  &\int_\Sigma \n\times\pa^\al \hat E\cdot    \pa^\al \hb =  -\int_{\Omega_+} \curlv \pa^\al \hat E \cdot    \pa^\al \hb \, d\V+ \int_{\Omega_+}  \pa^\al \hat E \cdot    \curlv\pa^\al \hb \, d\V+\int_{\Sigma_+} e_3 \times \pa^\al \hat E\cdot    \pa^\al \hb\nonumber
  \\& \quad=  -\int_{\Omega_+} \(\D_t \pa^\al {\hb}-\hat F^{3,\al}\) \cdot    \pa^\al \hb\, d\V+ \int_{\Omega_+}  \pa^\al \hat E \cdot   \hat F^{4,\al} \, d\V + \int_{\Sigma_+}     \pa^\al \hb \times e_3 \cdot \pa^\al \hat E\nonumber
  \\&\quad =  -\hal \dtt\int_{\Omega_+}   \abs{\pa^\al {\hb}}^2\, d\V-\hal\int_\Sigma \dt \eta \abs{\pa^\al {\hb}}^2+\int_{\Omega_+}  \(\hat F^{3,\al}  \cdot    \pa^\al \hb+ \pa^\al \hat E \cdot   \hat F^{4,\al} \) d\V.
\end{align}
By the twelfth, eighth, seventh and second equations in \eqref{MHDva}, one integrates by parts over $\Omega_-$ to obtain
\begin{align}\label{ffhh}
\int_{\Omega_-}   \nabla^\varphi \pa^\al p   \cdot \pa^\al u \, d\V &= \int_{\Sigma }   \pa^\al p    \pa^\al u\cdot \N    - \int_{\Omega_-}   \pa^\al p   \diva \pa^\al u \, d\V\nonumber
\\&= \int_{\Sigma }   ( -  \sigma \pa^\alpha H     )   \(\partial_{t} \pa^\alpha \eta- F^{5,\al}\)  - \int_{\Omega_-}   \pa^\al p   F^{2,\al} \, d\V.
\end{align}
By the ninth equation in \eqref{MHDva}, one may write
\begin{align}
 - \int_{\Sigma }   \sigma \pa^\alpha H          \dt \pa^\al \eta  =-\int_{\Sigma } \sigma\( \diverge_h  \( \dis\frac{ \nabla_h \pa^\alpha \eta}{\sqrt{1+|\nabla_h \eta|^2}}- \frac{\nabla_h \eta \cdot \nabla_h \pa^\alpha \eta}{\sqrt{1+|\nabla_h \eta|^2}^3}\nabla_h \eta \)+F^{6,\al}\)\dt \pa^\al \eta .
\end{align}
Integrating by parts in both $x_h$ and $t$ yields
\begin{align}\label{en_iden_al_2} 
&  -\int_{\Sigma } \sigma  \diverge_h  \( \dis\frac{ \nabla_h \pa^\alpha \eta}{\sqrt{1+|\nabla_h \eta|^2}}- \frac{\nabla_h \eta \cdot \nabla_h \pa^\alpha \eta}{\sqrt{1+|\nabla_h \eta|^2}^3}\nabla_h \eta \)
 \dt \pa^\al \eta \nonumber
 \\&\quad = \int_{\Sigma } \sigma    \( \dis\frac{ \nabla_h \pa^\alpha \eta}{\sqrt{1+|\nabla_h \eta|^2}}- \frac{\nabla_h \eta \cdot \nabla_h \pa^\alpha \eta}{\sqrt{1+|\nabla_h \eta|^2}^3}\nabla_h \eta \)\cdot
 \dt \nabla_h\pa^\al \eta\nonumber
 \\&\quad = \hal\dtt  \int_{\Sigma } \sigma \(\frac{ |    \nabla_h \pa^\al   \eta|^2  }{\sqrt{1+| \nabla_h \eta|^2}}-\frac{ | \nabla_h\eta  \cdot    \nabla_h  \pa^\al  \eta|^2  }{\sqrt{1+| \nabla_h\eta |^2}^3} \) -\i^\al,
\end{align}
where
\begin{align}
 \i^\al=& \frac{1}{2}  \int_{\Sigma}   \sigma \dt\(\frac{ 1  }{\sqrt{1+| \nabla_h \eta|^2} } \) \abs{   \nabla_h \pa^\al \eta}^2 - \frac{1}{2}  \int_{\Sigma}   \sigma\dt\( \frac{ 1  }{ \sqrt{1+| \nabla_h\eta |^2}^3}\)| \nabla_h\eta  \cdot    \nabla_h  \pa^\al  \eta|^2\nonumber
\\&-  \int_{\Sigma}      \frac{ \nabla_h\eta  \cdot \nabla_h  \pa^\al  \eta}{\sqrt{1+| \nabla_h\eta |^2}^3} \dt \nabla_h\eta \cdot   \nabla_h\pa^\al\eta .
\end{align}
Consequently, in light of \eqref{en_iden_al_1}--\eqref{en_iden_al_2}, \eqref{en_iden_al_0} yields the following energy identity:
\begin{align}\label{en_iden_al}
& \hal\dtt  \left(\int_{\Omega_-} \( \abs{\pa^\al u}^2   +\abs{\pa^\al b}^2\)   d\V+\int_{\Omega_+}  \abs{\pa^\al \hb}^2    d\V+\int_{\Sigma } \sigma \(\frac{ |    \nabla_h \pa^\al   \eta|^2  }{\sqrt{1+| \nabla_h \eta|^2}}-\frac{ | \nabla_h\eta  \cdot    \nabla_h  \pa^\al  \eta|^2  }{\sqrt{1+| \nabla_h\eta |^2}^3} \)\right)   \nonumber
\\&\quad+   \kappa  \int_{\Omega_-} |\curlv \pa^\al b|^2\, d\V  \nonumber
\\&\quad= \i^\al+ \int_{\Omega_-}\( F^{1,\al}\cdot \pa^\al u +\pa^\al p   F^{2,\al}+ F^{3,\al} \cdot \pa^\al b  + F^{4,\al} \cdot  \curlv \pa^\al b \)d\V
\\&\qquad+\int_{\Omega_+}  \(\hat F^{3,\al}  \cdot    \pa^\al \hb+ \pa^\al \hat E \cdot   \hat F^{4,\al} \) d\V+ \int_{\Sigma }   \(- F^{5,\al} \sigma \pa^\alpha H +\sigma   F^{6,\al}\dt \pa^\al \eta  +F^{7,\al}\cdot    \pa^\al b\right)     .\nonumber
\end{align}

We now estimate the right hand side of \eqref{en_iden_al}.  First, one has
\beq\label{hh00}
\i^\al\ls   \sqrt{\fe{N+4} }\abs{   \nabla_h \pa^\al \eta}_0^2\ls  \sqrt{\fe{N+4} }\se{2N}.
\eeq
 It follows from \eqref{p_F_e_01}  and \eqref{curl_e_2} with $n=2N$  that 
 \begin{align}
&\int_{\Omega_-} \(F^{1,\al}\cdot \pa^\al u  +F^{3,\al} \cdot \pa^\al b  + F^{4,\al} \cdot  \curlv \pa^\al b\) d\V +\int_{\Omega_+}  \hat F^{3,\al}  \cdot    \pa^\al \hb\,d\V\nonumber
 \\ &\quad\ls \norm{ F^{1,\al}}_0\norm{ \pa^\al u}_0+\norm{ F^{3,\al}}_0\norm{ \pa^\al b}_0+\norm{ F^{4,\al}}_0\norm{\curlv \pa^\al b}_0+\norm{\hat F^{3,\al}}_0\norm{ \pa^\al \hb}_0 \nonumber
 \\&\quad\ls\sqrt{\fe{N+4}\se{2N} }\(\sqrt{\se{2N}  }+\sqrt{\fd{2N}  }\),
\end{align}
\begin{align}
 \int_{\Sigma }   \sigma    F^{6,\al}\dt \pa^\alpha \eta      \ls \abs{  F^{6,\al}}_{1/2}\abs{\dt \pa^\alpha \eta }_{-1/2}     \ls  \sqrt{\fe{N+4} \se{2N}   } \sqrt{\se{2N}  }
\end{align}
and by the trace theory,
\begin{align}\label{hh11}
\int_{\Sigma }  F^{7,\al}\cdot \p^\al b  \le \abs{  F^{7,\al}}_{0}\abs{  \pa^\alpha b }_{0}
\ls  \abs{  F^{7,\al}}_{0}\norm{  \pa^\alpha b }_{1} 
  \ls  \sqrt{\fe{N+4}\se{2N} }\sqrt{\fd{2N}  }.
\end{align} 

Now we turn to estimate the most delicate three remaining terms. As explained in Section \ref{sec_2}, one needs to consider the cases $\alpha_0\le 2N-1$ and $\alpha_0=2N$ separately. For the case $\al_0\le 2N-1$,  one has that by \eqref{p_F_e_01} and \eqref{p_F_e_02},
\begin{align}
&\int_{\Omega_-}   \pa^\al p  F^{2,\al} \, d\V+ \int_{\Omega_+}   \pa^\al \hat E \cdot   \hat F^{4,\al}  d\V  -\int_{\Sigma }   \sigma \pa^\alpha H   F^{5,\al}\nonumber
 \\ &\quad \ls \norm{ \pa^\al p}_0\norm{ F^{2,\al}}_0+\norm{ \pa^\al \hat E}_0\norm{\hat F^{4,\al}}_0+\abs{ \pa^\alpha H }_{-1/2}\abs{ F^{5,\al}}_{1/2}\nonumber
 \\ &\quad\ls \sqrt{\se{2N}  } \sqrt{\fe{N+4}\se{2N} }.
\end{align}
For the case $\al_0=2N$, one integrates by parts in  $t$ to have, by \eqref{p_F_e_01} and \eqref{p_F_e_01'},
\begin{align}\label{es_2N_4}
& \int_{\Omega_+}   \dt^{2N} \hat E \cdot \hat F^{4,(2N,0)} \, d\V
=\dtt \int_{\Omega_+}   \dt^{2N-1} \hat E \cdot \hat F^{4,(2N,0)} \, d\V
-\int_{\Omega_+}   \dt^{2N-1} \hat E \cdot  \dt \(\hat F^{4,(2N,0)}\p_3\varphi \)\nonumber
\\&\quad
  \ls \dtt \int_{\Omega_+}   \dt^{2N-1} \hat E \cdot \hat F^{4,(2N,0)} \, d\V+ \norm{ \dt^{2N-1} \hat E}_1\(\norm{\dt \hat F^{4,(2N,0)}}_{-1}+\norm{  \hat F^{4,(2N,0)}}_{0}\)
 \nonumber
\\&\quad
  \ls \dtt \int_{\Omega_+}   \dt^{2N-1} \hat E \cdot \hat F^{4,(2N,0)} \, d\V+\sqrt{\se{2N}} \sqrt{\fe{N+4} \(\se{2N}+\fd{2N}\)  }.
\end{align}

The treatment of the remaining two terms is more involved. The difficulty is that there is no any estimate of $\dt^{2N} p$ and  so one needs to integrate by parts in $t$ for the pressure term, and also there is a $1/2$ regularity loss of $\dt^{2N} H$  so that it is insufficient to control the surface tension term. The crucial observation here is that these two terms will enjoy some cancellation by performing some careful computations. We start with the integration by parts in $t$ for the pressure term, and make use of a variant of the expression of $F^{2,(2N,0)}$ defined by \eqref{f2a}. Indeed, $\diva u=0$  yields
\beq\label{div_free}
\diva u\pa_3\varphi=\N\cdot \pa_3u +\pa_3\varphi  \diverge_h  u_h =0.
\eeq
Applying $\dt^{2N}$ to \eqref{div_free} and using the second equation in \eqref{MHDva}, one gets that
\beq
-\pa_3\varphi F^{2,(2N,0)}   = \[ \pa_t^{2N} ,  -\nabla_h\bar\eta  \] \cdot \partial_3u_h +  \[ \pa_t^{2N} ,  \partial_3 \bar\eta \] \diverge_h  u_h  .
\eeq
Moreover, one needs to single out  the highest $2N-1$ order time derivative terms of $ u_h$ and the highest $2N$ order time derivative terms of $\eta$ as
\begin{align}
-\pa_3\varphi F^{2,(2N,0)}= \sum_{i=1}^5 F^{2,(2N,0)}_i,
\end{align}
where
\begin{align}
& F^{2,(2N,0)}_1= -2N\pa_t  \nabla_h\bar\eta  \cdot   \pa_t^{2N-1} \pa_3u_h,
\\ & F^{2,(2N,0)}_2=  2N\pa_t \pa_3 \bar\eta \pa_t^{2N-1} \diverge_h  u_h ,
\\ & F^{2,(2N,0)}_3=  - \pa_t^{2N} \nabla_h\bar\eta  \cdot \pa_3u _h,
\\ &  F^{2,(2N,0)}_4= \pa_t^{2N} \pa_3 \bar\eta   \diverge_h  u_h ,
\\ &  F^{2,(2N,0)}_5=   \sum_{\ell=2 }^{2N-1} C_{2N}^\ell\(-\pa_t^\ell  \nabla_h\bar\eta \cdot   \pa_t^{2N-\ell}\pa_3u_h+\pa_t^\ell \pa_3 \bar\eta \pa_t^{2N-\ell} \diverge_h  u_h \).
\end{align}
 Accordingly,
\beq\label{sur_q_1}
\int_{\Omega_-}   \dt^{2N} p  F^{2,(2N,0)} \, d\V= -\sum_{i=1}^5 \int_{\Omega_-}  \dt^{2N} pF^{2,(2N,0)}_i   .
\eeq
Integrating by parts in  $t$ for the last four terms yields
\begin{align}
-\sum_{i=2}^5\int_{\Omega_-} \dt^{2N} p   F^{2,(2N,0)}_i = -\sum_{i=2}^5\dtt\int_{\Omega_-}  \pa_t^{2N-1} p F^{2,(2N,0)}_i 
 + \sum_{i=2}^5\int_{\Omega_-} \pa_t^{2N-1} p\dt   F^{2,(2N,0)}_i  .
\end{align}
One can check easily that $\norm{\dt F^{2,(2N,0)}_5}_0\ls \sqrt{\fe{N+4} \se{2N}}$ as in Lemma  \ref{le_F_2N}. Thus
\beq
\int_{\Omega_-}\pa_t^{2N-1} p \dt   F^{2,(2N,0)}_5  \le \norm{ \pa_t^{2N-1} p}_0\norm{\dt F^{2,(2N,0)}_5}_0\ls  \sqrt{  \se{2N}}\sqrt{\fe{N+4} \se{2N}} .
\eeq
Upon an integration by parts in $x_3$ and estimating as in Lemma \ref{le_F_2N}, one has
\begin{align}
&\int_{\Omega_-}  \pa_t^{2N-1} p\dt   F^{2,(2N,0)}_4  =\int_{\Omega_-}\pa_t^{2N-1} p  \(\pa_t^{2N+1} \pa_3 \bar\eta   \diverge_h  u_h +\pa_t^{2N} \pa_3 \bar\eta  \dt \diverge_h  u_h   \)\nonumber
\\&\quad=\int_{\Sigma}\pa_t^{2N-1} p   \pa_t^{2N+1}  \eta   \diverge_h  u_h   -\int_{\Omega_-}\(\!\pa_t^{2N+1} \bar \eta\pa_3\(\pa_t^{2N-1} p     \diverge_h  u_h \) +  \pa_t^{2N-1} p\pa_t^{2N} \pa_3 \bar\eta  \dt \diverge_h  u_h  \! \)\nonumber
\\&\quad\ls \abs{ \pa_t^{2N+1} \eta }_{-1/2} \abs{\pa_t^{2N-1} p   \diverge_h  u_h }_{1/2}  +\norm{\pa_t^{2N+1} \bar \eta}_0\norm{\pa_3 \(\pa_t^{2N-1} p     \diverge_h  u_h \)}_0
\nonumber\\&\qquad+\norm{\pa_t^{2N} \pa_3 \bar\eta   }_0\norm{\pa_t^{2N-1} p \dt \diverge_h  u_h }_0\nonumber
\\& \quad\ls  \sqrt{  \se{2N}}\sqrt{\fe{N+4}  }\sqrt{  \se{2N}}.
\end{align}
Similarly, by integrating by parts in $x_h$, one deduces
\beq
\int_{\Omega_-}\pa_t^{2N-1} p \(\dt   F^{2,(2N,0)}_2+ \dt   F^{2,(2N,0)}_3\)\ls  \sqrt{  \se{2N}}\sqrt{\fe{N+4} \se{2N}} .
\eeq

It remains to deal with the most difficult term, the first term involving $F^{2,(2N,0)}_1$ in \eqref{sur_q_1}.  One integrates by parts in $x_3$ first to get
\beq\label{sur_q_2}
-\int_{\Omega_-}  \dt^{2N} p F^{2,(2N,0)}_1  =\int_{\Sigma }   \dt^{2N} p 2N\pa_t \nabla_h\eta \cdot   \pa_t^{2N-1} u_h   -\int_{\Omega_-}    \pa_3\left( \dt^{2N} p2N\pa_t  \nabla_h\bar\eta \right) \cdot \pa_t^{2N-1} u_h   .
\eeq
Then integrating by parts in $t$ for the second term in the right hand side of \eqref{sur_q_2} yields
\begin{align}\label{sur_q_3}
&- \int_{\Omega_-}    \pa_3\left( \dt^{2N} p2N\pa_t  \nabla_h\bar\eta \right) \cdot \pa_t^{2N-1} u_h   \nonumber
\\ &\quad=-\dtt\int_{\Omega_-}  \pa_3\( \pa_t^{2N-1} p2N\pa_t  \nabla_h\bar\eta \)\cdot  \pa_t^{2N-1} u_h    \nonumber
\\&\qquad+\int_{\Omega_-}  \( \pa_3\( \pa_t^{2N-1} p2N\pa_t  \nabla_h\bar\eta \)\cdot  \pa_t^{2N} u_h +  \pa_3\( \pa_t^{2N-1} p2N\pa_t^2  \nabla_h\bar\eta \)\cdot  \pa_t^{2N-1} u_h \)\nonumber
\\ &\quad\le  -\dtt\int_{\Omega_-} \pa_3\( \pa_t^{2N-1} p2N\pa_t  \nabla_h\bar\eta \)\cdot  \pa_t^{2N-1} u_h     + C\sqrt{\fe{N+4} \se{2N}}\sqrt{  \se{2N}}.
\end{align}
Note carefully that we integrate by parts in $x_3$ here first rather than in $t$ since there are no estimates for $\dt^{2N}u_h$ on the boundary. This also indicates the difficulty in controlling the first term in the right hand side of \eqref{sur_q_2} since one can no longer integrate by parts in $t$. Recall here that there is also anther term out of control, which is the surface tension term in the right hand side of \eqref{en_iden_al} when $\al=(2N,0)$. Our crucial observation is that there is a cancellation between them since $\dt^{2N} p= -\sigma \dt^{2N} \eta  $ on $\Sigma $. Indeed, one has
\begin{align}\label{sur_q_4}
 & -\int_{\Sigma }   \sigma \dt^{2N}H      F^{5,(2N,0)}  +\int_{\Sigma }   \dt^{2N} p 2N\pa_t \nabla_h\eta \cdot   \pa_t^{2N-1} u_h  \nonumber
 \\& \quad=-\int_{\Sigma } \sigma \pa_t^{2N}  H  \( F^{5,(2N,0)} +  2N\pa_t  \nabla_h\eta \cdot   \pa_t^{2N-1} u_h\) +\int_{\Sigma }  (\sigma \pa_t^{2N}  H+\dt^{2N} p )  2N\pa_t \nabla_h\eta \cdot   \pa_t^{2N-1} u_h \nonumber
 \\&\quad=\int_{\Sigma } \sigma \pa_t^{2N}  H  \(u_h\cdot \nabla_h \pa_t^{2N} \eta +\tilde{F}^{5,(2N,0)} \)  
 \end{align}
 where
\beq
\tilde{F}^{5,(2N,0)}=\sum_{\ell=2}^{2N-1}C_{2N}^\ell \pa_t^\ell \nabla_h\eta \cdot    \pa_t^{2N-\ell} u_h.
\eeq
Note that $\abs{ \tilde{F}^{5,(2N,0)}}_1\ls \sqrt{\fe{N+4} \se{2N}}$ as in Lemma  \ref{le_F_2N}. So integrating by parts in $x_h$ yields
\begin{align}\label{sur_q_5}
& \int_{\Sigma } \sigma \pa_t^{2N}  H  \tilde{F}^{5,(2N,0)}  =-\int_{\Sigma } \sigma \dt^{2N}\( \frac{  \nabla_h  \eta}{\sqrt{1+| \nabla_h \eta|^2}} \) \cdot \nabla_h  \tilde{F}^{5,(2N,0)}   \nonumber
\\&\quad\ls \abs{  \dt^{2N}\( \frac{  \nabla_h\eta }{\sqrt{1+| \nabla_h\eta |^2}} \)}_{0}\abs{\tilde{F}^{5,(2N,0)}}_{1} \ls  \sqrt{  \se{2N}}\sqrt{\fe{N+4} \se{2N}}.
\end{align}
It follows from the ninth equation in \eqref{MHDva} that
\begin{align}
&\int_{\Sigma }  \sigma \pa_t^{2N}  H  u_h\cdot \nabla_h\dt^{2N} \eta \nonumber
\\&\quad= \int_{\Sigma }\sigma \(\diverge_h  \( \frac{  \nabla_h  \dt^{2N} \eta}{\sqrt{1+| \nabla_h\eta |^2}}- \frac{ \nabla_h\eta  \cdot \nabla_h \dt^{2N} \eta}{\sqrt{1+| \nabla_h\eta |^2}^3} \nabla_h\eta  \)+F^{6,(2N,0)} \) u_h\cdot \nabla_h\dt^{2N} \eta   .
\end{align}
Then \eqref{p_F_e_01} implies
\beq
\int_{\Sigma }\sigma   F^{6,(2N,0)}  u_h\cdot \nabla_h  \dt^{2N} \eta  \ls \abs{F^{6,(2N,0)} }_0\abs{\dt^{2N} \eta }_1\ls  \sqrt{\fe{N+4} \se{2N}}\sqrt{  \se{2N}}.
\eeq
Integrating by parts in  $x_h$, one can deduce that
\begin{align}
&\int_{\Sigma }\sigma  \diverge_h   \(\frac{     \nabla_h  \dt^{2N} \eta}{\sqrt{1+|  \nabla_h\eta |^2}} \) u_h\cdot \nabla_h   \dt^{2N} \eta =-\int_{\Sigma }\sigma  \frac{     \nabla_h  \dt^{2N} \eta}{\sqrt{1+|  \nabla_h\eta |^2}}  \cdot \nabla_h  (u_h\cdot \nabla_h   \dt^{2N} \eta )  \nonumber\\&\quad=-\int_{\Sigma }\sigma \( \frac{    \nabla_h  \dt^{2N} \eta}{\sqrt{1+| \nabla_h  h|^2} } \cdot \nabla_h u_h\cdot\nabla_h \dt^{2N} \eta  -\hal \diverge_h  \( \frac{ u_h  }{\sqrt{1+|  \nabla_h\eta |^2}} \) \abs{  \nabla_h   \dt^{2N} \eta}^2  \)
\nonumber
\\&\quad\ls \sqrt{\fe{N+4} }\abs{  \dt^{2N} \eta}_1^2\ls \sqrt{\fe{N+4} }\se{2N}.
\end{align}
Similarly,  one has
\begin{align}\label{sur_q_6}
 -\int_{\Sigma }\sigma \diverge_h  \(  \frac{ \nabla_h\eta  \cdot \nabla_h \dt^{2N} \eta}{\sqrt{1+| \nabla_h\eta |^2}^3} \nabla_h\eta   \) u_h\cdot \nabla_h\dt^{2N} \eta\ls \sqrt{\fe{N+4} }\se{2N}.
\end{align}
 Hence, it follows from \eqref{sur_q_5}--\eqref{sur_q_6} and \eqref{sur_q_4} that
\beq
 -\int_{\Sigma }   \sigma \dt^{2N} \eta     F^{5,(2N,0)}  +\int_{\Sigma }   \dt^{2N} p 2N\pa_t \nabla_h\eta \cdot   \pa_t^{2N-1} u_h   \ls\sqrt{\fe{N+4} }\se{2N}.
\eeq
This together with  \eqref{sur_q_1}--\eqref{sur_q_3}  implies that
\beq\label{es_2N_5}
- \int_{\Sigma }  \sigma  \dt^{2N}   \eta F^{5,(2N,0)}  +\int_{\Omega_-}   \dt^{2N} p  F^{2,(2N,0)} \, d\V \le-\dtt \mathcal{B}_{2N} +C\sqrt{\fe{N+4} }\se{2N}.
\eeq
where
\beq
\mathcal{B}_{2N}:= \sum_{i=2}^5 \int_{\Omega_-}  \pa_t^{2N-1} p F^{2,(2N,0)}_i
 + \int_\Omega\pa_3\( \pa_t^{2N-1} p2N\pa_t  \nabla_h\bar\eta \)  \pa_t^{2N-1} u_h    .
\eeq

As a consequence of the estimates \eqref{hh00}--\eqref{es_2N_4}  and  \eqref{es_2N_5},   one deduces from \eqref{en_iden_al}  with summing over such $\al$ and \eqref{en_iden} that,  by \eqref{curl_e_2} with $n=2N$ and Cauchy's inequality and then integrating in time from $0$ to $t$,
\beq
\seb{2N}(t)+ \int_0^t  \sdb{2N}(s)\,ds\ls   \seb{2N}(0)+\mathcal{B}_{2N}(0)-\mathcal{B}_{2N}(t) +\int_0^t\sqrt{\fe{N+4}(s)}  \(\se{ 2N}+\fd{2N} \)\,ds.
\eeq
Note that $\norm{F^{2,(2N,0)}_i}_0\ls \sqrt{\fe{N+4} \se{2N}},\ i=2,\dots,5,$ as Lemma  \ref{le_F_2N}. Thus
\beq
 \abs{\mathcal{B}_{2N}}\le \sum_{i=2}^5\norm{\pa_t^{2N-1} p}_0\norm{F^{2,(2N,0)}_i}_0+\sqrt{\fe{N+4} \se{2N}}\sqrt{  \se{2N}}\ls    (\se{ 2N} )^{3/2}  .
\eeq
Then the estimate \eqref{en_ev_2N} follows.
 \end{proof}

%%%%%%%%%%%%%%%%%%%%%%%%%%%%%%%%%%%%%%%%%%%%%%%%%%%%%%
\subsection{Energy evolution at $N+4,\dots, 2N-2$ levels}
%%%%%%%%%%%%%%%%%%%%%%%%%%%%%%%%%%%%%%%%%%%%%%%%%%%%%%

Now we present the following time-differential tangential energy evolution estimate, at $N+4,\dots, 2N-2$ levels.
\begin{prop}\label{evolution_n}
For $n=N+4,\dots,2N-2$, it holds that
\beq \label{en_ev_n}
\frac{d}{dt}\left(\bar{\mathcal{E}}_{n}+\mathcal{B}_{n}\right)+ \bar{\mathcal{D}}_{n} \ls   \sqrt{\se{2N}   } { \fd{ n}},
\eeq
where $\mathcal{B}_{n}$ is defined by \eqref{Bn_def} below and satisfies the estimate
\beq\label{Bn_es}
\abs{\mathcal{B}_{n}}\ls \sqrt{\se{2N}  } { \fe{n} }.
\eeq
\end{prop}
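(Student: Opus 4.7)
The plan is to follow essentially the same pattern as in the proof of Proposition \ref{evolution_2N}, but exploiting the fact that we are at level $n \le 2N-2$, so the highest time derivatives $\partial_t^n$ involved fall within the regularity window controlled by $\mathcal{E}_{2N}$, and the error estimates from Lemma \ref{p_F_eN2} give bounds of the form $\mathfrak{D}_{N+4} \mathcal{E}_{2N}$ (or $\mathfrak{E}_{N+4} \mathcal{E}_{2N}$), which combined with $\mathfrak{E}_{N+4} \le \mathcal{E}_{2N}$ and $\mathfrak{D}_{N+4} \le \mathfrak{D}_n$ will ultimately yield a right-hand side bounded by $\sqrt{\mathcal{E}_{2N}}\, \mathfrak{D}_n$. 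I would apply $\partial^\alpha$ with $\alpha \in \mathbb{N}^{1+2}$, $|\alpha| \le n$, to the system \eqref{MHDva} (and \eqref{MHDv3e}), take inner products with $\partial^\alpha u$, $\partial^\alpha b$, integrate by parts over $\Omega_-$ and $\Omega_+$ using the corresponding boundary conditions (jump conditions on $\Sigma$, perfect conductor/insulator on $\Sigma_\pm$), use \eqref{MHDv3e} in $\Omega_+$ to exchange the surface term with a volume term, and use the dynamic boundary condition $p = -\sigma H$ together with the kinematic identity $\partial_t \eta = u\cdot \N - F^{5,\alpha}$ to produce the surface-tension term, ending up with the analog of \eqref{en_iden_al}.

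Then I would bound each of the error integrals on the right-hand side. Using Lemma \ref{p_F_eN2}, all of the terms arising from $F^{1,\alpha}, F^{3,\alpha}, F^{4,\alpha}, \hat F^{3,\alpha}, F^{6,\alpha}, F^{7,\alpha}$, as well as the $\curl^\varphi$-difference terms via \eqref{curl_e_2}, can be controlled directly by $\sqrt{\mathcal{E}_{2N}\, \mathfrak{D}_n}$ times an energy factor already inside $\bar{\mathcal{E}}_n$, hence by Cauchy's inequality are absorbed into $\sqrt{\mathcal{E}_{2N}} \mathfrak{D}_n$ plus a small multiple of $\bar{\mathcal{D}}_n$. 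The delicate terms, as in the $2N$ case, are:
\begin{equation*}
\int_{\Omega_-} \partial^\alpha p \, F^{2,\alpha}\, d\mathcal{V}_t, \qquad \int_{\Omega_+} \partial^\alpha \hat E \cdot \hat F^{4,\alpha}\, d\mathcal{V}_t, \qquad -\int_\Sigma \sigma\, \partial^\alpha H \, F^{5,\alpha},
\end{equation*}
when $\alpha_0$ equals the maximum allowed value $n$. For $\alpha_0 \le n-1$ these three terms are harmless and can be handled directly by Lemma \ref{p_F_eN2} (using $\|\partial^\alpha p\|_0$, $\|\partial^\alpha \hat E\|_0$, $|\partial^\alpha H|_{-1/2}$ in $\bar{\mathcal{E}}_n/\mathfrak{E}_n$ and the $H^1,H^{1/2},H^{3/2}$ bounds of the forcing from \eqref{p_F_e_03}).

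For the case $\alpha = (n,0,0)$, I would replicate the integration-by-parts-in-time trick of the proof of Proposition \ref{evolution_2N}. Write $F^{2,(n,0)}$ (via $\Div^\varphi u = 0$) in the decomposition $\sum_{i=1}^5 F^{2,(n,0)}_i$, isolating the $F^{2,(n,0)}_1$ piece that still carries one top-order factor $\partial_t^{n-1} \partial_3 u_h$. Integrate the terms associated with $F^{2,(n,0)}_{2,\dots,5}$ by parts in $t$, producing a total time derivative of a bulk integral, together with remainders controlled by $\sqrt{\mathcal{E}_{2N}}\, \mathfrak{D}_n$ (using \eqref{p_F_e_03}, \eqref{p_F_e_04}). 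For $F^{2,(n,0)}_1$, first integrate by parts in $x_3$ so that the bulk remainder can be integrated by parts in $t$ (again generating a total time derivative), leaving a boundary term
\begin{equation*}
\int_\Sigma \partial_t^{n} p\; 2n\, \partial_t \nabla_h \eta \cdot \partial_t^{n-1} u_h.
\end{equation*}
Using $\partial_t^n p = -\sigma \partial_t^n H + F^{6,(n,0)}$ on $\Sigma$ and the ninth equation of \eqref{MHDva}, this boundary term combines with $-\int_\Sigma \sigma \partial_t^n H\, F^{5,(n,0)}$ and, after the same cancellation that appears in \eqref{sur_q_4}--\eqref{sur_q_6}, reduces to an $O(\sqrt{\mathcal{E}_{2N}}\, \mathfrak{D}_n)$ error (the remaining $F^{5,(n,0)}$-type terms being controlled via \eqref{p_F_e_03}, \eqref{p_F_e_04} together with the extra spatial derivative available because $\alpha_0 = n < 2N$). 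For the $\hat E$-term, integrate by parts in $t$: this yields a total time derivative and a remainder $\int_{\Omega_+} \partial_t^{n-1} \hat E \cdot \partial_t(\hat F^{4,(n,0)} \partial_3 \varphi)$ that is bounded by $\sqrt{\mathcal{E}_{2N}}\, \mathfrak{D}_n$ via \eqref{p_F_e_03} (using that $n-1 \le 2N-3$, so all derivatives of $\hat F^{4,(n,0)}$ appearing stay within the regularity covered by Lemma \ref{p_F_eN2}). Collecting all the total time derivatives generated in this process into
\begin{equation*}
\mathcal{B}_n = \sum_{i=2}^5 \int_{\Omega_-} \partial_t^{n-1} p\, F^{2,(n,0)}_i + \int_{\Omega_-} \partial_3(\partial_t^{n-1} p\, 2n\, \partial_t \nabla_h \bar\eta) \cdot \partial_t^{n-1} u_h - \int_{\Omega_+} \partial_t^{n-1} \hat E \cdot \hat F^{4,(n,0)}
\end{equation*}
(and using Lemma \ref{p_F_eN2} once more to verify $|\mathcal{B}_n| \lesssim \sqrt{\mathcal{E}_{2N}}\, \mathfrak{E}_n$) yields \eqref{en_ev_n} and \eqref{Bn_es}.

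The main obstacle is bookkeeping in the top-order case $\alpha_0 = n$: one must verify that every nonlinear factor generated by the integrations by parts in $t$ and $x_3$ sits at a level where Lemma \ref{p_F_eN2} gives a $\mathfrak{D}_{N+4}$-type (rather than only $\mathfrak{E}_{N+4}$-type) estimate, so that the final right-hand side is the dissipation $\mathfrak{D}_n$ and not just the energy $\mathfrak{E}_n$. This is where the gap $n \le 2N-2$ is essential: the absence of the $\partial_t^{2N}$ constraint leaves one spare time derivative that allows us to put every forcing term into the dissipation-controlled estimates \eqref{p_F_e_03} rather than the weaker energy-controlled estimates \eqref{p_F_e_04}.
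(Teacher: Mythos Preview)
Your overall architecture—start from the identity \eqref{en_iden_al} and estimate the right-hand side—is correct, but the way you propose to estimate the terms at the top time-derivative level $\alpha_0=n$ does not go through, and it differs substantially from what the paper actually does.

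\medskip

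\textbf{The gap.} You claim that the terms coming from $F^{1,\alpha}$ and $F^{6,\alpha}$ ``can be controlled directly by $\sqrt{\mathcal{E}_{2N}\,\mathfrak{D}_n}$ times an energy factor already inside $\bar{\mathcal{E}}_n$'', and then absorbed via Cauchy. At $\alpha_0=n$ this is false: the companions $\|\partial_t^{n}u\|_0$ and $|\partial_t^{n+1}\eta|_{-1/2}$ are in $\mathfrak{E}_n$ (and $\bar{\mathcal{E}}_n$) but \emph{not} in $\mathfrak{D}_n$; $\mathfrak{D}_n$ contains only $\|\partial_t^{j}u\|_{n-j-1}^2$ for $j\le n-1$ and $|\partial_t^{n}\eta|_0^2$. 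A bound of the form $\sqrt{\mathfrak{D}_{N+4}\mathcal{E}_{2N}}\cdot\sqrt{\mathfrak{E}_n}$ produces, after Cauchy, a residual $\mathcal{E}_{2N}^{3/2}$ that cannot be bounded by $\sqrt{\mathcal{E}_{2N}}\,\mathfrak{D}_n$. The paper handles \emph{both} of these by integrating once by parts in $t$, shifting the top derivative onto $\partial_t^{n-1}u$ and $\partial_t^{n}\eta$ (which \emph{are} in $\mathfrak{D}_n$) and adding the resulting boundary-in-time terms to $\mathcal{B}_n$. Your proposed $\mathcal{B}_n$ omits these two contributions.

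\medskip

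\textbf{The different route for the pressure and $\hat E$ terms.} For $\int_{\Omega_-}\partial_t^{n}p\,F^{2,(n,0)}$ and $\int_{\Omega_+}\partial_t^{n}\hat E\cdot\hat F^{4,(n,0)}$, you replicate the $2N$-level machinery: decompose $F^{2,(n,0)}=\sum_{i}F^{2,(n,0)}_i$, integrate in $x_3$, and invoke the cancellation with $-\int_\Sigma\sigma\partial_t^{n}H\,F^{5,(n,0)}$. The paper does something much simpler and avoids the cancellation trick entirely: because $n\le 2N-2$, one has two spare time derivatives, so one integrates by parts in $t$ \emph{twice}, landing on $\partial_t^{n-2}p$ and $\partial_t^{n-2}\hat E$, both of which sit in $\mathfrak{D}_n$, with remainders controlled via $\|\partial_t^2 F^{2,(n,0)}\|_0$ and $\|\partial_t^2\hat F^{4,(n,0)}\|_0$ from \eqref{p_F_e_03}. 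The paper also does a finer case splitting (in $|\alpha|$ and in $\alpha_1+\alpha_2$) than you indicate, integrating by parts in $t$ already for $\alpha=(n-1,0)$ and for $|\alpha|=n$ with $\alpha_1+\alpha_2=1$. Your attempted reuse of the cancellation runs into a second obstruction: the post-cancellation terms of the type $\int_\Sigma \sigma\,\partial_t^{n}H\,(u_h\cdot\nabla_h\partial_t^{n}\eta)$ involve $|\partial_t^{n}\eta|_1$, which again is in $\mathfrak{E}_n$ but not in $\mathfrak{D}_n$, so the closing step \eqref{sur_q_5}--\eqref{sur_q_6} does not yield a $\sqrt{\mathcal{E}_{2N}}\,\mathfrak{D}_n$ bound. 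In short, the $2N$-level cancellation is neither needed nor sufficient here; the paper's $\mathcal{B}_n$ (see \eqref{Bn_def}) is built from the time-integration-by-parts contributions of $F^{1,(n,0)}$, $F^{6,(n,0)}$, $F^{2,\alpha}$ and $\hat F^{4,\alpha}$, and is structurally quite different from the one you wrote down.
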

\begin{proof}
Let $n$ denote $N+4\dots, 2N-2$ throughout the proof and $\al\in \mathbb{N}^{1+2}$  such that  $1\le |\al|\le n$.  The equality \eqref{en_iden_al}  in the proof of Proposition \ref{evolution_2N} holds also here, and 
we will estimate the right hand side in a quite different way from  the arguments that lead to the estimates \eqref{hh00}--\eqref{es_2N_4} and  \eqref{es_2N_5}.

First,  one has
\begin{align}\label{es_n_1}
 \i^\al \ls \sqrt{\fd{N+4} }\abs{\nabla_h\pa^\al \eta}_1\abs{\nabla_h\pa^\al \eta}_{-1}\ls \sqrt{\fd{N+4} } \sqrt{ \se{2N}}\sqrt{  \fd{n}}.
\end{align} 
It follows from \eqref{p_F_e_03} and \eqref{curl_e_2}  that
\begin{align}\label{hh112}
& \int_{\Omega_-} \(F^{3,\al} \cdot \pa^\al b  + F^{4,\al} \cdot  \curlv \pa^\al b\) d\V +\int_{\Omega_+}  \hat F^{3,\al}  \cdot    \pa^\al \hb\,d\V
\nonumber
 \\&\quad \ls \norm{ F^{3,\al}}_0\norm{ \pa^\al b}_0+\norm{ F^{4,\al}}_0\norm{\curlv \pa^\al b}_0 +\norm{\hat F^{3,\al}}_0\norm{ \pa^\al \hb}_0\nonumber
 \\&\quad \ls  \sqrt{\fd{N+4}\se{2N} }\sqrt{\fd{n} } 
\end{align}
and by trace theory,
\begin{align}
 \int_{\Sigma } F^{7,\al}\cdot \p^\al b   \ls  \abs{  F^{7,\al}}_{0}\abs{  \pa^\alpha b }_{0}  \ls  \sqrt{\fd{N+4}\se{2N} }\sqrt{\fd{n} }.
\end{align} 

Next, we consider the  terms involving $F^{1,\al}$ and $F^{6,\al}$. If $|\al|\le n-1$, then by \eqref{p_F_e_03}, one has
\begin{align}
\int_{\Omega_-}F^{1,\al}\cdot \pa^\al u\,d\V+  \int_{\Sigma }  \sigma    F^{6,\al}\dt \pa^\al \eta & \ls  
\norm{ F^{1,\al}}_0\norm{ \pa^\al u}_0+\abs{F^{6,\al}}_0\abs{\dt \pa^\alpha \eta}_0\nonumber
\\&\ls  \sqrt{\fd{N+4} \se{2N}} \sqrt{  \fd{n}}.
\end{align}
If $\abs{\al}=n, \al_1+\al_2\ge 1$, then  by \eqref{p_F_e_03}, one obtains 
\begin{align}
 \int_{\Omega_-}F^{1,\al}\cdot \pa^\al u\,d\V+ \int_{\Sigma }  \sigma     F^{6,\al}\dt \pa^\al \eta & \ls  
 \norm{  F^{1,\al}}_1\norm{ \pa^{\al} u}_{-1}+\abs{F^{6,\al}}_1\abs{\dt \pa^{\al} \eta}_{-1}\nonumber
 \\&\ls  \sqrt{\fd{N+4}\se{2N} } \sqrt{  \fd{n}}.
\end{align}
The remaining case is  that when $\al_0=n$, and integrating by parts in $t$ and using \eqref{p_F_e_03} show that
\begin{align}
&\int_{\Omega_-}F^{1,(n,0)}\cdot  \dt^{n} u\,d\V=\dtt\int_{\Omega_-}F^{1,(n,0)}\cdot  \dt^{n-1} u\,d\V-\int_{\Omega_-}\dt\(F^{1,(n,0)}\pa_3\varphi\)\cdot   \dt^{n-1} u \nonumber
\\&\quad\ls \dtt\int_{\Omega_-}F^{1,(n,0)}\cdot  \dt^{n-1} u\,d\V+ \(\norm{   F^{1,(n,0)}}_0+\norm{ \dt F^{1,(n,0)}}_0\)\norm{ \dt^{n-1} u}_0 \nonumber
\\&\quad\ls \dtt\int_{\Omega_-}F^{1,(n,0)}\cdot  \dt^{n-1} u\,d\V+ \sqrt{\fd{N+4}\se{2N} }\sqrt{  \fd{n}}
\end{align}	
and
\begin{align}
&\int_{\Sigma }  \sigma    F^{6,(n,0)}\dt \dt^n \eta   =\dtt \int_{\Sigma }  \sigma      F^{6,(n,0)} \dt^n \eta  -\int_{\Sigma }  \sigma  \dt    F^{6,(n,0)}  \dt^n \eta \nonumber
\\&\quad\ls    \dtt \int_{\Sigma }  \sigma     F^{6,(n,0)} \dt^n \eta   + \abs{\dt F^{6,(n,0)}}_0\abs{\dt^n \eta}_0\nonumber
\\&\quad\ls    \dtt \int_{\Sigma }  \sigma     F^{6,(n,0)} \dt^n \eta   + \sqrt{\fd{N+4}\se{2N} } \sqrt{  \fd{n}} 
\end{align}

Next, we treat the terms involving $F^{2,\al}$ and $\hat F^{4,\al}$. If $|\al|\le n-1, \al_0\le n-2$, then \eqref{p_F_e_03} implies
\begin{align}
\int_{\Omega_-}   \pa^\al p   F^{2,\al} \, d\V+ \int_{\Omega_+}   \pa^\al \hat E \cdot   \hat F^{4,\al}  d\V   &\ls   \norm{ \pa^\al p }_0\norm{ F^{2,\al}}_0+\norm{ \pa^\al \he }_0\norm{  \hat F^{4,\al} }_0 \nonumber
\\&\ls \sqrt{  \fd{n}}\sqrt{\fd{N+4}\se{2N} }.
\end{align}
If $\al=(n-1,0)$, then one integrates by parts in $t$  to get 
\begin{align}
&\int_{\Omega_-}   \dt^{n-1} p   F^{2,(n-1,0)} \, d\V+ \int_{\Omega_+} \dt^{n-1}\hat E \cdot   \hat F^{4,(n-1,0)}  d\V  
\nonumber
\\&\quad=	\dtt\(\int_{\Omega_-}   \dt^{n-2} p   F^{2,(n-1,0)} \, d\V+ \int_{\Omega_+} \dt^{n-2}\hat E \cdot   \hat F^{4,(n-1,0)}  d\V  \)\nonumber
\\&\qquad-	\int_{\Omega_-}   \dt^{n-2} p  \dt \(F^{2,(n-1,0)} \pa_3\varphi\)- \int_{\Omega_+} \dt^{n-2}\hat E \cdot \dt\(  \hat F^{4,(n-1,0)}  \pa_3\varphi\)\nonumber
\\&\quad	\ls		\dtt\(\int_{\Omega_-}   \dt^{n-2} p   F^{2,(n-1,0)} \, d\V+ \int_{\Omega_+} \dt^{n-2}\hat E \cdot   \hat F^{4,(n-1,0)}  d\V  \)\nonumber
\\&\qquad+ \norm{ \dt^{n-2}  p }_0\(\norm{ F^{2,(n-1,0)}}_0+\norm{ \dt F^{2,(n-1,0)}}_0\)\nonumber
\\&\qquad+ \norm{ \dt^{n-2} \he }_0\(\norm{\hat F^{4,(n-1,0)}}_0+\norm{ \dt \hat F^{4,(n-1,0)}}_0\)\nonumber
\\&\quad\ls\dtt\(\int_{\Omega_-}   \dt^{n-2} p   F^{2,(n-1,0)} \, d\V+ \int_{\Omega_+} \dt^{n-2}\hat E \cdot   \hat F^{4,(n-1,0)}  d\V  \)+ \sqrt{  \fd{n}}\sqrt{\fd{N+4}\se{2N} }.
\end{align}
If $\abs{\al}=n $ and $\al_1+\al_2\ge 2$,  then 
\begin{align}
 \int_{\Omega_-}   \pa^\al p  F^{2,\al} \, d\V + \int_{\Omega_+}   \pa^\al \hat E \cdot   \hat F^{4,\al}  d\V 
 &\ls \norm{ \pa^{\al} p}_{-1}\norm{ F^{2,\al}}_1+\norm{ \pa^{\al} \he}_{-1}\norm{ \hat F^{4,\al}}_1 \nonumber
\\& \ls \sqrt{  \fd{n}}\sqrt{\fd{N+4}\se{2N} }.
\end{align}
If $\abs{\al}=n $ and $\al_1+\al_2= 1$,  then   one writes $\al=(n-1,0)+\al'$ for $\al'\in \mathbb{N}^2$ with $\al'\le\al$ and $\abs{\al'}=1$ and then integrates by parts in $t$  to have, by \eqref{p_F_e_03},
\begin{align}
&\int_{\Omega_-}   \dt^{n-1}\pa^{\al'} p  F^{2,\al} \, d\V+\int_{\Omega_+}   \dt^{n-1}\pa^{\al'} \he \cdot  \hat F^{4,\al} \, d\V \nonumber
\\&\quad=	\dtt \(\int_{\Omega_-}   \dt^{n-2}\pa^{\al'} p  F^{2,\al} \, d\V+\int_{\Omega_+}   \dt^{n-2}\pa^{\al'} \he \cdot  \hat F^{4,\al} \, d\V\)\nonumber
\\&\qquad-	\int_{\Omega_-}   \dt^{n-2}\pa^{\al'} p \dt \(F^{2,\al }\pa_3\varphi \)-\int_{\Omega_+}   \dt^{n-2}\pa^{\al'} \he \cdot \dt \(\hat F^{4,\al }\pa_3\varphi \)\nonumber
\\&\quad \ls	\dtt \(\int_{\Omega_-}   \dt^{n-2}\pa^{\al'} p  F^{2,\al} \, d\V+\int_{\Omega_+}   \dt^{n-2}\pa^{\al'} \he \cdot  \hat F^{4,\al} \, d\V\)\nonumber
\\&\qquad+ \norm{ \dt^{n-2}  p}_1\(\norm{  F^{2,\al }}_0+\norm{ \dt F^{2,\al }}_0\)+ \norm{ \dt^{n-2}  \he}_1\(\norm{ \hat F^{4,\al}}_0+\norm{ \dt \hat F^{4,\al}}_0\)\nonumber
\\&\quad\ls\dtt \(\int_{\Omega_-}   \dt^{n-2}\pa^{\al'} p  F^{2,\al} \, d\V+\int_{\Omega_+}   \dt^{n-2}\pa^{\al'} \he \cdot  \hat F^{4,\al} \, d\V\)+  \sqrt{  \fd{n}}\sqrt{\fd{N+4}\se{2N} }.
\end{align}
The remaining case, $\al_0=n$, can be handled by the integration by parts in $t$ twice and using \eqref{p_F_e_03} as
\begin{align}\label{es_n_2}
&\int_{\Omega_-}    \dt^{n} p   F^{2,(n,0)} \, d\V+\int_{\Omega_+}    \dt^{n} \he \cdot  \hat F^{4,(n,0)} \, d\V\nonumber
\\& \quad=\dtt \( \int_{\Omega_-}    \dt^{n-1} p   F^{2,(n,0)} \, d\V+\int_{\Omega_+}    \dt^{n-1} \he \cdot  \hat F^{4,(n,0)} \, d\V\)\nonumber
\\& \qquad- 	\int_{\Omega_-}    \dt^{n-1} p  \dt \(F^{2,(n,0)} \pa_3\varphi\) -\int_{\Omega_+}    \dt^{n-1} \he \cdot  \dt \( \hat F^{4,(n,0)} \pa_3\varphi\) \nonumber
\\& \quad =\dtt\(\int_{\Omega_-}    \dt^{n-1} p   F^{2,(n,0)} \, d\V- 	\int_{\Omega_-}   \dt^{n-2} p \dt \(F^{2,(n,0)} \pa_3\varphi\)+\int_{\Omega_+}    \dt^{n-1} \he \cdot  \hat F^{4,(n,0)} \, d\V\right.\nonumber
\\& \qquad\left.-\int_{\Omega_+}    \dt^{n-2} \he \cdot  \dt \( \hat F^{4,(n,0)} \pa_3\varphi\) \) +\int_{\Omega_-}   \dt^{n-2} p \dt^2 \(F^{2,(n,0)} \pa_3\varphi\)
\nonumber
\\&\qquad+\int_{\Omega_+}    \dt^{n-2} \he \cdot  \dt^2 \( \hat F^{4,(n,0)} \pa_3\varphi\) \nonumber
\\&\quad\ls \dtt\(\int_{\Omega_-}    \dt^{n-1} p   F^{2,(n,0)} \, d\V- 	\int_{\Omega_-}   \dt^{n-2} p \dt \(F^{2,(n,0)} \pa_3\varphi\)
+\int_{\Omega_+}    \dt^{n-1} \he \cdot  \hat F^{4,(n,0)} \, d\V\right.\nonumber
\\& \qquad\left.-\int_{\Omega_+}    \dt^{n-2} \he \cdot  \dt \( \hat F^{4,(n,0)} \pa_3\varphi\) \) + \norm{ \dt^{n-2}  p}_0\(\norm{   F^{2,(n,0)}}_0+\norm{ \dt  F^{2,(n,0)}}_0+\norm{ \dt^2 F^{2,(n,0)}}_0\)\nonumber
\\&\qquad + \norm{ \dt^{n-2}  \he}_0\(\norm{    \hat F^{4,(n,0)}}_0+\norm{ \dt   \hat F^{4,(n,0)}}_0+\norm{ \dt^2  \hat F^{4,(n,0)}}_0\)\nonumber
\\&\quad \ls\dtt\(\int_{\Omega_-}    \dt^{n-1} p   F^{2,(n,0)} \, d\V- 	\int_{\Omega_-}   \dt^{n-2} p \dt \(F^{2,(n,0)} \pa_3\varphi\)+\int_{\Omega_+}    \dt^{n-1} \he \cdot  \hat F^{4,(n,0)} \, d\V\right.\nonumber
\\& \qquad\left.-\int_{\Omega_+}    \dt^{n-2} \he \cdot  \dt \( \hat F^{4,(n,0)} \pa_3\varphi\) \) + \sqrt{  \fd{n}}\sqrt{\fd{N+4}\se{2N} } .
\end{align}

As a consequence of the estimates \eqref{es_n_1}--\eqref{es_n_2},   one deduces from \eqref{en_iden_al}  with summing over $1\le \abs{\al}\le n$ and \eqref{en_iden} that, by \eqref{curl_e_2} and since $n\ge N+4$,
\beq
\frac{d}{dt}\left(\bar{\mathcal{E}}_{n}+\mathcal{B}_{n}\right)+ \bar{\mathcal{D}}_{n} \ls   \sqrt{\fd{N+4}\se{2N}  } \sqrt{\fd{ n}}+ \se{2N}  \fd{ n}\ls \sqrt{\se{2N}  } { \fd{n} },
\eeq
where 
\begin{align}\label{Bn_def}
\mathcal{B}_{n}:=& -\int_{\Omega_-}F^{1,(n,0)}\cdot  \dt^{n-1} u\,d\V-\int_{\Sigma }  \sigma    F^{6,(n,0)} \dt^n \eta-\int_{\Omega_-}   \dt^{n-2} p  F^{2,(n-1,0)} \, d\V\nonumber
\\& -\int_{\Omega_-}   \dt^{n-2} \pa^{\al'} p  F^{2,\al } \, d\V-\int_{\Omega_-}    \dt^{n-1} p   F^{2,(n,0)} \, d\V+ 	\int_{\Omega_-}   \dt^{n-2} p \dt \(F^{2,(n,0)} \pa_3\varphi\)  \nonumber
\\&- \int_{\Omega_+} \dt^{n-2}\hat E \cdot   \hat F^{4,(n-1,0)}  d\V-\int_{\Omega_+}   \dt^{n-2}\pa^{\al'} \he \cdot  \hat F^{4,\al} \, d\V-\int_{\Omega_+}    \dt^{n-1} \he \cdot  \hat F^{4,(n,0)} \, d\V
 \nonumber
\\&+\int_{\Omega_+}    \dt^{n-2} \he \cdot  \dt \( \hat F^{4,(n,0)} \pa_3\varphi\) 
\end{align}
By  \eqref{p_F_e_04}, one has
\begin{align}
\abs{\mathcal{B}_{n}}\ls  \sqrt{\fe{N+4}\se{2N}  }\sqrt{\fe{n} }\ls \sqrt{\se{2N}  } { \fe{n} }.
\end{align}
Then the estimates \eqref{en_ev_n} and \eqref{Bn_es} follow.
\end{proof}

%%%%%%%%%%%%%%%%%%%%%%%%%%%%%%%%%%%%%%%%%%%%%%%%%%%%%%	
\section{Improved estimates}\label{sec_6}
%%%%%%%%%%%%%%%%%%%%%%%%%%%%%%%%%%%%%%%%%%%%%%%%%%%%%%

In this section, making use of the tangential energy evolution estimates derived in Section \ref{sec_5}, we established the full energy and dissipation estimates by exploiting the important damping structure of \eqref{MHDv}  and some elaborate elliptic analysis.

%%%%%%%%%%%%%%%%%%%%%%%%%%%%%%%%%%%%%%%%%%%%%%%%%%%%%%
\subsection{Primary improvement of the dissipation estimates}
%%%%%%%%%%%%%%%%%%%%%%%%%%%%%%%%%%%%%%%%%%%%%%%%%%%%%%

In this subsection we first give certain improvements of  the tangential dissipation $\sdb{n}$, defined by \eqref{bar_dn_def}.

%%%%%%%%%%%%%%%%%%%%%%%%%%%%%%%%%%%%%%%%%%%%%%%%%%%%%%
\subsubsection{$H^1$-dissipation estimates of $b$ and full dissipation estimates of $\hb$}
%%%%%%%%%%%%%%%%%%%%%%%%%%%%%%%%%%%%%%%%%%%%%%%%%%%%%%

One may first apply the Hodge-type estimates  to improve the dissipation estimates of the magnetic fields $b$ and $\hb$ from the assumed control of $\sdb{n}$.  Define
\beq \label{di_ev_n}
\fdb{n}:=\sum_{j=0}^{n }\norm{ \dt^j {b}}_{1,n-j}^2+\sum_{j=0}^{n }\norm{ \dt^j {\hb}}_{n-j+1}^2.
\eeq

\begin{prop}\label{evolution_1}
It holds that
\beq \label{di_ev_n1}
\fdb{2N}\ls \bar{\mathcal{D}}_{2N} +\fe{N+4} \(\se{2N}+\fd{2N}\)
\eeq
and for $n=N+4,\dots,2N-1$,  
\beq \label{di_ev_n2}
\fdb{n}\ls \bar{\mathcal{D}}_{n} + \fd{N+4} \se{2N}.
\eeq
\end{prop}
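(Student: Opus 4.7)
The strategy is to use the Hodge-type elliptic estimates of Section \ref{sec_3}, applied to the linear perturbed formulation \eqref{MHDv_perb}, which uses the flat $\curl, \diverge$ operators (the geometric correction having been absorbed into the $G$-terms). Consequently, when we invoke Propositions \ref{propel1} and \ref{propel2} below we set $\bar\eta = 0$ in their statements, so the constants $C_\eta$ become universal. The proof proceeds in two steps: first, an anisotropic $H^1$-control of the pair $(b, \hb)$ through the two-phase estimate; second, an elliptic upgrade of $\hb$ to the full Sobolev scale via the single-phase estimate applied in $\Omega_+$.

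For the first step, fix a horizontal multi-index $\al \in \mathbb{N}^2$ with $|\al|+j \le n$ and apply $\pa^\al \dt^j$ to the equations in \eqref{MHDv_perb} governing $b$ and $\hb$. The pair $(\pa^\al \dt^j b, \pa^\al \dt^j \hb)$ solves a flat two-phase Hodge system of the form \eqref{elpp2} with data $f^1 = \pa^\al \dt^j \curl b$, $f^2 = \pa^\al \dt^j G^4$, $\hat f^1 = \pa^\al \dt^j \hat G^3$, $\hat f^2 = \pa^\al \dt^j \hat G^4$; the boundary conditions $\jump{\pa^\al \dt^j b} = 0$, $(\pa^\al \dt^j b)_3|_{\Sigma_-} = 0$, and $\pa^\al \dt^j \hb \times e_3|_{\Sigma_+} = 0$ are inherited since tangential differentiation preserves the jump and trace conditions. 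Checking \eqref{nececon0}: $\diverge f^1 = \diverge \hat f^1 = 0$ as each is the divergence of a curl; $\jump{(\curl b)_3} = \pa_1 \jump{b_2} - \pa_2 \jump{b_1} = 0$ on $\Sigma$; and $(\hat G^3)_3 = (\curl \hb)_3 = \pa_1 \hb_2 - \pa_2 \hb_1$ vanishes on $\Sigma_+$ because $\hb_h$ does. Applying Proposition \ref{propel2} with $r = 1$ and summing over $|\al| \le n-j$, $0 \le j \le n$ yields
\begin{equation*}
\sum_{j=0}^n \ns{\dt^j b}_{1,n-j} + \sum_{j=0}^n \ns{\dt^j \hb}_{1,n-j} \ls \sdb{n} + \mathcal{G}_n,
\end{equation*}
where $\mathcal{G}_n := \sum_{j=0}^n \(\ns{\dt^j G^4}_{n-j} + \ns{\dt^j \hat G^3}_{n-j} + \ns{\dt^j \hat G^4}_{n-j}\)$.

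For the second step, to upgrade the control of $\hb$ to the isotropic Sobolev norm in \eqref{di_ev_n}, apply $\dt^j$ to \eqref{MHDv_perb} and regard $\dt^j \hb$ as a solution of the one-phase Hodge system \eqref{elpp1} in $\tilde\Omega = \Omega_+$ with tangential boundary $\tilde\Sigma_1 = \Sigma_+$ (where $f^3 = 0$) and normal boundary $\tilde\Sigma_2 = \Sigma$ (where $f^4 = \dt^j b_3$, using $\jump{b} = 0$). The compatibility conditions \eqref{nececon} hold by the same checks as before. Proposition \ref{propel1} with $r = n-j+1$ then gives
\begin{equation*}
\ns{\dt^j \hb}_{n-j+1} \ls \ns{\dt^j \hat G^3}_{n-j} + \ns{\dt^j \hat G^4}_{n-j} + \as{\dt^j b_3}_{n-j+1/2}.
\end{equation*}
The key observation, which avoids a circular dependence on $\ns{\dt^j b}_{n-j+1}$, is that the trace on the right is handled anisotropically: for each horizontal $\al$ with $|\al| \le n-j$ the standard trace inequality yields $\as{\pa^\al \dt^j b_3}_{1/2} \ls \ns{\pa^\al \dt^j b}_1$, so $\as{\dt^j b_3}_{n-j+1/2} \ls \ns{\dt^j b}_{1,n-j}$, and this quantity is already controlled by step one.

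Combining the two steps gives $\fdb{n} \ls \sdb{n} + \mathcal{G}_n$, and it remains to absorb the nonlinear $G$-terms via Lemma \ref{le_G_2N}. For $n = 2N$, the sums in $\mathcal{G}_{2N}$ run all the way to $j = 2N$, so we invoke the stronger bound \eqref{p_G_e_03} to obtain $\mathcal{G}_{2N} \ls \fe{N+4}\(\se{2N} + \fd{2N}\)$, which proves \eqref{di_ev_n1}. For $N+4 \le n \le 2N-1$ the sums in $\mathcal{G}_n$ are contained in the sums in \eqref{p_G_e_02}, and choosing the $\fd{N+4}$ option of the minimum produces $\mathcal{G}_n \ls \fd{N+4}\se{2N}$, yielding \eqref{di_ev_n2}. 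The only delicate point of the argument is the anisotropic trace in step two; without it, one would be trying to bound $\ns{\dt^j \hb}_{n-j+1}$ in terms of $\ns{\dt^j b}_{n-j+1}$, i.e., the very normal-regularity quantity we are trying to control.
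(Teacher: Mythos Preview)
Your proof is correct and follows essentially the same strategy as the paper's: a two-phase Hodge estimate at the $H^1$ level for the tangentially differentiated system, followed by an elliptic upgrade for $\hb$, and finally Lemma~\ref{le_G_2N} for the $G$-terms. The only difference lies in the second step. You invoke Proposition~\ref{propel1} on $\Omega_+$ with the normal datum $\dt^j b_3$ on $\Sigma$, and then close via the anisotropic trace bound $\as{\dt^j b_3}_{n-j+1/2}\ls\ns{\dt^j b}_{1,n-j}$; the paper instead applies Lemma~\ref{ell_hodge} directly, obtaining $\ns{\dt^j\hb}_{n-j+1}\ls\ns{\dt^j\hb}_{0,n-j+1}+\ns{\dt^j\hat G^3}_{n-j}+\ns{\dt^j\hat G^4}_{n-j}$ and absorbing the first term into $\ns{\dt^j\hb}_{1,n-j}$ from step one. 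The paper's route is marginally cleaner since it avoids the boundary trace altogether, but both arguments are valid and yield the same bound.
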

\begin{proof}
Assume that $n=N+4,\dots,2N$. First, the magnetic part of \eqref{MHDv_perb} yields that
\beq\label{elpp2123}
\begin{cases}
    \curl    b=   \curl    b, \quad \diverge  b= G^4 & \text{in } \Omega_-
 \\\curl  \hb= \hat G^3 ,\quad\diverge  \hb=  \hat G^4 &\text{in }\Omega_+
\\\jump{b}=0  & \text{on } \Sigma
\\  b_3=0 & \text{on } \Sigma_{-}
\\   \hb\times e_3=0  & \text{on } \Sigma_{+}.
\end{cases}
\eeq
It then follows from the Hodge-type estimates \eqref{v_ell_th02} of Proposition \ref{propel2} (setting $\eta=0$) with $r=1$ that for $j=0,\dots,n$,  
\begin{align}\label{deq00}
\norm{ \dt^j {  b}}_{1,n-j}^2+\norm{ \dt^j { \hb}}_{1,n-j}^2   \ls  \norm{ \dt^j \curl{ b}}_{0,n-j}^2+\ns{ \dt^j G^4}_{0,n-j}+\ns{ \dt^j \hat{G}^3}_{0,n-j}+ \ns{ \dt^j \hat{G}^4}_{0,n-j}.
\end{align}
On the other hand, employing the Hodge-type  estimates \eqref{v_ell_th} of Lemma \ref{ell_hodge} with $r= n-j+1\ge 1$ in $\Omega_+$, one deduces that, by   the third and fourth equations in \eqref{elpp2123}, 
\begin{align}\label{deq11}
 \ns{  \dt^j \hb  }_{n-j+1}&\ls \norm{ \dt^j { \hb}}_{0,n-j+1}^2 +\ns{\dt^j \curl \hb  }_{n-j}+ \ns{\dt^j \diverge \hb  }_{n-j}   \nonumber
\\ &=  \norm{ \dt^j { \hb}}_{0,n-j+1}^2 +\ns{\dt^j \hat{G}^3 }_{n-j}+ \ns{\dt^j \hat{G}^4 }_{n-j}   .
\end{align}
It follows from \eqref{deq00} and \eqref{deq11} that
\begin{align}\label{deq}
\fdb{n} &\ls  \sdb{n}+\sum_{j=0}^{n }\ns{ \dt^j G^4}_{n-j}+\sum_{j=0}^{n }\ns{ \dt^j \hat{G}^3}_{n-j}+\sum_{j=0}^{n }\ns{ \dt^j \hat{G}^4}_{n-j}.
\end{align}
By using \eqref{p_G_e_03} when $n=2N$ and \eqref{p_G_e_02} when $n=N+4,\dots, 2N-1$, one obtains \eqref{di_ev_n1} and \eqref{di_ev_n2}, respectively, from \eqref{deq}. 
\end{proof}

\begin{rem}\label{evolution_11}
Note that one can derive the desired boundary regularity of $b$ in the dissipation estimates. Indeed,
it follows from the trace theory that
\beq \label{di_ev_ntr}
\sum_{j=0}^{n }\abs{ \dt^j {b}}_{n-j+1/2}^2\ls \sum_{j=0}^{n }\ns{  \dt^j  b }_{1,n-j}\le\fdb{n}.
\eeq
\end{rem}

%%%%%%%%%%%%%%%%%%%%%%%%%%%%%%%%%%%%%%%%%%%%%%%%%%%%%%
\subsubsection{$\bar B\cdot\nabla$-dissipation estimates of $u$}
%%%%%%%%%%%%%%%%%%%%%%%%%%%%%%%%%%%%%%%%%%%%%%%%%%%%%%

Note that by now the dissipation estimates  only control the magnetic fields $b$ and $\hb$. The dissipation estimates for the velocity $u$ rely on the coupling between the fluid and the magnetic field and $\bar B_3\neq0$, and one first has the following.
\begin{prop}\label{evolution_12}
For $n=N+4,\dots,2N$, it holds that
\beq \label{low_diss2}
  \sum_{j=0}^{n-1}\(\ns{ \dt^j  u  }_{0,n-j-1}+\abs{  \dt^j  u_3}_{n-j-1}^2\) \ls \fdb{n}+ \fd{N+4} \se{2N}.
\eeq
\end{prop}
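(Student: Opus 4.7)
Following the strategy outlined in Section~\ref{sec21}, the plan is to recover the tangential dissipation of $u$ from the magnetic dissipation $\fdb{n}$ by (i) deriving algebraic identities expressing $\bar B\cdot\nabla u_3$ and $\bar B\cdot\nabla(\kappa\pa_3 b_h + \bar B_3 u_h)$ in terms of $b$, and (ii) invoking a Poincar\'e-type inequality tied to $\bar B_3\neq 0$ together with the boundary conditions on $\Sigma_-$.

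\textbf{$\nabb$-identities.} From the linearized magnetic equations in \eqref{MHDv_perb}, using $\curl\curl = \nabla\diverge -\Delta$ together with $\diverge b = G^4$, and the identity $\curl(u\times\bar B) = (\bar B\cdot\nabla)u - \bar B\,G^2$ (valid since $\bar B$ is constant and $\diverge u = G^2$), I will extract
\begin{align*}
\bar B\cdot\nabla u_3 &= \dt b_3 - \kappa\Delta_h b_3 + \kappa\nabla_h\cdot\pa_3 b_h + R^{(3)}, \\
\bar B\cdot\nabla(\kappa\pa_3 b_h + \bar B_3 u_h) &= \bar B_3\dt b_h - \kappa\bar B_3\Delta_h b_h + \kappa\bar B_h\cdot\nabla_h\pa_3 b_h + R^{(h)},
\end{align*}
where the second identity follows by multiplying the horizontal component of the $b$-equation by $\bar B_3$ and adding $\bar B\cdot\nabla(\kappa\pa_3 b_h)$ to exploit the exact cancellation of $\kappa\bar B_3\pa_3^2 b_h$, and $R^{(3)},R^{(h)}$ collect contributions from $G^{2,3,4}$. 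Crucially, both right-hand sides involve $b$ only through $\dt b$ or horizontal derivatives combined with at most one vertical derivative. Applying $\pa^\al\dt^j$ for horizontal multi-indices $|\al|\le n-j-1$, $0\le j\le n-1$, and using the embedding $\|\cdot\|_{1,k}^2 \supseteq \|\cdot\|_{0,k+1}^2$, the linear pieces are dominated by $\|\dt^{j+1}b\|_{1,n-j-1}^2 + \|\dt^j b\|_{1,n-j}^2 \subseteq \fdb{n}$, while the remainders are bounded by $\fd{N+4}\se{2N}$ via Lemma~\ref{le_G_2N}; this yields control of both $\nabb$-quantities displayed in \eqref{Bidiss}.

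\textbf{Poincar\'e inequality and interior estimate.} For any $w$ on $\Omega_-$ with trace $g$ on $\Sigma_-$, the horizontal Fourier transform converts $\bar B\cdot\nabla w$ into the first-order ODE $\bar B_3\pa_3\hat w + i(\bar B_h\cdot\xi_h)\hat w = \widehat{\bar B\cdot\nabla w}$ in $x_3\in(-1,0)$ with $\hat w(\xi_h,-1)=\hat g(\xi_h)$; explicit integration yields
\[
\|w\|_0^2 \ls |\bar B_3|^{-2}\|\bar B\cdot\nabla w\|_0^2 + |g|_0^2.
\]
On $\Sigma_-$, the impermeability condition gives $u_3 = 0$, and since $(\curl b)_h|_{\Sigma_-} = (\pa_3 b)_h^\perp$ when $b_3 = 0$, the condition $E\times e_3 = 0$ together with $u_3 = b_3 = 0$ reduces (through the linear perturbation in \eqref{MHDv_perb}) to $\kappa\pa_3 b_h + \bar B_3 u_h = G^7_h$, with $G^7$ controlled by $\fd{N+4}\se{2N}$ via Lemma~\ref{le_G_2N}. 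Applying the Poincar\'e bound to $\pa^\al\dt^j u_3$ (with $g = 0$) and to $\pa^\al\dt^j(\kappa\pa_3 b_h + \bar B_3 u_h)$ (with $g = \pa^\al\dt^j G^7_h$), summing over $|\al|\le n-j-1$, subtracting $\kappa\|\pa_3\dt^j b_h\|_{0,n-j-1}^2 \subseteq \fdb{n}$, and using $\bar B_3\neq 0$, yields the interior estimate $\sum_{j=0}^{n-1}\|\dt^j u\|_{0,n-j-1}^2 \ls \fdb{n} + \fd{N+4}\se{2N}$.

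\textbf{Boundary term and main obstacle.} For $|\dt^j u_3|_{n-j-1}^2$ on $\Sigma$, I will use the one-sided trace identity $|\pa^\al\dt^j u_3|_{L^2(\Sigma)}^2 \le 2\|\pa^\al\dt^j u_3\|_0\|\pa_3\pa^\al\dt^j u_3\|_0$ (valid since $u_3|_{\Sigma_-}=0$), combined with an integration by parts in $x_h$ using the incompressibility relation $\pa_3 u_3 = -\diverge_h u_h + G^2$, reducing matters to products of horizontal derivatives of $u_3$ against horizontal derivatives of $u_h$ in $L^2(\Omega_-)$. The main obstacle is precisely that a naive accounting demands $\|\dt^j u_3\|_{0,n-j}$, one horizontal level higher than the interior estimate furnishes; this will be resolved by substituting $\bar B_3\pa_3 u_3 = \bar B\cdot\nabla u_3 - \bar B_h\cdot\nabla_h u_3$ in the boundary integrand and shifting the extra horizontal derivative onto the interior quantity $\bar B\cdot\nabla\dt^j u_3$ already bounded by $\fdb{n}$ through Step~1 (modulo pieces absorbed by $\fd{N+4}\se{2N}$ using the smallness of $\se{2N}$). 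Summing over $j=0,\ldots,n-1$ then yields \eqref{low_diss2}.
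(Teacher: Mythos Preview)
Your Steps 1 and 2 are essentially the paper's argument: the algebraic identities for $\bar B\cdot\nabla u_3$ and $\bar B\cdot\nabla(\kappa\pa_3 b_h+\bar B_3 u_h)$ are exactly \eqref{disss1} and \eqref{disss122}, and your ODE-based Poincar\'e bound is the content of \eqref{poincare_1} in Lemma~\ref{lempoi}. The use of the boundary relation $\kappa\pa_3 b_h+\bar B_3 u_h=G^7_h$ on $\Sigma_-$ also matches \eqref{bbdd2}.

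The divergence is in Step 3, and the ``main obstacle'' you flag is self-inflicted. The paper does not pass through the trace inequality $|f|^2_{L^2(\Sigma)}\le 2\|f\|_0\|\pa_3 f\|_0$ and the divergence relation at all; it simply invokes the \emph{second} Poincar\'e-type inequality \eqref{poincare_2} of Lemma~\ref{lempoi}, namely $|f|_0^2\le \bar B_3^{-2}\|(\bar B\cdot\nabla)f\|_0^2+\|f\|_0^2$, applied to $f=\pa^\al\dt^j u_3$. Since Step~1 already controls $\|\bar B\cdot\nabla\dt^j u_3\|_{0,n-j-1}$ and Step~2 controls $\|\dt^j u_3\|_{0,n-j-1}$, the boundary term $|\dt^j u_3|_{n-j-1}$ follows with no loss of a horizontal derivative; see \eqref{v_bd_es_32}. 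Your own Fourier/ODE argument in Step~2 in fact proves \eqref{poincare_2} as well (evaluate the explicit solution at $x_3=0$), so you had the needed tool in hand but did not use it.

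As written, your proposed resolution is ambiguous. If you mean to write $|\pa^\al\dt^j u_3|_{L^2(\Sigma)}^2=\frac{2}{\bar B_3}\int_{\Omega_-}\pa^\al\dt^j u_3\,(\bar B\cdot\nabla)\pa^\al\dt^j u_3-\frac{1}{\bar B_3}\int_{\Omega_-}\bar B_h\cdot\nabla_h|\pa^\al\dt^j u_3|^2$ and then observe that the last integral vanishes by horizontal periodicity, that is correct---and is precisely a proof of \eqref{poincare_2}. But your phrase ``shifting the extra horizontal derivative onto $\bar B\cdot\nabla\dt^j u_3$'' suggests instead an attempt to bound $\|\nabla_h\pa^\al\dt^j u_3\|_0$ by $\fdb{n}$, which would fail for $|\al|=n-j-1$. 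Replace Step~3 by a direct appeal to \eqref{poincare_2}.
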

\begin{proof}
Assume that $n=N+4,\dots,2N$. First, it follows from   the third and fourth equations  in \eqref{MHDv_perb} that
\beq\label{bbequ}
\dt {b} -\kappa \Delta b=  \bar B\cdot \nabla u+G^3+\kappa \nabla G^4\quad  \text{in } \Omega_-
\eeq
By the vertical component of \eqref{bbequ} and the fourth equations  in \eqref{MHDv_perb}, one has
\begin{align}\label{disss1}
\bar B\cdot\nabla   u_3&= \dt b _3  -\kappa \Delta b_3-G^3_3- \kappa \p_3 G^4\nonumber
\\&= \dt b _3 -\kappa\Delta_hb_3+\kappa \pa_3\diverge_h  b_h -G^3_3-2\kappa \pa_3 G^4 .
\end{align}
It then follows from \eqref{disss1} and \eqref{p_G_e_02} that for $j=0,\dots,n-1$,
\begin{align}\label{v_bd_es_3}
\norm{\bar B\cdot \nabla \dt^j  u_3}_{0,n-j-1}^2 &\ls \norm{\dt^{j+1} b_3}_{0,n-j-1}^2+\norm{\dt^{j} b}_{1,n-j}^2+\norm{\dt^{j} G^3_3}_{n-j-1}^2+\norm{\dt^{j}   G^4}_{n-j}^2\nonumber
\\&\ls \fdb{n}+ \fd{N+4} \se{2N} .
\end{align}
This implies,  since $\bar B_3\neq 0$,  by the Poincare-type inequalities \eqref{poincare_1}  and \eqref{poincare_2} of Lemma \ref{lempoi} and the tenth equation  in \eqref{MHDv_perb}, that
\begin{align}\label{v_bd_es_32}
\norm{  \dt^j  u_3}_{0,n-j-1}^2+\abs{  \dt^j  u_3}_{n-j-1}^2\ls \norm{\bar B\cdot \nabla \dt^j  u_3}_{0,n-j-1}^2 \ls \fdb{n}+ \fd{N+4} \se{2N}.
\end{align}

Next,  the tenth to twelfth equations in \eqref{MHDv_perb}  imply
 \beq\label{bbdd2}
 \kappa\p_3b_h+\bar B_3 u_h =  G^{7}_h \quad\text{on }\Sigma_-.
 \eeq
This motivates one to consider the quantity $ \kappa\p_3b_h+\bar B_3 u_h$.
 It then follows from the horizontal components of the third equation  in \eqref{MHDv_perb} that
\begin{align}\label{disss122}
&\bar B\cdot \nabla (\kappa\p_3b_h+\bar B_3 u_h)\equiv\bar B_h\cdot \nabla_h (\kappa\p_3b_h)+\bar B_3  (\kappa\p_3^2b_h +\bar B\cdot \nabla  u_h)\nonumber
\\&\quad=\bar B_h\cdot \nabla_h (\kappa\p_3b_h)+\bar B_3  (-\kappa\Delta_hb_h -\dt b_h+G^3_h+\kappa \nabla_h G^4).
\end{align}
\eqref{disss122} and \eqref{p_G_e_02} imply that for $j=0,\dots,n-1$,
\begin{align}\label{vb_es_di6}
&\ns{\bar B\cdot \nabla \dt^j(\kappa\p_3b_h+\bar B_3 u_h)}_{0,n-j-1}\nonumber
\\&\quad\ls\ns{\dt^jb_h}_{1,n-j}+\ns{\dt^{j+1} b_h}_{0,n-j-1}+\ns{\dt^jG^3_h}_{0,n-j-1}+\ns{\dt^jG^4}_{0,n-j} \nonumber
\\&\quad \ls   \fdb{n} + \fd{N+4} \se{2N}.
\end{align}
By  \eqref{poincare_1}  and \eqref{poincare_2}  again,   it follows from \eqref{vb_es_di6},   \eqref{bbdd2}  and \eqref{p_G_e_02} that
\begin{align}\label{vb_es_di62}
&\ns{ \dt^j(\kappa\p_3b_{h}+\bar B_3 u_{h})}_{0,n-j-1}+\as{ \dt^j(\kappa\p_3b_{h}+\bar B_3 u_{h})}_{n-j-1}\nonumber
\\&\quad \ls \ns{\bar B\cdot \nabla \dt^j(\kappa\p_3b_{h}+\bar B_3 u_{h})}_{0,n-j-1}+\as{ \dt^j G^7_h}_{n-j-1} 
\nonumber
\\&\quad\ls   \fdb{n} + \fd{N+4} \se{2N}.
\end{align}
Hence,   by \eqref{vb_es_di62} and since  $\bar B_3\neq 0$ again, one has
\begin{align}\label{vb_es_di64}
\ns{ \dt^j  u_h }_{0,n-j-1}\ls  \ns{ \dt^j \p_3b_h }_{0,n-j-1}+\ns{ \dt^j(\kappa\p_3b_h+\bar B_3 u_h)}_{0,n-j-1}  \ls   \fdb{n} +\fd{N+4} \se{2N}.
\end{align}

Consequently, collecting the estimates \eqref{v_bd_es_3}, \eqref{v_bd_es_32} and \eqref{vb_es_di64} yields \eqref{low_diss2}.
\end{proof}

%%%%%%%%%%%%%%%%%%%%%%%%%%%%%%%%%%%%%%%%%%%%%%%%%%%%%%
\subsection{Estimates of $u$, $b$ and $\hb$}
%%%%%%%%%%%%%%%%%%%%%%%%%%%%%%%%%%%%%%%%%%%%%%%%%%%%%%

In this subsection we will complete the estimates of the velocity $u$ and the magnetic field $b$.

%%%%%%%%%%%%%%%%%%%%%%%%%%%%%%%%%%%%%%%%%%%%%%%%%%%%%%
\subsubsection{Estimates of $u$, $b$ and $\hb$ at the $2N$ level}\label{sec_53}
%%%%%%%%%%%%%%%%%%%%%%%%%%%%%%%%%%%%%%%%%%%%%%%%%%%%%%

We first derive the normal estimates of  $u$, $b$ and $\hb$ at the $2N$ level.
\begin{prop}\label{v_b_prop}
It holds that
\begin{align}\label{v_b_es}
&  \dtt  \ns{  (\curlv{u})_h }_{2N-1 } +\ns{  (\curlv{u})_h }_{2N-1 } +\sum_{j=0}^{2N}\norm{ \dt^j {u}}_{2N-j}^2+\sum_{j=0}^{2N}\norm{\dt^j b}_{2N-j+1}^2+\sum_{j=0}^{2N}\norm{\dt^j \hb}_{2N-j+1}^2\nonumber
\\&\quad\ls  \seb{2N}+\fdb{2N}+\fe{N+4}   \se{ 2N} 
\end{align}
and that
\begin{align}\label{vb_es_en}
&\sum_{j=0}^{2N} \norm{\dt^{j} u}_{2N-j}^2+	\sum_{j=0}^{2N-1}\norm{\dt^j b}_{2N-j+1 }^2+\norm{\dt^{2N} b}_{0}^2  +	\sum_{j=0}^{2N-1}\norm{\dt^j \hb}_{2N-j+1 }^2+\norm{\dt^{2N} \hb}_{0}^2  \nonumber
\\&\quad\ls   \seb{2N}+\ns{ (\curlv{u})_h }_{2N-1} +  \fe{N+4} \se{2N}.
\end{align}
\end{prop}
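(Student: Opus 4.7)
The plan is to exploit the damping structure of the horizontal vorticity equation~\eqref{MHDcurlva} and then recover full Sobolev norms of $u,b,\hb$ via Hodge-type elliptic estimates. For \eqref{v_b_es}, I would first apply $\pa^\al$ with $\al\in\mathbb{N}^{1+3}$, $|\al|\le 2N-1$, to \eqref{MHDcurlva}, pair with $\pa^\al(\curlv u)_h$, and integrate over $\Omega_-$ against $d\V$. Using $\diva u=0$, the transport term $u\cdot\Dn$ integrates by parts to boundary integrals on $\Sigma$ and $\Sigma_-$; the $\Sigma$ contribution cancels with the boundary term coming from $\D_t$ via the kinematic identity $u\cdot\N=\dt\eta$, while the $\Sigma_-$ contribution vanishes thanks to $u_3|_{\Sigma_-}=0$. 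The damping coefficient $\bar B_3^2/\kappa>0$ then produces the coercive quantity $\ns{(\curlv u)_h}_{2N-1}$ on the left. Summing over $|\al|\le 2N-1$ and applying \eqref{p_Phi_e_1} to $\Phi^\al_h$ gives
\[
\dtt\ns{(\curlv u)_h}_{2N-1}+\ns{(\curlv u)_h}_{2N-1}\ls\ns{L_h}_{2N-1}+\fe{N+4}\se{2N}.
\]
The linear source $L_h$ from \eqref{lheq} contains only tangential derivatives of $\curl b$ (observe that $(\curl b)_3=\p_1b_2-\p_2b_1$), one time derivative of $b$, and tangential derivatives of $u$ (the term $\bar B_3\p_{3-i}u_3$ is tangential because $3-i\in\{1,2\}$); each piece is controlled by the combination $\seb{2N}+\fdb{2N}$, the latter absorbing one normal derivative of $b$ via the $\norm{\cdot}_{1,\cdot}$ factor in its definition~\eqref{di_ev_n}.

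To obtain the remaining terms of \eqref{v_b_es}, as well as \eqref{vb_es_en}, I would run a Hodge/elliptic recovery. For $\dt^j u$ with $0\le j\le 2N$, the system
\[
\curlv\dt^j u=\dt^j\curlv u,\quad\diva\dt^j u\text{ given by }F^{2,(j,0)},\quad\dt^j u\cdot\N|_\Sigma=\dt^{j+1}\eta+\text{l.o.t.},\quad\dt^j u_3|_{\Sigma_-}=0
\]
fits the hypotheses of Proposition~\ref{propel1} with $r=2N-j$; the vertical component $(\curlv u)_3=\pav_1 u_2-\pav_2 u_1$ is tangential in $u_h$ and hence absorbed into $\seb{2N}$ modulo an $\fe{N+4}\se{2N}$ remainder, while the horizontal components $(\curlv u)_h$ are supplied either by the coercive term of the previous step (for \eqref{v_b_es}) or taken as a given quantity on the right-hand side (for \eqref{vb_es_en}). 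For the magnetic fields, Proposition~\ref{propel2} on the Hodge system~\eqref{elpp2123} with $r=2N-j+1$ yields
\[
\norm{\dt^j b}_{2N-j+1}^2+\norm{\dt^j\hb}_{2N-j+1}^2\ls\norm{\dt^j\curl b}_{2N-j}^2+\text{lower-order $G$-terms},
\]
where $\curl b$ is controlled by $\fdb{2N}$ together with $\seb{2N}$, and the $G$-terms by Lemma~\ref{le_G_2N}. At the top order $j=2N$, the norm $\norm{\dt^{2N} b}_1$ required by \eqref{v_b_es} is read directly from $\fdb{2N}$, while for \eqref{vb_es_en} one only needs $\norm{\dt^{2N} b}_0$, which follows from the equation $\dt b=\kappa\Delta b+\bar B\cdot\nabla u+G^3+\kappa\nabla G^4$ after bounding $\dt^{2N-1}(\dt b)$ via the lower-order estimates.

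The main obstacle I anticipate is verifying that the linear source $L_h$ is genuinely bounded by $\seb{2N}+\fdb{2N}$ with no derivative loss. This rests on the algebraic manipulations~\eqref{curl_1}--\eqref{curl_3} that converted the dangerous term $\bar B\cdot\Dn\curlv b$ into tangential quantities plus $\D_t b_{3-i}/\kappa$; this structural gain, powered by $\kappa>0$ and $\bar B_3\ne0$, is precisely the mechanism behind the damping and must be used twice, once to build the equation and once to estimate its source. A secondary subtlety is that the $H^1$ norm of $\dt^{2N}b$ appears on the left of \eqref{v_b_es} but is \emph{not} in $\se{2N}$, which is why $\fdb{2N}$ must appear on the right.
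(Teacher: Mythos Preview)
Your overall architecture is right --- vorticity damping for $(\curlv u)_h$ followed by elliptic recovery --- but the recovery step has two genuine gaps that would cause the argument to fail.

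First, using Proposition~\ref{propel1} to estimate $\dt^j u$ requires the \emph{full} curl in $H^{2N-j-1}$, and your claim that $(\curlv u)_3=\pav_1 u_2-\pav_2 u_1$ ``is tangential and absorbed into $\seb{2N}$'' is incorrect. The operator $\pav_1$ is tangential, but $\norm{(\curl u)_3}_{2N-1}$ involves up to $2N-1$ additional derivatives which may all be normal; controlling it costs $\norm{u_h}_{2N-1,1}$, not $\norm{u}_{0,2N}$, and this is precisely what you are trying to bound. The paper avoids this by invoking Lemma~\ref{ell_hodge}, a Hodge-type estimate that needs only $(\curl u)_h$ and the tangential norm $\norm{u}_{0,r}$ on the right; this lemma is essential and cannot be replaced by Proposition~\ref{propel1}.

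Second, your use of Proposition~\ref{propel2} for $b,\hb$ with $r=2N-j+1$ would require $\norm{\dt^j\curl b}_{2N-j}$ on the right, but $\fdb{2N}$ furnishes only $\norm{\dt^j b}_{1,2N-j}$ (one normal, the rest tangential), which is strictly weaker. The paper instead exploits the parabolic character of the magnetic equation: for \eqref{v_b_es} it treats $-\kappa\Delta b=\bar B\cdot\nabla u-\dt b+G^3+\kappa\nabla G^4$ as a Dirichlet problem with boundary data supplied by the trace bound \eqref{di_ev_ntr}, and for \eqref{vb_es_en} it uses Proposition~\ref{propel3} on the coupled second-order system \eqref{bj_eq2}. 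In both cases the right-hand side feeds $\norm{u}$ back in, and the resulting estimates for $u$ and $b$ are closed by a \emph{recursive} argument in the number of normal derivatives (the index $\ell$ in \eqref{v_b_es_42}--\eqref{v_b_es_43}), trading one normal derivative for one tangential derivative at each step until everything is reduced to $\seb{2N}+\fdb{2N}$.

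A smaller point: you apply $\pa^\al$ with $\al\in\mathbb{N}^{1+3}$ to the vorticity-damping equation, but the statement \eqref{v_b_es} carries $\dtt$ only on the purely spatial norm $\ns{(\curlv u)_h}_{2N-1}$. The paper uses the damping mechanism only for spatial multi-indices $\al\in\mathbb{N}^3$; time derivatives of $u$ are recovered directly from the identity $\dt\curl u=\bar B\cdot\nabla\curl b+\curl G^1$ combined with Lemma~\ref{ell_hodge}, which converts $\norm{\dt^j u}_{2N-j}$ into $\norm{\dt^{j-1}b}_{2N-j+1}$ plus tangential terms. This, together with the Dirichlet estimate for $\dt^j b$, gives a telescoping chain in $j$ that closes via Sobolev interpolation against $\sum_j\norm{\dt^j b}_0^2\le\fdb{2N}$.
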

\begin{proof}
Fix $\ell=0,\dots, 2N-1$. Let $\al\in \mathbb{N}^{3}$ with $\abs{\al}\le 2N-1$ such that $\al_3\le  2N-1-\ell$. Taking the inner product of the equations \eqref{MHDcurlva} with $\pa^\al (\curlv{u})_h $ and integrating by parts over $\Omega_-$, by using  the second, eighth and eleventh equations in \eqref{MHDv}, one obtains
\begin{align}\label{eeqq1}
&\hal \dtt\int_{\Omega_-} \abs{\pa^\al (\curlv{u})_h}^2\,d\V  +\frac{\bar{B}_3^2}{\kappa}\int_{\Omega_-} \abs{\pa^\al (\curlv{u})_h}^2\,d\V\nonumber
\\&\quad\ls  \(\norm{\pa^\al L_h}_0^2+\norm{\Phi^{\al}_h }_0\)\norm{\pa^\al (\curlv{u})_h}_0  .
\end{align}
It then follows from \eqref{eeqq1}, Cauchy's inequality, \eqref{lheq}, \eqref{p_Phi_e_1} and \eqref{curl_e_1} that 
\begin{align}\label{v_b_es_1}
 &\dtt \ns{\pa^\al (\curlv{u})_h}_0 + \ns{\pa^\al (\curlv{u})_h}_0 + \ns{\pa^\al (\curl{u})_h}_0 \nonumber
\\ &\quad \ls  \norm{\pa^\al L_h}_0^2+\ns{\Phi^{\al}_h }_0+\ns{\pa^\al \(\curlv u-\curl  u\) _h}_0\nonumber
\\& \quad\ls  \norm{\pa^\al u }_{0,1}^2+\norm{\pa^\al b }_{1,1}^2+\norm{\pa^\al \dt b }_{0}^2 +\fe{N+4}  \se{ 2N}   .
\end{align}
Summing \eqref{v_b_es_1} over such $\al$ yields
\begin{align}\label{v_b_es_2}
&  \dtt  \ns{   (\curlv{u})_h }_{ 2N-1-\ell,\ell} +\ns{   (\curlv{u})_h }_{ 2N-1-\ell,\ell} +\ns{   (\curl {u})_h }_{ 2N-1-\ell,\ell}\nonumber
\\&\quad\ls  \norm{  {u}}_{ 2N-1-\ell,\ell+1}^2+\norm{  {b}}_{ 2N -\ell,\ell+1}^2+\ns{\dt  {b}}_{ 2N-1} +\fe{N+4}  \se{ 2N}     .
\end{align}
On the other hand, employing the Hodge-type  estimates \eqref{v_ell_th} of Lemma \ref{ell_hodge} with $r= 2N-\ell\ge 1$ and using the second equation in \eqref{MHDv_perb} and  \eqref{p_G_e_02},  one obtains
\begin{align}\label{v_b_es_4}
  \ns{    {u} }_{ 2N-\ell,\ell} \ls & \ns{     {u} }_{0,2N-\ell+\ell}+\ns{   (\curl {u})_h  }_{ 2N-1-\ell,\ell} +\ns{   \diverge {u} }_{ 2N-1-\ell,\ell}   \nonumber
\\\le & \ns{     {u} }_{0,2N}  + \ns{   (\curl {u})_h  }_{ 2N-1-\ell,\ell}  +\ns{ G^2 }_{ 2N-1 }  
\nonumber
\\\ls & \ns{     {u} }_{0,2N}  + \ns{   (\curl {u})_h  }_{ 2N-1-\ell,\ell}  +\fe{N+4}  \se{ 2N}   .
\end{align}
Then one deduces from \eqref{v_b_es_2} and \eqref{v_b_es_4} that
\begin{align}\label{v_b_es_45}
&  \dtt  \ns{   (\curlv{u})_h }_{ 2N-1-\ell,\ell} +  \ns{   (\curlv{u})_h }_{ 2N-1-\ell,\ell} +  \ns{    {u} }_{ 2N-\ell,\ell}\nonumber
\\&\quad\ls  \norm{  {u}}_{ 2N-1-\ell,\ell+1}^2+\norm{  {b}}_{ 2N -\ell,\ell+1}^2+\ns{\dt  {b}}_{ 2N-1} +\fe{N+4} \se{ 2N}   .
\end{align}
Noting \eqref{di_ev_ntr} in Remark \ref{evolution_11}, we consider the following elliptic problem (\eqref{bbequ}):
\beq \label{bj_eq}
\begin{cases}
-\kappa \Delta   b=\bar{B} \cdot \nabla   {u}-  \dt  {b}+ G^3+\kappa \nabla G^4& \text{in } \Omega_-
\\   b=  b & \text{on }   \Sigma\cup\Sigma_-.
\end{cases}
\eeq
It then follows from the standard $H^r$ elliptic estimates  with $r=2N-\ell+1\ge 2$,  \eqref{p_G_e_02}  and \eqref{di_ev_ntr} with $n=2N$ that
\begin{align}\label{v_b_es_4b}
\ns{   {b} }_{2N -\ell+1,\ell } & \ls \norm{  {u}}_{2N  -\ell,\ell }^2+  \norm{\dt  b}_{2N -\ell-1,\ell } ^2+ \norm{ G^3}_{2N -\ell-1,\ell }^2+\norm{ G^4}_{2N -\ell,\ell }^2+\as{   {b} }_{2N  +1/2}  \nonumber
\\& \ls \norm{  {u}}_{2N -\ell,\ell}^2+  \norm{\dt  b}_{2N-1}^2+\fdb{2N}  +\fe{N+4}  \se{ 2N}   .
\end{align}
Then it follows from \eqref{v_b_es_45} and \eqref{v_b_es_4b} that
\begin{align}\label{v_b_es_42}
&  \dtt  \ns{  (\curlv{u})_h }_{2N-1-\ell,\ell} +\ns{  (\curlv{u})_h }_{2N-1-\ell,\ell}+\ns{   {u} }_{2N-\ell,\ell} +\ns{   {b} }_{2N -\ell+1,\ell }\nonumber
\\&\quad\ls \norm{ {u}}_{2N-1-\ell,\ell+1}^2 +\norm{  {b}}_{ 2N -\ell,\ell+1}^2  +\ns{\dt  {b}}_{2N-1} +\fdb{2N} +\fe{N+4} \se{ 2N}   .
\end{align}
A suitable linear combination of   \eqref{v_b_es_42}  for  $\ell=0,\dots, 2N-1$ yields that, recalling the  conventional notation \eqref{ccon} and  the definition of $\fdb{2N}$, 
\begin{align}\label{v_b_es_43}
&  \dtt  \ns{  (\curlv{u})_h }_{2N-1 } + \ns{  (\curlv{u})_h }_{2N-1 } +\ns{   {u}}_{2N}+\ns{   {b} }_{2N +1 }\nonumber
\\&\quad\ls  \norm{ {u}}_{0,2N}^2   +\norm{  {b}}_{ 1,2N}^2  +\ns{\dt  {b}}_{2N-1} +\fdb{2N}+\fe{N+4}  \se{ 2N}  
\nonumber
\\&\quad\ls  \norm{ {u}}_{0,2N}^2    +\ns{\dt  {b}}_{2N-1} +\fdb{2N}+ \fe{N+4} \se{ 2N}  .
\end{align}

Next,  applying $\curl$ to the first equation in \eqref{MHDv_perb} yields
\beq\label{vor1}
 \dt \curl {u}    = \bar{B}   \cdot \nabla  \curl  {b} + \curl G^1.
\eeq
For $j=1,\dots,2N-1$, employing the Hodge-type  estimates \eqref{v_ell_th} of Lemma \ref{ell_hodge} with $r= 2N-j\ge 1$, by \eqref{vor1},  the second equation in \eqref{MHDv_perb} and \eqref{p_G_e_02},  one obtains
\begin{align}\label{v_b_es_56}
\norm{ \dt^j   {u} }_{2N-j}^2 &\ls   \norm{ \dt^j   {u} }_{0,2N-j}^2+\norm{ \dt^j  (\curl {u})_h  }_{2N-j-1}^2+  \ns{   \dt^j  \diverge {u}}_{2N-j-1 } \nonumber\\
&\ls   \norm{ \dt^j   {u} }_{0,2N-j}^2+ \norm{\dt^{j-1} b}_{2N-j+1}^2+ \norm{\dt^{j-1} G^1}_{2N-j}^2  +  \ns{   \dt^j  G^2}_{2N-j-1 } \nonumber
\\ &\ls   \norm{ \dt^j   {u} }_{0,2N-j}^2+  \norm{\dt^{j-1} b}_{2N-(j-1)}^2+ \fe{N+4} \se{ 2N} .
\end{align}
On the other hand, applying $\dt^j$, $j=1,\dots,2N-1$, to  the problem \eqref{bj_eq} and the standard $H^r$ elliptic estimates with $r=2N-j+1\ge 2$,  \eqref{p_G_e_02} and \eqref{di_ev_ntr} with $n=2N$ show that
\begin{align}\label{v_b_es_6}
\norm{\dt^j b}_{2N-j+1}^2& \ls \norm{ \dt^j {u}}_{2N-j}^2+  \norm{\dt^{j+1} b}_{2N-j-1}^2+ \norm{\dt^jG^3}_{2N-j-1}^2+ \as{\dt^j b}_{2N-j+1/2}   \nonumber
\\& \ls \norm{ \dt^j {u}}_{2N-j}^2+  \norm{\dt^{j+1} b}_{2N-(j+1)}^2+ \fdb{2N}+ \fe{N+4} \se{ 2N} \ .
\end{align}
Combining \eqref{v_b_es_56} and \eqref{v_b_es_6}  and then summing  over $j=1,\dots,2N-1$ yield that
\begin{align}\label{v_b_es_6112233}
&\sum_{j=1}^{2N-1}\norm{ \dt^j {u}}_{2N-j}^2+\sum_{j=1}^{2N-1}\norm{\dt^j b}_{2N-j+1}^2 \nonumber
\\ &\quad\ls \sum_{j=1}^{2N-1}\norm{ \dt^j   {u} }_{0,2N-j}^2+ \norm{ b}_{2N}^2 +  \sum_{j=2}^{2N} \norm{\dt^{j} b}_{2N-j}^2  +   \fdb{2N}+ \fe{N+4} \se{ 2N}  .
\end{align}

Now combining \eqref{v_b_es_43} and \eqref{v_b_es_6112233} leads to
\begin{align}
&  \dtt  \ns{  (\curlv{u})_h }_{2N-1 } +\ns{  (\curlv{u})_h }_{2N-1 } +\sum_{j=0}^{2N-1}\norm{ \dt^j {u}}_{2N-j}^2+\sum_{j=0}^{2N-1}\norm{\dt^j b}_{2N-j+1}^2 \nonumber
\\&\quad\ls  \sum_{j=0}^{2N-1}\norm{ \dt^j   {u} }_{0,2N-j}^2+   \sum_{j=0}^{2N} \norm{\dt^{j} b}_{2N-j}^2  +   \fdb{2N}+ \fe{N+4} \se{ 2N}  .
\end{align}
This together with the Sobolev interpolation implies that
\begin{align}\label{v_b_es_412321}
&  \dtt  \ns{  (\curlv{u})_h }_{2N-1 } +\ns{  (\curlv{u})_h }_{2N-1 } +\sum_{j=0}^{2N-1}\norm{ \dt^j {u}}_{2N-j}^2+\sum_{j=0}^{2N-1}\norm{\dt^j b}_{2N-j+1}^2 \nonumber
\\&\quad\ls  \sum_{j=0}^{2N-1}\norm{ \dt^j   {u} }_{0,2N-j}^2+    \sum_{j=0}^{2N} \norm{\dt^{j} b}_{0}^2  +   \fdb{2N}+ \fe{N+4}  \se{ 2N}   \nonumber
\\&\quad\ls  \sum_{j=0}^{2N-1}\norm{ \dt^j   {u} }_{0,2N-j}^2+\fdb{2N}+ \fe{N+4}  \se{ 2N}  .
\end{align}
Finally, since $ \sum_{j=0}^{2N-1}\norm{ \dt^j   {u} }_{0,2N-j}^2\ls \seb{2N}$, \eqref{v_b_es_412321} yields the estimate \eqref{v_b_es}  as   $\ns{\dt^{2N}u}_0\le \seb{2N}$ and $\ns{\dt^{2N}b}_1+\sum_{j=0}^{2N}\norm{\dt^j \hb}_{2N-j+1}^2 \le \fdb{2N}$.

We now prove \eqref{vb_es_en}. First, one recalls from \eqref{v_b_es_4} with $\ell=0$ that
\beq \label{vb_es_en2}
\ns{u}_{2N}  \ls  \ns{u}_{0,2N}  +\ns{ (\curlv{u})_h }_{2N-1} +\fe{N+4} \se{2N}
\eeq
and from \eqref{v_b_es_56} that  for $j=1,\dots,2N-1$, 
\beq\label{vb_es_en22}
\norm{ \dt^j   {u} }_{2N-j}^2 \ls   \norm{ \dt^j   {u} }_{0,2N-j}^2+  \norm{\dt^{j-1} b}_{2N-(j-1)}^2+ \fe{N+4} \se{2N} .
\eeq
Now consider  the following two-phase elliptic problem, which follows from \eqref{MHDv_perb},
\beq \label{bj_eq2} 
\begin{cases}
 \kappa \curl\curl  b=\bar{B}   \cdot \nabla {u}-\dt {b} +G^3& \text{in } \Omega_-
\\ \diverge b= G^4 & \text{in } \Omega_-
   \\\curl \hb=  \hat{G}^3,\quad\diverge \hb=  \hat{G}^4 &\text{in }\Omega_+
\\\jump{b}=0  & \text{on } \Sigma
\\  b_3=0,\quad   {\kappa\curl b }\times e_3=\(u\times \bar B\)\times e_3+G^7 & \text{on } \Sigma_{-}
\\   b\times e_3=0  & \text{on } \Sigma_{+}.
\end{cases}
\eeq
Applying $\dt^j$, $j=0,\dots,2N-1$ to   \eqref{bj_eq2} and using the Hodge-type estimates \eqref{v_ell_th03} of Proposition \ref{propel3} (with $\eta=0$)   with $r=2N-j+1\ge 2$, \eqref{p_G_e_02} and the trace theory, one can get
\begin{align}\label{vb_es_en5}
&\norm{\dt^j b}_{2N-j+1 }^2+\norm{\dt^j \hb}_{2N-j+1 }^2\nonumber
\\&\quad \ls   \norm{\dt^{j} u}_{2N-j}^2 + \norm{\dt^{j+1} b}_{2N-j-1}^2+\norm{\dt^{j} G^3}_{2N-j-1}^2+\norm{\dt^{j} G^4}_{2N-j}^2
\nonumber
\\&\qquad +\norm{\dt^{j} \hat{G}^3}_{2N-j}^2+\norm{\dt^{j} \hat{G}^4}_{2N-j}^2
+\abs{ \dt^j {u}}_{{2N-j-1/2}}^2+\abs{ \dt^j {G^7}}_{{2N-j-1/2}}^2 \nonumber
\\&\quad \ls \norm{\dt^{j} u}_{2N-j}^2+ \norm{\dt^{j+1} b}_{2N-(j+1)}^2+ \fe{N+4} \se{2N}.
\end{align}
Hence, collecting \eqref{vb_es_en2}, \eqref{vb_es_en22} for $j=1,\dots,2N-1$ and \eqref{vb_es_en5} for $j=0,\dots,2N-1$ leads to
\begin{align}   \label{vb_es_en6} 
&\sum_{j=0}^{2N-1}\norm{ \dt^j {u}}_{2N-j}^2+\sum_{j=0}^{2N-1}\norm{\dt^j b}_{2N-j+1}^2+\sum_{j=0}^{2N-1}\norm{\dt^j \hb}_{2N-j+1 }^2\nonumber
\\ &\quad\ls \sum_{j=0}^{2N-1}\norm{ \dt^j   {u} }_{0,2N-j}^2+   \sum_{j=0}^{2N} \norm{\dt^{j} b}_{2N-j}^2  +\ns{ (\curlv{u})_h }_{2N-1}+ \fe{N+4} \se{ 2N}
\nonumber
\\ &\quad\ls \seb{2N}+   \sum_{j=0}^{2N} \norm{\dt^{j} b}_{2N-j}^2  + \ns{ (\curlv{u})_h }_{2N-1}+ \fe{N+4} \se{ 2N}  .
\end{align}
This together with the Sobolev interpolation implies that
\begin{align}   \label{vb_es_en7}
&\sum_{j=0}^{2N-1} \norm{\dt^{j} u}_{2N-j}^2+	\sum_{j=0}^{2N-1}\norm{\dt^j b}_{2N-j+1 }^2 +\sum_{j=0}^{2N-1}\norm{\dt^j \hb}_{2N-j+1 }^2\nonumber
\\&\quad\ls   \seb{2N}+ \sum_{j=0}^{2N}\norm{\dt^{j} b}_{0}^2+\ns{ (\curlv{u})_h }_{2N-1} +  \fe{N+4} \se{2N}
\nonumber \\&  \quad\ls   \seb{2N}+\ns{ (\curlv{u})_h }_{2N-1} +  \fe{N+4} \se{2N}.
\end{align}
This yields the estimate \eqref{vb_es_en} as   $\ns{\dt^{2N}u}_0+\ns{\dt^{2N}b}_0+\ns{\dt^{2N}\hb}_0\le \seb{2N}$.
\end{proof}

%%%%%%%%%%%%%%%%%%%%%%%%%%%%%%%%%%%%%%%%%%%%%%%%%%%%%%
\subsubsection{Estimates at $N+4,\cdots,2N$ levels}
%%%%%%%%%%%%%%%%%%%%%%%%%%%%%%%%%%%%%%%%%%%%%%%%%%%%%%

We now derive the following energy-dissipation estimates of $u$, $b$ and $\hb$ at $N+4,\cdots,2N$ levels.
\begin{prop}  \label{prop_vb_di}
For $n=N+4,\dots, 2N$, it holds that  
\begin{align}\label{vb_es_di}
   &\dtt  \ns{  (\curlv{u})_h }_{n-2} +\sum_{j=0}^{n-1}\norm{ \dt^j {u}}_{n-j-1}^2+\sum_{j=0}^{n-2}\norm{\dt^j b}_{n-j}^2+\sum_{j=0}^{n}\norm{\dt^j b}_{1,n-j}^2+\sum_{j=0}^{n}\norm{\dt^j \hb}_{n-j+1}^2\nonumber
   \\&\quad  \ls  \fdb{n}+\fd{N+4} \se{2N}
\end{align}
and that
\begin{align}\label{vb_es_en_n+2}
& \norm{   u}_{n-1}^2  +\norm{  u}_{0,n}^2+ \sum_{j=1}^{n}\norm{\dt^j u}_{n-j}^2+ \norm{   b}_{n}^2  +\sum_{j=1}^{n-1}\norm{\dt^j b}_{n-j+1}^2+\norm{\dt^{n}b}_{0}^2\nonumber
\\&\quad + \norm{   \hb}_{n}^2  +\sum_{j=1}^{n-1}\norm{\dt^j \hb}_{n-j+1}^2+\norm{\dt^{n}\hb}_{0}^2\ls  \bar{\mathcal{E}}_n+ \ns{ (\curlv{u})_h }_{n-2} +\fe{N+4} \se{2N}.
\end{align}
\end{prop}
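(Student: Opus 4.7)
The plan is to mimic the strategy that yielded Proposition \ref{v_b_prop} at the top level, but now adapted to the levels $n=N+4,\dots,2N$ by replacing energy-type bounds with dissipation-type bounds wherever possible and by using the primary dissipation improvements from Propositions \ref{evolution_1} and \ref{evolution_12} as the main source of control for lower-order terms.

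For \eqref{vb_es_di}, I would fix $\ell=0,\dots,n-2$ and, for multi-indices $\al\in\mathbb{N}^3$ with $\abs{\al}\le n-2$ and $\al_3\le n-2-\ell$, test the vorticity-damping equation \eqref{MHDcurlva} against $\pa^\al(\curlv u)_h$ to obtain, as in \eqref{eeqq1}--\eqref{v_b_es_1}, a time-differential estimate
\[
\dtt\ns{(\curlv u)_h}_{n-2-\ell,\ell}+\ns{(\curlv u)_h}_{n-2-\ell,\ell}\ls \ns{L_h}_{n-2-\ell,\ell}+\ns{\Phi^\al_h}_0+\ns{\curlv u-\curl u}_{n-2-\ell,\ell}.
\]
The crucial point here is the shape of $L_h$ in \eqref{lheq}: the forcing involves only $\nabla_h b$, $\curl b$, $\dt b$, $\nabla_h u_{3-i}$ and $\p_{3-i} u_3$. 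The first three are controlled by $\fdb{n}$ after using Proposition \ref{evolution_1}, while the horizontal and vertical-component terms in $u$ are precisely the objects bounded by $\sum_j\(\ns{\dt^j u}_{0,n-j-1}+\as{\dt^j u_3}_{n-j-1}\)$, which is in turn controlled by $\fdb{n}+\fd{N+4}\se{2N}$ via Proposition \ref{evolution_12}. Next, applying the Hodge estimate \eqref{v_ell_th} with $r=n-1-\ell$ and the second equation in \eqref{MHDv_perb} converts control of $(\curl u)_h$ together with $\norm{u}_{0,n-1}$ into a bound on $\norm{u}_{n-1-\ell,\ell}$, as in \eqref{v_b_es_4}. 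The magnetic part of the dissipation is then recovered by applying the standard $H^r$ elliptic estimate to the scalar Poisson-type problem \eqref{bj_eq} for $\p_t^j b$ at each $j$, using the boundary control $\abs{\dt^j b}_{n-j-1/2}$ already secured in Remark \ref{evolution_11}. Summing over $\ell$ (to absorb $\fe{N+4}\se{2N}$-type remainders) and over $j$ (plus a Sobolev interpolation to exchange spatial-tangential derivatives for pure temporal ones at the lowest level) yields \eqref{vb_es_di}. The main obstacle I anticipate is keeping the bookkeeping of the $\nabb$ quantities consistent so that one does \emph{not} need $\se{n}$-type energies, only $\fdb{n}$ and the lower-order smallness factor $\fd{N+4}\se{2N}$; this is why it is essential that $L_h$ contains only horizontal derivatives of $u$ and the vertical component $u_3$.

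For \eqref{vb_es_en_n+2}, I would copy the argument of \eqref{vb_es_en2}--\eqref{vb_es_en7} verbatim at the $n$ level: the Hodge-type estimate \eqref{v_ell_th} applied to $u$ with $r=n-1$ gives $\norm{u}_{n-1}\ls \norm{u}_{0,n-1}+\norm{(\curl u)_h}_{n-2}+\norm{G^2}_{n-2}$; the vorticity equation \eqref{vor1} then delivers, by another Hodge estimate with $r=n-j$ and $j=1,\dots,n-1$, a bound on $\norm{\dt^j u}_{n-j}$ in terms of $\norm{\dt^j u}_{0,n-j}+\norm{\dt^{j-1} b}_{n-j+1}+\fe{N+4}\se{2N}$; and finally applying $\dt^j$ to the two-phase mixed Hodge problem \eqref{bj_eq2}, which is precisely of the form covered by Proposition \ref{propel3}, upgrades these to bounds on both $\norm{\dt^j b}_{n-j+1}$ and $\norm{\dt^j \hb}_{n-j+1}$ in terms of $\norm{\dt^j u}_{n-j}$, $\norm{\dt^{j+1} b}_{n-j-1}$, the $G^i$'s and $\abs{\dt^j u}_{n-j-1/2}$. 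Summing over $j=0,\dots,n-1$, absorbing the boundary trace by the trace theorem, and performing a Sobolev interpolation of the telescoping $\norm{\dt^j b}_0$ terms as in \eqref{vb_es_en6}--\eqref{vb_es_en7} closes the estimate; the remaining top-temporal terms $\ns{\dt^n u}_0+\ns{\dt^n b}_0+\ns{\dt^n \hb}_0$ are absorbed by $\bar{\mathcal{E}}_n$. The nonlinear remainders are controlled by Lemmas \ref{le_G_2N} and \ref{p_F_eN2}, which reduce everything to $\fe{N+4}\se{2N}$.

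Overall the argument is parallel to the $2N$-level case; the principal new issue is the interplay between $\fd{N+4}$-type and $\fe{N+4}$-type small factors, so that the dissipation estimate \eqref{vb_es_di} is genuinely of dissipation type (right-hand side linear in $\fdb{n}$ and multiplied by $\fd{N+4}\se{2N}$) while the energy estimate \eqref{vb_es_en_n+2} is of energy type. Checking this separation carefully for every forcing term in $L_h$ and in the $G^i$'s is the key technical point.
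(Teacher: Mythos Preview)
Your proposal is correct and follows essentially the same route as the paper: run the vorticity--damping argument of Proposition \ref{v_b_prop} at the lower level (with $2N$ replaced by $n-1$), then recover the $u$, $b$, $\hb$ norms by the Hodge and two-phase elliptic estimates, the key new input being that the tangential velocity terms $\sum_j\norm{\dt^j u}_{0,n-j-1}^2$ are now bounded via the dissipation estimate of Proposition \ref{evolution_12} rather than by the energy $\seb{2N}$. One small point to watch in \eqref{vb_es_en_n+2}: the substitution is not a single $2N\mapsto n$; the paper uses $2N\mapsto n-1$ at $j=0$ (yielding $\norm{u}_{n-1}$, $\norm{b}_n$, $\norm{\hb}_n$) and $2N\mapsto n$ for $j\ge 1$, which is precisely what your detailed description carries out even though the phrase ``verbatim at the $n$ level'' suggests otherwise.
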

\begin{proof}
Assume that $n=N+4,\dots, 2N$.  It follows  similarly as the derivation of \eqref{v_b_es_412321} in the proof of Proposition \ref{v_b_prop}, with $2N$ replaced by $n-1$ in \eqref{v_b_es_43}, \eqref{v_b_es_56} and \eqref{v_b_es_6}, that
\begin{align}\label{vb_es_di2}
&  \dtt  \ns{  (\curlv{u})_h }_{n-2} +\sum_{j=0}^{n-2}\norm{ \dt^j {u}}_{n-j-1}^2+\sum_{j=0}^{n-2}\norm{\dt^j b}_{n-j}^2 \nonumber
\\&\quad\ls  \sum_{j=0}^{n-2}\norm{ \dt^j   {u} }_{0,n-j-1}^2+\fdb{n-1}+\fd{N+4} \se{2N}.
\end{align}
Different from the derivation of \eqref{v_b_es}, here one can estimate the term $\sum_{j=0}^{n-2}\norm{ \dt^j   {u} }_{0,n-j-1}^2 $ in the right hand side of  \eqref{vb_es_di2} by the dissipation rather than the energy. One thus appeals to \eqref{low_diss2} to obtain the estimate \eqref{vb_es_di}   from \eqref{vb_es_di2} by the definition of $\fdb{n}$.

We now prove \eqref{vb_es_en_n+2}. It follows similarly as the derivation of  \eqref{vb_es_en7}, with $2N$ replaced by $n-1$ in \eqref{vb_es_en2} and  \eqref{vb_es_en5} with $j=0$, and $2N$ replaced by $n$ in \eqref{vb_es_en22}  and \eqref{vb_es_en5} with $j=1,\dots,n-1$, that
\begin{align}   \label{vb_es_en77}
&\norm{   u}_{n-1}^2  +\sum_{j=1}^{n-1} \norm{\dt^{j} u}_{n-j}^2+	\norm{b}_{n}^2  +\sum_{j=1}^{n-1}\norm{\dt^j b}_{n-j+1 }^2+	\norm{\hb}_{n}^2  +\sum_{j=1}^{n-1}\norm{\dt^j \hb}_{n-j+1 }^2  \nonumber
\\&  \quad\ls   \seb{n}+\ns{ (\curlv{u})_h }_{n-2} +  \fe{N+4} \se{2N}.
\end{align}
This yields the estimate \eqref{vb_es_en_n+2} by noting that $   \norm{  u}_{0,n}^2+ \norm{\dt^{n} u}_{0}^2+ \norm{\dt^n b}_{0 }^2  + \norm{\dt^n \hb}_{0 }^2  \ls   \seb{n}$.
\end{proof}

%%%%%%%%%%%%%%%%%%%%%%%%%%%%%%%%%%%%%%%%%%%%%%%%%%%%%% 	
\subsection{Estimates of $p$ and $\eta$}
%%%%%%%%%%%%%%%%%%%%%%%%%%%%%%%%%%%%%%%%%%%%%%%%%%%%%%

In this subsection  we shall complete the estimates on the pressure $p$ and the free interface function $\eta$.	

%%%%%%%%%%%%%%%%%%%%%%%%%%%%%%%%%%%%%%%%%%%%%%%%%%%%%%
\subsubsection{Energy}
%%%%%%%%%%%%%%%%%%%%%%%%%%%%%%%%%%%%%%%%%%%%%%%%%%%%%%

We begin with the estimates in the energy.
\begin{prop}\label{prop_qh_en}
It holds that for $n=N+4,\dots, 2N$,
\begin{align}\label{qh_en_es}
&\sum_{j=0}^{n-1}\ns{  \dt^j{p}}_{n-j} +\sum_{j=0}^{n-1}\abs{  \dt^j \eta }_{n-j+3/2}^2+\abs{\dt^{n}\eta }_1^2  +  \abs{\dt^{n+1}\eta }_{-1/2}^2  \nonumber
\\&\quad\ls \seb{n}+\sum_{j=1}^n\norm{  \dt^{j}{u}}_{n-j}^2+\sum_{j=0}^{n-1}\norm{  \dt^j{b}}_{n-j}^2+\fe{N+4} \se{2N} .
\end{align}
\end{prop}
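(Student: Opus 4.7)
The plan is to handle $\eta$ and $p$ in an interlocking manner, circumventing the apparent circularity between $\abs{\dt^j\eta}_{n-j+3/2}$ and $\norm{\dt^j p}_{n-j}$ that is introduced by the capillary boundary condition $p = -\sigma\Delta_h \eta + G^6$ on $\Sigma$ in the seventh line of \eqref{MHDv_perb}.

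First I would dispense with the two non-standard $\eta$-norms. The bound $\abs{\dt^n\eta}_1^2 \le \seb{n}$ is built into the definition \eqref{bar_en_def}, and the $H^{-1/2}$ bound on $\dt^{n+1}\eta$ comes directly from applying $\dt^n$ to the kinematic relation $\partial_t\eta = u_3 + G^5$, together with the trace inequality $\abs{\dt^n u_3}_{-1/2}\ls \norm{\dt^n u}_0 \ls \seb{n}^{1/2}$ and the $G^5$ bound in \eqref{p_G_e_02}. For $j=0,\dots,n-1$ I would then apply $\dt^j$ to the capillary condition and invert $\Delta_h$ on the flat torus (using \eqref{zero_0vt}, which keeps $\dt^j\eta$ of zero mean) to obtain
\begin{align*}
\abs{\dt^j\eta}_{n-j+3/2}\ls \abs{\dt^j p}_{n-j-1/2}+\abs{\dt^j G^6}_{n-j-1/2}\ls \norm{\dt^j p}_{n-j}+\abs{\dt^j G^6}_{n-j-1/2}
\end{align*}
by the trace theorem, reducing everything to the estimate of $\sum_{j=0}^{n-1}\norm{\dt^j p}_{n-j}^2$.

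For the pressure I would deliberately avoid the standard mixed Dirichlet-Neumann elliptic estimate, because its Dirichlet-trace contribution $\abs{\dt^j p}_{\Sigma,n-j-1/2}$ would reintroduce $\abs{\dt^j\eta}_{n-j+3/2}$ and close a circular loop. Instead, the first equation of \eqref{MHDv_perb} reads off the pressure gradient pointwise,
\begin{align*}
\nabla p = -\partial_t u + \curl b\times\bar B + G^1,
\end{align*}
so $\norm{\dt^j\nabla p}_{n-j-1}$ is directly controlled by $\norm{\dt^{j+1}u}_{n-j-1}+\norm{\dt^j b}_{n-j}$ plus a $G^1$ contribution bounded via \eqref{p_G_e_02}. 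To add the zeroth-order piece I would use the vertical fundamental theorem of calculus in $\Omega_-=\mt\times(-1,0)$,
\begin{align*}
\dt^j p(x_h,x_3) = \dt^j p(x_h,0) - \int_{x_3}^{0} \partial_3 \dt^j p(x_h,s)\,ds,
\end{align*}
followed by Cauchy-Schwarz, giving $\norm{\dt^j p}_0^2 \ls \abs{\dt^j p|_{\Sigma}}_0^2 + \norm{\dt^j \partial_3 p}_0^2$. Crucially, the boundary trace is controlled using only \emph{two} spatial derivatives of $\eta$: the capillary condition yields $\abs{\dt^j p|_\Sigma}_0 \ls \abs{\dt^j \eta}_2 + \abs{\dt^j G^6}_0$, and for $j\le n-1$ the inclusion $\abs{\dt^j\eta}_2 \le \abs{\dt^j\eta}_{n-j+1}$ is already bundled into $\seb{n}$. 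Assembling $\norm{\dt^j p}_{n-j}^2 \ls \norm{\dt^j p}_0^2 + \norm{\dt^j\nabla p}_{n-j-1}^2$ and summing over $j=0,\dots,n-1$ then closes the pressure estimate, after which Step~2 above upgrades this to the desired bound on $\eta$.

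The main obstacle is precisely the circular interplay just described: a routine elliptic estimate would demand a full Dirichlet-trace derivative of $\eta$ at exactly the level the proposition is designed to deliver. The resolution is to trade full elliptic regularity for the cheaper combination of (pointwise gradient from the momentum equation) + (vertical Poincar\'e-type integration) + (a \emph{low}-regularity boundary trace $\abs{\dt^j\eta}_2$), so that only $\seb{n}$-controlled quantities enter the pressure bound, and the capillary relation is invoked just once, at the very end, to upgrade $\eta$ from $\seb{n}$ regularity to the top-regularity $H^{n-j+3/2}$ appearing in \eqref{qh_en_es}.
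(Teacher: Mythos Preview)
Your architecture is exactly the paper's: read off $\nabla\dt^j p$ from the momentum equation, control $\abs{\dt^j p}_{\Sigma,0}$ from the capillary relation using only the low $\abs{\dt^j\eta}_2\le\seb{n}^{1/2}$ already in \eqref{bar_en_def}, combine via a vertical Poincar\'e inequality (your FTC argument is precisely how \eqref{poincare_1} is proved), and then feed $\norm{\dt^j p}_{n-j}$ back into the capillary relation to upgrade $\eta$ to $H^{n-j+3/2}$.

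One step needs repair. The inequality $\abs{\dt^n u_3}_{-1/2}\ls\norm{\dt^n u}_0$ is not a trace theorem: an $L^2$ function has no trace. What is true is the \emph{normal} trace estimate of Lemma~\ref{Le_normal}, which requires divergence control. If you work in the linear form $\diverge\dt^n u=\dt^n G^2$, the $k=0$ Leibniz term in $\dt^n G^2=\dt^n(\nabla\bar\eta\cdot\pav_3 u)$ contains $\nabla\bar\eta\cdot\pav_3\dt^n u$, and $\norm{\partial_3\dt^{2N}u}_0$ is \emph{not} in $\se{2N}$; correspondingly, \eqref{p_G_e_02} stops at $j=2N-1$ for $G^2$. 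The paper instead stays in the geometric form $\partial_t\eta=u\cdot\N$ and uses $\diva\dt^n u=F^{2,(n,0)}$; because this is a commutator, its worst term carries only $\partial_3\dt^{n-1}u$, and $\norm{F^{2,(n,0)}}_0^2\ls\fe{N+4}\se{2N}$ by \eqref{p_F_e_01}. With that substitution your argument goes through.
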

\begin{proof}
Assume that $n=N+4,\dots,2N$. Let $\x_n$ denote the two sums in the right hand side of \eqref{qh_en_es}. It follows from the first equation  in \eqref{MHDv_perb} and \eqref{p_G_e_02} that for $j=0,\dots,n-1$,
\beq\label{q_en_2}
\norm{  \nabla \dt^j{p}}_{n-j-1}^2    \ls  \norm{  \dt^{j+1}{u}}_{n-j-1}^2+\norm{  \dt^j{b}}_{n-j}^2+ \norm{    \dt^j G^1}_{n-j-1}^2  \ls    \x_n+ \fe{N+4} \se{2N} .
\eeq
By the eighth  equation in \eqref{MHDv_perb} and \eqref{p_G_e_02}, one obtains that for $j=0,\dots,n-1$,
\beq\label{q_en_222}
\abs{ { \dt^j{p}}}_{0}^2\ls   \abs{  \dt^j \eta }_{2}^2 + \abs{  \dt^j G^{6}}_{0}^2     \ls \seb{n}  + \fe{N+4} \se{2N} .
\eeq
It then follows from \eqref{q_en_2} and \eqref{q_en_222} that for $j=0,\dots,n-1$, by Poincar\'e's inequality,
\beq\label{q_en_3}
\ns{  \dt^j{p}}_{n-j}  \ls  \ns{ \nabla \dt^j{p}}_{n-j-1} + \as{ { \dt^j{p}}}_{0}    \ls  \seb{n}  +\x_n+\fe{N+4} \se{2N}.
\eeq

Now, we improve the estimates of $\eta$ by using the estimates of $p$ derived in \eqref{q_en_3}. The standard elliptic theory on the eighth equation in \eqref{MHDv_perb} yields
 that for $j=0,\dots,n-1$, by  the trace theory, \eqref{p_G_e_02} and \eqref{q_en_3},
\begin{align} \label{q_en_4}
\abs{  \dt^{j}{\eta}}_{n-j+3/2}^2  & \ls \abs{  \dt^{j}{\eta}}_{0}^2  +  \abs{  \dt^j{p}}_{n-j-1/2}^2   +  \as{  \dt^j{G^{6}}}_{n-j-1/2}\nonumber
\\&\ls   \seb{n}   +\ns{  \dt^j{p}}_{n-j}   + \fe{N+4} \se{2N} \ls \seb{n}  +\x_n + \fe{N+4} \se{2N}.
\end{align}

Finally, using the normal trace estimate \eqref{normal_es_3} of Lemma \ref{Le_normal}, by the second equation in \eqref{MHDva} and \eqref{p_F_e_01}, one obtains 
\begin{align}\label{v_bd_es_2}
\abs{ \dt^{n}   u\cdot \N}_{-1/2} \ls \ns{ \dt^{n}  u  }_{0}+\ns{ \diva \dt^{n}  u  }_{0} \ls  \seb{n} +\ns{F^{2,(n,0)}}_0\ls \seb{n}+ \fe{N+4}\se{2N} .
\end{align}
It then follows from the seventh equation in \eqref{MHDva}, \eqref{v_bd_es_2} and \eqref{p_F_e_01} that
\beq \label{q_en_5}
\abs{  \dt^{n+1}\eta }_{-1/2}^2   \ls  	 \abs{  \dt^{n}{u}\cdot \N}_{-1/2}^2   +  \as{ F^{5,(n,0)}}_{ -1/2}\ls \seb{n}   + \fe{N+4} \se{2N}.
\eeq

Hence, summing \eqref{q_en_3}, \eqref{q_en_4} over $j=0,\dots,n-1$  and \eqref{q_en_5} yields the estimate \eqref{qh_en_es} due to that $\abs{\dt^{n}\eta }_1^2 \le\seb{n}$. 
\end{proof}

%%%%%%%%%%%%%%%%%%%%%%%%%%%%%%%%%%%%%%%%%%%%%%%%%%%%%%
\subsubsection{Dissipation}
%%%%%%%%%%%%%%%%%%%%%%%%%%%%%%%%%%%%%%%%%%%%%%%%%%%%%%

Now we consider the estimates in the dissipation.
 \begin{prop}\label{prop_qh_diss}
For $n=N+4,\dots, 2N$, it holds that
\begin{align}\label{q_es_diss}
&\sum_{j=0}^{n-2}\norm{  \dt^{j} p }_{n-j-1}^2+\sum_{j=0}^{n-2}\abs{  \dt^{j} \eta }_{n-j+1/2}^2+\abs{\dt^{n-1}\eta }_{1}^2 +\abs{\dt^{n}\eta }_{0}^2\nonumber
\\&\quad\ls \fdb{n} +\sum_{j=1}^{n-1}\norm{  \dt^{j}{u}}_{n-j-1}^2+\sum_{j=0}^{n-2}\norm{  \dt^j{b}}_{n-j-1}^2+\fd{N+4}\se{2N} .
\end{align}
\end{prop}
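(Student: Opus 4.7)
The plan is to run the four-stage scheme that proved Proposition~\ref{prop_qh_en}, lowered by one spatial order. In Stage~(i) I read off $\nabla\dt^jp$ from the first equation of \eqref{MHDv_perb}: for $0\le j\le n-2$,
\begin{equation*}
\norm{\nabla\dt^jp}_{n-j-2}^2\ls\norm{\dt^{j+1}u}_{n-j-2}^2+\norm{\dt^jb}_{n-j-1}^2+\norm{\dt^jG^1}_{n-j-2}^2,
\end{equation*}
and the $G^1$-piece is $\ls\fd{N+4}\se{2N}$ by Lemma~\ref{le_G_2N}. In Stage~(iii) I recover $\dt^j\eta$ from the dynamic boundary condition by inverting $\sigma\Delta_h\dt^j\eta=-(\dt^jp-\dt^jG^6)$ via elliptic regularity on $\mt$, using $\int_\Sigma\dt^j\eta=0$ from \eqref{zero_0vt}, and then tracing, to get $\abs{\dt^j\eta}_{n-j+1/2}\ls\norm{\dt^jp}_{n-j-1}+\abs{\dt^jG^6}_{n-j-3/2}$. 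For the top-end terms $\abs{\dt^{n-1}\eta}_1$ and $\abs{\dt^n\eta}_0$, which lie outside the Stage~(iii) pattern, I use the differentiated kinematic equation from \eqref{MHDva}, namely $\dt^{k+1}\eta=\dt^ku\cdot\N+F^{5,(k,0)}$ with $k=n-2,n-1$, and reduce to $\abs{\dt^{n-2}u_3}_1$ and $\abs{\dt^{n-1}u_3}_0$ plus nonlinear clutter; these two boundary norms are already bounded by $\fdb{n}+\fd{N+4}\se{2N}$ via \eqref{low_diss2}.

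The delicate step is Stage~(ii), which upgrades $\nabla\dt^jp$ to $\dt^jp$. The energy-level argument in Proposition~\ref{prop_qh_en} uses Poincaré combined with the trace bound $\abs{p|_\Sigma}_0\ls\abs{\eta}_2+\abs{G^6}_0$ and absorbs $\abs{\dt^j\eta}_2$ into $\seb{n}$ essentially for free. Transplanting this verbatim to the dissipation level would feed $\abs{\dt^j\eta}_{n-j+1/2}$—precisely the quantity Stage~(iii) is supposed to produce—back into the right-hand side, leaving the loop open. My workaround is to split $\dt^jp=\overline{\dt^jp}+\dt^j\tilde p$ into its mean over $\Omega_-$ and its mean-zero part, apply Poincaré--Wirtinger to $\dt^j\tilde p$, and control the scalar mean $\overline{\dt^jp}$ through the identity $\int_\Sigma\dt^jp=\dt^j\int_\Sigma G^6$—a consequence of $p=-\sigma\Delta_h\eta+G^6$ on $\Sigma$ together with $\int_\Sigma\Delta_h\eta=0$—plus a trace bound on $\dt^j\tilde p$. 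This yields $\norm{\dt^jp}_{n-j-1}^2\ls\norm{\nabla\dt^jp}_{n-j-2}^2+\abs{\dt^jG^6}_0^2$ with no feedback into $\eta$, and the entire chain closes.

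Assembling Stages~(i)--(iv) and absorbing the nonlinear remainders through \eqref{p_F_e_03} and \eqref{p_G_e_02} then yields \eqref{q_es_diss}. The only genuinely new ingredient relative to Proposition~\ref{prop_qh_en} is the Poincaré--Wirtinger route in Stage~(ii); the other steps are a routine lowering by one spatial order, with the $\fdb{n}$ and $\abs{u_3}$-boundary data on the right supplied by Propositions~\ref{evolution_1} and \ref{evolution_12}.
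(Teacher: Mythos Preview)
Your proof is correct, and Stages~(i), (iii), and (iv) match the paper's route almost exactly (the paper writes the kinematic step via the linear perturbation formulation $\dt\eta=u_3+G^5$ rather than $u\cdot\N+F^{5,\al}$, but once you unpack your ``nonlinear clutter'' you recover precisely $\dt^kG^5$, so there is no substantive difference there).

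Stage~(ii), however, is genuinely different from the paper. The paper does \emph{not} go through Poincar\'e--Wirtinger on $p$. Instead, it first uses the kinematic equation together with the boundary bound $\abs{\dt^{j-1}u_3}_{n-j}$ from \eqref{low_diss2} to obtain a preliminary, \emph{lower}-order estimate $\abs{\dt^j\eta}_{n-j}^2\ls\fdb{n}+\fd{N+4}\se{2N}$ for $j=1,\dots,n$; this is already enough input to control $\abs{\dt^jp}_0$ via the dynamic boundary condition for $j\ge 1$, after which the usual Poincar\'e in the vertical direction yields $\norm{\dt^jp}_{n-j-1}$. The case $j=0$ is handled separately by a bootstrap: trace of $\nabla_hp$ on $\Sigma$ controls $\nabla_h\eta$ in $H^{n-1/2}$, then the zero-average condition \eqref{zero_0vt} and Poincar\'e on $\mt$ recover $\abs{\eta}_{n+1/2}$, which feeds back to $\abs{p}_0$. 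So the paper's scheme does close, just in a different order; your diagnosis that the verbatim transplant of Proposition~\ref{prop_qh_en} loops is correct, but the paper's actual fix is the kinematic-first ordering rather than your mean-splitting. Your approach has the advantage of being uniform in $j$ (no special case at $j=0$) and of never touching $\eta$ when estimating $p$; the paper's approach has the advantage of extracting the preliminary $\eta$-bound directly from \eqref{low_diss2} without any algebra on boundary integrals of $p$.
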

\begin{proof}
Assume that $n=N+4,\dots,2N$. Let $\y_n$ denote the two sums in the right hand side of \eqref{q_es_diss}. It follows from the first equation  in \eqref{MHDv_perb} and \eqref{p_G_e_02} that for $j=0,\dots,n-2$,
 \beq\label{q_diss_1}
\norm{  \nabla \dt^j{p}}_{n-j-2}^2   \ls  \norm{  \dt^{j+1}{u}}_{n-j-2}^2+\norm{  \dt^j{b}}_{n-j-1}^2+ \norm{    \dt^j G^1}_{n-j-2}^2 \ls    \y_n  +\fd{N+4}\se{2N} .
\eeq
 To estimate ${\dt^{j} p} $ on the interface $\Sigma$, one may estimate $\eta$ first. Indeed,  by  the seventh equation in \eqref{MHDv_perb},   \eqref{low_diss2} and \eqref{p_G_e_02}, one obtains that for $j=1,\dots,n$,
 \begin{align}\label{h_diss}
 \abs{\dt^j  \eta }_{n-j}^2  &\ls \abs{\dt^{j-1} u_3}_{n-j}^2    +\as{\dt^{j-1}G^{5} }_{n-j} \equiv  \abs{\dt^{j-1} u_3}_{n-(j-1)-1}^2  +\abs{\dt^{j-1}G^{5} }_{n-(j-1)-1}^2 \nonumber
 \\& \ls \fdb{n} +\fd{N+4}\se{2N} .
 \end{align}
Hence by the eighth equation in \eqref{MHDv_perb}, \eqref{h_diss} and \eqref{p_G_e_02},
one has that for $j=1,\dots,n-2$,  
\beq\label{q_diss_2}
 \abs{{\dt^{j} p} }_{0}^2 \ls \abs{\dt^{j} \eta }_{2}^2  +\abs{\dt^{j} G^{6}}_{0}^2 \ls \fdb{n} +\fd{N+4}\se{2N} .
 \eeq
For $\as{ {p}}_0$,  a different argument is needed since one has not controlled $\as{\eta}_0$ yet. Note that by the trace theory, the estimate \eqref{q_diss_1} with $j=0$ implies in particular that
 \beq\label{q_diss_2o}
 \abs{\nabla_h p }_{n-5/2}^2 \ls \norm{\nabla_h p }_{n-2}^2    \ls \norm{  \dt {u}}_{n-2}^2+\norm{  {b}}_{n-1}^2  +\fd{N+4}\se{2N} .
 \eeq
 Then it follows from the eighth equation in \eqref{MHDv_perb}, \eqref{q_diss_2o} and \eqref{p_G_e_02} that
 \beq\label{q_diss_2o11}
 \abs{\nabla_h  \eta }_{n-1/2}^2 \ls \abs{\nabla_h {p} }_{n-5/2}^2+\abs{\nabla_h  G^{6}}_{n-5/2}^2 \ls  \fdb{n} +\y_n  +\fd{N+4}\se{2N} .
 \eeq
 Since $\int_{\Sigma } \eta =0$, so Poincare's inequality and \eqref{q_diss_2o11} yield
 \beq \label{h_diss_2}
 \abs{\eta}_{n+1/2}^2 \ls \abs{\nabla_h  \eta }_{n-1/2}^2\ls   \fdb{n} +\y_n  +\fd{N+4}\se{2N} ,
 \eeq
which in turn implies that, by  the eighth equation in \eqref{MHDv_perb} and \eqref{p_G_e_02} again,
 \beq \label{q_diss_3}
 \abs{{p}}_{0}^2 \ls \abs{\eta}_{2}^2+\abs{ G^{6}}_{0}^2\ls   \fdb{n} +\y_n  +\fd{N+4}\se{2N} .
 \eeq

Now by Poincar\'e's inequality \eqref{poincare_1}, one deduces from \eqref{q_diss_1},   \eqref{q_diss_2} and \eqref{q_diss_3} that for $j=0,\dots, n-2$,
 \beq  \label{q_diss_4}
 \ns{  \dt^{j} p }_{n-j-1} \ls  \ns{ \nabla \dt^{j} p }_{n-j-2}  +\as{  { \dt^{j} p} }_{0}  \ls \fdb{n} +\y_n+\fd{N+4}\se{2N} .
 \eeq
 This in turn, by the trace theory, the eighth equation in \eqref{MHDv_perb} and \eqref{p_G_e_02}, improves the estimates of $\dt^j \eta $ so that for $j=1,\dots, n-2$, it holds that
 \begin{align} \label{q_diss_5}
 \abs{  \dt^{j} \eta }_{n-j+1/2}^2 &\ls \abs{  \dt^{j} p }_{n-j-3/2}^2+\abs{  \dt^{j} G^{6}}_{n-j-3/2}^2\nonumber
 \nonumber\\&\ls\norm{  \dt^{j} p }_{n-j-1}^2+\fd{N+4}\se{2N} \ls \fdb{n} +\y_n  +\fd{N+4}\se{2N} .
 \end{align}

 Consequently, collecting \eqref{q_diss_4} with $j=0,\dots,n-2$, \eqref{h_diss_2}, \eqref{q_diss_5} with $j=1,\dots,n-2$ and \eqref{h_diss} with $j=n-1$ and $n$ yields the estimate \eqref{q_es_diss}.
\end{proof}

%%%%%%%%%%%%%%%%%%%%%%%%%%%%%%%%%%%%%%%%%%%%%%%%%%%%%%
\section{Global energy estimates}\label{sec_priori}
%%%%%%%%%%%%%%%%%%%%%%%%%%%%%%%%%%%%%%%%%%%%%%%%%%%%%%

In this section we will derive the global-in-time full energy estimates by making use of the estimates derived in Sections \ref{sec_5} and \ref{sec_6}. Set
\beq\label{ggdef}
\fe{N+4}^w(t):=\sup_{0\le s\le t}(1+s)^{N-5}\fe{N+4} (s).
\eeq

%%%%%%%%%%%%%%%%%%%%%%%%%%%%%%%%%%%%%%%%%%%%%%%%%%%%%%
\subsection{Boundedness estimates of $\se{2N}$ and $\fd{2N}$}
%%%%%%%%%%%%%%%%%%%%%%%%%%%%%%%%%%%%%%%%%%%%%%%%%%%%%%

We first show the the boundedness of $\se{2N}$ and $\fd{2N}$.
\begin{thm}\label{bdd_e2N}
Let $N\ge 8$. There exists a universal constant  $\delta>0$ such that if
\beq\label{a priori}
\se{2N}(t)\le \delta,\quad\forall t\in[0,T],
\eeq
then
\beq\label{full_en_es}
\se{ 2N}(t)+ \int_0^t \fd{2N}(s)\,ds \ls\se{2N}(0)+ \sqrt{\sup_{0\le s\le t}\se{ 2N}(s)}\fe{N+4}^w(t),\quad\forall t\in[0,T].
\eeq
\end{thm}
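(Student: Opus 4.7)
The plan is to build a single closed inequality for $\se{2N}$ and $\fd{2N}$ by combining the tangential energy identity of Proposition \ref{evolution_2N} with the transport-damping estimate for $(\curlv u)_h$ from Proposition \ref{v_b_prop} and the full-regularity recoveries of Propositions \ref{evolution_1}--\ref{prop_qh_diss}, and then to digest the remaining nonlinear time integral via the weighted decay $\fe{N+4}^w$. First, I would integrate the differential inequality \eqref{v_b_es} in time, producing a control of $\|(\curlv u)_h(t)\|_{2N-1}^2$ together with the time-integrated damping and the normal-derivative dissipation of $u$, $b$, $\hb$, by $\se{2N}(0) + \int_0^t(\seb{2N} + \fdb{2N} + \fe{N+4}\se{2N})$; adding a small multiple of this to \eqref{en_ev_2N} bounds $\seb{2N}(t) + \|(\curlv u)_h(t)\|_{2N-1}^2$ plus the time-integrals of $\sdb{2N}$, the damping, and the normal dissipation, up to errors of the form $(\se{2N}(t))^{3/2}$, $\int_0^t \sqrt{\fe{N+4}}(\se{2N}+\fd{2N})$, and $\int_0^t \fe{N+4}\se{2N}$.

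Second, I would upgrade both sides to the full quantities: Proposition \ref{evolution_1} replaces $\sdb{2N}$ by the magnetic dissipation $\fdb{2N}$; Proposition \ref{evolution_12} recovers the $\bar B\cdot\nabla$-based velocity dissipation; and Propositions \ref{prop_vb_di} and \ref{prop_qh_diss} deliver the full dissipation $\fd{2N}$, including the pressure and interface components. Symmetrically, Propositions \ref{v_b_prop} (second part) and \ref{prop_qh_en} promote $\seb{2N}(t) + \|(\curlv u)_h(t)\|_{2N-1}^2$ to the full energy $\se{2N}(t)$. Each upgrade only introduces nonlinear errors of the three types listed above. Choosing $\delta$ so small that $\sqrt{\fe{N+4}}\le C\sqrt{\delta}$, I would absorb the $\int_0^t \sqrt{\fe{N+4}}\,\fd{2N}$ and $\int_0^t \fe{N+4}\,\fd{2N}$ contributions into the dissipation integral on the left and the cubic $(\se{2N}(t))^{3/2}$ into $\se{2N}(t)$, leaving
\begin{equation*}
\se{2N}(t) + \int_0^t \fd{2N}(s)\,ds \lesssim \se{2N}(0) + \int_0^t \sqrt{\fe{N+4}(s)}\,\se{2N}(s)\,ds.
\end{equation*}

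Finally, I would close the estimate via $\fe{N+4}^w(t)$. The bound $\sqrt{\fe{N+4}(s)}\le (1+s)^{-(N-5)/2}\sqrt{\fe{N+4}^w(t)}$ together with $\se{2N}(s)\le \sup_{0\le s\le t}\se{2N}(s)$ and the convergence of $\int_0^\infty (1+s)^{-(N-5)/2}\,ds$ (guaranteed by $N\ge 8$, since $(N-5)/2\ge 3/2>1$) yields
\begin{equation*}
\int_0^t \sqrt{\fe{N+4}(s)}\,\se{2N}(s)\,ds \lesssim \sqrt{\fe{N+4}^w(t)}\cdot \sup_{0\le s\le t}\se{2N}(s),
\end{equation*}
and a Young-type split $\sqrt{\fe{N+4}^w}\cdot\sup\se{2N} \le \tfrac{1}{2}\fe{N+4}^w\sqrt{\sup\se{2N}} + \tfrac{1}{2}(\sup\se{2N})^{3/2}$ converts this into the claimed $\sqrt{\sup\se{2N}}\,\fe{N+4}^w$ bound after absorbing the cubic remainder into $\se{2N}(t)$ on the left. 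The main obstacle will be the bookkeeping in the second step: each elliptic recovery brings its own nonlinear inhomogeneity, and the combination of Propositions \ref{evolution_2N}, \ref{v_b_prop}, \ref{evolution_1}, \ref{evolution_12}, \ref{prop_vb_di}, \ref{prop_qh_en}, and \ref{prop_qh_diss} must be weighted so that every nonlinear error is either absorbable under the smallness $\se{2N}\le\delta$ or collapses into the single integrable remainder $\int_0^t \sqrt{\fe{N+4}}\,\se{2N}$.
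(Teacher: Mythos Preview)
Your overall strategy is right, and Steps 2--3 are essentially how the paper closes the argument (including the AM--GM split $\sqrt{\fe{N+4}^w}\,\sup\se{2N}\le\tfrac12\sqrt{\sup\se{2N}}(\fe{N+4}^w+\sup\se{2N})$). The gap is in Step 1.

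When you integrate \eqref{v_b_es} directly, the right-hand side produces the term $\int_0^t \seb{2N}(s)\,ds$, and this cannot be absorbed by anything you have on the left. Adding a small multiple to \eqref{en_ev_2N} does not help: \eqref{en_ev_2N} provides $\seb{2N}(t)$ and $\int_0^t\sdb{2N}$, not $\int_0^t\seb{2N}$; and even though the integrated damping and normal-dissipation terms on the left of \eqref{v_b_es} do dominate $\seb{2N}(u,b,\hb)$, they do so only with constant $1$, whereas the $\seb{2N}$ on the right of \eqref{v_b_es} carries the large implicit constant from $\ls$. So the absorption fails, and the three error types you list do not exhaust what actually appears.

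The paper handles exactly this by a Gronwall argument on the damping ODE. From \eqref{v_b_es} and \eqref{di_ev_n1} one has
\[
\dtt\ns{(\curlv u)_h}_{2N-1}+\ns{(\curlv u)_h}_{2N-1}\ls \seb{2N}+\sdb{2N}+\fe{N+4}(\se{2N}+\fd{2N}),
\]
and since the left side is of the form $\dot f + f$, Duhamel gives a \emph{pointwise} bound
\[
\ns{(\curlv u)_h(t)}_{2N-1}\ls \se{2N}(0)+\sup_{[0,t]}\seb{2N}+\int_0^t\sdb{2N}+\sup_{[0,t]}\se{2N}\Bigl(\sup_{[0,t]}\se{2N}+\int_0^t\fd{2N}\Bigr),
\]
i.e.\ $\sup_{[0,t]}\seb{2N}$ in place of your $\int_0^t\seb{2N}$. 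This is precisely what makes the combination with \eqref{en_ev_2N} and \eqref{ful_es_en1} close. After that, the paper obtains $\int_0^t\fd{2N}$ separately by integrating the $(2N{-}2)$-level inequality \eqref{vb_es_ditt2} (from \eqref{vb_es_di}, \eqref{q_es_diss}, \eqref{di_ev_n1}), which again avoids $\int_0^t\seb{2N}$. Insert this Gronwall step into your Step 1 and the rest of your plan goes through.
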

\begin{proof}
First, the estimate \eqref{en_ev_2N} and Cauchy's inequality imply that for $N\ge8$,
\begin{align}\label{vb_es_en21}
\seb{2N}(t) + \int_0^t \sdb{2N}(s)\,ds&\ls\se{2N}(0)+ (\se{ 2N}(t))^{3/2} +\sup_{0\le s\le t}\se{ 2N}(s)\sqrt{\fe{N+4}^w(t)} \int_0^t (1+s)^{-(N-5)/2}\,ds\nonumber
\\&\quad+\sqrt{\sup_{0\le s\le t}\se{ 2N}(s)}\int_0^t  \fd{2N}(s)\, ds
\\& \ls\se{2N}(0)+ \sqrt{\sup_{0\le s\le t}\se{ 2N}(s)}\(\sup_{0\le s\le t}\se{ 2N}(s)+\int_0^t  \fd{2N}(s)\, ds+\fe{N+4}^w(t)\).\nonumber
\end{align}
Next, it follows from the estimates \eqref{v_b_es} and \eqref{di_ev_n1} that
\beq\label{v_b_es110}
  \dtt  \ns{  (\curlv{u})_h }_{2N-1 } +\ns{  (\curlv{u})_h }_{2N-1 }   \ls  \seb{2N}+\sdb{2N}+\fe{N+4}  \(  \se{ 2N} +\fd{2N}\).
\eeq
A Gronwall type argument on \eqref{v_b_es110} yields
 \begin{align}\label{v_b_es11}
  \ns{ (\curlv{u})_h(t) }_{2N -1 }   & \ls\se{2N}(0)+  \sup_{0\le s\le t}\seb{2N}(s)  +\int_0^t \sdb{2N}(s)\,ds\nonumber
 \\&   \quad+ \sup_{0\le s\le t}\se{ 2N}(s)\(\sup_{0\le s\le t}\se{ 2N}(s)+\int_0^t  \fd{2N}(s)\, ds\).
\end{align}
Now combining the estimates \eqref{vb_es_en} and \eqref{qh_en_es} with $n=2N$ yields
\beq\label{ful_es_en1}
\se{2N}
 \ls  \bar{\mathcal{E}}_{2N}+ \ns{ (\curlv{u})_h}_{2N-1} +\fe{N+4}  \se{2N} .
\eeq
Hence, it follows from \eqref{vb_es_en21}, \eqref{v_b_es11} and \eqref{ful_es_en1}  that for $\se{ 2N}\le \delta$ small,
\begin{align}\label{vb_es_en2100}
\sup_{0\le s\le t}\se{ 2N}(s)+ \int_0^t \sdb{2N}(s)\,ds
& \ls\se{2N}(0)+ \sqrt{\sup_{0\le s\le t}\se{ 2N}(s)}\( \int_0^t  \fd{2N}(s)\, ds+\fe{N+4}^w(t)\). 
\end{align}

On the other hand, taking $n=2N$ in the estimates  \eqref{vb_es_di} and \eqref{q_es_diss} and using  the estimate  \eqref{di_ev_n1}, one deduces that for $\se{ 2N}$ small,
\beq\label{vb_es_ditt}
   \dtt  \ns{  (\curlv{u})_h }_{2N-2} +\fd{2N}\ls  \sdb{2N}+\fe{N+4}     \se{2N}  +\fd{N+4} \se{2N},
\eeq
which implies
\beq\label{vb_es_ditt2}
\dtt  \ns{  (\curlv{u})_h }_{2N-2} +\fd{2N}\ls  \sdb{2N}+\fe{N+4}    \se{2N}  .
\eeq
Integrating \eqref{vb_es_ditt2} in time gives in particular that  
\begin{align}\label{vb_es_ditt33}
\int_0^t \fd{2N}(s)\,ds  &\ls\se{2N}(0)  +\int_0^t \sdb{2N}(s)\,ds   +\int_0^t \fe{N+4} (s) \se{ 2N}(s) \,ds \nonumber
\\& \ls\se{2N}(0)+\int_0^t \sdb{2N}(s)\,ds+  {\sup_{0\le s\le t}\se{ 2N}(s)}\fe{N+4}^w(t).
\end{align}
Hence, one may improve \eqref{vb_es_en2100} to be, since $\se{ 2N}\le \delta$ is small,
\beq\label{vb_es_en210000}
\sup_{0\le s\le t}\se{ 2N}(s)+ \int_0^t \fd{2N}(s)\,ds \ls\se{2N}(0)+ \sqrt{\sup_{0\le s\le t}\se{ 2N}(s)}\fe{N+4}^w(t). 
\eeq
This implies \eqref{full_en_es}.
\end{proof}

%%%%%%%%%%%%%%%%%%%%%%%%%%%%%%%%%%%%%%%%%%%%%%%%%%%%%%
\subsection{Decay estimates of $\fe{n}$ and $\fd{n}$}
%%%%%%%%%%%%%%%%%%%%%%%%%%%%%%%%%%%%%%%%%%%%%%%%%%%%%%

Next we derive the energy-dissipation estimates with respect to $\fe{n}$ and $\fd{n}$ and show the decay estimates for $n=N+4,\dots,2N-2$.
\begin{thm}\label{decay_e2N}
Let $N\ge 8$. There exists a universal constant  $\delta>0$ such that if
\beq\label{a priorid}
\se{2N}(t)\le \delta,\quad\forall t\in[0,T],
\eeq
then
\begin{align}\label{decay_est}
&\sum_{j=0}^{N-6} (1+t)^{N-5-j}\fe{N+4+j}(t) + \sum_{j=0}^{N-6}\int_0^t(1+s)^{N-5-j}\fd{N+4+j}(s)\,ds\nonumber
\\&\quad\ls \se{2N}(0)  + \int_0^t\fd{2N}(s)\,ds,\quad\forall t\in[0,T].
\end{align}
\end{thm}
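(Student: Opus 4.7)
The plan is to reduce Theorem \ref{decay_e2N} to a clean differential inequality at each level $n \in \{N+4,\dots,2N-2\}$ and then run a time-weighted induction. The first step is to show that, for $\delta$ small enough,
\begin{equation}\label{plan_ED}
\frac{d}{dt}\fe{n} + \fd{n} \le 0,\quad n = N+4,\dots,2N-2,
\end{equation}
in the sense of the conventions \eqref{ccon}. This should follow by combining Proposition \ref{evolution_n}, which yields $\frac{d}{dt}(\bar{\mathcal{E}}_n + \mathcal{B}_n) + \bar{\mathcal{D}}_n \ls \sqrt{\se{2N}}\fd{n}$ with $|\mathcal{B}_n|\ls\sqrt{\se{2N}}\fe{n}$, together with the Hodge-type improvement $\fdb{n} \ls \bar{\mathcal{D}}_n + \fd{N+4}\se{2N}$ from Proposition \ref{evolution_1}, the $\bar B\cdot\nabla$ dissipation for $u$ from Proposition \ref{evolution_12}, the curl-based enhancement of $u,b,\hb$ in Proposition \ref{prop_vb_di} (which supplies the auxiliary $\ns{(\curlv u)_h}_{n-2}$ term and upgrades $\fdb{n}$ to the full $u,b,\hb$ dissipation), and the pressure/free-surface closure in Propositions \ref{prop_qh_en} and \ref{prop_qh_diss}. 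Taken in appropriate linear combinations with small multipliers, these assemble a modified functional equivalent to $\fe{n}$ — the $\mathcal{B}_n$ perturbation satisfies $|\mathcal{B}_n|\ls \sqrt{\se{2N}}\fe{n}$ and is therefore absorbable up to a factor $1+O(\sqrt{\delta})$ — whose time derivative plus $\fd{n}$ is dominated by $\sqrt{\se{2N}}\fd{n} + \fd{N+4}\se{2N}$, both of which are absorbed into $\fd{n}$ on the left for $\delta$ small, giving \eqref{plan_ED}.

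Once \eqref{plan_ED} is in hand, I would run an inductive time-weighted argument. The key structural observation is that direct inspection of the definitions \eqref{low_en} and \eqref{low_diss} yields the embedding
\begin{equation}\label{plan_key}
\fe{\ell} \ls \fd{\ell+1},\quad \ell = N+4,\dots,2N-1,
\end{equation}
since at each $b,\hb,u,p,\eta$-slot $\fd{\ell+1}$ carries at least as many derivatives as $\fe{\ell}$. For each $j\in\{0,\dots,N-6\}$ and $n = N+4+j$, multiplying \eqref{plan_ED} by $(1+t)^{N-5-j}$ and integrating from $0$ to $t$ gives
\begin{equation*}
(1+t)^{N-5-j}\fe{N+4+j}(t) + \int_0^t (1+s)^{N-5-j}\fd{N+4+j}(s)\,ds \ls \fe{N+4+j}(0) + \int_0^t (1+s)^{N-6-j}\fe{N+4+j}(s)\,ds.
\end{equation*}
Applying \eqref{plan_key} to the last integrand produces $\int_0^t (1+s)^{N-6-j}\fd{N+5+j}(s)\,ds$, which is exactly the weighted dissipation integral appearing on the left at step $j+1$. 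Summing over $j$ with suitable positive multipliers makes the chain telescope: the right-hand weighted dissipation at step $j$ is absorbed by the corresponding left-hand term at step $j+1$. The chain terminates at $j = N-6$, where the unabsorbed leftover is $\int_0^t (1+s)^0 \fd{2N-1}(s)\,ds \ls \int_0^t \fd{2N}(s)\,ds$. Together with $\fe{N+4+j}(0) \ls \se{2N}(0)$ for each $j$, this produces \eqref{decay_est}.

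The main obstacle is Step 1: producing the sharp inequality \eqref{plan_ED} requires delicate tuning of the small coefficients in front of each auxiliary functional — the $\mathcal{B}_n$-corrected tangential energy, the vorticity term $\ns{(\curlv u)_h}_{n-2}$ whose own $\frac{d}{dt}$-evolution enters via Proposition \ref{prop_vb_di}, and the pressure/$\eta$ closures — so that all their time derivatives on the right are subordinate to $\fd{n}$, while the composite energy on the left remains equivalent to $\fe{n}$. The persistent nonlinear errors of the form $\sqrt{\se{2N}}\fd{n}$ and $\fd{N+4}\se{2N}$ are controlled by the smallness hypothesis \eqref{a priorid}. In contrast, Step 2 is essentially bookkeeping once \eqref{plan_ED} and the embedding \eqref{plan_key} are secured.
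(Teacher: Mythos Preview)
Your proposal is correct and follows essentially the same route as the paper. You correctly identify that the composite functional $\bar{\mathcal{E}}_n+\mathcal{B}_n+\ns{(\curlv u)_h}_{n-2}$ is equivalent to $\fe{n}$ (via Propositions \ref{prop_vb_di} and \ref{prop_qh_en} together with \eqref{Bn_es}), that its evolution plus $\fd{n}$ is controlled by $\sqrt{\se{2N}}\fd{n}$ after combining Proposition \ref{evolution_n} with \eqref{vb_es_di}, \eqref{q_es_diss}, \eqref{di_ev_n2} and absorbing $\sdb{n}$ into $\bar{\mathcal{D}}_n$, and that the time-weighted telescoping via $\fe{\ell}\ls\fd{\ell+1}$ closes at $j=N-6$ against $\int_0^t\fd{2N-1}\le\int_0^t\fd{2N}$.
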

\begin{proof}
First, it follows from the estimates \eqref{vb_es_di}, \eqref{q_es_diss} and \eqref{di_ev_n2} that for $n=N+4,\dots,2N-2,$
\beq\label{decay_es1}
\dtt \norm{(\curlv u)_h}_{n-2}^2+\fd{n}\ls \sdb{n}+  \fd{N+4}{\se{2N}}.
\eeq
This together with  the estimate \eqref{en_ev_n} implies that, since $n\ge N+4$ and $\se{ 2N}\le \delta$ is small, 
\beq\label{eeeqqq}
\frac{d}{dt}\left(\bar{\mathcal{E}}_{n}+\mathcal{B}_{n}+\norm{(\curlv u)_h}_{n-2}^2\right)+\fd{n} \le   0.
\eeq
On the other hand, it follows from the estimates \eqref{vb_es_en_n+2} and \eqref{qh_en_es} that for $n=N+4,\dots,2N-2,$
\beq
\fe{n} \ls   \bar{\mathcal{E}}_n+\norm{(\curlv u)_h}_{n-2}^2+\fe{N+4} \se{2N}.
\eeq
This together with \eqref{Bn_es}  implies  that
\beq\label{decay_es3} 
\fe{n} \ls \bar{\mathcal{E}}_{n}+\mathcal{B}_{n}+\norm{(\curlv u)_h}_{n-2}^2\ls \fe{n}.
\eeq
Hence, combining \eqref{eeeqqq} and \eqref{decay_es3} yields that for $n=N+4,\dots,2N-2,$
\beq\label{decay_es4}
\dtt \fe{n}+\fd{n}\le0.
\eeq     	

Note that  $\fd{n}$ can not control $\fe{n}$, which can be seen by checking both the spatial and the temporal regularities in their definitions. This rules out not only the exponential decay of $\fe{n}$ but also prevents one from using the spatial Sobolev interpolation as \cite{GT2,GT3} to bound $\fe{n} \ls \se{2N}^{1-\theta} \fd{n}^{\theta},  0<\theta<1$ so as to derive the algebraic decay.  Observe that $\fe{\ell}\le \fd{\ell+1}$. Then we will employ a time weighted inductive argument here. To begin with, we may rewrite  \eqref{decay_es4} as that for $j=0,\dots,N-6$,
\beq\label{decay_es5}
\dtt \fe{N+4+j}+\fd{N+4+j}\le 0.
\eeq  	
Multiplying \eqref{decay_es5} by $(1+t)^{N-5-j} $, one has that, by using $\fe{N+4+j}\le \fd{N+5+j}$,
\begin{align}\label{decay_es6}
\dtt\( (1+t)^{N-5-j} \fe{N+4+j}\)+(1+t)^{N-5-j}\fd{N+4+j}&\le (N-5-j)(1+t)^{N-6-j}\fe{N+4+j}\nonumber
\\& \ls   (1+t)^{N-5-(j+1)}\fd{N+4+(j+1)}.
\end{align}
Integrating \eqref{decay_es6} in time directly, by a suitable linear combination of the resulting inequalities, one obtains
\begin{align}\label{v_b_es_119}
&\sum_{j=0}^{N-6} (1+t)^{N-5-j}\fe{N+4+j}(t) + \sum_{j=0}^{N-6}\int_0^t(1+s)^{N-5-j}\fd{N+4+j}(s)\,ds\nonumber
\\&\quad\ls \se{2N}(0)+ \int_0^t\fd{2N-1}(s)\,ds.
\end{align}
This implies \eqref{decay_est}.  	 	
\end{proof}

%%%%%%%%%%%%%%%%%%%%%%%%%%%%%%%%%%%%%%%%%%%%%%%%%%%%%%
\subsection{The a priori estimates}\label{sec_apr}
%%%%%%%%%%%%%%%%%%%%%%%%%%%%%%%%%%%%%%%%%%%%%%%%%%%%%%

Now we can arrive at the ultimate energy estimates.
\begin{thm}\label{apriorith}
Let $N\ge 8$. There exists a universal constant  $\tilde\delta>0$ such that if
\beq\label{arre}
\se{2N}(t)\le \tilde\delta,\quad\forall t\in[0,T],
\eeq
then
\beq\label{thm_en1en}
 \se{2N} (t) + \int_0^t \fd{2N} (s)\,ds \le \tilde C_1 \se{2N} (0),\quad\forall t\in[0,T]
\eeq
and
\beq\label{thm_en2en}
\sum_{j=0}^{N-6} (1+t)^{N-5-j}\fe{N+4+j}(t) + \sum_{j=0}^{N-6}\int_0^t(1+s)^{N-5-j}\fd{N+4+j}(s)\,ds\ls  \se{2N} (0),\ \forall t\in[0,T]  .
\eeq
\end{thm}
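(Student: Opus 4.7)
My plan is to combine the two preceding results, Theorem~\ref{bdd_e2N} and Theorem~\ref{decay_e2N}, by a short absorption/bootstrap argument. The key observation is that the weighted supremum $\fe{N+4}^w(t)$ defined in \eqref{ggdef} is precisely the $j=0$ component of the decay estimate \eqref{decay_est}, so the two theorems feed into each other and can be closed simultaneously provided $\tilde\delta$ is chosen small enough.

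First, I would fix $\tilde\delta\le \delta$ (with $\delta$ being the smaller of the thresholds appearing in Theorems~\ref{bdd_e2N} and \ref{decay_e2N}), so that both estimates \eqref{full_en_es} and \eqref{decay_est} hold on $[0,T]$ under the hypothesis \eqref{arre}. Taking the $j=0$ term of \eqref{decay_est} and taking a supremum in time gives
\begin{equation}\label{step1_plan}
\fe{N+4}^w(t) \;=\; \sup_{0\le s\le t}(1+s)^{N-5}\fe{N+4}(s) \;\le\; C_2\Bigl(\se{2N}(0)+\int_0^t \fd{2N}(s)\,ds\Bigr).
\end{equation}
Next, plugging \eqref{step1_plan} into the right-hand side of \eqref{full_en_es} yields, for a universal $C_3>0$,
\begin{equation}\label{step2_plan}
\se{2N}(t)+\int_0^t \fd{2N}(s)\,ds \;\le\; C_3\se{2N}(0) + C_3 C_2\sqrt{\sup_{0\le s\le t}\se{2N}(s)}\Bigl(\se{2N}(0)+\int_0^t \fd{2N}(s)\,ds\Bigr).
\end{equation}

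The only subtlety is the absorption step. I would take the supremum of the left-hand side over $[0,t]$ first, denoting $M(t):=\sup_{0\le s\le t}\se{2N}(s)+\int_0^t \fd{2N}(s)\,ds$; then \eqref{step2_plan} becomes $M(t)\le C_3\se{2N}(0) + C_3 C_2\sqrt{M(t)}\,M(t)$. Using the a priori bound $M(t)\le \tilde\delta$ (valid since $\se{2N}\le\tilde\delta$ and the dissipation integral appears on the left), one has $\sqrt{M(t)}\le \sqrt{\tilde\delta}$, so by choosing $\tilde\delta$ small enough to ensure $C_2 C_3\sqrt{\tilde\delta}\le 1/2$, the last term can be absorbed into the left, yielding \eqref{thm_en1en} with $\tilde C_1=2C_3$. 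Finally, \eqref{thm_en2en} follows immediately by substituting the just-proved bound $\int_0^t\fd{2N}(s)\,ds\le \tilde C_1\se{2N}(0)$ back into the right-hand side of \eqref{decay_est}.

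There is no real obstacle here: the entire argument is a routine closing step once the boundedness estimate (Theorem~\ref{bdd_e2N}) and the decay estimate (Theorem~\ref{decay_e2N}) are in hand. All of the genuine analytic work — the tangential energy evolution, the induced damping for $(\curlv u)_h$, the Hodge-type and elliptic improvements, and the time-weighted induction giving decay — has already been carried out in Sections~\ref{sec_5}--\ref{sec_6} and in the two theorems of this section. The only thing to watch is the choice of $\tilde\delta$: it must be small enough relative to the products $C_2 C_3$ of the universal constants coming from \eqref{full_en_es} and \eqref{decay_est} to make the absorption valid uniformly in $t\in[0,T]$.
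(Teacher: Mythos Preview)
Your approach is exactly the one the paper uses: feed the $j=0$ term of the decay estimate \eqref{decay_est} back into \eqref{full_en_es} and close by absorption, then substitute back into \eqref{decay_est} to get \eqref{thm_en2en}.

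There is one slip in your absorption step. You assert $M(t)\le \tilde\delta$ on the grounds that ``$\se{2N}\le\tilde\delta$ and the dissipation integral appears on the left''; but the hypothesis \eqref{arre} gives no a priori control on $\int_0^t\fd{2N}$, so $M(t)\le\tilde\delta$ is not available at that point, and ``appearing on the left'' is not a justification. The fix is actually simpler than the detour through $M(t)$: in your inequality \eqref{step2_plan} the square-root factor is $\sqrt{\sup_{0\le s\le t}\se{2N}(s)}$, which is $\le\sqrt{\tilde\delta}$ \emph{directly} from \eqref{arre}. That already lets you absorb the term $C_2C_3\sqrt{\tilde\delta}\int_0^t\fd{2N}$ into the left-hand side once $C_2C_3\sqrt{\tilde\delta}\le 1/2$, with no need to bound $M(t)$ first. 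This is precisely how the paper proceeds (see \eqref{full_en_esed}).
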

\begin{proof}
Let $\tilde\delta$ be smaller than those $\delta$ in Theorems  \ref{bdd_e2N} and \ref{decay_e2N}.
The estimate \eqref{decay_est} of Theorem  \ref{decay_e2N} implies in particular that
\beq\label{efefe}
\fe{N+4}^w(t)\ls \se{2N} (0) +\int_0^t \fd{2N} (s)\,ds.
\eeq 
Then combining the estimates \eqref{full_en_es} and \eqref{efefe} yields that
\beq\label{full_en_esed}
\se{ 2N}(t)+ \int_0^t \fd{2N}(s)\,ds
  \le C_1\se{2N}(0)+ C_1\tilde \delta^{1/2}\int_0^t \fd{2N}(s)\,ds,
\eeq
which implies \eqref{thm_en1en} if $C_1\tilde \delta^{1/2}\le 1/2$.  Finally, \eqref{thm_en2en} follows from \eqref{decay_est}  and  \eqref{thm_en1en}.
\end{proof}

%%%%%%%%%%%%%%%%%%%%%%%%%%%%%%%%%%%%%%%%%%%%%%%%%%%%%%
\section{Local well-posedness}\label{sec_lwp}
%%%%%%%%%%%%%%%%%%%%%%%%%%%%%%%%%%%%%%%%%%%%%%%%%%%%%%

In this section we will prove the local well-posedness of \eqref{MHDv}, as stated in Theorem \ref{main_thm2}. As mentioned already in Section \ref{sec_2}, despite the a priori energy estimates established in Section \ref{sec_priori}, the local well-posedness of \eqref{MHDv} is still highly nontrivial due to the structure of our energy functionals, even the set of initial data with the high order compatibility conditions has to be examined. Based on the a priori energy estimates in Section \ref{sec_priori}, we will use an iteration scheme to construct solutions to \eqref{MHDv} by solving a free-surface Euler equations with surface tension (the hydrodynamic part) and a  two-phase magnetic system in moving domains (the magnetic part). Note that although the hydrodynamic part can be handled easily, yet great cares are needed to construct solutions to the magnetic part due to the nonlocal boundary condition and the less regularity of the velocity. We will construct elaborate approximate solutions for the magnetic part by modifying the analysis of Padula and Solonnikov \cite{PS} and obtain the solution as the limit of these approximate solutions. To this end, our a priori energy estimates independent of the regularization parameter are crucial.

%%%%%%%%%%%%%%%%%%%%%%%%%%%%%%%%%%%%%%%%%%%%%%%%%%%%%%
\subsection{The initial data and the compatibility conditions}\label{sec71}
%%%%%%%%%%%%%%%%%%%%%%%%%%%%%%%%%%%%%%%%%%%%%%%%%%%%%%

Since our energy functional framework requires high order regularity, we have to specify explicitly the high order compatibility conditions of the initial data. Thus, for given initial data, $(u_0,b_0,\eta_0)$, one needs to construct the data $\partial_t^j \eta(0)$ for $j=1,\dotsc,2N+1$, $\partial_t^j u(0)$ and $\partial_t^j b(0)$ for $j=1,\dotsc,2N$,  $\partial_t^j  p(0)$ for $j =0,\dotsc, 2N-1$, and $\partial_t^j  \hb(0)$ for $j =0,\dots, 2N$, and state the $2N$-th order  compatibility conditions. The construction of the data can be given as follows. First, one writes $(\dt^0u(0),\dt^0 b(0),\dt^0\eta(0))=(u_0,b_0,\eta_0)$ and constructs $\dt \eta(0)=u_0\cdot\n_0$,  hereafter $\n_0=\n(0)$, etc. Now, suppose that $j \in [0,2N-1]$ and that $\dt^\ell u(0),\dt^\ell b(0)$ are known for $\ell=0,\dotsc,j$, $\dt^\ell p(0)$ are known for $\ell = 0,\dotsc,j-1$ (with the understanding that nothing is known of $p(0)$ when $j=0$) and  $\dt^\ell \eta(0)$ are known for $\ell = 0,\dotsc,j+1$, then  $ \dt^{j+1} u(0)$,   $\dt^{j+1} b(0)$, $\dt^{j} p(0)$ and $\dt^{j+2} \eta(0)$ can be obtained as follows. First, let $ \dt^{j} p(0)$ be the  solution to 
\beq \label{MHDvp} 
\begin{cases}
\Delta^{\varphi_0} \dt^{j} p(0) =- \Delta^{\varphi_0}\((\D_t)^{j}-\dt^{j}\)p(0) -(\D_t)^{j}\( \nabla^\varphi u: \nabla^\varphi u^t  \)(0) 
\\\qquad\quad\qquad\ \,\,\,+\diverge^{\varphi_0}(\D_t)^{j} (\curlv   b\times (\bar B+b)) (0)   &\text{in }\Omega_-
\\ \dt^{j} p(0)=  -\sigma \dt^{j} H(0)  & \text{on } \Sigma
\\ \p_3^{\varphi_0} \dt^j p(0)= \p_3^{\varphi_0}\((\D_t)^{j}-\dt^{j}\)p(0)+\dt^j \( {\curlv   b\times (\bar B+b) }\)(0) \cdot e_3& \text{on } \Sigma_-.
\end{cases}
\eeq 
Next, define $ \dt^{j+1} u(0)$ as
\beq
 \dt^{j+1} u(0)=  -\((\D_t)^{j+1}-\dt^{j+1}\) u (0) +(\D_t)^j\(  -   {u}\cdot \Dn  {u} -\Dn   p +  \curlv   b\times (\bar B+b)\) (0)
\eeq
and  $ \dt^{j+1} b(0)$ as
\beq\label{dii0}
 \dt^{j+1} b(0)      =   -\((\D_t)^{j+1}-\dt^{j+1}\)b(0) +\curlvz(\D_t)^j   E (0).
\eeq
Finally, set $\dt^{j+2} \eta(0)$ to be
\beq
 \dt^{j+2} \eta(0)      = \dt^{j+1}\(u\cdot\n\)(0).
\eeq
Note that now one has the data  $\partial_t^j \eta(0)$ for $j=0,\dotsc,2N+1$, $\partial_t^j u(0)$ and $\partial_t^j b(0)$ for $j=0,\dotsc,2N$ and  $\partial_t^j  p(0)$ for $j =0,\dotsc, 2N-1$,  one then can obtain  $\partial_t^j \hb(0)$ for $j=0,\dotsc,2N$  as the  solution to, iteratively,  
 \beq\label{MHDvb}
 \begin{cases}
\curlvz \partial_t^j \hb(0)=-\curlvz \((\D_t)^{j}-\dt^{j}\)\hb(0) &\text{in }\Omega_+
\\ \divaz \partial_t^j \hb(0)=-\divaz \((\D_t)^{j}-\dt^{j}\)\hb(0)&\text{in }\Omega_+
\\\dt^j  \hb(0)\cdot \n_0= -\[\dt^j, \n\]\cdot \hb(0)+\dt^j(b\cdot\n)(0)   & \text{on } \Sigma
 \\  \dt^j \hb(0)\times e_3=0  & \text{on } \Sigma_{+}.
\end{cases}
\eeq
Note that  $\curlvz ((\D_t)^{j}-\dt^{j})\hb(0)\cdot e_3=0$ on $\Sigma_+$, which follows by the fact that $e_3\cdot \curlv =e_3\cdot \curl$ and $(\D_t)^{j}=\dt^{j}$ on $\Sigma_+$, and thus, by Proposition \ref{propel1},  guarantees the solvability of \eqref{MHDvb}. The construction of the  data is thus completed. In order for  $(u_0,b_0,\eta_0)$ to be taken as the initial data for the local well-posedness of  \eqref{MHDv} in our energy functional framework,  these data constructed above need to satisfy the following  $2N$-th order  compatibility conditions:
\beq\label{compatibility}
\begin{cases}
   \diverge^{\varphi_0}  u_0  =\diverge^{\varphi_0}  b_0  =0\text{ in }\Omega_-, \ u_{0,3} =b_{0,3} =0\text{ on }\Sigma_-, 
   \\   \jump{\dt^j b(0)}\times \n_0 =0\text{ on }\Sigma \text{ and } \dt^j E(0)\times e_3=0\text{ on }\Sigma_- ,\ j=0,\dots,2N-1.
 \end{cases}
\eeq

We shall now show  that the set of the initial data satisfying the compatibility conditions \eqref{compatibility}  is not empty. In principle, this is highly technical since the problem  \eqref{MHDv} is nonlinear and  nonlocal. Our key observation here is that since the nonlinear problem is a small perturbation of the linearized one, so their compatibility conditions for the initial data should be close to each other. Then our idea is to  first construct the initial data for the linearized problem that satisfies the corresponding linear compatibility conditions, and then we obtain a family of initial data satisfying the compatibility conditions for the nonlinear problem, which are close to the initial data of the linearized problem, by a perturbation argument.  For the linearized problem of \eqref{MHDv} ($i.e.$, \eqref{MHDv_perb} with $G^i$ and $\hat G_i$ being zero), given the initial data $(u_0,b_0,\eta_0)$,  the construction of  the data $\partial_t^j \eta(0)$ for $j=1,\dotsc,2N+1$, $\partial_t^j u(0)$ and $\partial_t^j b(0)$ for $j=1,\dotsc,2N$,  $\partial_t^j  p(0)$ for $j =0,\dotsc, 2N-1$ and $\partial_t^j  \hb(0)$ for $j =0,\dotsc, 2N$  is similar to that of the nonlinear problem \eqref{MHDv}, and the  linear compatibility conditions  is the one obtained by setting $\eta=0$ (and $E=u\times  \bar B -\kappa\curl  b$) in \eqref{compatibility}. Note that  the set of the initial data satisfying the linear compatibility conditions is not empty; indeed, any triple of $u_0\in C_0^\infty(\Omega_-)$ with $\diverge u_0=0$, $b_0\in C_0^\infty(\Omega_-)$ with $\diverge b_0=0$ and $\eta_0=0$ satisfies the linear compatibility conditions, which follows from the fact that, from the construction, $\dt^j\eta(0)=0$ for $j=1,\dots,2N+1$, $\dt^j p(0)=-\bar B\cdot \dt^j b(0)$ for $j=0,\dots,2N-1$, $\dt^j u(0)=  -\nabla\dt^{j-1} p(0)+\curl   \dt^{j-1}  b(0)\times  \bar B$ for $j=1,\dots,2N$ and $\dt^j b(0)=  -\kappa \curl\curl \dt^{j-1}   b(0)+\bar{B}   \cdot \nabla  \dt^{j-1} {u}(0)$ for $j=1,\dots,2N$ all belong to $C_0^\infty(\Omega_-)$ and $\dt^j\hb(0)=0$ for $j=0,\dots,2N$. Now given any smooth initial data satisfying the linear compatibility conditions,  denoted by $(u_0^L, b_0^L,\eta_0^L)$,  one may then employ the abstract argument before Lemma 5.3 of \cite{JT} of using the implicit function theorem to show that there exist  a constant  $\iota_0>0$  and a family of smooth initial data of the form $
(u_0^\iota,b_0^\iota,\eta_0^\iota)
=\iota  (u_0^L, b_0^L,\eta_0^L)+\iota^2
(\tilde{u}(\iota), \tilde{b}(\iota),\tilde{\eta}(\iota))
$
for $\iota\in [0,\iota_0)$ and some smooth $(\tilde{u}(\iota), \tilde{b}(\iota),\tilde{\eta}(\iota))$  so that $(u_0^\iota,b_0^\iota,\eta_0^\iota)$ satisfies the nonlinear compatibility conditions \eqref{compatibility}.

%%%%%%%%%%%%%%%%%%%%%%%%%%%%%%%%%%%%%%%%%%%%%%%%%%%%%%
\subsection{The free-surface Euler equations with surface tension}
%%%%%%%%%%%%%%%%%%%%%%%%%%%%%%%%%%%%%%%%%%%%%%%%%%%%%%

In this subsection we consider the following free-surface Euler equations that for given $ F$,
\beq\label{MHDvk1}
\begin{cases}
\D_t   {u}  +    {u}\cdot \Dn  {u}  +\Dn   p =  F & \text{in } \Omega_-
\\ \diva    u=0  &\text{in }\Omega_-
\\ \partial_t \eta   = u\cdot\N & \text{on }\Sigma
\\  p =  - \sigma H  & \text{on } \Sigma
\\ u_3 =0 & \text{on } \Sigma_{-}
\\ (u,\eta)\mid_{t=0}= (u_0,\eta_0).
\end{cases}
\eeq

Given the initial data $(u_0,\eta_0)$, let the data  $\partial_t^j \eta(0)$ for $j=1,\dotsc,2N+1$, $\partial_t^j u(0)$ for $j=1,\dotsc,2N$ and  $\partial_t^j  p(0)$ for $j =0,\dotsc, 2N-1$  be constructed similarly as in Section \ref{sec71}. The initial data are required to  satisfy the following compatibility conditions
\beq\label{cc1}
   \diverge^{\varphi_0}  u_0  =0\text{ in }\Omega_-\text{ and }  u_{0,3} =0\text{ on }\Sigma_-.
\eeq

Recall the definition \eqref{high_en} of $\se{2N}$, and  denote the  $u$-parts of $\se{2N}$ by $\sen(u)$, etc. Set
\begin{align}\label{nof1}
 \mathfrak{F}_2^{2N}(F):=  \int_0^T \sum_{j=0}^{2N}\norm{ \dt^j  F }_{{2N}-j}^2,\quad \mathfrak{F}^{2N}_\infty(F):= \sup_{[0,T]} \sum_{j=0}^{2N-1} \norm{ \dt^j  F }_{{2N}-j-1}^2
\end{align}
and
\beq\label{nof3}
 \mathfrak{F}_0^{2N}(F):=  \sum_{j=0}^{2N-1} \norm{ \dt^j  F(0) }_{{2N}-j-1}^2.
\eeq
Note that 
\beq\label{feses}
\mathfrak{F}^{2N}_\infty(F)\ls \mathfrak{F}_0^{2N}(F)+T\mathfrak{F}_2^{2N}(F).
\eeq

Now the local well-posedness of \eqref{MHDvk1} can be stated  as follows.
\begin{prop}\label{lemma1}
Let $N\ge 4$ be an integer. Assume  that $\mathfrak{F}_0^{2N}(F)+\mathfrak{F}_2^{2N}(F)<\infty$ for any $0<T\le 1$, $u_0\in H^{2N}(\Omega_-)$ and $\eta_0\in H^{2N+3/2}(\Sigma )$ are given such that $\sen(u,p,\eta)(0)<\infty$ and the compatibility conditions \eqref{cc1} are satisfied.  There exist a universal constant $\delta_1>0$   such that  if  $ \sen(u,p,\eta)(0) +   \mathfrak{F}_0^{2N}(F)+\mathfrak{F}_2^{2N}(F)\le \delta_1$,  then  there exists a  unique solution $(u,p, \eta)$ to  \eqref{MHDvk1} on $[0,T]$ satisfying
\begin{align}
\label{bound222}
\sup_{[0,T]}\sen(u,p,\eta)    \ls    \sen(u,p,\eta)(0)  + \mathfrak{F}_0^{2N}(F)+T\mathfrak{F}_2^{2N}(F)  .
\end{align}  
\end{prop}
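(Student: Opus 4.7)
The plan is to construct the solution via a fixed-point iteration on a linearization of \eqref{MHDvk1}, combined with tangential energy estimates that mirror Sections \ref{sec_5}--\ref{sec_6} restricted to the absence of magnetic variables. First, I build the higher-order time derivatives $\dt^j u(0),\dt^j p(0),\dt^j\eta(0)$ for $1\le j\le 2N$ by iterating \eqref{MHDvk1} at $t=0$, analogously to Section \ref{sec71} but simpler, using the compatibility conditions \eqref{cc1} and the norms $\mathfrak{F}_0^{2N}(F)$ and showing that all of these are controlled by $\sen(u,p,\eta)(0)+\mathfrak{F}_0^{2N}(F)$. Second, given an iterate $(\tilde u,\tilde\eta)$ with the correct initial data, define $\tilde\varphi=\varphi(\tilde\eta)$, $\tilde\N=\N(\tilde\eta)$ and solve the linear problem
\begin{equation*}
\begin{cases}
\partial_t^{\tilde\varphi} u+\tilde u\cdot\nabla^{\tilde\varphi} u+\nabla^{\tilde\varphi} p=F,\quad \diverge^{\tilde\varphi} u=0 & \text{in }\Omega_-,\\
u_3=0 & \text{on }\Sigma_-,\\
\partial_t\eta=u\cdot\tilde\N,\quad p=-\sigma H(\eta) & \text{on }\Sigma,
\end{cases}
\end{equation*}
whose solvability follows from a Galerkin scheme based on the basic energy identity for $\tfrac12\|u\|_0^2+\sigma\int_\Sigma(\sqrt{1+|\nabla_h\eta|^2}-1)$ together with the elliptic recovery of the pressure.

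Third, uniform-in-iterate a priori estimates are obtained as in Sections \ref{sec_5}--\ref{sec_6}. Applying $\pa^\al$ with $\al\in\mathbb{N}^{1+2}$, $|\al|\le 2N$, tangentially and testing against $\pa^\al u$ yields, as in Proposition \ref{evolution_2N}, control of $\sum_{|\al|\le 2N}\int|\pa^\al u|^2+\sigma|\nabla_h\pa^\al\eta|^2$ up to a forcing contribution bounded by $\int_0^T\mathfrak{F}_2^{2N}(F)$ via Cauchy--Schwarz; the top-order case $\al_0=2N$ again requires the cancellation between $\dt^{2N}p$ and $\sigma\dt^{2N}H$ on $\Sigma$ from the dynamic boundary condition, handled exactly as in \cite{WX}. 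Normal derivatives of $u$ are then recovered from the vorticity equation
\begin{equation*}
\D_t\curlv u+u\cdot\Dn\curlv u=\curlv u\cdot\Dn u+\curlv F
\end{equation*}
together with the Hodge-type estimate of Proposition \ref{propel1}. The pressure follows from the elliptic problem obtained by taking $\diverge^{\tilde\varphi}$ of the momentum equation, with boundary data $p=-\sigma H$ on $\Sigma$ and $\partial_3^{\tilde\varphi}p=F_3$ on $\Sigma_-$; regularity of $\eta$ then follows by inverting $-\sigma\Delta_h\eta=p+\text{l.o.t.}$ on $\Sigma$ as in Proposition \ref{prop_qh_en}. These estimates close on a short time interval where $\sen(0)+\mathfrak{F}_0^{2N}(F)+T\mathfrak{F}_2^{2N}(F)$ is small, giving \eqref{bound222} via \eqref{feses}.

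Finally, convergence of the iteration is established in a lower-norm space (roughly one horizontal spatial derivative less than $\sen$), where the map is a contraction; the limit is upgraded to full $\sen$-regularity by weak-$\ast$ compactness and the uniform bound, and uniqueness follows from the same difference estimate. The main obstacle will be, as in Proposition \ref{evolution_2N}, the top-order time-derivative interaction $\int_{\Omega_-}\dt^{2N}p\,F^{2,(2N,0)}\,d\V$ combined with $-\int_\Sigma\sigma\dt^{2N}H\,F^{5,(2N,0)}$: neither $\dt^{2N}p$ nor $\dt^{2N}H$ is directly bounded by the energy, and extracting a closable evolution inequality relies on the precise cancellation on $\Sigma$ coming from $p=-\sigma H$ and carefully ordered integration by parts in both $t$ and $x_h$. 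A secondary technical point will be verifying that the iteration scheme preserves the compatibility conditions at $t=0$ at every order, which here is routine since no nonlocal magnetic boundary condition enters.
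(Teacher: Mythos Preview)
Your proposal is correct and follows essentially the same approach as the paper: the paper defers the actual construction to Coutand--Shkoller \cite{CS1} and concentrates on the a priori estimate \eqref{bound222}, deriving it via the tangential energy evolution of Proposition \ref{evolution_2N} (including the $\dt^{2N}p$--$\sigma\dt^{2N}H$ cancellation), the vorticity equation \eqref{curlv_eqe}, and the pressure/$\eta$ recovery of Proposition \ref{prop_qh_en}, exactly as you outline. One small correction: the Hodge-type recovery of normal derivatives of $u$ uses Lemma \ref{ell_hodge} (the boundary-condition-free version controlling $\norm{u}_r$ by $\norm{u}_{0,r}+\norm{(\curl u)_h}_{r-1}+\norm{\diverge u}_{r-1}$), not Proposition \ref{propel1}.
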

\begin{proof}
The problem \eqref{MHDvk1} could be solved similarly as in Coutand and Shkoller \cite{CS1}. In fact, the arguments here would be reasonably easier since the geometry of \eqref{MHDvk1} is simpler than those of \cite{CS1} which treats more general domains in Lagrangian coordinates. So we may omit the details, and focus only on the derivation of the a priori estimate \eqref{bound222}.
The proof follows similarly as that of the a priori estimates for \eqref{MHDv},  not involving the magnetic part, and thus we provide only the necessary modifications. 

Assume  that the solution $(u,p, \eta)$ to \eqref{MHDvk1} is given on the interval $[0,T]$ and satisfies 
\beq\label{apriori_11}
\se{2N}(t)\le \delta,\quad\forall t\in[0,T]
\eeq
for  sufficiently small   $\delta>0.$
First, one may modify the proof of Proposition \ref{evolution_2N} easily to deduce 
 \begin{align}\label{en_ev_2Ne}
\seb{2N}(u,\eta)(t)   
&  \ls        \sen(u,p,\eta)(0)+   (\sen(u,p,\eta)(t))^{3/2} +   \int_0^t   (\sen(u,p,\eta))^{3/2}   \nonumber \\& \quad  +\sum_{j=0}^{2N }\int_0^t \sqrt{\seb{2N}(u)}\norm{ \dt^j  F }_{{2N}-j} 
.
\end{align} 
By Cauchy's inequality and the Cauchy-Schwarz  inequality, one deduces that for $T\le 1$,
\begin{align}
 \label{en_ev_2Ne1}
 \sup_{[0,T]}\seb{2N}(u,\eta) &  \ls        \sen(u,p,\eta)(0)+     \sup_{[0,T]} \(\sen(u,p,\eta)\)^{3/2}  +T\mathfrak{F}_2^{2N}(F).
\end{align}  

Next, following a variant of the proof of Proposition \ref{v_b_prop} by using the vorticity equations
\begin{align}\label{curlv_eqe}
\D_t \curlv{u}  +    {u}\cdot \Dn  \curlv {u} =     \curlv{u}\cdot \Dn  {u} + \curlv F  , 
\end{align}
one obtains 
\begin{align}
 \label{en_ev_2Ne2}
 \sup_{[0,T]}\sen(u)     \ls    \sen(u,\eta)(0)+ \sup_{[0,T]}\seb{2N}(u)+ \sup_{[0,T]} \(\sen(u,p,\eta)\)^{3/2}   + T \mathfrak{F}_2^{2N}(F)+\mathfrak{F}^{2N}_\infty(F).
\end{align}  
Following the proof of Proposition \ref{prop_qh_en} leads to
\begin{align}
 \label{en_ev_2Ne5}
 \sen(p,\eta)  \ls \seb{2N}(\eta)+   \sen(u)+ \(\sen(u,p,\eta)\)^{2}   +\mathfrak{F}_\infty^{2N}(F). 
\end{align}

Now, collecting the estimates \eqref{en_ev_2Ne1}, \eqref{en_ev_2Ne2} and \eqref{en_ev_2Ne5} yields that, by \eqref{feses},
\begin{align}
 \label{en_ev_2Ne7}
\sup_{[0,T]}\sen(u,p,\eta)     \ls        \sen(u,p,\eta)(0)+ \sup_{[0,T]} \(\sen(u,p,\eta)\)^{3/2} + \mathfrak{F}_0^{2N}(F)+T\mathfrak{F}_2^{2N}(F) .
\end{align}  
This implies in particular that, for $\delta>0$ small,
\begin{align}
 \label{en_ev_2Ne8}
 \sup_{[0,T]}\sen(u,p,\eta)   \le C_1\(       \sen(u,p,\eta)(0)+ \mathfrak{F}_0^{2N}(F)+T\mathfrak{F}_2^{2N}(F)  \),
\end{align}  
which closes the a priori assumption  \eqref{apriori_11} if one has assumed  that
\beq
 \sen(u,p,\eta)(0) +   \mathfrak{F}_0^{2N}(F)+\mathfrak{F}_2^{2N}(F) \le \delta_1:=\delta /C_1.
\eeq
 This in turn implies that the solution $(u,p,\eta)$ exists on $[0,T]$ for any $0<T\le 1$ and the estimate \eqref{bound222} holds, and the proposition is thus proved.
\end{proof}

%%%%%%%%%%%%%%%%%%%%%%%%%%%%%%%%%%%%%%%%%%%%%%%%%%%%%%
\subsection{The two-phase magnetic system in moving domains}
%%%%%%%%%%%%%%%%%%%%%%%%%%%%%%%%%%%%%%%%%%%%%%%%%%%%%%

In this subsection we consider the following magnetic system that for given  $\eta$ and $G$,
\beq\label{MHDvk01}
\begin{cases}
\D_t  {b}   +\kappa\curlv\curlv  b     =  \curlv   {G} & \text{in } \Omega_-
\\ \diva  b = 0  &\text{in }\Omega_-
\\\curlv\hb=0,\quad\diva \hb=0 &\text{in }\Omega_+
\\ \jump{b}=0  & \text{on } \Sigma
\\ b_3 =0,\quad \kappa \curlv  b \times e_3 = {G}\times e_3 & \text{on } \Sigma_{-}
\\  \hb\times e_3=0  & \text{on } \Sigma_{+}
\\ b\mid_{t=0}= b_{0}.
\end{cases}
\eeq 

Given the initial data $b_0 $,  let the data $\partial_t^j b(0)$ for $j=1,\dotsc,2N$ and $\partial_t^j \hb(0)$ for $j=0,\dotsc,2N$ be constructed similarly as in Section \ref{sec71}. For the later use, we may reformulate the construction of these data and the corresponding $2N$-th order compatibility conditions as follows. For $j=0,\dots, 2N$, denote $P_j(f_0,\dots, f_{j-1} )$ (depending on $\varphi$) to be the corresponding expression $((\D_t)^j-\dt^j)f(0)$ with $\dt^\ell f(0)$ replaced by $f_\ell $  for $\ell=0,\dots,j-1$  (with the understanding that $P_0=0$ when $j=0$). For the given initial data $b(0)=b_0$, one can define  iteratively  that  for $j=0,\dots,2N-1$,
\begin{align}\label{MHDvbee00e1}
 \dt^{j+1} b(0)      &  = -P_{j+1}( b(0),\dots, \dt^{j} b(0))-\kappa\curlvz    \curlvz \( \dt^{j} b(0)+P_{j}( b (0),\dots, \dt^{j-1} b(0)) \)\nonumber
  \\&\quad +\curlvz   (\D_t)^{j}  G(0) 
\end{align}
and  for $j=0,\dotsc,2N$, $ \partial_t^j \hb(0)$ is the solution to 
 \beq\label{MHDvbee00e2}
 \begin{cases}
\curlvz \partial_t^j \hb(0)=-\curlvz P_{j}( \hb(0),\dots, \dt^{j-1} \hb(0)) &\text{in }\Omega_+
\\ \divaz \partial_t^j \hb(0)=-\divaz P_{j}( \hb(0),\dots, \dt^{j-1} \hb(0))&\text{in }\Omega_+
\\\dt^j  \hb(0)\cdot \n_0= -\[\dt^j, \n\]\cdot \hb(0)+\dt^j(b\cdot\n)(0)   & \text{on } \Sigma
 \\  \dt^j \hb(0)\times e_3=0  & \text{on } \Sigma_{+}.
\end{cases}
\eeq
The   $2N$-th order compatibility conditions  for  \eqref{MHDvk01}  are the following:\beq\label{compatibility0j}  
\begin{cases}
 \diverge^{\varphi_0}  b_0  =0\text{ in }\Omega_-,\   b_{0,3} =0\text{ on }\Sigma_-, \jump{\dt^j b(0)}\times\n_0  =0  \text{ on }\Sigma,\ j=0,\dots 2N-1,
 \\  \kappa \curlvz  \dt^j  b(0) \times e_3\! = \!\! \(\!\!-\kappa \curlvz P_{j}( b (0),\dots, \dt^{j-1} b(0))+ \dt^j G (0) \!\!\)\!\times e_3 \text{ on }\Sigma_-,\ j=0,\dots 2N-1.
 \end{cases}
\eeq	
Note that it follows from \eqref{MHDvbee00e1}, the last two lines in \eqref{compatibility0j}  and the third equation in \eqref{MHDvbee00e2} that
\beq\label{commmo}
\begin{cases}
\divaz \partial_t^j b(0)=-\divaz P_{j}( b(0),\dots, \dt^{j-1} b(0)) \text{ in }\Omega_+ ,\ j=1,\dots,2N,
\\\dt^j b_3(0)=0\text{ on }\Sigma_-,\ j=1,\dots,2N
\text{ and }
\jumps{\dt^j b(0)}\cdot\n_0=0\text{ on }\Sigma,\ j=0,\dots,2N.
 \end{cases}
\eeq

The problem \eqref{MHDvk01} was solved   in Padula and Solonnikov \cite{PS} in a slightly different setting by using the full parabolic regularity of the problem. However, it should be noted that one key subtle point in \cite{PS} is that the iteration scheme of constructing the solutions to the viscous and resistive plasma-vacuum interface problem therein requires high order regularities of $u$ and $\eta$ guaranteed by the viscosity, which unfortunately is not the case here for solving the inviscid and resistive plasma-vacuum interface problem \eqref{MHDv}. Our  way to get around this difficulty is to regularize \eqref{MHDvk01}, and to solve the regularized problem  by modifying the arguments of \cite{PS}. Then we derive the uniform estimates of the approximate solutions independent of the smoothing parameter, which enable us to take the limit to solve the original problem \eqref{MHDvk01}. To this end, we will make an important use of the corresponding regularized electric field in vacuum. As our energy functional is different from the parabolic one of \cite{PS} and is of high order,  we need to solve the regularized problem in a higher regularity counterpart of that of \cite{PS}, which requires us also to smooth out the initial data $b_0$. Such a smoothing procedure is highly technical  as it needs to guarantee the high order compatibility conditions for the regularized problem. It seems extremely difficult for us to apply directly the usual standard regularization technique through mollifiers to achieve this. Our idea here is to introduce the so-called correctors to the regularized problem. 

More precisely, we will regularize  \eqref{MHDvk01} as follows:
\beq\label{MHDvk2}
\begin{cases}
\De_t  {\be}   +\kappa\curlve\curlve  \be     =  \curlve \( G^\eep -\Psi^\eep\)& \text{in } \Omega_-
\\ \divae  \be = 0  &\text{in }\Omega_-
\\\curlve\hbe=0,\quad\divae \hbe=0 &\text{in }\Omega_+
\\ \jump{\be}=0  & \text{on } \Sigma
\\  b^\eep_3 =0,\quad\kappa \curlve  \be \times e_3 = {G^\eep}\times e_3 & \text{on } \Sigma_{-}
\\  \hbe\times e_3=0  & \text{on } \Sigma_{+}
\\ \be\mid_{t=0}= b_{0}^\eep. 
\end{cases}
\eeq
Here $\varphi^\eep= \varphi(\eta^\epp)$ with $\eta^\epp=\( \eta\)^\epp_{t,x_h}$ and $G^\eep=\( G\)^\epp_{t,x}$, where $\( \cdot\)^\epp_{t}$ is the usual smooth  approximation in time through a mollifier, etc., and the corrector $\Psi^\eep$ and the smooth data $b_0^\eep$ are constructed  simultaneously  as follows. For the given $\dt^0b(0)=b_0$, the initial date of the original problem \eqref{MHDvk01}, let $\dt^jb(0)$ for $j=1,\dots,2N$  and $\dt^j\hb(0)$ for $j=0,\dots,2N$ be constructed  by \eqref{MHDvbee00e1} and \eqref{MHDvbee00e2}, respectively.  For $j=0,\dots, 2N$, let $P_j^\epp$  be $P_j$ with $\varphi$ replaced  by $\varphi^\epp$. Note that $P_j^\epp=0$ on $\Sigma_\pm$. For $j=0,\dots, 2N-1$, we define a sequence of smooth functions $w_j^\eep$ and $\hat w_j^\eep$  as follows. Suppose  that $j \in [0,2N-1]$ and that $w_\ell^\eep$ and $\hat w_\ell^\eep$ are known for $\ell=0,\dotsc,j-1$ (with the understanding that nothing is known for $w_0^\eep$ and $\hat w_0^\eep$ when $j=0$), we define $w_j^\eep$ and $\hat w_j^\eep$ as the solution to
\beq\label{MHDvk21100}
\begin{cases}
\kappa\curlvez\curlvez  w_j^\eep     =-\kappa   \curlvez\curlvez  P_{j}^\epp(w_0^\eep,\dots, w_{j-1}^\eep)
\\\qquad\qquad\qquad\qquad\, +\kappa\curlvez\curlvez  \((\D_t)^j  b(0)\)^\epp_{x}-\curlvez\phi_j^\epp& \text{in } \Omega_-
\\ \divaez  w_j^\eep =-\divaez P_j^\epp(w_0^\eep,\dots, w_{j-1}^\eep)  &\text{in }\Omega_-
\\\curlvez\hat{w}_j^\eep=-\curlvez P_j^\epp(\hat w_0^\eep,\dots, \hat w_{j-1}^\eep)  &\text{in }\Omega_+
\\ \divaez  \hat w_j^\eep =-\divaez P_j^\epp(\hat w_0^\eep,\dots, \hat w_{j-1}^\eep) &\text{in }\Omega_+
\\ \jump{w_j^\eep}=0  & \text{on } \Sigma
\\  w_j^\eep\cdot e_3=0,\quad \kappa \curlvez  w_j^\eep \times e_3 =\( \!- \kappa \curlvez  P_{j}^\epp(w_0^\eep,\dots, w_{j-1}^\eep)+\dt^j G^\eep(0)\!\)\times e_3 & \text{on } \Sigma_{-}
\\  \hat w_j^\eep\times e_3=0  & \text{on } \Sigma_{+},
\end{cases}
\eeq
where $\phi_j^\epp$ is a sequence of correctors satisfying 
\beq\label{phiedef}
\phi^\epp_{j,3}=0\text{ and }\phi^\epp_{j,h}=\(\kappa \curlvez  \((\D_t)^j  b(0)\)^\epp_{x}-\dt^j G^\eep(0)\)_h \text{ on }\Sigma_-,\ j=0,\dots,2N-1,
\eeq
which can be constructed by the harmonic extension, similarly as Lemma \ref{p_poisson}.
It should be noted here that the introduction of the correctors $\phi_j^\epp$ is crucial in order to guarantee the solvability of \eqref{MHDvk21100} according to Proposition \ref{propel3}. Indeed, without $\phi_j^\epp$, the solvability of \eqref{MHDvk21100} would require  $\kappa \curlvez  \((\D_t)^j  b(0)\)^\epp_{x}\times e_3=  \dt^j G^\eep(0) \times e_3 $ on $\Sigma_-$, which is not valid in general even that the last line in the $2N$-th order compatibility conditions \eqref{compatibility0j} hold. Next, by the second, sixth and seventh equations in \eqref{MHDvk21100},  according to Proposition \ref{propel1}, for  $j=0,\dots,2N-2$ one can define $\psi_j^\eep$ as the solution to
\beq \label{MHDvk2110022}
\begin{cases}
\curlvez \psi_j^\epp=  -\kappa \curlvez \curlvez \( w_{j}^\eep+P_{j}^\eep(w_0^\eep,\dots, w_{j-1}^\eep)   \)  
\\ \qquad\qquad\quad +\curlvez  (\De_t)^j  G^\eep(0)   -w_{j+1}^\eep- P_{j+1}^\epp(w_0^\eep,\dots, w_{j}^\eep) &\text{in }\Omega_-
\\\divaez \psi_j^\eep=0&\text{in }\Omega_-
\\ \psi_j^\eep\cdot\n_0^\epp=0&\text{on }\Sigma
\\\psi_j^\eep\times e_3=0&\text{on }\Sigma_-,
\end{cases}
\eeq
where $\n^\epp=(-\nabla_h\bar\eta^\epp,1).$ Now we can set $b_0^\eep=w_0^\eep$ and   $\Psi^\eep(t)$, by the time extension similarly as Lemmas \ref{l_sobolev_extension}--\ref{l_sobolev_extension2}, such that 
\beq\label{MHDvk211002233}
(\De_t)^j  \Psi^\eep(0)=\psi_j^\eep, \ j=0,\dots,2N-2\text{ and } 
(\De_t)^{2N-1}  \Psi^\eep(0)=0.
\eeq 
It follows from the fact that $(\D_t)^{j}=\dt^{j}$ on $\Sigma_-$ and the last equation in \eqref{MHDvk2110022} that $ \dt^j \Psi^\eep(0)\times e_3=0 \text{ on }\Sigma_- $, $j=0,...2N-1$, hence one can further choose to have  that (see Chapter 4 in Lions and Magenes \cite{LM})
\beq\label{psibd}
 \Psi^\eep\times e_3=0 \text{ on }\Sigma_-,
\eeq
Note that \eqref{psibd} is required for the solvability of \eqref{MHDvk2}.

Now having constructed smooth $b^\epp(0)=b_0^\eep$ and $\Psi^\epp(t)$,  we can construct the data $\partial_t^j b^\eep(0)$ for $j=1,\dotsc,2N$ and $\partial_t^j \hb^\eep(0)$ for $j=0,\dotsc,2N$  inductively that
   for $j=0,\dots,2N-1$,
\begin{align}\label{MHDvbee00e111}
 \dt^{j+1} b^\eep(0)     & =  -P_{j+1}^\eep( b^\eep(0),\dots, \dt^{j} b^\eep(0))-\kappa\curlvez   \curlvez \( \dt^{j} b^\eep(0)+P_{j}^\eep( b^\eep(0),\dots, \dt^{j-1} b^\eep(0))\) \nonumber
 \\  &\quad +\curlvez   \((\De_t)^{j}  G^\eep(0)-(\De_t)^{j} \Psi^\eep(0)\) 
\end{align}
and that for $j=0,\dotsc,2N$,   
 \beq\label{MHDvbee00e212}
 \begin{cases}
\curlvez \partial_t^j \hb^\eep(0)=-\curlvez P^\eep_{j}( \hb^\eep(0),\dots, \dt^{j-1} \hb^\eep(0)) &\text{in }\Omega_+
\\ \divaez \partial_t^j \hb^\eep(0)=-\divaez P^\eep_{j}( \hb^\eep(0),\dots, \dt^{j-1} \hb^\eep(0))&\text{in }\Omega_+
\\\dt^j  \hb^\eep(0)\cdot \ne_0= -\[\dt^j, \ne\]\cdot \hb^\eep(0)+\dt^j(b^\eep\cdot\ne)(0)   & \text{on } \Sigma
 \\  \dt^j \hb^\eep(0)\times e_3=0  & \text{on } \Sigma_{+},
 \end{cases}
\eeq
where $\ne=(-\nabla_h\eta^\epp,1)$.

We now claim that $\dt^j b^\eep(0)=w_j^\epp$, $j=1,\dots,2N-1$ and $\dt^j \hb^\eep(0)=\hat w_j^\epp$, $j=0,\dots,2N-1$.
First, since $b^\epp(0)=b_0^\eep=w_0^\eep$, it follows from \eqref{MHDvk21100} with $j=0$ that  $\hat w_0^\eep$ solves \eqref{MHDvbee00e212} with $j=0$, and hence by the uniqueness one has   $\hb^\epp(0)=\hat w_0^\eep$. Now, suppose  that $j \in [0,2N-2]$ and that     $\dt^\ell b^\eep(0)=w_\ell^\eep$ and $\dt^\ell \hb^\eep(0)=\hat w_\ell^\eep$ have been verified for $\ell=0,\dotsc,j$, one finds that, by \eqref{MHDvbee00e111}, \eqref{MHDvk2110022} and the first equation in \eqref{MHDvk2110022} and \eqref{MHDvk211002233},
\begin{align}
 \dt^{j+1} b^\eep(0)     & = -P_{j+1}^\eep(w_0^\eep,\dots, w_{j}^\eep)-\kappa\curlvez    \curlvez \( w_{j}^\eep+P_{j}^\eep(w_0^\eep,\dots, w_{j-1}^\eep)\) \nonumber
 \\  &\quad +\curlvez    (\De_t)^{j}  G^\eep(0)-\curlvez(\De_t)^{j}  \Psi^\eep(0) \nonumber
  \\& =  w_{j+1}^\eep.
\end{align}
It then follows from \eqref{MHDvk21100} with $j$ replaced by $j+1$ and the induction assumption that $\hat w_{j+1}^\eep$ solves  \eqref{MHDvbee00e212} with $j$ replaced by $j+1$, and hence by the uniqueness  one has $\dt^{j+1}\hb^\eep(0)=\hat w_{j+1}^\eep$. The claim is thus proved. Note then that  by  \eqref{MHDvk21100}, one finds that the corresponding $2N$-th order compatibility conditions for \eqref{MHDvk2} are satisfied, $i.e.,$ 
\beq\label{compatibility0je}  
\begin{cases}
\divaez b_0^\epp  =0\text{ in }\Omega_-,\   b_{0,3}^\epp =0\text{ on }\Sigma_-\text{ and}
 \\\text{for } j=0,\dots 2N-1,
 \ 
 \jump{\dt^j b^\epp(0)}\times\ne_0  =0  \text{ on }\Sigma  \text{ and } 
   \\  \kappa \curlvez  \dt^j  b^\epp(0) \times e_3 =  \(-\kappa \curlvez P_{j}^\epp( b^\epp(0),\dots, \dt^{j-1} b^\epp(0))+{ \dt^j G^\epp (0) }\)\times e_3 \text{ on }\Sigma_- .
 \end{cases}
\eeq	

In general, $b_0^\epp$ constructed above does not converge to $b_0$ and $\Psi^\epp$  does not vanish as $\epp\rightarrow0$. To ensure such convergence, additional conditions are required as shown in the following lemma, where
\beq\label{FFdef}
\mathfrak{F}_2^{2N}(G):=  \int_0^T \sum_{j=0}^{2N}\norm{ \dt^j  G }_{{2N}-j}^2,\quad \mathfrak{F}_\infty^{2N}(G):= \sup_{[0,T]} \sum_{j=0}^{2N-1} \norm{ \dt^j  G }_{{2N}-j}^2
\eeq
and
\beq\label{Psidef}
\mathfrak{F}_2^{2N}(\Psi):=  \int_0^T \sum_{j=0}^{2N}\norm{ \dt^j  \Psi }_{{2N}-j+1/2}^2,\quad\mathfrak{F}_\infty^{2N}(\Psi):= \sup_{[0,T]} \sum_{j=0}^{2N} \norm{ \dt^j  \Psi }_{{2N}-j}^2.
\eeq 
\begin{lem}\label{covrem}
Suppose that $\sup_{[0,  T]} \se{2N}(\eta)<\infty$, $\mathfrak{F}_\infty^{2N}(G)+\mathfrak{F}_2^{2N}(G)<\infty$,  $\se{2N}(b,\hb)(0)<\infty$ and  the $2N$-th order compatibility conditions \eqref{compatibility0j} are satisfied. Then as $\epp\rightarrow 0$, $(b_0^\epp,\hb_0^\epp)  \rightarrow  (b_0,\hb_0)$ in the norms of $\se{2N}(b,\hb)(0)$  and $\Psi^\epp \rightarrow 0$  in the norms of $\mathfrak{F}_\infty^{2N}(\Psi)+\mathfrak{F}_2^{2N}(\Psi)$. 
\end{lem}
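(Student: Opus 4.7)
The plan is to use the Hodge-type elliptic estimates from Section \ref{sec_3} together with the $2N$-th order compatibility conditions \eqref{compatibility0j} to control each of the sequences $\{\phi_j^\epp\}$, $\{w_j^\epp\}$, $\{\hat w_j^\epp\}$ and $\{\psi_j^\epp\}$, and then to pass to the limit $\epp\to 0$ in the elliptic systems they satisfy. The structure is an induction on $j=0,1,\ldots,2N-1$, which runs in parallel for these objects: each is defined through Proposition \ref{propel3} (for $w_j^\epp,\hat w_j^\epp$) or Proposition \ref{propel1} (for $\phi_j^\epp$ and $\psi_j^\epp$) from data built out of lower-index quantities already shown to converge.

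First I would show $\phi_j^\epp\to 0$. By \eqref{phiedef} and its construction by harmonic extension, $\phi_j^\epp$ is controlled in $H^{2N-j+1/2}(\Omega_-)$ by its tangential trace $(\kappa\curlvez((\D_t)^j b(0))^\epp_x-\dt^j G^\epp(0))_h$ on $\Sigma_-$. The last line of the compatibility conditions \eqref{compatibility0j} says precisely that this boundary value equals zero in the unmollified limit; combining standard mollifier estimates with the convergence $\eta^\epp\to\eta_0$ in $H^{2N+3/2}$ (which follows from $\sup_{[0,T]}\se{2N}(\eta)<\infty$) and the trace theorem yields $\|\phi_j^\epp\|_{2N-j+1/2}\to 0$.

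Next I would prove by induction on $j$ that $w_j^\epp\to\dt^j b(0)$ in $H^{2N-j+1}(\Omega_-)$ and $\hat w_j^\epp\to\dt^j\hb(0)$ in $H^{2N-j+1}(\Omega_+)$ for $j\le 2N-1$. Comparing the system \eqref{MHDvk21100} with the corresponding limiting problem for $\dt^j b(0),\dt^j\hb(0)$ (which exists by \eqref{MHDvbee00e1}--\eqref{MHDvbee00e2} and the compatibility conditions), the difference satisfies a system of the same shape whose forcing involves only (a) the coordinate perturbation $\varphi^\epp-\varphi_0$, (b) the mollifier errors on $((\D_t)^j b(0))^\epp_x$ and $\dt^j G^\epp(0)$, (c) the term $\curlvez\phi_j^\epp$, which vanishes by the previous step, and (d) the previously controlled lower-order differences $w_\ell^\epp-\dt^\ell b(0)$, $\hat w_\ell^\epp-\dt^\ell\hb(0)$ for $\ell<j$. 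All of these tend to zero in the norms required by the elliptic estimate \eqref{v_ell_th03} of Proposition \ref{propel3} (applied with $r=2N-j+1\ge 2$). This gives in particular $b_0^\epp=w_0^\epp\to b_0$ and $\hb_0^\epp=\hat w_0^\epp\to\hb_0$ in $H^{2N+1}$; the top-order case $j=2N$ is then handled directly from \eqref{MHDvbee00e111}--\eqref{MHDvbee00e212} together with the already-established convergences, delivering the $H^0$-norm demanded in $\se{2N}(b,\hb)(0)$.

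Finally, for $\Psi^\epp$, I would apply Proposition \ref{propel1} to the system \eqref{MHDvk2110022} for $\psi_j^\epp$. Using the previous step and the identity \eqref{MHDvbee00e1}, the $\curlvez$-forcing of $\psi_j^\epp$ converges in the limit to $\curlvz(\D_t)^j G(0)-\dt^{j+1}b(0)-P_{j+1}(\ldots)-\kappa\curlvz\curlvz(\dt^j b(0)+P_j(\ldots))=0$; the divergence forcing and the boundary data likewise vanish. Proposition \ref{propel1} thus yields $\|\psi_j^\epp\|_{2N-j+1/2}\to 0$ for $j\le 2N-2$. Since by \eqref{MHDvk211002233} one has $(\De_t)^j\Psi^\epp(0)=\psi_j^\epp$ for these $j$ and $(\De_t)^{2N-1}\Psi^\epp(0)=0$, the time-extension construction from Lemmas \ref{l_sobolev_extension}--\ref{l_sobolev_extension2} produces a $\Psi^\epp$ whose $\mathfrak{F}_\infty^{2N}(\Psi)+\mathfrak{F}_2^{2N}(\Psi)$-norm is bounded by the Sobolev norms of these initial jets, hence tends to zero. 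The principal obstacle in the argument is the bookkeeping in the induction of the third paragraph: one must simultaneously track convergence of $w_j^\epp$, $\hat w_j^\epp$ and $\phi_j^\epp$ in norms matching the regularity $2N-j+1$ in $\se{2N}(b,\hb)(0)$, while carefully estimating the perturbation between the operators $\curlvez,\divaez$ and their limits $\curlvz,\divaz$; the compatibility conditions \eqref{compatibility0j} are exactly what is needed to force $\phi_j^\epp\to 0$ and, through them, $\psi_j^\epp\to 0$, closing the loop.
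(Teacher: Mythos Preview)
Your proposal is correct and follows essentially the same route as the paper's proof: mollifier convergence for $\eta^\epp$ and $G^\epp$, then the compatibility conditions \eqref{compatibility0j} to force $\phi_j^\epp\to 0$, an induction on $j$ using the elliptic estimates of Propositions \ref{propel3} and \ref{propel1} to get $w_j^\epp\to\dt^j b(0)$, $\hat w_j^\epp\to\dt^j\hb(0)$ and $\psi_j^\epp\to 0$, and finally the time-extension Lemmas \ref{l_sobolev_extension}--\ref{l_sobolev_extension2} for $\Psi^\epp$. The only minor slip is in a couple of Sobolev indices: the harmonic extension of the trace data in \eqref{phiedef} (which lies in $H^{2N-j-1/2}(\Sigma_-)$) gives $\phi_j^\epp\in H^{2N-j}(\Omega_-)$ rather than $H^{2N-j+1/2}$, and similarly the forcing in \eqref{MHDvk2110022} (which contains $\curlvez\curlvez w_j^\epp$) limits $\psi_j^\epp$ to $H^{2N-j}$; these weaker norms are exactly what the paper states and are already sufficient for the time-extension step, so the argument closes unchanged.
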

\begin{proof}
First, it follows from the usual properties of mollifiers that  if $\sup_{[0,  T]} \se{2N}(\eta)<\infty$, then $\sup_{[0,  T]} \se{2N}(\eta^\epp)\ls \sup_{[0,  T]} \se{2N}(\eta)<\infty$, $\eta^\epp\rightarrow \eta$  in the norms of $\int_0^T\se{2N}(\eta)$ and $\eta^\epp(0)\rightarrow \eta(0)$  in the norms of $\se{2N}(\eta)(0)$ as $\epp\rightarrow0$. Similarly, if $\mathfrak{F}_\infty^{2N}(G)+\mathfrak{F}_2^{2N}(G)<\infty$, then  $\mathfrak{F}_\infty^{2N}(G^\epp)+\mathfrak{F}_2^{2N}(G^\epp)\ls \mathfrak{F}_\infty^{2N}(G)+\mathfrak{F}_2^{2N}(G)<\infty$, $G^\epp\rightarrow G$  in the norms of $\mathfrak{F}_2^{2N}(G)$ and $\dt^jG^\epp(0)\rightarrow \dt^j G(0)$ in $H^{2N-j}(\Omega_-),\ j=0,\dots,2N-1$, as $\epp\rightarrow0$. 

Now suppose that $\se{2N}(b,\hb)(0)<\infty$ and \eqref{compatibility0j} holds and recall \eqref{commmo}. Then by \eqref{phiedef} and the last line in \eqref{compatibility0j}, according to  the trace theory and the estimates of the harmonic extension similarly as Lemma \ref{p_poisson}, one has that $\phi^\epp_j\rightarrow 0$ in $H^{2N-j}(\Omega_-),\ j=0,\dots,2N-1$,  as $\epp\rightarrow 0$. Hence, by Propositions \ref{propel3} and \ref{propel1}, it is then routine to check from \eqref{MHDvk21100} for $j=0,\dots, 2N-1$, \eqref{MHDvbee00e111} with $j=2N-1$,    \eqref{MHDvbee00e212} with $j=2N$ and \eqref{MHDvk2110022} for $j=0,\dots, 2N-2$ that as $\epp\rightarrow 0$,  $(b_0^\epp,\hb_0^\epp)  \rightarrow  (b_0,\hb_0)$ in the norms of $\se{2N}(b,\hb)(0)$ and $\psi^\epp_j\rightarrow 0$ in $H^{2N-j}(\Omega_-)$, $j=0,\dots,2N-2$. Finally, according to \eqref{MHDvk211002233} and the estimates of the time extension similarly as Lemmas \ref{l_sobolev_extension}--\ref{l_sobolev_extension2}, one has that $\Psi^\epp \rightarrow 0$  in the norm of $\mathfrak{F}_\infty^{2N}(\Psi)+\mathfrak{F}_2^{2N}(\Psi)$ as $\epp\rightarrow 0$.
\end{proof}

We now establish the well-posedness of the regularized problem  \eqref{MHDvk2}.  Recall  the $L^2$ anisotropic space-time Sobolev spaces  
\begin{align} \label{parabolicspace}
H^{r,r/2}((0,T)\times\Omega)=L^2(0,T; H^r(\Omega))\cap H^{r/2}(0,T; L^2(\Omega)), \ r\ge 0,
\end{align} 
etc., see  Lions and Magenes \cite{LM}. Define
\beq
\mathfrak{K}^{n}(b^\epp ,\hb^\epp   ):=\norm{b^\epp }_{H^{2n+\ell,n+\ell/2}((0,T)\times\Omega_-)} + \norm{\hb^\epp }_{H^{2n+\ell,n+\ell/2}((0,T)\times\Omega_+)}.
\eeq
\begin{lem}\label{lemma2}
Let $N\ge 4$ be an integer and $1/2<\ell<1$ or $1<\ell<3/2$.
 Assume that the smooth initial data $b_0^\epp $ satisfies the $2N$-th order compatibility conditions \eqref{compatibility0je}. There exists a universal constant $\delta_2>0$ such that if $\sup_{[0,  T]}\se{2N}(\eta)\le \delta_2 $  for any $0<T\le 1$, then  for any $\epp>0$  there exists a  unique strong solution $(b^\eep,\hb^\eep)$ to  \eqref{MHDvk2}  on $[0,  T]$ satisfying
\beq\label{kkee}
\mathfrak{K}^{2N}(b^\eep,\hb^\eep ) <\infty.
\eeq
\end{lem}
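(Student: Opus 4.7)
The plan is to adapt the scheme of Padula--Solonnikov \cite{PS} to our anisotropic Sobolev framework, in three steps: (i) solve the flat-interface version of \eqref{MHDvk2} obtained by setting $\eta^\epp\equiv 0$; (ii) recover the full $\eta^\epp$-dependent problem by a perturbation/contraction argument relying on the smallness of $\eta^\epp$ guaranteed by $\sup_{[0,T]}\se{2N}(\eta)\le \delta_2$; and (iii) bootstrap to $\mathfrak{K}^{2N}$-regularity using the high-order compatibility conditions \eqref{compatibility0je} that have been built into $b_0^\epp$ through the corrector construction \eqref{MHDvk21100}--\eqref{MHDvk211002233}.

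For step (i), one eliminates $\hb^\epp$ in the flat case via the explicit Hodge-type problem \eqref{MHDv"} (with $\varphi=x_3$), converting the jump condition $\jump{b^\epp}=0$ on $\Sigma$ into a nonlocal Steklov--Poincar\'e-type boundary condition of the form $b^\epp\times e_3=\mathcal{B}^t(b^\epp\cdot e_3)\times e_3$. Following Ladyzhenskaya--Solonnikov \cite{LS1,LS}, weak existence for the resulting parabolic system in $\Omega_-$ (with a pseudodifferential boundary operator of order one on $\Sigma$ and the tangential condition of \eqref{MHDvk2} on $\Sigma_-$) is obtained by a Galerkin approximation in a basis of smooth divergence-free fields compatible with the tangential and nonlocal boundary conditions; the uniform bounds come from the flat-geometry analogue of the energy identity \eqref{en_iden12}. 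Parabolic smoothing upgrades the weak solution to $H^{2,1}$, and an induction on $j$ applied to the systems satisfied by $\dt^j b^\epp$ (each of which inherits the same linear structure with forcings determined by $\dt^j G^\epp$, $\dt^j\Psi^\epp$ and lower-order time-derivatives of $b^\epp$) yields the anisotropic regularity $H^{2n+\ell,n+\ell/2}$ whenever the data satisfy compatibility to the corresponding order; this is the standard Lopatinskii--Shapiro parabolic theory of Lions--Magenes \cite{LM}, applicable here because $\mathcal{B}^t$ is pseudodifferential of order one, so the parabolic symbol $\dt-\kappa\Delta$ dominates the boundary symbol.

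For step (ii), write \eqref{MHDvk2} in the schematic form $\mathcal{L}_0(b^\epp,\hb^\epp)=\mathcal{R}_{\eta^\epp}(b^\epp,\hb^\epp)+\mathcal{G}^\epp$, where $\mathcal{L}_0$ is the flat operator solved in step (i) and $\mathcal{R}_{\eta^\epp}$ collects the commutators $\De_t-\dt$, $\curlve-\curl$, $\divae-\Div$, together with the geometric corrections to the unit normals on $\Sigma$ and $\Sigma_\pm$. Each term of $\mathcal{R}_{\eta^\epp}$ carries at least one factor of $\bar\eta^\epp$ or its derivative, so that by the product estimates of Lemma \ref{sobolev} and the smoothness of $\eta^\epp$ in $(t,x)$, the operator norm of $\mathcal{R}_{\eta^\epp}$ on $\mathfrak{K}^{2N}$ is bounded by $C\sqrt{\sup_{[0,T]}\se{2N}(\eta)}\le C\sqrt{\delta_2}$. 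A Banach fixed-point (or successive-approximation) argument then yields the unique solution to \eqref{MHDvk2} with the claimed regularity once $\delta_2$ is small enough. The compatibility conditions \eqref{compatibility0je} on $b_0^\epp$ are exactly what the Lions--Magenes anisotropic theory requires at $t=0$, so that no regularity loss occurs there; this was the whole purpose of introducing $\Psi^\epp$ and the correctors $\phi_j^\epp$, $\psi_j^\epp$ in the construction of $b_0^\epp$.

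The main obstacle will be verifying the Lopatinskii--Shapiro covering condition for the mixed boundary system on $\Sigma\cup\Sigma_-$, where the nonlocal operator $\mathcal{B}^t$ couples the tangential and normal components of $b^\epp$ on $\Sigma$ through the vacuum field $\hb^\epp$. This is settled in \cite{PS,LS} for the analogous flat system by a direct symbol computation; in our setting $\mathcal{B}^t$ remains pseudodifferential of order one and, together with the tangential condition on $\Sigma_-$, is dominated by the parabolic principal symbol of $\dt-\kappa\Delta$, so the covering condition holds. A secondary, bookkeeping difficulty is that the forcing $\Psi^\epp$ and all boundary data constructed in Section \ref{sec71} must sit in the anisotropic function spaces required by \cite{LM}; this follows from the smoothness of $\Psi^\epp$ in $(t,x)$ and from the vanishing \eqref{psibd} on $\Sigma_-$, which was arranged precisely to make the admissibility automatic.
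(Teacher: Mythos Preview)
Your step (ii) contains a genuine gap. You claim that the remainder $\mathcal{R}_{\eta^\epp}$ has operator norm on $\mathfrak{K}^{2N}$ bounded by $C\sqrt{\se{2N}(\eta)}\le C\sqrt{\delta_2}$, but this is false. The anisotropic norm $\mathfrak{K}^{2N}$ lives in $H^{4N+\ell,2N+\ell/2}$, and the bilinear estimate there (cf.\ \eqref{ssbb}) is of tame type: it produces a cross term in which the \emph{high} anisotropic norm of $\eta^\epp$ multiplies a \emph{low} norm of $(b^\epp,\hb^\epp)$. That high norm of $\eta^\epp$ is finite for fixed $\epp$ (since $\eta^\epp$ is mollified) but is \emph{not} controlled by $\se{2N}(\eta)$; it is some $C_\epp$ that blows up as $\epp\to 0$. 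So a one-shot contraction on $\mathfrak{K}^{2N}$ with constant $\sqrt{\delta_2}$ cannot be obtained, and your fixed-point argument does not close as stated.

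The paper circumvents exactly this obstruction. After passing to the Piola-type unknown $\bb^\epp=J^\epp(\jj^\epp)^{-1}b^\epp$ (which makes the divergence constraint flat and keeps the boundary conditions unchanged), it writes the linear solution operator estimate \eqref{mmm1nn} at two regularity levels, $n=3$ and $n=2N$. The tame product bound gives
\[
\mathfrak{K}^{3}(\mathcal{Q})\ls C_\epp+\se{2N}(\eta)\,\mathfrak{K}^{3}(\bb^\epp,\hhbb^\epp),\qquad
\mathfrak{K}^{2N}(\mathcal{Q})\ls C_\epp+\se{2N}(\eta)\,\mathfrak{K}^{2N}(\bb^\epp,\hhbb^\epp)+C_\epp\,\mathfrak{K}^{3}(\bb^\epp,\hhbb^\epp),
\]
so one first closes an iteration at the $\mathfrak{K}^{3}$ level (here only $\delta_2$-smallness is needed) to get a uniform bound $M_1$, and then at the $\mathfrak{K}^{2N}$ level the dangerous term $C_\epp\,\mathfrak{K}^{3}\le C_\epp M_1$ is absorbed into the data, leaving only the genuinely small coefficient $\delta_2$ in front of $\mathfrak{K}^{2N}$. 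This two-tier (tame) iteration is the missing ingredient in your proposal; without it you cannot get the $\mathfrak{K}^{2N}$ bound from smallness of $\se{2N}(\eta)$ alone.

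A secondary point: the paper does not invoke a Lopatinskii--Shapiro verification for the nonlocal boundary operator. Instead it first removes the inhomogeneities $\mathcal{Q}^{2,\epp},\mathcal{Q}^{3,\epp}$ by the two-phase elliptic solver of Proposition \ref{propel3}, reducing to the homogeneous-boundary problem \eqref{MHDvk4123} (solvable by Galerkin as in \cite{LS1,LS,PS}), and then reads off the anisotropic regularity from Theorem~4 of \cite{PS}. Your route via a Steklov--Poincar\'e reformulation is plausible but would require a separate argument to justify the full $H^{2n+\ell,n+\ell/2}$ regularity with that nonlocal boundary condition.
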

\begin{proof} 
Note that a similar problem as \eqref{MHDvk2} was solved in Padula and Solonnikov \cite{PS}, that is, the problem when an isolated plasma surrounded by a vacuum which are bounded from the outside by a perfectly conducting wall. But the main scheme in \cite{PS} can be modified slightly to the  case here. We utilize the  results  of \cite{PS} and repeat some main steps, but refer to \cite{PS} for the full details. 

As in Beale \cite{B2} and Padula and Solonnikov \cite{PS}, one may introduce the following change of unknowns:
\beq\label{btr}
\bb^\eep=J^\eep(\jj^\eep)^{-1} b^\eep \text{ in }\Omega_-\text{ and }\hhbb^\eep =J^\eep (\jj^\eep)^{-1} \hb^\eep \text{ in }\Omega_+,
\eeq
where $\jj^\eep=\nabla\Phi^\eep$ with $\Phi^\eep=\Phi(\eta^\eep)$ in \eqref{diff_def} and $J^\eep$ is its determinant.  The advantages of introducing $\bb^\eep$ and $\hhbb^\eep$ are that $\diverge \bb^\eep=0$ in $\Omega_-$, $\diverge \hhbb^\eep=0$ in $\Omega_+$ and that it also keeps the boundary conditions of $\bb^\eep $ and $\hhbb^\eep $ same as those of $b$ and $\hb$.   By \eqref{btr}, one may rewrite \eqref{MHDvk2} with  $\eta^\epp $ small  in terms of $(\bb^\eep ,\hhbb^\eep )$ in the following perturbed form:
\beq\label{MHDvk4}
\begin{cases}
\dt \bb^\eep   +\kappa\curl \curl \bb^\eep     =\mathcal{Q}^{1,\epp}    & \text{in } \Omega_-
\\ \diverge \bb^\eep  = 0  &\text{in }\Omega_-
\\ \curl\hhbb^\eep  =\curl  {\mathcal{Q}}^{2,\epp},\quad\diverge  \hhbb^\eep =0 &\text{in }\Omega_+
\\ \jump{\bb^\eep }=0  & \text{on } \Sigma
\\ \bb^\eep _3=0 ,\quad \kappa  \curl \bb^\eep  \times e_3 =\mathcal{Q}^{3,\epp}\times e_3 & \text{on } \Sigma_{-}
\\  \hhbb^\eep \times e_3=0  & \text{on } \Sigma_{+}
\\ \bb^\eep \mid_{t=0}= \bb^\eep _{0}\equiv J_0^\eep (\jj_0^\eep)^{-1} b_0^\epp,
\end{cases}
\eeq
where
\begin{align}\label{q1def}
\mathcal{Q}^{1,\epp}&=\curl\( (\jj^\eep)^T({G^\eep}-\Psi^\eep)+\kappa ( \curl \bb^\eep -(\jj^\eep)^T(J^\eep)^{-1}\jj^\eep\curl( (\jj^\eep)^T(J^\epp)^{-1}\jj^\eep  \bb^\eep  ))\) \nonumber
\\&\quad-J^\eep(\jj^\eep)^{-1}\dt((J^\epp)^{-1}\jj^\eep) \bb^\eep + (\jj^\eep)^{-1}\dt\bar{\eta^\eep}\p_3((J^\epp)^{-1}\jj^\epp \bb^\eep ),
\\\label{q2def}
  {\mathcal{Q}}^{2,\epp} &= \(I- (\jj^\eep)^T(J^\epp)^{-1}\jj^\epp\)  \hhbb^\eep   ,
\\\label{q3def}
\mathcal{Q}^{3,\epp}& = (\jj^\eep)^T{G^\epp}+\kappa ( \curl \bb^\eep -(\jj^\eep)^T(J^\epp)^{-1}\jj^\epp\curl( (\jj^\eep)^T(J^\epp)^{-1}\jj^\epp  \bb^\eep  ))
.
\end{align} 
It is straightforward to check that 
\beq\label{Qdenti}
\diverge \mathcal{Q}^{1,\epp} =0\text{  in }\Omega_- , \  {\mathcal{Q}}^{1,\epp}\cdot e_3=\diverge_h (\mathcal{Q}^{3,\epp}\times e_3)_h\text{  on }\Sigma_-\text{  and } {\mathcal{Q}}^{2,\epp}\times e_3=0\text{  on }\Sigma_+,
\eeq
where one has used \eqref{psibd}. The  $n$-th order compatibility conditions  for  \eqref{MHDvk4}    read as
\beq\label{compatibility0jt}  
 \begin{cases}
 \diverge  \bb^\eep _0  =0\text{ in }\Omega_-,\   \bb^\eep _{0,3} =0\text{ on }\Sigma_-, 
 \ 
 \jump{\dt^j \bb^\eep (0)}\times e_3  =0\text{ on } \Sigma,\  j=0,\dots n-1,
   \\   \kappa \curl   \dt^j  \bb^\eep (0) \times e_3 =     { \dt^j \mathcal{Q}^{3,\epp} } \times e_3 \text{ on }\Sigma_- ,\  j=0,\dots,n-1.
 \end{cases}
\eeq

The problem \eqref{MHDvk4} with $\mathcal{Q}^{2,\epp}=0, \mathcal{Q}^{3,\epp}=0$ and  $\mathcal{Q}^{1,\epp} $ given and satisfying \eqref{Qdenti} could be  solved  by employing the Galerkin method as in Ladyzhenskaya and Solonnikov \cite{LS1,LS}.
For the general  $\mathcal{Q}^{2,\epp}\neq 0$ or $\mathcal{Q}^{3,\epp}\neq 0$,  \eqref{MHDvk4}  could be solved as in Padula and Solonnikov \cite{PS} by making use of the full parabolic regularity of the problem,   which works in  the  anisotropic space-time Sobolev spaces; such spaces allow for the control of the resulting forcing terms when one adjusts $\mathcal{Q}^{2,\epp}$ and $\mathcal{Q}^{3,\epp}$ to be zero.
Indeed, by  \eqref{Qdenti}, according to Proposition \ref{propel3} (setting $\eta$=0), one can define $(\tilde\bb^\epp,\tilde\hhbb^\epp)$  as the solution to
 \beq\label{MHDvk411} 
\begin{cases}
\kappa    \curl \curl \tilde\bb^\epp   ={\mathcal{Q}}^{1,\epp} & \text{in } \Omega_-
\\ \diverge \tilde\bb^\epp = 0 & \text{in } \Omega_-
 \\\curl  \tilde\hhbb^\epp= \curl {\mathcal{Q}}^{2,\epp}  ,\quad\diverge \tilde\hhbb^\epp= 0 &\text{in }\Omega_+
\\\jump{\tilde\bb^\epp}=0  & \text{on } \Sigma
\\  \tilde\bb^\epp_3=0,\quad  \kappa  \curlv \tilde\bb^\epp \times e_3={\mathcal{Q}}^{3,\epp}  & \text{on } \Sigma_{-}
\\  \tilde\hhbb^\epp\times e_3=0  & \text{on } \Sigma_{+}.
\end{cases}
\eeq
Similarly as for the Corollary on page 151 of  \cite{PS}, in a higher regularity, one can show that
\begin{align}
 \label{mm22}
\mathfrak{K}^{n}(\tilde \bb^\eep,\tilde\hhbb^\epp)   \ls\mathfrak{K}^{n}(\mathcal{Q}^{1,\epp},\mathcal{Q}^{2,\epp},\mathcal{Q}^{3,\epp}), 
\end{align}
where
\begin{align}
\mathfrak{K}^{n}(\mathcal{Q}^{1,\epp},\mathcal{Q}^{2,\epp},\mathcal{Q}^{3,\epp})&:=\norm{\mathcal{Q}^{1,\epp}}_{H^{2n-2+\ell,n-1+\ell/2}((0,T)\times\Omega_-)} 
+ \norm{\mathcal{Q}^{2,\epp}}_{H^{2n+\ell,n+\ell/2}((0,T)\times\Omega_+)} 
\nonumber
\\&\,\quad+ \norm{\mathcal{Q}^{3,\epp}}_{H^{2n-3/2+\ell,n-3/4+\ell/2}((0,T)\times\Sigma_-)} . 
\end{align}  
Then one finds that $(\bar\bb^\eep, \bar\hhbb^\eep):=( \bb^\eep-\tilde \bb,  \hhbb^\eep-\tilde \bb)$ solves
\beq\label{MHDvk4123}
\begin{cases}
\dt \bar\bb^\eep   +\kappa\curl \curl \bar\bb^\eep    =\bar  {\mathcal{Q}}^{1,\epp}:=- \dt \tilde\bb^\eep      & \text{in } \Omega_-
\\ \diverge \bar\bb^\eep  = 0  &\text{in }\Omega_-
\\ \curl\bar\hhbb^\eep  =0,\quad\diverge  \hhbb^\eep =0 &\text{in }\Omega_+
\\ \jump{\bar\bb^\eep }=0  & \text{on } \Sigma
\\ \bar\bb^\eep _3=0 ,\quad \kappa  \curl \bar\bb^\eep  \times e_3 =0 & \text{on } \Sigma_{-}
\\  \bar\hhbb^\eep \times e_3=0  & \text{on } \Sigma_{+}
\\ \bar\bb^\eep \mid_{t=0}= \bar\bb^\eep _{0}\equiv \bb^\eep _{0}-\tilde\bb^\eep (0).
\end{cases}
\eeq
Note that by \eqref{mm22}, one has 
\begin{align}
 \label{mmm1nn33}
\mathfrak{K}^{n}(\bar  {\mathcal{Q}}^{1,\epp})&:=\norm{\bar  {\mathcal{Q}}^{1,\epp}}_{H^{2n-2+\ell,n-1+\ell/2}((0,T)\times\Omega_-)}\ls \norm{\tilde \bb^\epp }_{H^{2n+\ell,n+\ell/2}((0,T)\times\Omega_-)} \nonumber\\&\ls\mathfrak{K}^{n}(\mathcal{Q}^{1,\epp},\mathcal{Q}^{2,\epp},\mathcal{Q}^{3,\epp})
\end{align}  
and by the trace theory (see Chapter 4 in Lions and Magenes \cite{LM}),
\begin{align}
 \label{mmm1nn3}
  \mathfrak{K}^{n}_0(\bar\bb^\eep )&:= \norm{ \bar\bb^\eep _0}_{H^{2n-1+\ell}(\Omega_-)}\le   \mathfrak{K}^{n}_0( \bb^\eep )+  \mathfrak{K}^{n}_0( \tilde\bb^\eep (0) )
  \ls   \mathfrak{K}^{n}_0( \bb^\eep )+ \norm{\tilde \bb^\epp }_{H^{2n+\ell,n+\ell/2}((0,T)\times\Omega_-)} 
  \nonumber\\&\ls\mathfrak{K}^{n}_0( \bb^\eep )+\mathfrak{K}^{n}(\mathcal{Q}^{1,\epp},\mathcal{Q}^{2,\epp},\mathcal{Q}^{3,\epp}). 
\end{align}

We  may now apply the results of Theorem 4 in   \cite{PS}, with a slight modification, in a higher regularity context as follows. Assume that $\bb^\eep _0$ and $\mathcal{Q}^{1,\epp},\mathcal{Q}^{2,\epp}, \mathcal{Q}^{3,\epp}$ are given such that  $\mathfrak{K}_0^{n}(\bb^\eep )<\infty$, $\mathfrak{K}^{n}(\mathcal{Q}^{1,\epp},\mathcal{Q}^{2,\epp},\mathcal{Q}^{3,\epp})<\infty$ and the $n$-th compatibility conditions \eqref{compatibility0jt}   are satisfied. Then there exists a unique strong solution $(\bar\bb^\eep, \bar\hhbb^\eep) $ to \eqref{MHDvk4123}, and hence $(\bb^\eep ,\hhbb^\eep  )$ solves \eqref{MHDvk4} on $[0,T]$ satisfying, by \eqref{mm22}, \eqref{mmm1nn33} and \eqref{mmm1nn3},
\begin{align}
 \label{mmm1nn}
\mathfrak{K}^{n}(\bb^\eep ,\hhbb^\eep  )
  &\ls \mathfrak{K}^{n}(\bar\bb^\eep ,\bar\hhbb^\eep  )+\mathfrak{K}^{n}(\tilde \bb^\eep,\tilde\hhbb^\epp)\ls \mathfrak{K}^{n}_0(\bar\bb^\eep )+ \mathfrak{K}^{n}(\bar  {\mathcal{Q}}^{1,\epp})+\mathfrak{K}^{n}(\mathcal{Q}^{1,\epp},\mathcal{Q}^{2,\epp},\mathcal{Q}^{3,\epp})\nonumber
  \\ &  \ls \mathfrak{K}^{n}_0(\bb^\eep )+\mathfrak{K}^{n}(\mathcal{Q}^{1,\epp},\mathcal{Q}^{2,\epp},\mathcal{Q}^{3,\epp}). 
\end{align}  
Indeed, the case that when  $n=1$, $1/2<\ell<1$ and the first order compatibility conditions ($i.e.$, \eqref{compatibility0jt} with   $n=1$) are satisfied was proved in \cite{PS}; the restriction   $\ell<1$ can be relaxed to include the case of $1<\ell<3/2$,  see the last paragraph on page 578 of Solonnikov \cite{So2}. The restriction of $\ell$,  $1/2<\ell<3/2$ with $\ell\neq 1$, is required so that the trace operator of $H^{2+\ell,1+\ell/2}((0,T)\times\Omega_-)$ onto the set of the initial data in \eqref{MHDvk4123} satisfying the first order compatibility conditions has a bounded right inverse, see  Lemma 2.1 in Beale \cite{B1} or Chapter 4 in Lions and Magenes \cite{LM}.  This then allows one to adjust the initial data to be zero, see Theorem 4 in \cite{PS}. The general cases for $n\ge 1$ follow  by an induction argument under the assumption \eqref{compatibility0jt}.

We now construct solutions to  \eqref{MHDvk4} with $\mathcal{Q}^{1,\epp}=\mathcal{Q}^{1,\epp}(\bb^\eep ,\eta^\eep, G^\eep,\Psi^\eep)$, $\mathcal{Q}^{2,\epp}=\mathcal{Q}^{2,\epp}(\hhbb^\eep ,\eta^\eep)$ and  $\mathcal{Q}^{3,\epp}=\mathcal{Q}^{3,\epp}(\bb^\eep ,\eta^\eep,G^\eep)$ defined by \eqref{q1def}--\eqref{q3def}, respectively.  We will use an iteration argument by making use of  the smallness of $\se{2N}(\eta^\eep)$;  it is crucial for our later use to not assume the higher order norms  of $\eta^\eep$, say, $\norm{\eta^\eep}_{H^{4N-1/2+\ell,2N-1/4+\ell/2}((0,T)\times\Sigma)} $, to be small. Our key point here is to apply \eqref{mmm1nn} in two levels of regularity, $i.e.$, $n=3$ and $2N$, respectively. One may use the following well-known fact (see Lions and Magenes \cite{LM}) that for $l>(d+2)/2$ with $d$ the spatial dimension, 
\beq\label{ssbb}
\norm{fg}_{H^{r,r/2}}\ls \norm{f }_{H^{l,l/2}}\norm{g}_{H^{r,r/2}}+\norm{g }_{H^{l,l/2}}\norm{f}_{H^{r,r/2}}.
\eeq
Recall that $ \eta^\eep,G^\eep,\Psi^\epp$ and $\bb_0^\epp$ are smooth. By \eqref{ssbb}, it is direct to check that
\begin{align}\label{mmm11} 
\mathfrak{K}^{3}(\mathcal{Q}^{1,\epp},\mathcal{Q}^{2,\epp},\mathcal{Q}^{3,\epp})
  \ls C_\epp+ \sup_{[0,T]}\se{2N}(\eta)\mathfrak{K}^{3}(\bb^\eep ,\hhbb^\eep  )
\end{align} 
and 
\begin{align}\label{mmm12} 
\mathfrak{K}^{2N}(\mathcal{Q}^{1,\epp},\mathcal{Q}^{2,\epp},\mathcal{Q}^{3,\epp})
  \ls C_\epp+ \sup_{[0,T]}\se{2N}(\eta)\mathfrak{K}^{2N}(\bb^\eep ,\hhbb^\eep  )+  C_{\epp}\mathfrak{K}^{3}(\bb^\eep ,\hhbb^\eep  ),
\end{align}  
where $C_\epp$ is a positive constant depending on $ \eta^\epp,G^\epp,\Psi^\epp$ and $\bb_0^\epp $. The solution to \eqref{MHDvk4} is obtained  as the limit of a sequence of approximate solutions to be constructed below. 
We first extend the initial data $(\dt^j \bb^\eep (0),\dt^j\hhbb^\eep (0))$ to a time-dependent function $(\bb^{\epp,0},\hhbb^{\epp,0}) $ such that $(\dt^j \bb^{\epp,0}(0),\dt^j \hhbb^{\epp,0}(0)) =( \dt^j \bb^\eep(0),\dt^j \hhbb^\eep (0)),\ j= 0,\dots,2N-1$, see for instance  Lemmas \ref{l_sobolev_extension}--\ref{l_sobolev_extension2}. Next, we claim that  there exist two constants   $M_2> M_1>0$, independent of $m$,  such that for $m\ge 0$,  if $(\bb^{\epp,m},\hhbb^{\epp,m})$ satisfies 
\beq
(\dt^j \bb^{\epp,m}(0),\dt^j \hhbb^{\epp,m}(0)) =( \dt^j \bb^\eep(0),\dt^j \hhbb^\eep (0)),\ j= 0,\dots,2N-1
\eeq
and 
\beq\label{claimmbound}
\mathfrak{K}^{3}(\bb^{\epp,m},\hhbb^{\epp,m})\le M_1\text{ and }\mathfrak{K}^{2N}(\bb^{\epp,m},\hhbb^{\epp,m})\le M_2 ,
\eeq
then there exists a unique solution $(\bb^{\epp,m+1},\hhbb^{\epp,m+1})$ to \eqref{MHDvk4} with $\mathcal{Q}^{1,\epp}=\mathcal{Q}^{1,\epp}(\bb^{\epp,m},\eta^\epp, G^\epp,\Psi^\epp)$, $\mathcal{Q}^{2,\epp}=\mathcal{Q}^{2,\epp}(\hhbb^{\epp,m},\eta^\epp)$ and  $\mathcal{Q}^{3,\epp}=\mathcal{Q}^{3,\epp}(\bb^{\epp,m},\eta^\epp,G^\epp)$ satisfying
\beq\label{claimmboundm1}
\mathfrak{K}^{3}(\bb^{\epp,m+1},\hhbb^{\epp,m+1})\le M_1\text{ and }\mathfrak{K}^{2N}(\bb^{\epp,m+1},\hhbb^{\epp,m+1})\le M_2.
\eeq
To prove the claim, note first that  \eqref{mmm11}--\eqref{claimmbound}  imply  $ \mathfrak{K}^{2N}(\mathcal{Q}^{1,\epp},\mathcal{Q}^{2,\epp},\mathcal{Q}^{3,\epp})<\infty$  and that the corresponding $2N$-th order compatibility conditions are satisfied.  Hence, one has the existence of  $(\bb^{\epp,m+1},\hhbb^{\epp,m+1})$. Moreover,  by \eqref{mmm1nn} with $n=3$, \eqref{mmm11} and the first assumption in \eqref{claimmbound}, one obtains
\begin{align}\label{clm+11}
\mathfrak{K}^{3}(\bb^{\epp,m+1},\hhbb^{\epp,m+1}) \le C_2 \(C_\epp+ \delta_2M_1\).
\end{align} 
So if $\delta_2\le 1/(2C_2)$ and taking 
$
M_1= 2C_2 C_\epp,
$
 then one has
 \begin{align}
\mathfrak{K}^{3}(\bb^{\epp,m+1},\hhbb^{\epp,m+1}) \le  M_1/2  +M_1/2 =  M_1.
\end{align}
On the other hand, by \eqref{mmm1nn} with $n=2N$, \eqref{mmm12}  and the second assumption in \eqref{claimmbound}, one has
\begin{align}
\mathfrak{K}^{2N}(\bb^{\epp,m+1},\hhbb^{\epp,m+1}) \le C_2\(  C_\epp+\delta_2M_2+ C_\epp
M_1  \) .
\end{align}
Hence, taking 
$
M_2=2 C_2C_\epp\(  1 +  
M_1\) ,
$
one then gets
 \begin{align}
\mathfrak{K}^{2N}(\bb^{\epp,m+1},\hhbb^{\epp,m+1}) \le M_2/2  +M_2/2 =  M_2.
\end{align}
Thus the claim is proved. Note that if one has taken $M_1\ge \mathfrak{K}^{2N}(\bb^{\epp,0},\hhbb^{\epp,0}) $, then 
  \eqref{claimmbound} holds for $m=0$. Consequently, one can then iterate from $m=0$ to construct the sequence of approximate solutions $\{(\bb^{\epp,m},\hhbb^{\epp,m})\}_{m=1}^\infty$.
  
   The uniform estimates in \eqref{claimmbound} imply that as $m\rightarrow \infty$, up to a subsequence, the sequence $(\bb^{\epp,m},\hhbb^{\epp,m})$ converges to a limit $(\bb^\epp, \hhbb^\epp )$ in
the weak or weak-$\ast$ sense of the norms in   $\mathfrak{K}^{2N}$. Moreover, according to the weak lower semicontinuity, one has
 \beq\label{bbesq}
\mathfrak{K}^{2N}(\bb^\epp,\hhbb^\epp)\le M_2.
\eeq

 Now we prove the contraction of the approximate sequence $\{(\bb^{\epp,m},\hhbb^{\epp,m})\}_{m=0}^\infty$. For $m\ge 1$,   set $\BB^{\epp,m}=\bb^{\epp,m}-\bb^{m-1}$ and $\hat \BB^{\epp,m}=\hhbb^{\epp,m}-\hhbb^{m-1}$. Then $(\BB^{\epp,m+1},\hat\BB^{\epp,m+1})$ solves \eqref{MHDvk4} with $\mathcal{Q}^{1,\epp}=\mathcal{Q}^{1,\epp}(\BB^{\epp,m},\eta^\epp, 0,0)$, $\mathcal{Q}^{2,\epp}=\mathcal{Q}^{2,\epp}(\hat\BB^{\epp,m},\eta^\epp)$,  $\mathcal{Q}^{3,\epp}=\mathcal{Q}^{3,\epp}(\BB^{\epp,m},\eta^\epp,0)$ and $\BB^{\epp,m+1}(0)=0$. Hence, in the same way as for \eqref{clm+11}, one has 
\begin{align}
 \mathfrak{K}^{3}(\BB^{\epp,m+1},\hat\BB^{\epp,m+1})\le C_2\delta_2\mathfrak{K}^{3}(\BB^{m},\hat\BB^{m})\le \hal \mathfrak{K}^{3}(\BB^{\epp,m},\hat\BB^{\epp,m}).
\end{align}
This implies that the sequence $\{(\bb^{\epp,m},\hhbb^{\epp,m})\}_{m=0}^\infty$ is contractive in the norm $ { \mathfrak{K}^{3}}$ and then converges to the limit $(\bb^\epp,\hhbb^\epp)$, strongly in the norm  $\mathfrak{K}^{3}$, which is a strong solution to the original  problem \eqref{MHDvk4} on $[0,T]$ satisfying \eqref{bbesq}.
The uniqueness of solutions to \eqref{MHDvk4} satisfying \eqref{bbesq} can be obtained by a similar argument as for the contraction.

Note that with the $(\bb^\epp,\hhbb^\epp)$  in hand,   $(b^\epp,\hb^\epp)=(J^\epp)^{-1} \jj^\epp(\bb^\epp,\hhbb^\epp)$ is then the unique solution to \eqref{MHDvk2} on $[0,T]$ satisfying \eqref{kkee}, which follows from \eqref{bbesq}. 
\end{proof}

\begin{rem}\label{reinini}
Since $\eta^\epp$ and $G^\epp$ are smooth, one may employ a standard parabolic regularization argument (see for instance \cite{B2}) to show that  the solution $(b^\eep,\hb^\eep)$ to  \eqref{MHDvk2} constructed in Lemma  \ref{lemma2}  is indeed smooth  for any positive time $t>0$.
\end{rem}

Now we shall derive the uniform estimates of the approximate solutions, independent of the smoothing parameter $\epp>0$, to take the limit as $\epp\rightarrow 0$.
Recall   $\sen(b, \hb )$, and define 
\begin{align}\label{high_diss22}
\sdn(b, \hb ):=& \sum_{j=0}^{2N}\norm{ \dt^j {b}}_{2N-j+1 }^2+ \sum_{j=0}^{2N}\norm{ \dt^j {\hb}}_{2N-j+1 }^2.
\end{align} 
\begin{prop}\label{prop2}
Let $N\ge 4$ be an integer.  Assume that for $0<T\le 1$,  $\sup_{[0,T]}\se{2N}(\eta)<\infty$, $\mathfrak{F}_\infty^{2N}(G)+\mathfrak{F}_2^{2N}(G)<\infty$ and $b_0\in H^{2N+1}(\Omega_-)$ are given such that $\sen(b,\hb)(0)<\infty$ and the $2N$-th order compatibility conditions \eqref{compatibility0j} are satisfied. There exists a universal constant $\delta_3>0$ such that if 
$
\sup_{[0,T]}\se{2N}(\eta)\le \delta_3,
$
then there exists a  unique solution $(b, \hb ) $ to  \eqref{MHDvk01}  on $[0,T]$ satisfying
\begin{align}
 \label{en_ev_2N11}
 \sup_{[0,T]}\sen(b, \hb ) +\int_0^T \sdn(b, \hb )     \ls    \sen(b, \hb )(0)+ \mathfrak{F}_\infty^{2N}(G)+ \mathfrak{F}_2^{2N}(G) .
\end{align}  
\end{prop}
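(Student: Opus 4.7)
The plan is to construct the solution $(b,\hb)$ to \eqref{MHDvk01} as the limit, as the smoothing parameter $\epp\to 0$, of the sequence $(b^\epp,\hb^\epp)$ of solutions to the regularized problem \eqref{MHDvk2}. The existence and uniqueness of $(b^\epp,\hb^\epp)$ on a time interval $[0,T]$ for each fixed $\epp>0$ is furnished by Lemma \ref{lemma2}, which applies thanks to the careful construction of $b_0^\epp$ and $\Psi^\epp$ in Section 8.3 that yields the $2N$-th order compatibility conditions \eqref{compatibility0je} for \eqref{MHDvk2}; moreover, by Remark \ref{reinini} these approximate solutions are actually smooth for $t>0$, which legitimates the energy computations below. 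The convergence of the regularized data $b_0^\epp\to b_0$ (in the norms of $\sen(b,\hb)(0)$) and of $\Psi^\epp\to 0$ (in the relevant $\mathfrak{F}$-type norms) is already supplied by Lemma \ref{covrem}.

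The main step is to derive uniform-in-$\epp$ estimates of the form \eqref{en_ev_2N11} on a time interval independent of $\epp$. The strategy mimics, but is considerably simpler than, the a priori analysis of Sections 4--7 applied to the magnetic subsystem alone. Namely, one applies $\pa^\al$ with $|\alpha|\le 2N$ (temporal and horizontal) to \eqref{MHDv22}, and pairs with $\pa^\al b^\epp$ in $\Omega_-$ and with $\pa^\al\hb^\epp$ in $\Omega_+$. The coupling on $\Sigma$ is handled through the \emph{regularized} electric field in vacuum $\hee^\epp$ solving \eqref{MHDv33}; its well-posedness is ensured by the geometric conditions built into the regularized system, and its high-order estimates parallel those of Proposition \ref{eprop}, so $\hee^\epp$ can safely be used as an auxiliary variable without any loss of regularity in $u$ or $\eta$. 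The commutator terms appearing in the tangential identity are of the type already controlled in Lemmas \ref{le_F_2N} and \ref{p_F_eN2} with $u$ replaced by $b^\epp$, and can be absorbed using the smallness of $\sup_{[0,T]}\se{2N}(\eta)$ together with the dissipation $\kappa\int_0^T\!\int_{\Omega_-}|\curlve\pa^\al b^\epp|^2$. Normal regularity of $b^\epp$ and $\hb^\epp$ at each time level is then recovered via the Hodge-type estimates of Proposition \ref{propel3} applied to the elliptic boundary-value problem obtained from \eqref{MHDv22} by freezing time, bootstrapping exactly as in Subsection 6.2 to arrive at the full bound on $\sen(b^\epp,\hb^\epp)+\int_0^T\sdn(b^\epp,\hb^\epp)$ by the right-hand side of \eqref{en_ev_2N11} (independently of $\epp$).

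The hard part will be making sure that these uniform estimates accommodate the additional forcing $\curlve\Psi^\epp$ and the $\epp$-dependent smoothing of $\eta$ and $G$ without deterioration as $\epp\to 0$: this is precisely where Lemma \ref{covrem} enters, ensuring that all the extra $\epp$-correction terms are controlled by $\mathfrak{F}_\infty^{2N}(\Psi^\epp)+\mathfrak{F}_2^{2N}(\Psi^\epp)\to 0$ and by quantities that converge to the corresponding norms of $G,b_0$. Once the uniform bounds are in hand, weak-$\ast$/weak compactness and lower semicontinuity extract a limit $(b,\hb)$ satisfying \eqref{en_ev_2N11}; strong convergence in lower-order norms (obtained by a contraction argument on differences of approximate solutions, as in the proof of Lemma \ref{lemma2}) suffices to pass to the limit in the equations \eqref{MHDv22} and recover \eqref{MHDvk01} in the strong sense. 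Uniqueness within the class \eqref{en_ev_2N11} follows by applying the same energy-dissipation estimate to the difference of two solutions, exploiting the linearity of \eqref{MHDvk01} in $(b,\hb)$ for fixed $\eta$ and $G$.
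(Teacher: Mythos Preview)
Your proposal is correct and follows essentially the same route as the paper: solve the regularized problem \eqref{MHDvk2} via Lemma \ref{lemma2}, introduce the auxiliary regularized vacuum electric field $\hee^\epp$ to close the tangential energy identity, upgrade to full $\sen$ and $\sdn$ norms via the Hodge/elliptic estimates, absorb the correctors through Lemma \ref{covrem}, and pass to the limit by weak compactness and lower semicontinuity. The only minor deviation is that the paper does not perform a separate contraction argument on the family $\{(b^\epp,\hb^\epp)\}_\epp$ to obtain strong convergence; since \eqref{MHDvk01} is linear in $(b,\hb)$ and the coefficients (depending on $\eta^\epp,G^\epp,\Psi^\epp$) already converge strongly by mollifier properties and Lemma \ref{covrem}, weak convergence of $(b^\epp,\hb^\epp)$ alone suffices to identify the limit as a solution.
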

\begin{proof} 
For each $\epp>0$,   let $(\be,\hbe)$ be the   solution  to   the regularized problem  \eqref{MHDvk2} on $[0,  T]$  constructed in Lemma \ref{lemma2}.  As \eqref{MHDvk2} is solved on $[0,T]$,  similarly as Remark \ref{ssrem}, it follows   that there exists  solutions $\hee^\epp$   on $[0,T]$  to the   corresponding regularized electric system in vacuum:
\beq\label{MHDvk22}
\begin{cases}
\curlve\hee^\epp=\De_t\hb^\epp,\quad\divae \hee^\epp=0 &\text{in }\Omega_+
\\
\hee^\epp\times \ne=\(-\kappa\curlve  b^\epp+G^\epp-\Psi^\epp\)\times \ne  &\text{on }\Sigma\\
\hee^\epp_3=0 &\text{on }\Sigma_{+}.
\end{cases}
\eeq
Since $(b^\eep,\hb^\eep)$ is smooth  for $t>0$ (see Remark \ref{reinini}),  $\hee^\epp$ is indeed smooth  for $t>0$.

We now derive the $\epp$-independent estimates of $(\be,\hbe )$ (and $\hee^\epp$). The proof follows  similarly as that of the a priori estimates of \eqref{MHDv}, not involving to the hydrodynamic part, and we provide only the necessary modifications. 

 First,  one may follow the proof of  Proposition \ref{eprop} to deduce
\begin{align}
 \label{en_ev_2Ne3be}
\sen(\hee^\eep)    \ls    \sen(\be,\hbe) + \mathfrak{F}_\infty^{2N}(G^\epp)+ \mathfrak{F}_\infty^{2N}(\Psi^\epp).
\end{align}   
Next,  following the proof of Proposition \ref{evolution_2N}  yields
 \begin{align}\label{en_ev_2Neb}
& \sup_{[0,T]}\seb{2N}(\be,\hbe)(t) +  \int_0^T  \sdb{2N}  (\be)  \nonumber
 \\&\quad \ls       \sen(\be,\hbe)(0)+     \int_0^T \sqrt{ \sen(\eta) } \( \sdn(\be,\hbe )  +\sen(\hee^\eep)\)+ \mathfrak{F}_2^{2N}(G^\epp)+ \mathfrak{F}_2^{2N}(\Psi^\epp).
\end{align} 
Following a variant in the proof of Proposition \ref{v_b_prop} of applying the elliptic theory yields
\begin{align}
 \label{en_ev_2Ne3b}
\sen(\be,\hbe)    \ls     \seb{2N}(\be,\hbe)+  \sen (\eta) \sen(\be,\hbe)+ \mathfrak{F}_\infty^{2N}(G^\epp)+ \mathfrak{F}_\infty^{2N}(\Psi^\epp)
\end{align}  
and
\begin{align}
 \label{en_ev_2Ne4}
\int_0^T \sdn(\be,\hbe)     \ls     \int_0^T \sdb{2N} (\be) +\int_0^t\sen(\eta)  \sdn(\be,\hbe) + \mathfrak{F}_2^{2N}(G^\epp) + \mathfrak{F}_2^{2N}(\Psi^\epp),
\end{align}  
Since $ \delta_3$ is small,  combining \eqref{en_ev_2Ne3be}--\eqref{en_ev_2Ne4} yields
 \begin{align}\label{en_ev_2Neb22}
& \sup_{[0,T]}\sen(\be,\hbe )  +  \int_0^T  \sdn (\be,\hbe )
 \nonumber\\&\quad \ls       \sen(\be,\hbe)(0) + \mathfrak{F}_\infty^{2N}(G^\epp)+\mathfrak{F}_2^{2N}(G^\epp)+ \mathfrak{F}_\infty^{2N}(\Psi^\epp)+  \mathfrak{F}_2^{2N}(\Psi^\epp).
\end{align} 
It follows from Lemma \ref{covrem} and \eqref{en_ev_2Neb22} that
 \begin{align}\label{en_ev_2Neb2233}
\sup_{[0,T]}\sen(\be,\hbe ) +  \int_0^T  \sdn (\be,\hbe )
  \ls \sen(b, \hb )(0)+ \mathfrak{F}_\infty^{2N}(G)+ \mathfrak{F}_2^{2N}(G)+\eep.
\end{align}  
The estimate  \eqref{en_ev_2Neb2233} allows one to conclude that as $\eep\rightarrow 0$, up to extraction of a subsequence,   the sequence $(\be,\hbe )$ converges to a limit $(b, \hb )$ in
the weak or weak-$\ast$ sense of the norms in the left hand side of  \eqref{en_ev_2Neb2233}, which makes it possible to take  the limit in \eqref{MHDvk2}  to find  that $(b, \hb ) $ solves   \eqref{MHDvk01}. The estimate \eqref{en_ev_2N11} follows from the the weak   lower
semicontinuity of the left hand side of \eqref{en_ev_2Neb2233} and  passing to the limit in the right hand side. 
\end{proof}

\subsection{Sequence of approximate solutions}
The  solution  to  the problem \eqref{MHDv} will be obtained by the method of successive approximations.  The sequence of approximate solutions, $\{(u^m,p^m,\eta^m,b^m,\hb^m )\}_{m=0}^\infty$, is constructed as follows. First, one constructs $(u^0,b^0,\eta^0)$ achieving the initial data. Second,  assuming that $(u^m,b^m,\eta^m)$ for $m\ge 0$  achieves the initial data and satisfies suitable estimates to be specified later, we define $(u^{m+1},p^{m+1},\eta^{m+1},b^{m+1},\hb^{m+1})$ as the solution to
\beq\label{mkgeometric}
\begin{cases}
\Dml_t   {u} ^{m+1} +    {u}^{m+1}\cdot \Dnml  {u}^{m+1}+\Dnml   p^{m+1} =   \curlvm   b^{m}\times (\bar B+b^{m})  & \text{in } \Omega_- 
\\ \divaml    u^{m+1}=0  &\text{in }\Omega_- 
\\ \Dm_t  {b}^{m+1}+\kappa\curlvm  \curlvm   {b}^{m+1} =\curlvm (u^{m+1}\times (\bar B+b^m))& \text{in } \Omega_- 
\\ \divam b^{m+1} = 0  &\text{in }\Omega_- 
\\\curlvm\hb^{m+1} =0,\quad\divam \hb^{m+1} =0 &\text{in }\Omega_+ 
\\ \partial_t \eta^{m+1} = u^{m+1}\cdot\nml & \text{on }\Sigma
\\  p^{m+1}=  - \sigma H^{m+1},\quad \jump{b^{m+1}}=0  & \text{on } \Sigma
\\ u_3^{m+1} =0,\quad b_3^{m+1}=0,\quad \kappa \curlvm   {b}^{m+1}\times e_3=(u^{m+1}\times (\bar B+b^m))\times e_3 & \text{on } \Sigma_{-}
\\   b^{m+1}\times e_3=0 & \text{on } \Sigma_{+}
\\ (u^{m+1},b^{m+1},\eta^{m+1})\mid_{t=0}= (u_0,b_{0},\eta_0).
\end{cases}
\eeq  
Here $\varphi^m=\varphi(\eta^m)$, $\n^m=(-\nabla_h\eta^m,1)$ and $H^m=H(\eta^m)$ as in \eqref{HHdef}.

This construction and the corresponding estimates are recorded in the following proposition.

\begin{prop}\label{l_iteration}
There exist universal positive constants $\tilde \delta_1$ and  $T_1$ such that if $\sen (0) \le \tilde \delta_1$ and $0<T\le T_1$,  then   there exists a sequence $\{(u^m,p^m,\eta^m,b^m,\hb^m  )\}_{m=0}^\infty$ that  solves \eqref{mkgeometric} on $[0,T]$ and satisfies the following estimates
\begin{equation}\label{l_it_03}
\sup_{[0,T]}\sen(u^m,p^m,\eta^m,b^m,\hb^m  )+\int_0^{T}  \sdn(b^m,\hb^m )   \ls\sen(0).
\end{equation}
\end{prop}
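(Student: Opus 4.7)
The plan is to construct the sequence $\{(u^m,p^m,\eta^m,b^m,\hb^m)\}_{m=0}^\infty$ inductively on $m$, by splitting \eqref{mkgeometric} into its hydrodynamic part (the Euler equations with surface tension) and its magnetic part (the two-phase magnetic system in the moving domain determined by $\eta^{m+1}$) at each step. The base case amounts to extending the initial data $(u_0,b_0,\eta_0)$ and the constructed time derivatives $\dt^j u(0),\dt^j p(0),\dt^j\eta(0),\dt^j b(0),\dt^j\hb(0)$ from Section \ref{sec71} to time-dependent functions $(u^0,p^0,\eta^0,b^0,\hb^0)$ with the correct initial traces using the time extension tools of Lemmas \ref{l_sobolev_extension}--\ref{l_sobolev_extension2}; this can be done so that $\sup_{[0,T]}\sen(u^0,p^0,\eta^0,b^0,\hb^0)\le C\sen(0)$ for $T\le 1$.

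For the inductive step, assuming $(u^m,p^m,\eta^m,b^m,\hb^m)$ has been constructed and satisfies the bound \eqref{l_it_03}, I would first solve the hydrodynamic part of \eqref{mkgeometric} for $(u^{m+1},p^{m+1},\eta^{m+1})$ by viewing it as a free-surface Euler system \eqref{MHDvk1} with forcing $F=\curlvm b^m\times(\bar B+b^m)$ and geometry governed by $\eta^{m+1}$; Proposition \ref{lemma1} applies and gives, for $T$ small,
\begin{equation*}
\sup_{[0,T]}\sen(u^{m+1},p^{m+1},\eta^{m+1})\ls \sen(0)+\mathfrak{F}^{2N}_0(F)+T\mathfrak{F}^{2N}_2(F)\ls \sen(0)+T\,\sen(b^m,\hb^m).
\end{equation*}
With $\eta^{m+1}$ in hand, I would then solve the magnetic part of \eqref{mkgeometric} for $(b^{m+1},\hb^{m+1})$ via Proposition \ref{prop2}, with $G=u^{m+1}\times(\bar B+b^m)$ on the domain deformed by $\eta^{m+1}$. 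This yields
\begin{equation*}
\sup_{[0,T]}\sen(b^{m+1},\hb^{m+1})+\int_0^T\sdn(b^{m+1},\hb^{m+1})\ls \sen(b,\hb)(0)+\mathfrak{F}^{2N}_\infty(G)+\mathfrak{F}^{2N}_2(G).
\end{equation*}
Combining the two estimates, and using $\mathfrak{F}^{2N}_\infty(G)+\mathfrak{F}^{2N}_2(G)\ls\sup_{[0,T]}\sen(u^{m+1},b^m)$, I then obtain for some universal $C>0$,
\begin{equation*}
\sup_{[0,T]}\sen(u^{m+1},p^{m+1},\eta^{m+1},b^{m+1},\hb^{m+1})+\int_0^T\sdn(b^{m+1},\hb^{m+1})\le C\sen(0)+C T\,M,
\end{equation*}
whenever the left-hand side of \eqref{l_it_03} at step $m$ is bounded by $M$. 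Setting $M:=2C\sen(0)$ and choosing $T_1\le 1$ so small that $CT_1\le 1/2$, then if $\tilde\delta_1$ is taken sufficiently small that the smallness hypotheses of Propositions \ref{lemma1} and \ref{prop2} are met at each stage, the induction closes with the uniform bound \eqref{l_it_03}.

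The main technical obstacle is verifying at each step that the data fed into Proposition \ref{prop2} actually satisfies the $2N$-th order compatibility conditions \eqref{compatibility0j} for the magnetic system; this requires checking that the time derivatives of $(b^{m+1},\hb^{m+1})$ at $t=0$ as produced by the recursion \eqref{MHDvbee00e1}--\eqref{MHDvbee00e2} (with $\varphi=\varphi^m$, $G$ as above, and iteratively with $(u^{m+1},b^m)$ evaluated at $t=0$) coincide with the prescribed data $\dt^j b(0),\dt^j\hb(0)$ derived from $(u_0,b_0,\eta_0)$ in Section \ref{sec71}. The key point is that since $(u^{m+1},p^{m+1},\eta^{m+1})$ satisfies the Euler system \eqref{MHDvk1}, a direct induction on $j$ using the defining relations in Section \ref{sec71} shows that all temporal traces at $t=0$ of $b^{m+1},\hb^{m+1}$ computed from \eqref{mkgeometric} agree with those of the original problem \eqref{MHDv}, since the construction in Section \ref{sec71} is linear in time derivatives and depends only on $(u_0,b_0,\eta_0)$ through the same algebraic formulas. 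Hence the compatibility assumption \eqref{compatibility} transfers exactly to \eqref{compatibility0j} for the magnetic subproblem at step $m+1$, and Proposition \ref{prop2} applies. The hydrodynamic compatibility \eqref{cc1} is automatic from $\Div u_0=0$ and $u_{0,3}=0$ on $\Sigma_-$.
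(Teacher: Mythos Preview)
Your approach is essentially the same as the paper's: extend the initial data to $(u^0,b^0,\eta^0)$ via Lemmas \ref{l_sobolev_extension}--\ref{l_sobolev_extension2}, then inductively apply Proposition \ref{lemma1} to the hydrodynamic part and Proposition \ref{prop2} to the magnetic part, closing a bootstrap by choosing $T$ small. Two minor corrections are in order. First, the magnetic subsystem in \eqref{mkgeometric} is posed on the geometry $\varphi^m$, not $\varphi^{m+1}$ (note the operators $\Dm_t$, $\curlvm$, $\divam$ there); this does not affect the argument since $\eta^m(0)=\eta_0$ and the required smallness $\se{2N}(\eta^m)\le\delta_3$ is part of the inductive hypothesis. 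Second, your bound $T\mathfrak{F}_2^{2N}(F)\lesssim T\,\sen(b^m,\hb^m)$ is not correct: since $F$ contains $\curlvm b^m$, the top-order contribution $\|\dt^{2N}F\|_0$ requires $\|\dt^{2N}b^m\|_1$, which is not in $\sen(b^m)$ but only in $\sdn(b^m)$. The right estimate is $\mathfrak{F}_2^{2N}(F)\lesssim \int_0^T\sdn(b^m)\le M$, whence $T\mathfrak{F}_2^{2N}(F)\lesssim TM$; your final inequality and the bootstrap closure then go through unchanged.
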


\begin{proof}
First, extend the initial data $(\dt^j u(0),\dt^j b(0),\dt^j \eta(0)),\ j=0,\dots,2N-1,$ to the time-dependent functions $(u^0,b^0,\eta^0) $  so that $\(\dt^j u^0(0) ,\dt^j b^0(0),\dt^j \eta^0(0) \)= (\dt^j u(0),\dt^j b(0),\dt^j \eta(0))$. This can be done by applying Lemmas \ref{l_sobolev_extension}--\ref{l_sobolev_extension2}, and one has in particular that
\begin{equation}\label{l_iT_1}
\sup_{[0,\infty]}\sen(u^0,b^0,\eta^0)+\int_0^{\infty}    \sdn(b^0) \le C_1
\sen(0).
\end{equation}

Next,  we claim that there exist   $\gamma_1,\gamma_2>0$ and  $T>0$ such that   if $(u^m,b^m,\eta^m)$ achieves the initial data and satisfies 
\begin{equation}\label{claim0}
\sup_{[0,T]}\se{2N}(u^m,\eta^{m})\le \gamma_1 \sen(0) 
\eeq
and
\beq\label{claim00}
\sup_{[0,T]}\se{2N}(b^m)+\int_0^{T}    \sdn(b^{m})\le  \gamma_2\sen(0) ,
\eeq
then there exists a unique solution   $(u^{m+1},p^{m+1},\eta^{m+1},b^{m+1},\hb^{m+1} )$ to \eqref{mkgeometric} on $[0,T]$ satisfying
\begin{equation} \label{claim123} 
\sup_{[0,T ]}\sen(u^{m+1},p^{m+1}, \eta^{m+1})\le \gamma_1\sen(0) 
\end{equation} 
and
\begin{equation} \label{claim1234} 
\sup_{[0,T ]}\sen( b^{m+1} ,\hb^{m+1} )+\int_0^{T } \sdn(b^{m+1},\hb^{m+1} )   \le \gamma_2\sen(0).
\end{equation} 
To prove the claim, one may first use  $( b^{m},\eta^{m})$ to construct  $(u^{m+1},p^{m+1}, \eta^{m+1})$ as the solution to \eqref{MHDvk1} with 
$
 F  =\curl^{  \varphi^m} b^{m} \times (\bar B+  b^{m}).
$
Recall the notations \eqref{nof1} and \eqref{nof3}. Note that
\beq\label{claim0000}
 \mathfrak{F}_0^{2N}(F)\ls  \(1+\sen(b^m,\eta^m)(0)\)    \sen(b)(0)\ls \sen(0) ,
 \eeq
 and by \eqref{claim0} and \eqref{claim00},
 \beq\label{claim00000}
 \mathfrak{F}_2^{2N}(F)\ls  \int_0^{T} \(1+\se{2N}(\eta^m,b^{m})\)  \sdn(b^{m})\ls \(1+(\gamma_1+\gamma_2) \sen(0)\) \gamma_2 \sen(0).
 \eeq
Hence,
\beq \mathfrak{F}_0^{2N}(F)+   T\mathfrak{F}_2^{2N}(F)  \le C_2\(    1+T\gamma_2 \(1+(\gamma_1+\gamma_2) \sen(0)\) \) \sen(0) .
\eeq
If $\tilde \delta_1\le \min\{1/(\gamma_1+\gamma_2),\delta_1/\(1+C_2(1+2 \gamma_2)\)\}$ with $\delta_1$ given in Proposition \ref{lemma1}, then  
\begin{align}\label{claes}
  \se{2N}(0)+  \mathfrak{F}_0^{2N}(F)+   T\mathfrak{F}_2^{2N}(F)   
  &\le \(  1+C_2\(    1+T\gamma_2 \(1+(\gamma_1+\gamma_2) \sen(0)\) \)  \)\sen(0)\nonumber
  \\& \le  \(1+C_2\(   1  + 2T \gamma_2  \)\)\sen(0)\le \delta_1,\ \forall T\le 1.
\end{align}
Hence, Proposition \ref{lemma1} guarantees the existence of a unique $(u^{m+1},p^{m+1},\eta^{m+1})$ solving \eqref{MHDvk1}  on $[0,T]$ for any $ T\le 1$. Moreover, \eqref{bound222} and \eqref{claes} imply 
\begin{align} \label{en_ev_2N22m22}
 \sup_{[0,T]}\se{2N}(u^{m+1},p^{m+1},\eta^{m+1})
 \le C_3 C_2\(   1  + 2T \gamma_2  \)\sen(0).
\end{align}
If $\gamma_1\ge 2C_3C_2$ and $T\le 1/(2\gamma_2)$, then \eqref{en_ev_2N22m22} yields \eqref{claim123}.

Now one uses $(u^{m+1},b^m, \eta^m)$ to construct  $(b^{m+1},\hb^{m+1} )$  as the solution to \eqref{MHDvk01} with    $\varphi=\varphi^m$ and $G =u^{m+1}\times (\bar B+b^m)$. It follows from \eqref{claim123} and \eqref{claim00} that for $T>0$ in the above,
\begin{align}\label{claes2} 
\mathfrak{F}_\infty^{2N}(G)+\mathfrak{F}_2^{2N}(G)\ls (1+T)  \sup_{[0,T]}\se{2N}( u^{m+1})\(1+\se{2N}(b^m)\)\ls \gamma_1 \sen(0) \(1+\gamma_2 \sen(0)\).
\end{align}
If $\tilde \delta_1\le \delta_3/\gamma_1$ with $\delta_3$ given in Proposition \ref{prop2} and hence by \eqref{claim0}, $\se{2N}( \eta^m)\le\delta_3$,  then Proposition \ref{prop2} guarantees the existence of a unique $(b^{m+1},\hb^{m+1} )$ solving \eqref{MHDvk01} on $[0,T]$. In addition, \eqref{en_ev_2N11} and \eqref{claes2} imply
\begin{align}
 \label{en_ev_2N22m}
 \sup_{[0,T ]}\sen( b^{m+1} ,\hb^{m+1} )+\int_0^{T } \sdn(b^{m+1},\hb^{m+1} )       
 \le C_4\(    \sen(0)+\gamma_1 \sen(0) \(1+\gamma_2 \sen(0)\)\) .
\end{align} 
Hence if  $\tilde \delta_1\le 1/\gamma_2$ and $ \gamma_2\ge C_4(1+2\gamma_1)$, then \eqref{en_ev_2N22m} yields \eqref{claim1234}. 

Consequently,  the claim is proved. Note that  if  $ \gamma_1,\gamma_2\ge C_1$, then \eqref{l_iT_1} implies that \eqref{claim0} and \eqref{claim00}  hold for $m=0$. Hence, one can then iterate from $m=0$ to construct the sequence   $\{(u^m,p^m,\eta^m,b^m,\hb^m  )\}_{m=0}^\infty$ satisfying the conclusions.
\end{proof}

Now we prove the contraction of the sequence $\{(u^m,p^m,\eta^m,b^m,\hb^m )\}_{m=0}^\infty$. For $m\ge 1$, define
\beq 
\begin{split}
&U^m= u^m-u^{m-1},\ Q^m= p^m-p^{m-1},\ \zeta^m= \eta^m-\eta^{m-1},
\\
 & B^m= b^m-b^{m-1},\ \hat B^m= \hb^m-\hb^{m-1}. 
 \end{split}
\eeq
Then it follows from \eqref{mkgeometric} that 
\beq\label{MHDvkm1}
\begin{cases}
\Dm_t   {U}^{m+1}  +    {u}^{m}\cdot \Dnm  {U}^{m+1}+\Dnm   Q^{m+1}   = \F^{1,m} & \text{in } \Omega_- 
\\ \divam   U^{m+1} = \F^{2,m}  &\text{in }\Omega_- 
\\ \Dm_t  {B}^{m+1}  +\kappa\curlvm \curlvm B^{m+1}     = \F^{3,m}  & \text{in } \Omega_- 
\\ \divam  B^{m+1}  = \F^{4,m}    &\text{in }\Omega_- 
\\\curlvm\hat B^{m+1} =\hat{\F}^{3,m} ,\quad\divam \hat B^{m+1} =\hat{\F}^{4,m}  &\text{in }\Omega_+ 
\\ \partial_t \zeta^{m+1}  = U^{m+1}\cdot\nm + \F^{5,m} & \text{on }\Sigma
\\  Q^{m+1}= - \sigma \Delta_h\zeta^{m+1}+\F^{6,m} , \quad  \jump{B^{m+1} }=0   & \text{on } \Sigma
\\ U^{m+1}_3 =0,\quad   B^{m+1}_3=0,\quad  \curlvm  B^{m+1}\times e_3 =\F^{7,m} & \text{on } \Sigma_{-}
\\  \hat B^{m+1}\times e_3=0      & \text{on } \Sigma_{+}
\\ (U^{m+1},B^{m+1},\zeta^{m+1})\mid_{t=0}= (0,{0},0),
\end{cases}
\eeq

where 
\begin{align}
F^{1,m} =&  -(\Dml_t -\Dm_t)  {u}^{m+1} -    \({u}^{m+1}\cdot \Dnml-{u}^{m}\cdot \Dnm\)  {u}^{m+1} -\(\Dnml -\Dnm  \)p^{m+1}\nonumber 
\\&+\curlvm   b^{m}\times (\bar B+b^{m})-\curlvmll  b^{m-1}\times (\bar B+b^{m-1}) ,  
\\ \F^{2,m} =&-\(\divaml-\divam\)    u^{m+1} ,
\\ \F^{3,m}  = &
 -\(\Dm_t-\Dmll_t\)  {b}^{m+1} -\kappa \(\curlvm \curlvm -\curlvmll \curlvmll \)b^{m+1}  \nonumber
 \\&+ \curlvm u^{m+1}\times (\bar B+b^m) -\curlvmll u^{m}\times (\bar B+b^{m-1})  ,
\\ \F^{4,m} = &-\(\divam-\divamll\)  b^{m+1}   ,
\\ \hat{\F}^{3,m} = &-\(\curlvm-\curlvmll\)  \hb^{m+1}   ,
\\ \hat{\F}^{4,m} = &-\(\divam-\divamll\)  \hb^{m+1}   , 
\\ \F^{5,m} =& u^{m+1}\cdot\(\nml-\nm\) ,
\\  \F^{6,m} =  &-\sigma \diverge_h  \(\! \frac{\nabla_h\eta^{m+1}}{\sqrt{1+|\nabla_h\eta^{m+1}|^2}}-\nabla_h\eta^{m+1}\!\)+\sigma \diverge_h  \(\! \frac{\nabla_h\eta^{m}}{\sqrt{1+|\nabla_h\eta^{m}|^2}}-\nabla_h\eta^{m}\!\),  
\\   \F^{7,m}  =&\(\kappa(-   \curlvm   +  \curlvmll)  b^{m}+u^{m+1}\times (\bar B+b^m)\ -u^{m}\times (\bar B+b^{m-1}) \)\times e_3  .  
\end{align} 

The contraction of the sequence in the lower-order energy, say, $\se{N}$, is given as follows.

\begin{prop}\label{l_iteration22}
There exist  universal positive constants $\tilde \delta_2$ $(\le \tilde\delta_1)$ and $T_2$ $(\le T_1)$ such that if $\sen (0) \le \tilde \delta_2$ and $T\in (0, T_2]$, 
then it holds that
\begin{align} \label{l_it_033}
& \sup_{[0,T]}\se{N}(U^{m+1},Q^{m+1},\zeta^{m+1},B^{m+1},\hat B^{m+1} )+\int_0^{T}   \sdn(B^{m+1},\hat B^{m+1} )\nonumber
 \\&\quad\le   \hal\(\sup_{[0,T]}\se{N}(U^m,\zeta^m,B^m)+\int_0^{T}   \sdn(B^{m})\).
\end{align}
\end{prop}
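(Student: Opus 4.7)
The strategy is to derive for the difference system \eqref{MHDvkm1} an analogue of the combined tangential, vorticity, elliptic and pressure/height estimates of Sections \ref{sec_5}--\ref{sec_6}, but at the lower level $\se{N}$. Working at level $N$ rather than $2N$ is crucial: the boundedness \eqref{l_it_03} from Proposition \ref{l_iteration} provides uniform $\sen$ control of every iterate $(u^m,p^m,\eta^m,b^m,\hb^m)$, so we have an $N$-derivative margin to absorb factors depending on the previous iterates via Sobolev embedding. In addition, the zero initial data for $(U^{m+1},B^{m+1},\zeta^{m+1})$ means the whole energy functional of the difference starts at zero, so after integration in time one gets an overall factor of $T$ (up to a power) in front of the difference norm at step $m$. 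Choosing $T$ small then yields the contraction factor $1/2$.

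The first step is to estimate the forcing terms $\F^{i,m}$, $\hat\F^{j,m}$ in the $H^{N-k}$ type norms that appear at the right-hand side of the energy identities. By expanding $\varphi^{m+1}-\varphi^m$, $\Dm_t-\Dmll_t$, $\Dnm-\Dnmll$, $\curlvm-\curlvmll$ and $\divam-\divamll$ in terms of $\zeta^m$ and its derivatives (via the harmonic extension Lemma \ref{p_poisson}), and by writing each bilinear difference as $a^{m+1}b^m-a^m b^{m-1}=U^{m+1}b^m+a^m B^m+\cdots$, all the $\F^{i,m}$ become sums of products where one factor involves $(U^m,B^m,\hat B^m,\zeta^m,Q^m)$ (or their derivatives) and the other factor involves the iterates $(u^m,p^m,\eta^m,b^m,\hb^m)$ or $(u^{m+1},\ldots,\hb^{m+1})$. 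Using the product estimates of Lemma \ref{sobolev} and the trace theory together with the uniform bound \eqref{l_it_03}, each $\F^{i,m}$ is controlled in the relevant norm by $C\sqrt{\sen(0)}\sqrt{\se{N}(U^m,B^m,\hat B^m,\zeta^m)}$, where $C$ is universal; here one uses $\sen(0)\le\tilde\delta_1$ to bound products of iterate norms.

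With these forcing estimates in hand, the second step is to run through the four blocks of the a priori argument for the difference system:
\begin{itemize}
\item[(i)] A tangential energy identity analogous to Proposition \ref{evolution_2N} at level $N$, using the symmetric structure of \eqref{MHDvkm1} together with the auxiliary electric field $\hat{\mathcal{E}}^{m+1}$ solving the analogue of \eqref{MHDv3e} (whose solvability is granted by Proposition \ref{propel1} in exactly the same way as in Remark \ref{ssrem}), gives
\[
\seb{N}(U^{m+1},B^{m+1},\hat B^{m+1},\zeta^{m+1})(t)+\int_0^t\sdb{N}(B^{m+1})\ls\int_0^t\sqrt{\sen(0)}\,\se{N}(U^m,B^m,\hat B^m,\zeta^m).
\]
Here the surface tension cancellation at top order, which was delicate in the $2N$ argument, is unnecessary because we are at level $N$ and all terms are already controllable.
\item[(ii)] The Hodge-type estimates of Propositions \ref{propel2}--\ref{propel3} applied to $B^{m+1}$ and $\hat B^{m+1}$, together with the $\nabb$-dissipation argument of Proposition \ref{evolution_12} adapted to the difference system, upgrade $\sdb{N}$ to the full $\sdn$ of $(B^{m+1},\hat B^{m+1})$ and recover the normal dissipation of $U^{m+1}$.
\item[(iii)] The vorticity-damping structure \eqref{MHDcurlva} applied to $\curlvm U^{m+1}$ (with source $\pa^\al L_h$ now including the extra terms coming from the difference of the two Lorentz forces) combined with the Hodge estimates produces the full $\sen$-control of $U^{m+1}$ and, through the elliptic system for $B^{m+1}$, of $B^{m+1}$ and $\hat B^{m+1}$.
\item[(iv)] The Euler equation and the surface equation in \eqref{MHDvkm1} provide the estimates for $Q^{m+1}$ and $\zeta^{m+1}$ exactly as in Proposition \ref{prop_qh_en}.
\end{itemize}
Summing these four blocks and using the zero initial data $(U^{m+1},B^{m+1},\zeta^{m+1})(0)=(0,0,0)$ (so that $\se{N}(U^{m+1},Q^{m+1},\zeta^{m+1},B^{m+1},\hat B^{m+1})(0)=0$, the corresponding data for $Q^{m+1}$ and $\hat B^{m+1}$ vanishing by the elliptic problems that define them at $t=0$) yields, after a Gronwall step absorbing the trivial $T\,\se{N}$ piece,
\[
\sup_{[0,T]}\se{N}(U^{m+1},Q^{m+1},\zeta^{m+1},B^{m+1},\hat B^{m+1})+\int_0^T\sdn(B^{m+1},\hat B^{m+1})\le C_5\sqrt{T}\sqrt{\sen(0)}\,\mathcal{M}_m,
\]
where $\mathcal{M}_m:=\sup_{[0,T]}\se{N}(U^m,\zeta^m,B^m)+\int_0^T\sdn(B^m)$. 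Choosing $\tilde\delta_2$ and $T_2$ so that $C_5\sqrt{T_2\tilde\delta_2}\le 1/2$ then gives \eqref{l_it_033}.

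The main obstacle is step (i) together with the forcing estimates: one must check that, at the $N$-level, every boundary integral or commutator coming from the discrete-in-iteration structure (in particular the terms where $\varphi^{m+1}\neq\varphi^m$ acts on $b^{m+1}$ or $\hb^{m+1}$, producing $\hat\F^{3,m},\hat\F^{4,m}$ supported in $\Omega_+$) can be paired with the available regularity of $\zeta^m$ via Lemma \ref{p_poisson}. Once the forcing bounds are in place, the rest of the scheme is a faithful but simpler replay of the Sections \ref{sec_5}--\ref{sec_6} machinery. Uniqueness of solutions to \eqref{MHDv} at the local well-posedness stage will follow from the same contraction argument applied directly to two solutions.
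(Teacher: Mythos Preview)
Your overall strategy is viable, but it diverges from the paper's proof in both structure and in the smallness mechanism that produces the contraction.

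The paper does not rerun the full machinery of Sections \ref{sec_5}--\ref{sec_6}. Instead it treats \eqref{MHDvkm1} modularly: the hydrodynamic difference $(U^{m+1},Q^{m+1},\zeta^{m+1})$ is estimated by replaying Proposition \ref{lemma1} (the free-surface Euler block), and the magnetic difference $(B^{m+1},\hat B^{m+1})$ by replaying Proposition \ref{prop2} (the two-phase magnetic block), yielding the two intermediate estimates \eqref{boundmmm1} and \eqref{boundmmm2}. In particular the vorticity-damping structure \eqref{MHDcurlva} and the $\nabb$-dissipation of Proposition \ref{evolution_12} are \emph{not} used here; those are global-decay tools, and for a short-time contraction the plain transport estimate \eqref{curlv_eqe} (as in the proof of Proposition \ref{lemma1}) already produces the needed smallness via integration from zero initial data. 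Your steps (ii)--(iii) are therefore heavier than necessary, and carrying out the damping substitution for the difference system would force you to thread the extra forcing $\F^{3,m}$ through \eqref{curl_1}--\eqref{curl_3}, which is more involved than the direct route.

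There is also an imprecision in your forcing analysis: not every $\F^{i,m}$ factors as (difference at step $m$)$\times$(bounded iterate). For instance $\F^{1,m}$ and $\F^{2,m}$ contain $(\Dml_t-\Dm_t)u^{m+1}$ and $(\divaml-\divam)u^{m+1}$, which depend on $\zeta^{m+1}$, not $\zeta^m$. These terms must be absorbed on the left by smallness of $\tilde\delta_2$, and indeed the paper's estimates \eqref{boundmmm1}--\eqref{boundmmm2} carry $\se{N}$ of both $\zeta^{m+1}$ and $\zeta^m$ (and of $U^{m+1},B^{m+1},\hat B^{m+1}$) on the right before absorption. Correspondingly, the final contraction constant in the paper is $C_1\tilde\delta_2$ in front of the sup term and $C_1(\tilde\delta_2+T)$ in front of the dissipation integral (cf.\ \eqref{ccc1}), so $\tilde\delta_2$ and $T$ are each taken small on their own; your single factor $C_5\sqrt{T\,\sen(0)}$ does not match this splitting and would not emerge from the paper's argument.
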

\begin{proof}
By \eqref{l_it_03},  one has the uniform smallness that 
\beq
\se{2N}(u^m,p^m,\eta^m,b^m,\hb^m )\ls\sen(0)\le \tilde \delta_2.
\eeq
Then following the proof of Proposition \ref{lemma1} with slight modifications, one may conclude that
\begin{align} \label{boundmmm1}
 \sup_{[0,T]}\se{N}(U^{m+1},Q^{m+1},\zeta^{m+1})
& \ls  \tilde \delta_2 \sup_{[0,T]}\se{N}(U^{m+1},Q^{m+1},\zeta^{m+1},\zeta^m,B^m)
\\&\quad +   \sqrt{\sup_{[0,T]}\sen(U^{m+1},\zeta^{m+1})T\int_0^{T}   \mathbb{D}_N(B^{m})} +T\int_0^{T}   \mathbb{D}_N(B^{m}).\nonumber
\end{align} 
On the other hand, modifying the proof of Proposition \ref{prop2} slightly yields
\begin{align} \label{boundmmm2}
&\sup_{[0,T ]}\se{N}(B^{m+1},\hat B^{m+1} ) +\int_0^{T}   \mathbb{D}_N(B^{m+1},\hat B^{m+1} )
\\&\quad\ls \sup_{[0,T]} \se{N}( U^{m+1}) +\tilde \delta_2\sup_{[0,T]} \se{N}(U^{m+1},B^{m+1},\hat B^{m+1},\zeta^m,B^m) +\tilde \delta_2 \int_0^{T}   \mathbb{D}_N(B^{m+1},\hat B^{m+1},B^{m}). \nonumber
\end{align} 
Hence, combining \eqref{boundmmm1} and \eqref{boundmmm2} shows, by the smallness of $\tilde \delta_2$ and Cauchy's inequality,
\begin{align} \label{ccc1}
 &\sup_{[0,T]}\se{N}(U^{m+1},Q^{m+1},\zeta^{m+1},B^{m+1},\hat B^{m+1} )+\int_0^{T}   \mathbb{D}_N(B^{m+1},\hat B^{m+1} ) \nonumber
\\ &\quad\le  C_1 \(\tilde \delta_2 \sup_{[0,T]}\(\se{N}(U^m,\zeta^m,B^m)\)+ (\tilde \delta_2+T)\int_0^{T}   \mathbb{D}_N(B^{m})\).
\end{align}

Consequently, if $\tilde \delta_2\le 1/(4C_1)$ and $T\le 1/(4C_1)$, then \eqref{ccc1} implies in particular \eqref{l_it_033}.
\end{proof}

\subsection{Local well-posedness of \eqref{MHDv}} 
We are now ready to state the local well-posedness result for \eqref{MHDv}.
\begin{thm}\label{main_thm2}
Assume that $\kappa>0$ and $\sigma>0$ and let $N\ge 4$  be an integer. Assume that $u_0\in H^{2N}(\Omega)$, $b_0\in H^{2N+1}(\Omega)$  and $\eta_0\in H^{2N+3/2}(\Sigma )$ are given such that $\se{2N} (0)<\infty$ and that  the  compatibility conditions \eqref{compatibility}  are satisfied.
There exist universal positive constants $\delta_0$ and $T_0$ such that if $\se{2N} (0) \le \delta_0$ and $0<T\le T_0$,  then there exists a  unique solution $(u,p,\eta,b, \hb )$  to  \eqref{MHDv} on $[0,T]$ satisfying
\begin{equation}\label{bound110}
\sup_{[0,T]}\se{2N}+\int_0^{T}  \sdn(b,\hb)\le \tilde C_2\se{2N}(0).
\end{equation}
\end{thm}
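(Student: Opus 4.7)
The plan is to combine the uniform higher-order bounds from Proposition \ref{l_iteration} with the lower-order contraction from Proposition \ref{l_iteration22} to extract a strongly convergent subsequence of $\{(u^m,p^m,\eta^m,b^m,\hb^m)\}$, pass to the limit in the iteration system \eqref{mkgeometric}, and thereby produce a solution to \eqref{MHDv}. The choice of the contraction in a lower norm (here $\se{N}$) is essential because one cannot hope to close a contraction at the full $\se{2N}$ level when the iteration involves the transport/magnetic coupling; this is a standard Cauchy-in-weak-norm / bounded-in-strong-norm strategy.

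First, for $\se{2N}(0)\le \tilde\delta_2$ and $T\le T_2$, Proposition \ref{l_iteration22} yields
\beq\label{prop_cauchy}
\sup_{[0,T]}\se{N}(U^{m+1},Q^{m+1},\zeta^{m+1},B^{m+1},\hat B^{m+1})+\int_0^T\sdn(B^{m+1},\hat B^{m+1})\le 2^{-m}\tilde M
\eeq
for some $\tilde M$ depending on $\se{2N}(0)$ (obtained by iterating the contraction starting from the uniform bound \eqref{l_it_03}). Thus $\{(u^m,p^m,\eta^m,b^m,\hb^m)\}$ is Cauchy in the space associated with $\se{N}$, and so converges strongly in this norm to some limit $(u,p,\eta,b,\hb)$. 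On the other hand, \eqref{l_it_03} gives uniform boundedness in the spaces associated with $\se{2N}$ and $\int_0^T\sdn$, so by weak-$\ast$ compactness one may assume the sequence also converges weakly-$\ast$ in these higher-order spaces to the same limit. Strong convergence in $\se{N}$ (with $N\ge 4$) together with Sobolev embedding is more than enough to pass to the limit in every nonlinear term of \eqref{mkgeometric}: the coefficients $\nabla^{\varphi^m}$, $\curlv[{\varphi^m}]$, $\partial_t^{\varphi^m}$, $H^{m+1}$, $\n^m$ all depend continuously on $\eta^m$ in $C^0$ via the harmonic extension $\mathcal{P}$ (Lemma \ref{p_poisson}), and the quadratic terms such as $u^{m+1}\cdot\Dnml u^{m+1}$ and $\curlvm b^m\times(\bar B+b^m)$ converge in $L^2$ norms. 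One thus obtains a solution $(u,p,\eta,b,\hb)$ to \eqref{MHDv} attaining the initial data, and by weak lower semicontinuity
\beq
\sup_{[0,T]}\se{2N}+\int_0^T\sdn(b,\hb)\le \liminf_{m\to\infty}\left(\sup_{[0,T]}\se{2N}(u^m,\cdots)+\int_0^T\sdn(b^m,\hb^m)\right)\ls \se{2N}(0),
\eeq
which is \eqref{bound110}.

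For uniqueness, suppose $(u^i,p^i,\eta^i,b^i,\hb^i)$, $i=1,2$, are two solutions to \eqref{MHDv} on $[0,T]$ with the same initial data and both satisfying \eqref{bound110}. Setting $U=u^1-u^2$, $Q=p^1-p^2$, $\zeta=\eta^1-\eta^2$, $B=b^1-b^2$, $\hat B=\hb^1-\hb^2$, the difference solves a system formally analogous to \eqref{MHDvkm1} with $U$, $B$, $\zeta$ playing the role of both $U^{m+1}$ and $U^m$. Running the proof of Proposition \ref{l_iteration22} with these identifications gives a self-improving estimate of the type
\beq
\sup_{[0,T]}\se{N}(U,Q,\zeta,B,\hat B)+\int_0^T\sdn(B,\hat B)\le C(T+\tilde\delta_0)\left(\sup_{[0,T]}\se{N}(U,Q,\zeta,B,\hat B)+\int_0^T\sdn(B,\hat B)\right),
\eeq
which for $T$ and $\se{2N}(0)\le \delta_0$ small enough forces $U=B=\hat B=0$, $\zeta=0$, and then $Q=0$ from the elliptic pressure problem.

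The main obstacle, which Propositions \ref{l_iteration} and \ref{l_iteration22} already overcome, is the compatibility between the hydrodynamic iteration (requiring regularity only available from the Euler well-posedness of Proposition \ref{lemma1}) and the magnetic iteration (where the parabolic $\se{2N}$-type regularity of $b,\hb$ is driven by Proposition \ref{prop2}, whose proof itself requires the regularization \eqref{MHDvk2} with correctors $\Psi^\epp$ and $\phi_j^\epp$ to reconcile the smoothed initial data with the high-order compatibility conditions). Given those two building blocks, the remaining argument above is a routine Cauchy-extraction, limit-passage and weak-lower-semicontinuity procedure.
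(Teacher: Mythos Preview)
Your proposal is correct and follows essentially the same approach as the paper: weak/weak-$\ast$ compactness from the uniform bound \eqref{l_it_03}, strong convergence in $\se{N}$ from the contraction \eqref{l_it_033}, passage to the limit in \eqref{mkgeometric}, the estimate \eqref{bound110} by weak lower semicontinuity, and uniqueness by rerunning the contraction argument on the difference of two solutions. The paper's proof is in fact slightly terser but contains exactly these ingredients in the same order.
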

\begin{proof}
Take  $\delta_0=\tilde \delta_2$ and $T_0=T_2$ as given in Proposition  \ref{l_iteration22}. Let $\{(u^m,p^m,\eta^m,b^m,\hb^m )\}_{m=1}^\infty$ on $[0,T]$ with $0<T\le T_0$ be  the sequence constructed in Proposition \ref{l_iteration} that solves \eqref{mkgeometric}.
The uniform estimate  \eqref{l_it_03} implies that as $m\rightarrow \infty$, up to extraction of a subsequence, the sequence $(u^m,p^m,\eta^m,b^m,\hb^m  )$ converges to a limit $(u,p,\eta,b, \hb )$ in the weak or weak-$\ast$ sense of the norms in  \eqref{l_it_03}. Then the weak lower semicontinuity shows that $(u,p,\eta,b, \hb )$ satisfies the estimate \eqref{bound110}. On the other hand, the contractive estimate \eqref{l_it_033} 
  shows that the whole sequence  $(u^m,p^m, \eta^m,b^m,\hb^m )$ converges strongly to the limit
$(u,p,\eta,b, \hb )$   in the  norms of $\se{N}$, which is sufficient for passing to the limit in  \eqref{mkgeometric}.  Then one finds that the limit $(u,p,\eta,b,\hb)$ is a
strong solution to \eqref{MHDv} on $[0,T]$. The
uniqueness of   solutions to \eqref{MHDv} 
satisfying \eqref{bound110} follows  similarly as in the proof of the contraction.
\end{proof}

\begin{rem}
It is possible to remove the smallness assumption of $u_0$ and $b_0$ in Theorem  \ref{main_thm2}  by restricting the local existence time of the solution to be smaller, depending on the initial data, see Padula and Solonnikov \cite{PS}. However, much more  work is required if one would like to relax the smallness of $\eta_0$, owing to the way of solving the magnetic part; yet the local well-posedness recorded in Theorem  \ref{main_thm2} is sufficient to be adapted in the proof of our main theorem of global well-posedness.
\end{rem}
%%%%%%%%%%%%%%%%%%%%%%%%%%%%%%%%%%%%%%%%%%%%%%%%%%%%%%

\section{Global well-posedness}\label{sec_gwp}
%%%%%%%%%%%%%%%%%%%%%%%%%%%%%%%%%%%%%%%%%%%%%%%%%%%%%%

In this section we   prove the global well-posedness of \eqref{MHDv} as follows.
\begin{proof}[Proof of Theorem \ref{main_thm}]
Recall the constants $\tilde\delta$ and $\tilde C_1$ in Theorem \ref{apriorith} and $\delta_0$, $T_0$ and $\tilde C_2$ in Theorem \ref{main_thm2}, and, without loss of generality,   assume $\tilde C_1,\tilde C_2\ge 1$.
Assume that $\se{2N} (0) \le \varepsilon_0$ for
\beq\label{Edef}
\varepsilon_0 =\min\left\{\frac{\delta_0} {\tilde C_1},\frac{\tilde\delta}{2\tilde C_1\tilde C_2}\right\}.
\eeq
Set
\beq\label{Tdef}
T^\ast:=\sup_T\left\{T>0\left|\begin{array}{lll} \text{There exists a unique solution  to  \eqref{MHDv} on }[0,T]\\\text{satisfying }\sup_{[0,T]}\se{2N}  \le 2\tilde C_1\tilde C_2 \varepsilon_0.
\end{array}\right.\!\!\right\}.
\eeq
It follows from Theorem \ref{main_thm2} that $T^\ast\ge T_0>0$. We will show that $T^\ast=\infty$ by contradiction. Assume that $T^\ast<\infty$. Then for any $0<T<T^\ast$, it follows from \eqref{Tdef} and \eqref{Edef} that
\beq
\sup_{[0,T]}\se{2N}  \le 2\tilde C_1\tilde C_2 \varepsilon_0\le \tilde \delta.
\eeq
Then Theorem \ref{apriorith} shows
\beq
\sup_{[0,T]}\se{2N}  \le  \tilde C_1\se{2N}(0) \le   \tilde C_1\varepsilon_0\le \delta_0.
\eeq
Now taking the $T$ above as the initial time, one can apply Theorem  \ref{main_thm2}  again  to find that there is a unique solution  to  \eqref{MHDv}  on $[T,T+T_0]$ satisfying
\beq
\sup_{[T,T+T_0]}\se{2N}  \le  \tilde C_2\se{2N}(T) \le   \tilde C_1\tilde C_2 \varepsilon_0.
\eeq
This contradicts the definition of $T^\ast$, and so $T^\ast=\infty $. Hence, the solution is global and the estimates \eqref{thm_en1} and \eqref{thm_en2} follow  from Theorem \ref{apriorith}. 
 \end{proof}

%%%%%%%%%%%%%%%%%%%%%%%%%%%%%%%%%%%%%%%%%%%%%%%%%%%%%%
 \section{ The  plasma-plasma interface problem}\label{sec_pp}
 %%%%%%%%%%%%%%%%%%%%%%%%%%%%%%%%%%%%%%%%%%%%%%%%%%%%%%

 In this section we will present a similar global well-posedness for  the plasma-plasma interface problem for the incompressible inviscid and resistive MHD, where the two immiscible plasmas occupy the two regions $\Omega_\pm(t)$ respectively. Assume that the velocities $u_\pm$, the pressures $p_\pm$ and the magnetic fields $B_\pm$ satisfy the following problem:
\beq\label{ppMHD}
\begin{cases}
\partial_t {u}_\pm  +    {u}_\pm\cdot \nabla  {u} _\pm+\nabla p_\pm =  \curl B_\pm\times B_\pm& \text{in } \Omega_\pm(t)
\\ \Div u_\pm=0 &\text{in }\Omega_\pm(t)
\\ \dt B _\pm       =  \curl   E_\pm,\quad  E_\pm= u_\pm\times B_\pm-\kappa_\pm\curl  B_\pm& \text{in } \Omega_\pm(t)
\\ \Div B_\pm=0 &\text{in }\Omega_\pm(t)
\\ \dt\eta=u_\pm\cdot\n & \text{on }\Sigma(t)
\\  p_+=p_-+\sigma H,\quad B_+=B_-,\quad E_+\times \n =E_-\times \n  & \text{on } \Sigma(t)
\\ u_+\cdot e_3 =0,\quad B_+\times e_3 =\bar B\times e_3 & \text{on } \Sigma_{+} 
\\ u_-\cdot e_3 =0,\quad B_-\cdot e_3=\bar B\cdot e_3,\quad E_-\times e_3 =0 & \text{on } \Sigma_{-}. 
\end{cases}
\eeq
Here  $E_\pm$  are the electric fields of the plasmas and  $\kappa_\pm>0$ are the magnetic diffusion coefficients. Note that we have set the problem to be around the traversal magnetic field $\bar B$ with $\bar B_3\neq 0$.
 
Use again the mapping $\Phi$ (cf. \eqref{diff_def}) to transform the problem \eqref{ppMHD}   to the one in  $\Omega_\pm$. Since the domains $\Omega_\pm$ are fixed, one may simplify notation by writing $f$ to refer to $f_\pm$. In the following,  an equation for $f$ on $\Omega$ means that  the equation holds for $f_\pm$ on $\Omega_\pm$,  and an equation involving $f$ on $\Sigma$ means that the equation holds for both  $f=f_+$ and $f=f_-$. For a quantity $f=f_\pm$,   the interfacial jump on $\Sigma$ is defined as
\begin{equation}
\jump{f} := f_+ \vert_{\Sigma} - f_- \vert_{\Sigma}.
\end{equation} 
Then the problem \eqref{ppMHD}  is equivalent to the following problem for $(u,p,\eta,b,\hb)$, with $b= B-\bar B$,
\beq\label{ppMHDv}
\begin{cases}
\D_t   {u}  +    {u}\cdot \Dn  {u} +\Dn   p =   \curlv   b\times (\bar B+b)  & \text{in } \Omega 
\\ \diva    u=0  &\text{in }\Omega 
\\ \D_t  {b}        =  \curlv   E ,\quad   E=  u\times (\bar B+b) -\kappa\curlv  b & \text{in } \Omega 
\\ \diva  b = 0  &\text{in }\Omega 
\\ \partial_t \eta  = u\cdot\N & \text{on }\Sigma
\\  \jump{p}=\sigma H,\quad \jump{b}=0,\quad \jump{E}\times \n =0  & \text{on } \Sigma 
\\  u_3 =0 ,\quad b \times e_3=0  & \text{on } \Sigma_{+}
\\  u_3 =0 ,\quad b_3 =0,\quad E\times e_3 =0 & \text{on } \Sigma_{-}
\\(u,b,\eta)\mid_{t=0}= (u_0,b_{0},\eta_0).
\end{cases}
\eeq
As the pressure $p$ is uniquely determined up to constants (constant in space), to guarantee the uniqueness of $p$ and without loss of generality, one may require $\int_\Omega p =0.$

Given the initial data $(u_0,b_0, \eta_0)$, one needs to use the equations \eqref{ppMHDv} to construct the data $\partial_t^j u(0)$ and $\partial_t^j b(0)$ for $j=1,\dotsc,2N$, $\partial_t^j  p(0)$ for $j =0,\dotsc, 2N-1$ and $\partial_t^j \eta(0)$ for $j=1,\dotsc,2N+1$. These data need to satisfy the  $2N$-th order  compatibility conditions:
\beq\label{ppcompatibility}
\begin{cases}
   \diverge^{\varphi_0}  u_0  =0\text{ in }\Omega, \quad\jump{ u_0}\cdot\n_0  =0\text{ on }\Sigma,\quad u_{0,3} =0\text{ on }\Sigma_\pm,
   \\   \diverge^{\varphi_0}  b_0  =0\text{ in }\Omega,\quad\jump{ b_0}  =0\text{ on }\Sigma, \quad b_{0}\times e_3 =0\text{ on }\Sigma_+,\quad b_{0,3} =0\text{ on }\Sigma_-,
   \\   \jump{\dt^j b(0)}\times \n_0 =0\text{ on }\Sigma \text{ and }  \dt^j b(0) \times e_3 =0\text{ on }\Sigma_+,\ j=1,\dots,2N-1,
      \\  \dt^j \( \jump{E }\times \n\)(0)=0\text{ on }\Sigma \text{ and } \dt^j E(0)\times e_3=0\text{ on }\Sigma_- ,\ j=0,\dots,2N-1.
 \end{cases}
\eeq	 

For $f =f_\pm$, denote $\ns{f}_k = \ns{f_+}_{H^k(\Omega_+)} + \ns{f_-}_{H^k(\Omega_-)} $ and $\as{f}_s = \ns{f_+}_{H^k(\Sigma)} + \ns{f_-}_{H^k(\Sigma)}$. For an integer $N\ge 4$, we define the high-order energy as
\begin{align}\label{pphigh_en}
\se{2N}:=&\sum_{j=0}^{2N}\norm{  \dt^j{u}}_{2N-j}^2 +\as{ \dt^{2N}{u}}_{-1/2}+\sum_{j=0}^{2N-1}\norm{ \dt^j {b}}_{2N-j+1 }^2+\ns{  \dt^{2N} b}_{0} \nonumber
\\&+\sum_{j=0}^{2N-1}\norm{  \dt^j p}_{2N-j}^2 + \sum_{j=0}^{{2N}-1}\abs{  \dt^j\eta}_{{2N}-j+3/2}^2+\abs{  \dt^{2N}\eta}_{1}^2+\abs{  \dt^{{2N}+1}\eta}_{-1/2}^2.
\end{align}
For  $n=N+4,\dots,2N$, we define a set of energies as
\begin{align} \label{pplow_en}
\fe{n}:=&\norm{   u}_{n-1}^2+\norm{   u}_{0,n}^2 +\sum_{j=1}^{n}\norm{\dt^j u}_{n-j}^2 +\norm{ b}_{n}^2+\sum_{j=1}^{n-1}\norm{ \dt^j {b}}_{n-j+1 }^2+ \norm{ \dt^n{b}}_{0}^2  \nonumber
\\&+\sum_{j=0}^{n-1}\norm{  \dt^j p}_{n-j}^2+ \sum_{j=0}^{{n}-1}\abs{  \dt^j\eta}_{{n}-j+3/2}^2+\abs{  \dt^{n}\eta}_{1}^2+\abs{  \dt^{{n}+1}\eta}_{-1/2}^2
\end{align}
and the corresponding dissipations as
\begin{align} \label{pplow_diss}
\fd{n}:=&  \sum_{j=0}^{n-1} \norm{\dt^j u}_{n-j-1}^2+  \sum_{j=0}^{n-2}\norm{\dt^j b}_{n-j }^2 +\sum_{j=0}^{n}\norm{\dt^j b}_{1,n-j}^2        \nonumber
\\& +\sum_{j=0}^{n-2}\norm{  \dt^{j} p }_{n-j-1}^2 +\sum_{j=0}^{n-2}\abs{  \dt^{j} \eta}_{n-j+1/2}^2+\abs{\dt^{n-1}\eta}_{1}^2 +\abs{\dt^{n}\eta}_{0}^2.
\end{align}

Now the main results of this section are stated as follows.
\begin{thm}\label{ppmain_thm}
Assume that $\kappa>0,\ \bar B_3\neq 0$ and $\sigma>0$ and let $N\ge 8$ be an integer. Let $u_0\in H^{2N}(\Omega)$,
 $b_0\in H^{2N+1}(\Omega)$  and $\eta_0\in H^{2N+3/2}(\Sigma )$ be given such that $\se{2N} (0)<\infty$ and    the compatibility conditions \eqref{ppcompatibility}  as well as the zero average condition \eqref{zero_0v} are satisfied.
There exists a universal constant $\varepsilon_0>0$ such that if $\se{2N} (0) \le \varepsilon_0$, then there exists a global unique solution $(u,p,\eta,b)$ to  \eqref{ppMHDv}. Moreover, for all $t\ge 0$,
\beq\label{ppthm_en1}
 \se{2N} (t) + \int_0^t \fd{2N} (s)\,ds \ls \se{2N} (0)
\eeq
and
\beq\label{ppthm_en2}
\sum_{j=0}^{N-6} (1+t)^{N-5-j}\fe{N+4+j}(t) + \sum_{j=0}^{N-6}\int_0^t(1+s)^{N-5-j}\fd{N+4+j}(s)\,ds\ls \se{2N} (0)  .
\eeq
\end{thm}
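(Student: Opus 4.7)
The plan is to mirror the proof of Theorem \ref{main_thm} with the same two-step scheme: first establish a local well-posedness in the high-regularity class defined by $\mathcal{E}_{2N}$ (including the jump regularity $\as{\dt^{2N} u}_{-1/2}$ on $\Sigma$), then combine it with global-in-time a priori estimates of the form \eqref{ppthm_en1}--\eqref{ppthm_en2} via the continuity argument exactly as in Section \ref{sec_gwp}. The only genuinely new ingredient is that the vacuum region $\Omega_+(t)$ is replaced by a second plasma, so $(u_+,p_+,b_+)$ satisfies a full resistive MHD system rather than the pre-Maxwell equations. This removes the need for the auxiliary electric field $\hat E$ in vacuum (since $E_+$ is now an intrinsic variable) but introduces a genuine coupling of two hyperbolic-transported vorticities across $\Sigma$.

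I would start by deriving the two-phase energy-dissipation identity, which now reads
\begin{align*}
&\hal\dtt\(\int_{\Omega_\pm}(|u|^2+|b|^2)\,d\mathcal V_t+\int_{\mathbb T^2}2\sigma\(\sqrt{1+|\nabla_h\eta|^2}-1\)\)+\sum_\pm\kappa_\pm\int_{\Omega_\pm}|\curl^\varphi b|^2\,d\mathcal V_t=0,
\end{align*}
where the jump conditions $\jump{p}=\sigma H$, $\jump{b}=0$, $\jump{E}\times\n=0$ make all the boundary terms on $\Sigma$ cancel after integrating the curl by parts on each side. Applying $\pa^\alpha$ for $\alpha\in\mathbb N^{1+2}$ with $1\le|\alpha|\le 2N$ and following Proposition \ref{evolution_2N} almost verbatim yields the tangential energy evolution estimate \eqref{en_ev_2N} with $\seb{2N}$ now containing both $u_+$ and $u_-$ contributions and $\sdb{2N}$ containing $\curl b_\pm$ contributions. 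The pressure-surface tension cancellation at the $\al_0=2N$ level is identical after writing $\jumps{\dt^{2N}p}=\sigma\dt^{2N}H$. The time-differentiated version \eqref{en_ev_n} for $n=N+4,\dots,2N-2$ transfers in the same way.

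Next, I would upgrade the tangential control to full Sobolev control using the Hodge-type problems of Section \ref{sec_3}. The relevant elliptic systems in the plasma-plasma setting are now genuinely two-phase with prescribed $\curl$ and $\diverge$ in both $\Omega_\pm$, jump conditions $\jump b=0$ on $\Sigma$, $b_+\times e_3=0$ on $\Sigma_+$ and $b_{-,3}=0$ on $\Sigma_-$; these fit into the framework of Proposition \ref{propel2} and its mixed-phase variant Proposition \ref{propel3}, with the outer-boundary condition on $\Sigma_+$ symmetric to the one on $\Sigma_-$. The $\bar B\cdot\nabla$-type dissipation estimates \eqref{low_diss2} for $u$ now work on each side separately: on the $(-)$ side one uses $b_{-,3}=0$ and $\kappa_-\p_3 b_{-,h}+\bar B_3u_{-,h}=\text{forcing}$ at $\Sigma_-$ from the boundary condition, and on the $(+)$ side one uses $b_+\times e_3=0$ (hence $\p_3 b_{+,3}=-\diverge_h b_{+,h}=0$ plus a tangential control), together with the Poincar\'e-type inequality relative to $\bar B\cdot\nabla$ with $\bar B_3\neq 0$. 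Crucially, the vorticity equations for $\curl^\varphi u_\pm$ retain the key damping identity \eqref{MHDcurlva} separately on each side with the $\frac{\bar B_3^2}{\kappa_\pm}(\curl^\varphi u_\pm)_h$ term; this is the mechanism that allows global-in-time control of the normal derivatives of both velocities. Everything else in Sections \ref{sec_5}--\ref{sec_priori} — the pressure-eta estimates via \eqref{elpp1}/\eqref{elpp2}, the Sobolev interpolation, and the time-weighted inductive decay argument \eqref{decay_es6} — goes through mutatis mutandis.

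The hard part, as in the plasma-vacuum case, is the local well-posedness. The iteration scheme decouples at each step into (a) two free-surface incompressible Euler problems with surface tension, one in $\Omega_+$ and one in $\Omega_-$, sharing the common interface $\Sigma$ and the matched kinematic condition $\dt\eta=u_\pm\cdot\N$, driven by the magnetic force $\curl^{\tilde\varphi}\tilde b\times(\bar B+\tilde b)$; and (b) a two-phase magnetic diffusion system for $b$ in the whole strip $\Omega$ with the jump conditions $\jump b=0$, $\jump{E}\times\n=0$ replacing the nonlocal boundary condition of \eqref{MHDvk01}. Part (a) is a direct two-phase extension of Proposition \ref{lemma1}, with the Taylor-sign coming from $\sigma>0$ providing well-posedness on each side and the kinematic condition coupling $\eta$ to the normal traces of both $u_+$ and $u_-$ (this is where the $\as{\dt^{2N}u}_{-1/2}$ component in $\mathcal E_{2N}$ is essential). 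Part (b) is easier than the plasma-vacuum magnetic subproblem, since there is no longer a nonlocal harmonic extension in $\Omega_+$ to deal with; one regularizes $\eta$ and the forcing, smooths the initial data via the same corrector trick \eqref{MHDvk21100}--\eqref{MHDvk211002233} (now for both phases) to enforce the $2N$-th order compatibility conditions \eqref{ppcompatibility}, and invokes the anisotropic parabolic theory of \cite{LM,PS} in each phase glued by the two jump conditions. The main obstacle I anticipate is precisely verifying that the corrector scheme for initial-data smoothing produces a smooth two-phase datum satisfying the analogue of \eqref{compatibility0je} with both jumps $\jump{\dt^j b^\eep(0)}\times\n^\eep_0=0$ and $\jump{\dt^j E^\eep(0)}\times\n^\eep_0=0$ simultaneously; this requires introducing correctors in both $\Omega_+$ and $\Omega_-$ and choosing them consistently on $\Sigma$, but the construction is a technical extension of Section \ref{sec_lwp} and does not affect the structure of the a priori estimates.
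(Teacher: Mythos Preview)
Your sketch misses the principal new difficulty in the a priori estimates. The tangential energy evolution at the $\alpha_0=2N$ level is \emph{not} identical to the plasma--vacuum case. In the one-phase setting the pressure boundary term $\int_\Sigma\dt^{2N}p\,\dt^{2N}u\cdot\N$ is tied to the surface-tension term via $p=-\sigma H$. In the two-phase setting the boundary contribution is $\int_\Sigma\dt^{2N}p_-\,\dt^{2N}u_-\cdot\N-\int_\Sigma\dt^{2N}p_+\,\dt^{2N}u_+\cdot\N$; only the piece proportional to $\jump{\dt^{2N}p}$ is absorbed by $\sigma\dt^{2N}H$, and a residual of the schematic form $\int_\Sigma\dt^{2N}p_-\,\dt^{2N}\jump{u}\cdot\N$ survives (together with the symmetric commutator variants inside $F^{2,(2N,0)}$ and $F^{5,(2N,0)}$). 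After the integration-by-parts-in-time manoeuvre of \eqref{sur_q_1}--\eqref{sur_q_3}, the critical leftover is
\[
-\int_\Sigma \dt^{2N-1}p_-\,\dt^{2N+1}\jump{u}\cdot\N,
\]
which the paper isolates as the ``new and most technical difficulty''. Using $\jump{u}\cdot\N=0$ one rewrites it as $\int_\Sigma \dt^{2N-1}p_-\bigl((2N{+}1)\dt^{2N}\jump{u}\cdot\dt\N+\jump{u}\cdot\dt^{2N+1}\N\bigr)+{\sum}_{\mathcal R}$. The first summand is precisely where the boundary norm $\as{\dt^{2N}u}_{-1/2}$ in \eqref{pphigh_en} is used (you attribute this norm to the local theory, but its role is here). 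The second summand pairs $\dt^{2N+1}\nabla_h\eta$ against $\dt^{2N-1}p_-\,\jump{u_h}$ and cannot be closed with $\dt^{2N}$ estimates alone; the paper handles it by replacing $\dt^{2N}$ with the material derivative $D_t^-\dt^{2N-1}$, where $D_t^-=\pav_t+U_-\cdot\nabla^\varphi$ and $U_-$ extends $u_-$ to $\Omega$, so that the troublesome commutators cancel, following Cheng--Coutand--Shkoller \cite{CCS1}. This mechanism, specific to two-phase vortex-sheet estimates, is absent from your outline.

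Your local-existence plan also diverges from the paper's. The paper does \emph{not} regularize the magnetic subproblem via the corrector scheme of Section~\ref{sec_lwp}. Since both phases are plasmas there is no nonlocal harmonic extension in $\Omega_+$; after the Piola change $\bb=J\jj^{-1}b$ one has $\diverge\bb=0$ in all of $\Omega$ with purely local jump conditions, and the magnetic system is solved directly by Galerkin approximation. The obstruction is that this change of unknown needs $\eta$ half a derivative smoother than $b$, which fails for the pair $(\dt^{2N}\eta,\dt^{2N}b)$. The paper therefore regularizes the \emph{hydrodynamic} part, adding an artificial viscosity $\epsilon\Lambda\eta$ (with a compensator $\Psi$ chosen so that the initial data and compatibility conditions \eqref{ppcompatibility} are preserved) to gain the extra dissipation $\sdn(\eta)$ in \eqref{high_dissk}; this both legitimizes \eqref{ppbtr1} and absorbs the new artificial-viscosity commutators. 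Your proposed two-phase corrector construction for the magnetic data may be workable, but it is not the route taken here, and you would still have to confront the $\eta$-regularity shortfall somewhere in the iteration.
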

\begin{proof}
The main approach is similar to that of Theorem \ref{main_thm}. We will not repeat all the details of the proof but sketch only the main differences.

For the a priori estimates, the main difference here lies in deriving the energy evolution estimates of the highest order time derivative of the solution to  \eqref{ppMHDv}. Indeed, compared to the plasma-vacuum  interface problem, the new and most technical difficulty is the control of the following nonlinear terms for the hydrodynamic part: 
\begin{align}\label{ppintro_11}
&-\int_{\Sigma} \dt^{2N-1} p_- \dt^{2N+1}\jump{ u}\cdot   \N\nonumber
\\&\quad=\int_{\Sigma} \dt^{2N-1} p_-\(   (2N+1)\dt^{2N}\jump{ u}\cdot   \dt \N+  \jump{u}\cdot \dt^{2N+1} \N\)+{\sum}_{\mathcal{R}},
\end{align}
where one used the fact that $\jump{u}\cdot\N=0$ on $\Sigma$ by the fifth equation in  \eqref{ppMHDv}.
The first term in the right hand side of \eqref{ppintro_11} is controlled by  using $\as{\dt^{2N} u}_{-1/2}\ls \se{2N}$ (cf. \eqref{pphigh_en}), while the treatment of the second term is much more involved: one needs  to introduce the ``material" derivative  $D_t^-=\D_t+U_-\cdot \nav$, where $U_-$ is an extension of $u_-$ onto $\Omega$, and the key point lies in that when considering $D_t^-\dt^{2N-1}$ instead of $\dt^{2N} $, certain troublesome nonlinear terms will be canceled. One may refer to Cheng, Coutand and Shkoller \cite{CCS1} for the details. With this, the rest of the  estimates can be derived more or less similarly as those for the plasma-vacuum interface problem \eqref{MHDv}.

For the construction of local solutions, we also decompose  \eqref{ppMHDv} into the hydrodynamic part  and the magnetic part. The hydrodynamic part is the two-phase incompressible Euler equations with surface tension, which can be solved in spirit of Cheng, Coutand and Shkoller \cite{CCS1}. For the other part of the two-phase magnetic system in moving domains, we can employ an  argument different from the one for \eqref{MHDv}. Indeed, by using again the change of unknown:
\beq\label{ppbtr1}
\bb=J \jj^{-1} b,
\eeq
for $\eta$ small we can solve the transformed system of $b$ in our energy functional framework by using directly the Galerkin method (recall the reason that we resorted to a smoothing argument for solving \eqref{MHDv} is merely due to that $\curl \hhbb\neq 0$ in $\Omega_+$; the fact that $\curl \bb\times e_3\neq 0$ on $\Sigma_-$  is indeed not an obstacle, see Guo and Tice \cite{GT1} for the study of the incompressible Navier-Stokes equations). However, the validity of \eqref{ppbtr1} requires that $\eta$ is of half regularity index higher than $b$, which is not the case for the pair of $\dt^{2N}\eta$ and $\dt^{2N}b$ in our energy functional setting. Our way to overcome this difficulty is to  regularize the hydrodynamic part by considering the following approximate problem
\beq\label{ppMHDvk}
\begin{cases}
\D_t   {u}  +    {u}\cdot \Dn  {u}-(\epsilon\Lambda\bar\eta-\epsilon\Lambda \Psi)\D_3u+\Dn   p =   \curlv   b\times (\bar B+b)  & \text{in } \Omega 
\\ \diva    u=0  &\text{in }\Omega 
\\ \D_t  {b}        =  \curlv   E ,\quad   E=  u\times (\bar B+b) -\kappa\curlv  b & \text{in } \Omega 
\\ \diva  b = 0  &\text{in }\Omega 
\\ \partial_t \eta +\epsilon\Lambda\eta-\epsilon\Lambda\Psi = u\cdot\N & \text{on }\Sigma
\\  \jump{p}=   \sigma H,\quad \jump{b}=0,\quad \jump{E}\times \n =0  & \text{on } \Sigma
\\  u_3 =0 ,\quad b \times e_3=0  & \text{on } \Sigma_{+}
\\  u_3 =0 ,\quad b_3 =0,\quad E\times e_3 =0 & \text{on } \Sigma_{-}.
\end{cases}
\eeq
Here $\epsilon>0$ is the artificial viscosity coefficient, $\Lambda:=(-\Delta_h)^{1/2}$  and   $\Psi$ is the so-called compensator  satisfying $\dt^j\Psi(0)=\mathcal{P}(\dt^j \bar \eta(0)),\ j=0,\dots, 2N-2$, where the initial data $\dt^j\eta(0)$ are those from the original problem \eqref{ppMHDv}. By the introduction of such $\Psi$, at time $t = 0$, one essentially adds nothing on the equations and its time derivatives up to $2N-2$ order. This allows one to take the initial data  for the problem \eqref{ppMHDv} exactly as the one for the regularized problem  \eqref{ppMHDvk} (the compatibility conditions to \eqref{ppMHDvk} are same as \eqref{ppMHDv}). It is crucial that the regularized problem \eqref{ppMHDvk} is asymptotically consistent with the a priori estimates of \eqref{ppMHDv}. Indeed, compared to \eqref{ppMHDv}, the artificial viscosity term leads to the gain of regularities for $\eta$ through the following  (cf. the first term in the right hand side of \eqref{ffhh}): for $\al\in \mathbb{N}^{1+2}$ with $|\al|\le 2N$,
\begin{align} 
- \int_{\Sigma }   \jump{ \pa^\al  p}    \epsilon    \pa^\al   \Lambda \eta 
=- \int_{\Sigma }   \sigma  \pa^\al   H          \epsilon  \pa^\al  \Lambda \eta    \le-  \sigma\epsilon \int_{\Sigma }  \abs{\nabla_h \Lambda^{1/2} \pa^\al  \eta}^2+C\sqrt{ \sen } \epsilon \sdn(\eta),
\end{align}
where
\beq\label{high_dissk}
\sdn(\eta):=\sum_{j=0}^{2N }\as{ \dt^j {\eta}}_{{2N}-j+3/2}.
\eeq
On the other hand,  all the new nonlinear terms resulting from the artificial viscosity term can be controlled by ${\sum}_{\mathcal{R}}$ with an exception (cf. the second term in the right hand side of \eqref{ppintro_11}), which is estimated as follows: by the trace theory,
\beq
-\int_{\Sigma}    \dt^{2N-1}  p_-  \jump{  u_h}\cdot    \nabla_h \epsilon \dt^{2N}\Lambda \eta\ls \abs{\dt^{2N-1}  p_- }_{1/2}\sqrt{\sen}\abs{\nabla_h \epsilon \dt^{2N}\Lambda \eta}_{-1/2}\ls  {\sen}\epsilon\sqrt{\sdn(\eta)}.
\eeq
The uniform in $\epsilon$ a priori estimates of \eqref{ppMHDvk} can be thus closed for $\sen$ small. It is also then routine to check that the regularized hydrodynamic part of \eqref{ppMHDvk} can be solved as the original one of \eqref{ppMHDv}.
Note that for each fixed $\epsilon>0$, the gained regularity of $\eta$ (cf. \eqref{high_dissk}) in particular validates the change of unknown in \eqref{ppbtr1}. Hence, one can first construct the solutions to the nonlinear $\epsilon$-approximate problem \eqref{ppMHDvk} by the method of successive approximations basing on the solvability of  the hydrodynamic and magnetic parts, whose limit as  $\epsilon\rightarrow 0$ yields the desired solution to  \eqref{ppMHDv}.
\end{proof}

\appendix

%%%%%%%%%%%%%%%%%%%%%%%%%%%%%%%%%%%%%%%%%%%%%%%
\section{Analytic tools}\label{section_app}
%%%%%%%%%%%%%%%%%%%%%%%%%%%%%%%%%%%%%%%%%%%%%%%

In this appendix we will collect the analytic tools which are used throughout the paper.

%%%%%%%%%%%%%%%%%%%%%%%%%%%%%%%%%%%%%%%%%%%%%%%
\subsection{Harmonic extension}
%%%%%%%%%%%%%%%%%%%%%%%%%%%%%%%%%%%%%%%%%%%%%%%

Define the specialized Poisson sum in $ \mt  \times \mathbb{R}$  by (see \cite{WTK})
\beq\label{poisson_def}
\mathcal{P} f(x) :=
\left\{\begin{array}{lll} \dis\sum_{\xi \in   \mathbb{Z} ^2 }  e^{2\pi i \xi  \cdot x_h} \sum_{j=0}^m\alpha_j e^{- |\xi|\lambda_jx_3} \hat{f}(\xi), & x_3> 0 
\\ \dis  \sum_{\xi \in     \mathbb{Z} ^2  }  e^{2\pi i \xi  \cdot x_h} e^{2\pi \abs{\xi }x_3} \hat{f}(\xi),& x_3\le 0,
\end{array}\right.
\eeq
where 
\beq
\hat{f}(\xi) = \int_{\mt}  f(x_h)   e^{-2\pi i \xi  \cdot x_h},\quad \xi\in  \mathbb{Z}^2  .
\eeq
Here  $0<\lambda_0<\lambda_1<\cdots<\lambda_m<\infty$ for $m\in \mathbb{N}$, and  $\alpha=(\alpha_0,\alpha_1,\dots,\alpha_m)^T$ is the solution to
\begin{equation}\label{Veq}
V(\lambda_0,\lambda_1,\dots,\lambda_m)\,\alpha= (1,1,\dots,1)^T,
\end{equation} 
where $V$ is the $(m+1) \times (m+1)$ Vandermonde matrix. The Poisson sum \eqref{poisson_def} is specialized in that  $\mathcal{P} f$ is differentially continuous across $\Sigma$ up to any order as needed provided that $m$ is sufficiently large. Moreover, the following estimate holds.
\begin{lem}\label{p_poisson}
It holds that for all $s\in \mathbb{R}$,
\beq
\norm{ \mathcal{P}f }_{s} \ls \abs{f}_{s-1/2}.
\eeq
\end{lem}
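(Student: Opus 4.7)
The plan is to prove the estimate by Fourier analysis in the horizontal variable $x_h$, reducing everything to a mode-by-mode computation in $x_3$. Since $\mathcal{P}f$ is defined piecewise on $x_3>0$ and $x_3\le 0$, I will establish the bound on each half separately; the matching of derivatives across $\Sigma$ (guaranteed by the Vandermonde equation \eqref{Veq}) ensures $\mathcal{P}f$ has sufficient regularity across $x_3=0$ so that the norms on $\Omega_\pm$ add up to an $H^s$ norm on $\mathbb{T}^2\times(-1,1)$ in the integer-$s$ case (and by extension for all real $s$).

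First I will treat the case of a non-negative integer $s$. Applying Plancherel in $x_h\in\mathbb{T}^2$ mode by mode and denoting the profile
\[
\Phi_\xi(x_3)=\sum_{j=0}^m \alpha_j e^{-|\xi|\lambda_j x_3}\quad(x_3>0),\qquad \Phi_\xi(x_3)=e^{2\pi|\xi|x_3}\quad(x_3\le 0),
\]
I compute, for any multi-index $\beta\in\mathbb{N}^2$ and integer $k\ge 0$ with $|\beta|+k\le s$,
\[
\int_{\mathbb{T}^2}\!\int_{-1}^{1}\bigl|\partial_h^\beta\partial_{x_3}^k\mathcal{P}f\bigr|^2\,dx_3\,dx_h
= \sum_{\xi\in\mathbb{Z}^2}(2\pi|\xi|)^{2|\beta|}|\hat f(\xi)|^2\!\int_{-1}^{1}|\Phi_\xi^{(k)}(x_3)|^2\,dx_3.
\]
The core pointwise bound is the elementary inequality
\[
\int_{0}^{1}\bigl(|\xi|^k e^{-c|\xi|x_3}\bigr)^2 dx_3\;\le\;\frac{|\xi|^{2k-1}}{2c}\quad\text{for }|\xi|\ge 1,\ c>0,
\]
together with the trivial estimate $\int_{-1}^{0}|\Phi_\xi^{(k)}|^2\le C|\xi|^{2k-1}$ on the negative side, and the obvious bound $\le C$ when $\xi=0$. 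Summing these contributions with the weight $|\xi|^{2|\beta|}$ yields
\[
\int |\partial_h^\beta\partial_{x_3}^k\mathcal{P}f|^2 \le C\sum_\xi (1+|\xi|^2)^{|\beta|+k-1/2}|\hat f(\xi)|^2 \le C|f|_{s-1/2}^2,
\]
since $|\beta|+k\le s$. Summing over all admissible $(\beta,k)$ gives $\norm{\mathcal{P}f}_s\ls|f|_{s-1/2}$ for non-negative integer $s$.

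For arbitrary real $s$, I will finish by interpolation. Concretely, having established the bound at integer levels $s=n$ and $s=n+1$, the general case $s\in[n,n+1]$ follows from the standard interpolation property of Sobolev norms on bounded Lipschitz domains (alternatively, one can work directly with the Fourier-series characterization of $H^s(\Omega_\pm)$, mimicking the computation above but with continuous weights $(1+|\xi|^2)^s$ and using the estimates for $\Phi_\xi$ at non-integer differentiation orders via $L^2_{x_3}$ bounds combined with classical estimates on exponential kernels). For negative $s$, duality suffices: $\norm{\mathcal{P}f}_s=\sup_{g}\frac{\langle \mathcal{P}f,g\rangle}{\norm{g}_{-s}}$ reduces to bounding boundary pairings via the trace theorem and the already-proven positive-$s$ bound applied to a dual harmonic extension.

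The main obstacle will be purely bookkeeping—keeping track of the piecewise structure and ensuring that the smoothing across $\Sigma$ enforced by \eqref{Veq} means that the $H^s(\Omega_+)$ and $H^s(\Omega_-)$ norms assemble correctly to the $H^s$ norm on the full slab (so that the interpolation step does not lose a $1/2$-derivative at the interface). Since the Vandermonde system gives matching of up to $m$ tangential derivatives across $\Sigma$, taking $m$ large enough relative to $s$ ensures global $H^s$ regularity and makes the argument clean.
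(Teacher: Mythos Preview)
Your approach is correct and is precisely the standard Fourier-mode computation one uses for such Poisson-type extension bounds; it is almost certainly what the referenced Lemma~A.9 of \cite{GT1} carries out. The paper itself gives no argument at all here---its entire ``proof'' is a one-line citation---so you are supplying the content that the authors outsourced.

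One small comment: your duality sketch for negative $s$ is vaguer than the rest, and on a bounded slab the meaning of $H^s$ for $s<0$ (and the identification $\norm{\cdot}_s=\sup_g\langle\cdot,g\rangle/\norm{g}_{-s}$) requires a bit of care. A cleaner way to finish is simply to note that the mode-by-mode calculation you wrote for integer $s\ge 0$ works verbatim with the continuous weight $(1+|\xi|^2)^{s}$ for any real $s$, since the only ingredient is $\int_0^1|\xi|^{2k}e^{-2c|\xi|x_3}\,dx_3\ls\langle\xi\rangle^{2k-1}$; there is no need to invoke interpolation or duality separately. This is also how the Guo--Tice argument is structured.
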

\begin{proof}
One may refer to Lemma A.9 of \cite{GT1} for instance.
\end{proof}

%%%%%%%%%%%%%%%%%%%%%%%%%%%%%%%%%%%%%%%%%%%%%%%
\subsection{Time extension}
%%%%%%%%%%%%%%%%%%%%%%%%%%%%%%%%%%%%%%%%%%%%%%%

The following lemmas allow one to extend the initial data to be time-dependent functions, which are ``hyperbolic" versions of the ``parabolic" ones in \cite{GT1} with some minor modifications.

\begin{lem}\label{l_sobolev_extension}
Suppose that $\dt^j \eta(0) \in H^{2N-j+3/2}(\Sigma)$ for $j=0,\dotsc,2N-1$.  There exists an extension $\eta^0$ defined on $[0,\infty)$, achieving the initial data,  so that
\begin{align}\label{kk11}
\sum_{j=0}^{2N+1}\sup_{[0,\infty]}\as{\dt^j \eta^0}_{ {2N-j+3/2}}+\sum_{j=0}^{2N+2} \int_0^\infty\as{\dt^j \eta^0}_{ {2N-j +2}}
\ls \sum_{j=0}^{2N-1}   \as{\dt^j \eta(0)}_{  {2N-j+3/2 }}.
\end{align}
\end{lem}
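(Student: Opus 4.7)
The plan is to construct $\eta^0$ by a frequency-localised time extension in Fourier space. Fix once and for all a cutoff $\chi\in C_c^\infty(\Rn{})$ with $\chi\equiv 1$ on $[-1,1]$ and $\mathrm{supp}\,\chi\subset[-2,2]$; in particular $\chi(0)=1$ and $\chi^{(m)}(0)=0$ for every $m\ge 1$. Setting $\eta_j:=\dt^j\eta(0)$ and expanding each $\eta_j$ in a Fourier series on $\mt$, define
\beq
\eta^0(t,x_h):=\sum_{j=0}^{2N-1}\sum_{\xi\in\Z^2}\frac{t^j}{j!}\,\chi(\br{\xi}t)\,\widehat{\eta_j}(\xi)\,e^{2\pi i\xi\cdot x_h},\qquad \br{\xi}:=\sqrt{1+\abs{\xi}^2}.
\eeq
Because $\chi\equiv 1$ in a neighbourhood of $0$, the product $(t^j/j!)\chi(\br{\xi}t)$ coincides with $t^j/j!$ for $t$ small (depending on $\xi$), so the identity $\dt^k(t^j/j!)\vert_{t=0}=\delta_{jk}$ immediately gives $\dt^k\eta^0(0)=\eta_k$ for $k=0,\ldots,2N-1$, as required.

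The estimate \eqref{kk11} then reduces to a single pointwise bound, obtained from Leibniz's rule together with the support constraint $\br{\xi}\abs{t}\le 2$:
\beq
\abs{\dt^j\[t^k\chi(\br{\xi}t)\]}\ls \br{\xi}^{j-k}\ \text{on}\ \{\br{\xi}\abs{t}\le 2\},\qquad j\ge 0,\ 0\le k\le 2N-1.
\eeq
With this in hand, Parseval's identity on $\mt$ yields the $L^\infty_t$ bound
\beq
\sup_{t\ge 0}\as{\dt^j\eta^0}_{2N-j+3/2}\ls\sum_{k=0}^{2N-1}\sum_{\xi\in\Z^2}\br{\xi}^{4N-2j+3}\br{\xi}^{2(j-k)}\abs{\widehat{\eta_k}(\xi)}^2=\sum_{k=0}^{2N-1}\as{\eta_k}_{2N-k+3/2},
\eeq
where the two powers of $\br{\xi}$ collapse precisely to $\br{\xi}^{4N-2k+3}$. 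For the time-integrated estimate, the support of $\chi(\br{\xi}\,\cdot)$ in $t$ has Lebesgue measure $\ls\br{\xi}^{-1}$, producing an extra factor $\br{\xi}^{-1}$ that exactly compensates the one additional spatial derivative asked for in the integrand: the weight $\br{\xi}^{4N-2j+4}\br{\xi}^{2(j-k)-1}$ once again collapses to $\br{\xi}^{4N-2k+3}$. Summing over $j\in\{0,\ldots,2N+2\}$ and $k\in\{0,\ldots,2N-1\}$ delivers the integral part of \eqref{kk11}.

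The essential reason the construction succeeds is the parabolic-type scaling $\lambda=\br{\xi}$ chosen in $\chi(\lambda t)$: this is the unique scaling for which the loss $\br{\xi}^{j-k}$ incurred by $j$ temporal derivatives falling on $t^k\chi(\lambda t)$ matches the spatial-regularity gap between the target index $2N-j+3/2$ and the datum index $2N-k+3/2$, and for which the support-duration factor $\br{\xi}^{-1}$ accounts for the extra $1/2$ spatial derivative appearing in the $L^2_t$-norm. The only bookkeeping subtlety lies in verifying the pointwise bound in both regimes $k\le j$ (where the monomial $t^k$ carries enough $t$-powers to accommodate the derivatives) and $k>j$ (where the leftover monomial $t^{k-j}$ decays on the support); both regimes produce the same $\br{\xi}^{j-k}$ bound, and the rest of the argument is identical.
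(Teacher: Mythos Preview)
Your proof is correct and is essentially the same construction as the paper's: the paper sets $\hat F_j(\xi,t)=\varphi_j(t\br{\xi})\hat f_j(\xi)\br{\xi}^{-j}$ for abstract $\varphi_j\in C_c^\infty(\Rn{})$ with $\varphi_j^{(k)}(0)=\delta_{jk}$, and your choice $\varphi_j(s)=(s^j/j!)\chi(s)$ is an explicit instance of this (since $\varphi_j(t\br{\xi})\br{\xi}^{-j}=(t^j/j!)\chi(\br{\xi}t)$). You supply the pointwise bound and the scaling bookkeeping that the paper delegates to \cite{GT1}, but the idea is identical.
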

\begin{proof}
For each $j=0,\dotsc,2N-1$, we denote $f_j  = \dt^j \eta(0)$ and let $\varphi_j \in C_0^\infty(\Rn{})$ be such that $\varphi_j^{(k)}(0) = \delta_{j,k}$ for $k=0,\dotsc,2N-1$. Then
$\eta^0$ is constructed as a sum $\eta^0 = \sum_{j=0}^{2N-1} F_j$, where    $F_j$ is defined via its Fourier coefficients:
\begin{equation}
 \hat{F}_j(\xi,t) = \varphi_j(t \br{\xi}) \hat{f}_j(\xi)\br{\xi}^{-j},\ j=0,\dots,2N-1,\nonumber
\end{equation}
where $\br{\xi} = \sqrt{1+ \abs{\xi}^2}$. It follows by modifying the proof of  Lemma A.5 of \cite{GT1} suitably that $\eta^0$ satisfies the conclusion.
\end{proof}

\begin{lem}\label{l_sobolev_extension3}
Suppose that $\dt^j u(0) \in H^{2N-j}(\Omega_-)$ for $j=0,\dotsc,2N-1$.  There exists an extension $u^0$ defined on $[0,\infty)$, achieving the initial data,  so that
\begin{align}\label{kk12}
\sum_{j=0}^{2N}\sup_{[0,\infty]}\ns{\dt^j u^0}_{ {2N-j}}+\sum_{j=0}^{2N} \int_0^\infty\ns{\dt^j u^0}_{ {2N-j +1/2}}
\ls \sum_{j=0}^{2N-1}   \ns{\dt^j u(0)}_{  {2N-j }}.
\end{align}
\end{lem}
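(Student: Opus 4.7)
The plan is to adapt the Fourier extension construction used for $\eta$ in Lemma \ref{l_sobolev_extension} to the slab domain $\Omega_- = \mt \times (-1,0)$. Because $\Omega_-$ is horizontally periodic, the natural move is to perform a horizontal Fourier decomposition only, treating the vertical coordinate $x_3 \in (-1,0)$ as a parameter, and let the smooth time-cutoffs $\varphi_j$ do the work of switching on the appropriate initial value for each time derivative.

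Concretely, for each $j = 0,\dots,2N-1$ set $f_j := \dt^j u(0) \in H^{2N-j}(\Omega_-)$ and write $f_j(x_h,x_3) = \sum_{\xi \in \Z^2} \hat f_j(\xi,x_3)\, e^{2\pi i\xi\cdot x_h}$. Fix cutoffs $\varphi_j \in C_0^\infty(\mathbb{R})$ with $\varphi_j^{(k)}(0) = \delta_{jk}$ for $k = 0,\dots,2N-1$, set $\br{\xi} := \sqrt{1+|\xi|^2}$, and define
$$
F_j(t,x) := \sum_{\xi \in \Z^2} \varphi_j(t\br{\xi})\, \br{\xi}^{-j}\, \hat f_j(\xi,x_3)\, e^{2\pi i\xi\cdot x_h}, \qquad u^0 := \sum_{j=0}^{2N-1} F_j.
$$
A direct computation yields $\dt^k \hat F_j(\xi,x_3,t) = \br{\xi}^{k-j}\varphi_j^{(k)}(t\br{\xi})\,\hat f_j(\xi,x_3)$, so that $\dt^k u^0(0) = \dt^k u(0)$ for $k = 0,\dots,2N-1$, and moreover $\dt^{2N} u^0$ is well-defined since the growth factors $\br{\xi}^{2N-j}$ are absorbed by the horizontal Sobolev regularity of $f_j$.

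For the estimates I would use the product characterization of integer Sobolev norms on the slab, namely for $r \ge 0$,
$$
\ns{g}_r \simeq \sum_{\xi \in \Z^2} \sum_{\substack{\alpha,\beta \ge 0 \\ \alpha + \beta \le r}} \br{\xi}^{2\alpha}\, \|\partial_3^\beta \hat g(\xi,\cdot)\|^2_{L^2(-1,0)}.
$$
Substituting $\dt^k F_j$ and applying $|\varphi_j^{(k)}| \le C$ together with $2(k-j) + 2\alpha \le 2(2N-j)$ whenever $k \le 2N$ and $\alpha + \beta \le 2N-k$, one obtains $\sup_t \ns{\dt^k F_j}_{2N-k} \ls \ns{f_j}_{2N-j}$. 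For the $L^2_t H^{2N-k+1/2}_x$-piece, the rescaling $s = t\br{\xi}$ yields the identity $\int_0^\infty |\varphi_j^{(k)}(t\br{\xi})|^2\,dt = \br{\xi}^{-1}\|\varphi_j^{(k)}\|^2_{L^2}$, producing exactly the extra $\br{\xi}^{-1}$ factor corresponding to the $+1/2$ gain of horizontal Sobolev regularity. Summing over $j$ and $k$ then gives \eqref{kk12}.

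The principal technical point --- and thus the main obstacle meriting care --- is the anisotropic bookkeeping between horizontal and vertical regularity: since the cutoff $\varphi_j(t\br{\xi})$ depends only on horizontal frequencies, the $+1/2$ gain that drives the integral estimate is purely horizontal, whereas the target norm $\ns{\cdot}_{2N-k+1/2}$ on $\Omega_-$ mixes horizontal and vertical directions. This is reconciled through the equivalence $H^{s+1/2}(\Omega_-) = [H^s(\Omega_-),H^{s+1}(\Omega_-)]_{1/2}$ obtained by real interpolation, which is compatible with the product characterization above; once this anisotropic comparison is set, the remainder of the argument is essentially parallel to the proof of Lemma \ref{l_sobolev_extension}.
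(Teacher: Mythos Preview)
Your construction has a genuine gap in the $L^2_t H^{2N-k+1/2}$ estimate. Because the cutoff $\varphi_j(t\br{\xi})$ depends only on the horizontal frequency, the rescaling $s=t\br{\xi}$ produces the factor $\br{\xi}^{-1}$, which is a gain of $1/2$ derivative \emph{only in the horizontal directions}. The target norm $\ns{\cdot}_{2N-k+1/2}$, however, requires $1/2$ extra regularity isotropically, in particular in $x_3$. The interpolation identity $H^{s+1/2}=[H^s,H^{s+1}]_{1/2}$ is correct but does not rescue the argument: to use it you would need an $H^{2N-k+1}$ bound on $\dt^k F_j$, and that in turn needs $f_j\in H^{2N-j+1}(\Omega_-)$, one full derivative more than the hypothesis provides. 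A concrete counterexample: take $f_j(x_h,x_3)=g(x_3)$ independent of $x_h$ with $g\in H^{2N-j}(-1,0)\setminus H^{2N-j+1/2}(-1,0)$. Then only the $\xi=0$ mode survives, $\br{0}=1$, and your formula gives $\dt^j F_j(t,x)=\varphi_j^{(j)}(t)\,g(x_3)$, whose $H^{2N-j+1/2}(\Omega_-)$ norm is $|\varphi_j^{(j)}(t)|\,\|g\|_{H^{2N-j+1/2}}=\infty$ whenever $\varphi_j^{(j)}(t)\neq 0$.

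The paper avoids this by first invoking a bounded Sobolev extension $H^k(\Omega_-)\to H^k(\mathbb{R}^3)$ (equivalently, extending in $x_3$ from $(-1,0)$ to $\mathbb{R}$), then running the construction of Lemma~\ref{l_sobolev_extension} with the \emph{full} three-dimensional frequency $\zeta=(\xi,\xi_3)$ and the cutoff $\varphi_j(t\br{\zeta})$. The rescaling $s=t\br{\zeta}$ then yields the factor $\br{\zeta}^{-1}$, which is an isotropic half-derivative gain, and the estimate on $\Omega_-$ follows by restriction. Your horizontal-only scheme would work if the target were the anisotropic space $L^2_t H^{1/2}_{x_h} H^{2N-k}_x$, but not for the isotropic $H^{2N-k+1/2}(\Omega_-)$ appearing in \eqref{kk12}.
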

\begin{proof}
It follows similarly as Lemma \ref{l_sobolev_extension}, by using additionally the usual theory of extensions and restrictions in Sobolev spaces between $H^k(\Omega_-)$ and $H^k(\mathbb{R}^3)$ for $k\ge 0$.
\end{proof}

\begin{lem}\label{l_sobolev_extension2}
Suppose that $\dt^j b(0) \in H^{2N-j+1}(\Omega_-)$ for $j=0,\dotsc,2N-1$.  There exists an extension $b^0$ defined on $[0,\infty)$, achieving the initial data,  so that
\begin{align}\label{kk13}
\sum_{j=0}^{2N+1}\sup_{[0,\infty]}\ns{\dt^j b^0}_{ {2N-j+1}}+\sum_{j=0}^{2N+1} \int_0^\infty\ns{\dt^j b^0}_{ {2N-j +3/2}}
\ls \sum_{j=0}^{2N-1}   \ns{\dt^j b(0)}_{  {2N-j+1 }}.
\end{align}
\end{lem}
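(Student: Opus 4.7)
The plan is to mimic the construction used in Lemmas \ref{l_sobolev_extension} and \ref{l_sobolev_extension3}, adjusting the regularity indices. For each $j=0,\dotsc,2N-1$, denote $g_j = \dt^j b(0) \in H^{2N-j+1}(\Omega_-)$. First, I would apply the standard Sobolev extension operator to extend each $g_j$ from $\Omega_-$ to $\mathbb{R}^3$, preserving the regularity $H^{2N-j+1}(\mathbb{R}^3)$; by abuse of notation call the extension $g_j$ again. Then I would fix cutoffs $\varphi_j \in C_0^\infty(\mathbb{R})$ with $\varphi_j^{(k)}(0) = \delta_{j,k}$ for $k=0,\dotsc,2N-1$ and define $b^0 = \sum_{j=0}^{2N-1} G_j$ via the Fourier-side formula
\begin{equation}
\hat{G}_j(\xi,t) = \varphi_j(t \langle\xi\rangle) \hat{g}_j(\xi) \langle\xi\rangle^{-j}, \qquad \langle\xi\rangle = \sqrt{1+|\xi|^2},
\end{equation}
and finally restrict back to $\Omega_-$. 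By choice of $\varphi_j$, the function $b^0$ achieves the prescribed initial data $\dt^j b^0(0) = g_j$ for $j=0,\dotsc,2N-1$.

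The next step is to verify the quantitative bound \eqref{kk13}. A $k$-fold time derivative of $\hat{G}_j$ produces a factor $\langle\xi\rangle^{k-j}\varphi_j^{(k)}(t\langle\xi\rangle)$, so for any $s\ge 0$,
\begin{equation}
\ns{\dt^k G_j(\cdot,t)}_s \ls \int_{\mathbb{R}^3} \langle\xi\rangle^{2(s + k - j)} |\varphi_j^{(k)}(t\langle\xi\rangle)|^2 |\hat{g}_j(\xi)|^2\,d\xi.
\end{equation}
Taking $s = 2N-k+1$ yields the supremum-in-time bound since $s + k - j = 2N+1-j$ matches the regularity of $g_j$ and $\varphi_j^{(k)}$ is bounded. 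For the space-time integral, taking $s = 2N-k+3/2$ gives $s+k-j = 2N+3/2-j$, and the change of variable $\tau = t\langle\xi\rangle$ trades one power of $\langle\xi\rangle$ for the $t$-integral, producing exactly the required half-derivative gain to reduce back to $\langle\xi\rangle^{2(2N+1-j)}|\hat g_j(\xi)|^2$. Summing over $j=0,\dotsc,2N-1$ and $k=0,\dotsc,2N+1$, one recovers $\sum_{j=0}^{2N-1} \ns{g_j}_{2N-j+1}$.

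The one point requiring mild care is that the sums in the left side of \eqref{kk13} run up to $k = 2N+1$, i.e., two orders higher in time than the prescribed data. These higher time derivatives are not constructed from initial data directly; instead, they arise automatically from the $\varphi_j(t\langle\xi\rangle)$ structure, with the missing regularity borrowed from the $\langle\xi\rangle^{k-j}$ factor. The bookkeeping is identical to that carried out in Lemma \ref{l_sobolev_extension} for $\eta$ and in Lemma \ref{l_sobolev_extension3} for $u$, and no new difficulty arises since the shift of indices in \eqref{kk13} ($s = 2N-j+1$ and $s = 2N-j+3/2$) compared with \eqref{kk12} is uniform across all $j$. I do not anticipate a genuine obstacle; the main bit of care is simply keeping track of the shift in regularity so that the Sobolev extension in $\Omega_-$ and the Fourier extension in time combine consistently.
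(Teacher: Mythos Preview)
Your proposal is correct and is exactly the approach the paper intends: the paper's proof of this lemma is simply ``It follows in the same way as Lemma~\ref{l_sobolev_extension3},'' which in turn refers back to the Fourier-side construction of Lemma~\ref{l_sobolev_extension} combined with the standard Sobolev extension from $\Omega_-$ to full space. You have faithfully unpacked precisely that argument with the appropriate index shift.
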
 
\begin{proof}
It follows in the same way as Lemma \ref{l_sobolev_extension3}.
\end{proof}

%%%%%%%%%%%%%%%%%%%%%%%%%%%%%%%%%%%%%%%%%%%%%%%
\subsection{Product estimates}
%%%%%%%%%%%%%%%%%%%%%%%%%%%%%%%%%%%%%%%%%%%%%%%

The following standard estimates in Sobolev spaces are needed.
\begin{lem}\label{sobolev}
Let the domains below be  either $\Omega_\pm$, $\Sigma $ or $\Sigma_\pm$, and $d$ be the dimension.
\begin{enumerate}
\item Let $0\le r \le s_1 \le s_2$ be such that  $s_1 > d/2$. Then
\beq
\norm{fg}_{H^r} \lesssim \norm{f}_{H^{s_1}} \norm{g}_{H^{s_2}}.
\eeq
\item Let $0\le r \le s_1 \le s_2$ be such that  $s_2 >r+ d/2$.    Then
\beq\label{i_s_p_02}
\norm{fg}_{H^r} \lesssim \norm{f}_{H^{s_1}} \norm{g}_{H^{s_2}}.
\eeq 
\end{enumerate}
\end{lem}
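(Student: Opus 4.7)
Both inequalities are classical product estimates for Sobolev spaces, so the plan is to reduce them to their whole-space counterparts and then quote Kato--Ponce / Moser type inequalities. First I would dispose of the geometry: the domains $\Omega_\pm = \mt \times I$, $\Sigma = \mt$, and $\Sigma_\pm = \mt$ all admit bounded extension operators into Sobolev spaces on the corresponding Euclidean space (trivial in the torus directions; a standard reflection or Stein extension across $\Sigma_\pm$ handles the $x_3$ variable). Thus it suffices to prove both estimates on $\Rn{d}$, where everything can be phrased in terms of Bessel-potential spaces $H^r = \langle\nabla\rangle^{-r}L^2$.

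For (1), the hypothesis $s_1 > d/2$ gives the Sobolev embedding $H^{s_1}(\Rn{d}) \hookrightarrow L^\infty(\Rn{d})$, so I would invoke the Kato--Ponce commutator estimate
\begin{equation*}
\norm{\langle\nabla\rangle^r(fg)}_{L^2} \ls \norm{f}_{L^\infty} \norm{\langle\nabla\rangle^r g}_{L^2} + \norm{\langle\nabla\rangle^r f}_{L^2} \norm{g}_{L^\infty}.
\end{equation*}
Since $r \le s_1 \le s_2$, the first term is bounded by $\norm{f}_{H^{s_1}}\norm{g}_{H^{s_2}}$ directly (using $\norm{f}_{L^\infty} \ls \norm{f}_{H^{s_1}}$ and $r \le s_2$). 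For the second term, $\norm{\langle\nabla\rangle^r f}_{L^2} \ls \norm{f}_{H^{s_1}}$ because $r \le s_1$, while $\norm{g}_{L^\infty} \ls \norm{g}_{H^{s_2}}$ because $s_2 \ge s_1 > d/2$.

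For (2), the hypothesis $s_2 > r + d/2$ places all derivatives of $g$ up to order $r$ into $L^\infty$ by Sobolev embedding: $\norm{\langle\nabla\rangle^{r'}g}_{L^\infty} \ls \norm{g}_{H^{s_2}}$ for every $0 \le r' \le r$. I would again appeal to Kato--Ponce, but now assign derivatives preferentially to $g$, so that each factor of $f$ appears only in an $L^2$-based norm controlled by $\norm{f}_{H^{s_1}}$ (which works since $r \le s_1$), while the $L^\infty$ factors of $g$ are absorbed by the embedding above.

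The only genuine technical point is the fractional case, which is handled cleanly by Kato--Ponce in its sharp form; for integer $r$ everything collapses to Leibniz plus H\"older plus Gagliardo--Nirenberg interpolation, and there is no substantive obstacle in either regime.
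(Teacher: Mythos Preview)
The paper does not supply a proof of this lemma at all: it is listed in the appendix under ``Product estimates'' as a standard result, with no argument and no reference. Your proposal is a correct and standard way to justify such product estimates --- reduce to $\Rn{d}$ via extension, then invoke Kato--Ponce (or, equivalently for integer orders, Leibniz plus Sobolev embedding/interpolation). For part~(2) your heuristic of ``assigning derivatives preferentially to $g$'' is slightly informal; the clean version is to use Kato--Ponce with the H\"older pair $(p,q)=(2,\infty)$ on both terms, noting that $s_2 > r + d/2$ gives $H^{s_2}\hookrightarrow W^{r,\infty}$ and in particular $H^{s_2}\hookrightarrow L^\infty$, while $r\le s_1$ handles the $f$ factors in $L^2$. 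Alternatively one can simply cite the general bilinear estimate $H^{s_1}\times H^{s_2}\to H^r$ valid whenever $s_1,s_2\ge r$ and $s_1+s_2 > r+d/2$, which is exactly the regime here. Either way there is nothing to compare against in the paper itself.
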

 
\begin{lem}\label{sobolev-1}
It holds that  for $s>5/2$,
\beq\label{i_s_p_03}
 \norm{fg}_{-1} \ls \norm{f}_{-1} \norm{g}_{{s}}.
\eeq  
\end{lem}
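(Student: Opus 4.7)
The plan is to prove this via duality, reducing the claim to a multiplier estimate: multiplication by $g$ is a bounded operator on $H^1(\Omega)$ when $s > 5/2$, and then the same holds on its dual $H^{-1}(\Omega)$. Since the ambient domain is three-dimensional, the threshold $s > 5/2$ is exactly what is needed for $H^s \hookrightarrow W^{1,\infty}$.

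Concretely, I would start from the duality characterization
\[
\norm{fg}_{-1} = \sup_{\varphi \in H^1(\Omega),\,\norm{\varphi}_1 \le 1} \left| \int_\Omega fg\,\varphi \right|.
\]
For a test function $\varphi$ with $\norm{\varphi}_1 \le 1$, I pair $f$ against the product $g\varphi$:
\[
\left| \int_\Omega fg\,\varphi \right| = |\langle f, g\varphi\rangle| \le \norm{f}_{-1}\, \norm{g\varphi}_1.
\]
So the whole matter reduces to the multiplier bound
\[
\norm{g\varphi}_1 \ls \norm{g}_s \norm{\varphi}_1 \quad\text{whenever } s > 5/2.
\]

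For this multiplier bound I would use the Leibniz rule together with standard three-dimensional Sobolev embeddings. Writing
\[
\norm{g\varphi}_1^2 \ls \norm{g\varphi}_{L^2}^2 + \norm{\varphi\,\nabla g}_{L^2}^2 + \norm{g\,\nabla\varphi}_{L^2}^2,
\]
I would estimate the first and third terms by $\norm{g}_{L^\infty}\norm{\varphi}_1$, and the middle term by $\norm{\nabla g}_{L^\infty}\norm{\varphi}_{L^2}$. The embedding $H^s(\Omega)\hookrightarrow L^\infty(\Omega)$ in 3D is valid for $s>3/2$, so $\norm{g}_{L^\infty}\ls \norm{g}_s$ as soon as $s>3/2$; applied to $\nabla g\in H^{s-1}(\Omega)$ the same embedding requires $s-1>3/2$, i.e.\ $s>5/2$, giving $\norm{\nabla g}_{L^\infty}\ls \norm{g}_s$. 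Combining yields $\norm{g\varphi}_1 \ls \norm{g}_s \norm{\varphi}_1$ exactly in the announced regime.

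Inserting this multiplier estimate into the duality bound and taking the supremum over $\varphi$ with $\norm{\varphi}_1\le 1$ concludes the proof. There is no genuine obstacle here; the only delicate point is the sharpness of the threshold $s>5/2$, which traces back to the need to control $\nabla g$ in $L^\infty$ in three spatial dimensions, and this is exactly where the hypothesis $s>5/2$ of the lemma enters.
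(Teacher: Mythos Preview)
Your proof is correct and is the standard duality argument for this kind of negative-index product estimate. The paper itself does not supply a proof of this lemma; it is stated in the appendix among the collected analytic tools, without justification. Your argument---reducing to the multiplier bound $\norm{g\varphi}_1 \ls \norm{g}_s\norm{\varphi}_1$ via the embedding $H^s \hookrightarrow W^{1,\infty}$ in three dimensions---is exactly the expected one, and your identification of $s>5/2$ as the threshold coming from $\nabla g \in L^\infty$ is on point.
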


%%%%%%%%%%%%%%%%%%%%%%%%%%%%%%%%%%%%%%%%%%%%%%%
\subsection{Poincar\'e-type inequality}
%%%%%%%%%%%%%%%%%%%%%%%%%%%%%%%%%%%%%%%%%%%%%%%

The following Poincar\'e-type inequality related to $\nabb$ holds.
\begin{lem}\label{lempoi}
For any constant vector $\bar B \in \mathbb{R}^3$ with $\bar B_3\neq 0$, it holds that  \beq\label{poincare_1}
\ns{ f}_0  \le \frac{1}{\bar B_3^2}\ns{(\bar B\cdot\nabla)f}_0+\as{ f}_0
\eeq
and
\beq\label{poincare_2}
\as{ f}_0  \le \frac{1}{\bar B_3^2}\ns{(\bar B\cdot\nabla)f}_0+\ns{ f}_0 .
\eeq
\end{lem}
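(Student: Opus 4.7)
The plan is to establish both estimates by integration along the characteristic lines of the constant vector field $\bar B$, which, thanks to the hypothesis $\bar B_3 \neq 0$, are transversal to the horizontal boundaries $\Sigma$ and $\Sigma_-$ of $\Omega_- = \mt \times (-1,0)$; horizontal periodicity will be essential to convert the translations incurred along these characteristics into measure-preserving changes of variables on $\mt$.

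For $(x_h, x_3) \in \Omega_-$, parametrize the characteristic through this point by $\gamma(t) = (x_h + (t - x_3)\bar B_h/\bar B_3,\, t)$, so that $\gamma(0) \in \Sigma$, $\gamma(-1) \in \Sigma_-$, and $(d/dt)\,f(\gamma(t)) = \bar B_3^{-1}(\bar B \cdot \nabla) f(\gamma(t))$. The fundamental theorem of calculus then yields, for each $c \in \{0, -1\}$, the pointwise identity $f(x_h, x_3) = f(\gamma(c)) - \bar B_3^{-1}\int_c^{x_3} (\bar B \cdot \nabla) f(\gamma(s))\,ds$. To prove \eqref{poincare_1}, I would square this identity, apply Cauchy--Schwarz on the integral term (which produces the factor $|x_3 - c| \le 1$ times $\bar B_3^{-2}\int_c^{x_3} |(\bar B \cdot \nabla) f(\gamma(s))|^2\,ds$), and integrate over $x \in \Omega_-$. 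Horizontal periodicity makes the map $x_h \mapsto x_h - (x_3 - c)\bar B_h/\bar B_3$ a measure-preserving translation on $\mt$, so the boundary term integrates to $\abs{f}_0^2$ on $\Sigma$ (or on $\Sigma_-$ for $c = -1$). Swapping the order of integration and substituting $z_3$ for the second coordinate of $\gamma(s)$ then converts the integral term into $\bar B_3^{-2}\ns{(\bar B \cdot \nabla) f}_0$ up to a bounded constant, and Young's inequality on the resulting cross term yields \eqref{poincare_1}.

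For \eqref{poincare_2}, invert the same identity to read $f(\gamma(c)) = f(x_h, x_3) + \bar B_3^{-1}\int_c^{x_3}(\bar B \cdot \nabla) f(\gamma(s))\,ds$, square, integrate in $x_h \in \mt$, and then average in $x_3 \in (-1, 0)$ (which contributes only a unit-length factor on the left since $\Omega_-$ has unit vertical extent); the same change-of-variables manipulation controls the integral term, producing the trace-type bound. The only mild obstacle is tracking the constants so as to match exactly those written in the statement, but since the lemma is invoked only through the schematic $\lesssim$ estimates in \eqref{v_bd_es_32} and \eqref{vb_es_di62}, any universal constants suffice, and Young's inequality with a suitably small parameter produces the stated form.
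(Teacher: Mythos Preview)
Your approach is correct and matches the paper's: the paper's entire proof is the sentence ``It follows by the fundamental theorem of calculus, see Lemma A.4 in \cite{W},'' and your integration along the characteristics of $\bar B$ is exactly that argument made explicit. Your observation that the precise constants are immaterial for the applications in \eqref{v_bd_es_32} and \eqref{vb_es_di62} is also well taken; indeed the exact constants as written do not follow from the naive squaring-and-Cauchy--Schwarz step (one picks up a factor of $2$), but applying the fundamental theorem to $|f(\gamma(s))|^2$ rather than to $f(\gamma(s))$ itself, then using Young's inequality with a parameter, yields an inequality of the stated schematic form, which is all the paper actually uses.
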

\begin{proof}
It follows by the fundamental theorem of calculus, see Lemma A.4 in \cite{W}.
\end{proof}

%%%%%%%%%%%%%%%%%%%%%%%%%%%%%%%%%%%%%%%%%%%%%%%
\subsection{Normal trace estimates}
%%%%%%%%%%%%%%%%%%%%%%%%%%%%%%%%%%%%%%%%%%%%%%%

The following $H^{-1/2}$ boundary estimate holds for functions satisfying $v\in L^2$ and $\diva v \in L^2$.
\begin{lem}\label{Le_normal}
Assume that    $\norm{ \nabla \varphi}_{L^\infty} \leq C$, then
\beq\label{normal_es_3}
\abs{v \cdot \N}_{-1/2}\ls\norm{v}_0+\norm{\diva v}_0  .
\eeq
\end{lem}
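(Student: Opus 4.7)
The plan is to reduce to the flat Euclidean setting via a Piola-type transformation and then close the estimate by duality against $H^{1/2}(\Sigma)$. Define the vector field
\beq\label{piola_def}
\mathbf{w}:=\bigl(\pa_3\varphi\, v_1,\ \pa_3\varphi\, v_2,\ v_3-\pa_1\bar\eta\, v_1-\pa_2\bar\eta\, v_2\bigr).
\eeq
Using $\pa_i\pa_3\varphi=\pa_3\pa_i\bar\eta$ for $i=1,2$, a direct computation gives the identity
\beq\label{piola_id}
\diverge_x\mathbf{w}=\pa_3\varphi\,\diva v\quad\text{in }\Omega_-,
\eeq
so that under the hypothesis $\norm{\nabla\varphi}_{L^\infty}\le C$ (which together with $\pa_3\varphi>0$ yields a two-sided bound, as in \eqref{apriori_2}) one has $\norm{\mathbf{w}}_0\ls\norm{v}_0$ and $\norm{\diverge_x\mathbf{w}}_0\ls\norm{\diva v}_0$. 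Moreover, the third component of $\mathbf{w}$ is precisely $v\cdot\N$ on $\Sigma$, since $\bar\eta|_\Sigma=\eta$, while $\mathbf{w}\cdot e_3=v_3$ on $\Sigma_-$.

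To carry out the duality argument, let $\phi\in H^{1/2}(\Sigma)$ be arbitrary and construct an extension $\Phi\in H^1(\Omega_-)$ satisfying $\Phi|_\Sigma=\phi$, $\Phi|_{\Sigma_-}=0$, and $\norm{\Phi}_1\ls\abs{\phi}_{1/2}$ (for instance, by extending $\phi$ harmonically via Lemma \ref{p_poisson} and then multiplying by a smooth cut-off equal to one near $x_3=0$ and vanishing near $x_3=-1$). The flat divergence theorem applied to $\Phi\mathbf{w}$ in $\Omega_-$, using $\Phi|_{\Sigma_-}=0$ and that the outward flat normal on $\Sigma$ is $e_3$, gives
\beq
\int_\Sigma\phi\,(v\cdot\N)\,dx_h=\int_{\Omega_-}\bigl(\Phi\,\diverge_x\mathbf{w}+\nabla\Phi\cdot\mathbf{w}\bigr)\,dx.
\eeq
Cauchy--Schwarz then bounds the right-hand side by $C\norm{\Phi}_1\bigl(\norm{\mathbf{w}}_0+\norm{\diverge_x\mathbf{w}}_0\bigr)\ls\abs{\phi}_{1/2}\bigl(\norm{v}_0+\norm{\diva v}_0\bigr)$, and taking the supremum over $\phi$ with $\abs{\phi}_{1/2}=1$ yields \eqref{normal_es_3}.

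The main obstacle is purely algebraic: identifying the correct Piola-type transformation \eqref{piola_def} so that \eqref{piola_id} holds and so that its third component produces exactly the non-unit geometric normal $\N$ on $\Sigma$. Once \eqref{piola_def}--\eqref{piola_id} are in place, the rest is standard, and the vanishing boundary contribution on $\Sigma_-$ (guaranteed by requiring $\Phi|_{\Sigma_-}=0$ in the extension, which is consistent with the $H^{1/2}(\Sigma)$ duality since $\Sigma_-$ is disjoint from $\Sigma$) avoids any complication from the rigid boundary.
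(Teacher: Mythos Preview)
Your proof is correct. The paper does not give its own argument here; it simply cites Lemma~3.3 in \cite{GT1}, and your Piola-type transformation together with the duality against $H^{1/2}(\Sigma)$ is exactly the standard route (and indeed the one used in the cited reference), so you have supplied the details the paper defers.
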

\begin{proof}
One may refer to Lemma 3.3 in \cite{GT1}.  %Let $\psi \in H^{1/2}(\Sigma )$, and let $\tilde{\psi} \in H^1(\Omega)$ be a bounded extension which vanishes near $\Sigma_{-}$. Then
%\begin{align}
%\int_{\Sigma } \psi v \cdot \N   &= \int_\Omega \diva (\tilde{\psi}v) d \V = \int_\Omega \(\nabla^\varphi \tilde{\psi}\cdot v+\tilde{\psi} \diva v   \) d \V \nonumber
%\\ &\ls\norm{\tilde{\psi}}\norm{\diva v}_0 +\norm{\nabla \tilde{\psi}}_0\norm{v}_0 \ls \abs{ \psi }_{1/2}\(\norm{v}_0+\norm{\diva v}_0  \).\label{normal_2}
%\end{align}
%Then  \eqref{normal_es_3} follows immediately from \eqref{normal_2}.
\end{proof}

%%%%%%%%%%%%%%%%%%%%%%%%%%%%%%%%%%%%%%%%%%%%%%%
\subsection{Hodge-type estimates}
%%%%%%%%%%%%%%%%%%%%%%%%%%%%%%%%%%%%%%%%%%%%%%%
The following Hodge-type estimate  holds, when the boundary conditions are not specified. Let the domain be either $\Omega_-$ or $\Omega+$ or $\Omega$.
\begin{lem}\label{ell_hodge}
Let $r\ge 1$ be an integer. Then it holds that
\beq\label{v_ell_th}
\norm{v}_r\ls \norm{v}_{0,r}+\norm{ (\curl v)_h }_{r-1}+\norm{\Div v}_{r-1}  .
\eeq
\end{lem}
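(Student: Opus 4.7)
\medskip

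\noindent\textbf{Proof proposal.} The plan is to reduce everything to the pointwise algebraic identities
\begin{align*}
\partial_3 v_1 &= (\curl v)_2 + \partial_1 v_3,\\
\partial_3 v_2 &= -(\curl v)_1 + \partial_2 v_3,\\
\partial_3 v_3 &= \Div v - \partial_1 v_1 - \partial_2 v_2,
\end{align*}
which express every normal derivative of a component of $v$ in terms of either a horizontal component of $\curl v$, the divergence $\Div v$, or a purely horizontal derivative of another component of $v$. The estimate \eqref{v_ell_th} then follows by a straightforward induction on the number of normal derivatives.

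More precisely, writing a multi-index $\beta\in\mathbb{N}^3$ as $\beta=(\beta_h,\beta_3)$ with $\beta_h\in\mathbb{N}^2$, I would prove by induction on $j=0,1,\dots,r$ the statement
\[
\sum_{|\beta|\le r,\ \beta_3\le j}\norm{\pa^\beta v}_0 \ls \norm{v}_{0,r}+\norm{(\curl v)_h}_{r-1}+\norm{\Div v}_{r-1}.
\]
The base case $j=0$ is exactly the definition of $\norm{v}_{0,r}$. For the inductive step, given $\beta$ with $\beta_3=j\ge 1$ and $|\beta|\le r$, I write $\pa^\beta v_i = \pa^{\beta_h}\pa_3^{j-1}(\pa_3 v_i)$ and substitute the relevant identity above; each resulting summand is either $\pa^{\beta_h}\pa_3^{j-1}$ applied to a component of $(\curl v)_h$ or to $\Div v$ (with total order $|\beta|-1\le r-1$, hence bounded by $\norm{(\curl v)_h}_{r-1}+\norm{\Div v}_{r-1}$), or of the form $\pa^{\beta_h+e_k}\pa_3^{j-1}v_\ell$, which has normal order $j-1$ and total order $|\beta|\le r$, and is thus covered by the induction hypothesis. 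Summing over admissible $\beta$ closes the induction.

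I do not expect any genuine obstacle here: all the required Sobolev-norm manipulations are standard, there is no boundary condition to exploit (which is why the tangential term $\norm{v}_{0,r}$ must appear on the right), and the domain being a slab plays no role beyond the splitting of derivatives into horizontal and vertical ones. The only mildly delicate bookkeeping is to make sure that, at each use of the identities, the total derivative count on the quantities $(\curl v)_h$ and $\Div v$ is at most $r-1$, which is automatic since one $\partial_3$ has been converted into an algebraic substitution. The lemma then follows by summing the inductive bound up to $j=r$.
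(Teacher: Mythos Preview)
Your proposal is correct and follows essentially the same approach as the paper: both arguments rest on the three algebraic identities expressing $\partial_3 v_i$ in terms of $(\curl v)_h$, $\Div v$, and horizontal derivatives, and both proceed by induction on the number of vertical derivatives. The paper phrases the induction via the anisotropic norms $\norm{v}_{\ell,r-\ell}$ (first proving the $r=1$ case, then applying it to horizontal derivatives of $v$ to step $\ell\mapsto\ell+1$), whereas you carry out the equivalent multi-index bookkeeping directly; the content is the same.
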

\begin{proof}
Notice that \eqref{v_ell_th} follows easily for $r=1$. Now for $r\ge 2$, applying the previous estimate of $r=1$ gives that for $\ell=1,\dots, r$,
\begin{align}\label{v_ell_th21}
\norm{v}_{\ell,r-\ell}&\ls \norm{v}_{\ell-1,r-\ell+1}+\norm{ (\curl v)_h }_{\ell-1,r-\ell}+\norm{\Div v}_{\ell-1,r-\ell} \nonumber
\\&\ls \norm{v}_{\ell-1,r-\ell+1}+\norm{ (\curl v)_h }_{r-1}+\norm{\Div v}_{r-1}  .
\end{align}
By an induction argument on  $\ell=1,\dots, r$, one gets \eqref{v_ell_th}.
\end{proof}

 %%%%%%%%%%%%%%%%%%%%%%%%%%%%%%%%%%%%%%%%%%%%%%%%%%%%%%
\section*{Acknowledgement}
%%%%%%%%%%%%%%%%%%%%%%%%%%%%%%%%%%%%%%%%%%%%%%%%%%%%%%

This research was supported by Zheng Ge Ru Foundation, HongKong RGC Earmarked Research Grants CUHK14305315, CUHK14302819, CUHK14300917, CUHK14302917, and Basic and Applied Basic Research Foundation of Guangdong Province 2020B1515310002. 
Y. J. Wang was also supported by the National Natural Science Foundation of China (No. 11771360) and the Natural Science Foundation of Fujian Province of China (No. 2019J02003).

\vspace{0.5cm}

\end{document}